\renewcommand*{\backref}[1]{}
\renewcommand*{\backrefalt}[4]{
  \ifcase #1
  [No citations.]
  \or [#2]
  \else [#2]
  \fi }
   \def\MR#1{}
\newcommand{\To}{\longrightarrow}
\newcommand{\R}{\mathbb{R}}
\renewcommand{\S}{\mathscr{S}}
\newcommand{\Z}{\mathbb{Z}}
\numberwithin{equation}{section}
\theoremstyle{plain}
\newtheorem{theorem}[equation]{Theorem}
\newtheorem{thm}[equation]{Theorem}
\newtheorem{lemma}[equation]{Lemma}
\newtheorem{lem}[equation]{Lemma}
\newtheorem{proposition}[equation]{Proposition}
\newtheorem{prop}[equation]{Proposition}
\theoremstyle{definition}
\newtheorem{defn}[equation]{Definition}
\newcommand{\refsec}[1]{Section~\ref{Sec:#1}}
\newcommand{\refdef}[1]{Definition~\ref{Def:#1}}
\newcommand{\reffig}[1]{Figure~\ref{Fig:#1}}
\newcommand{\refeqn}[1]{\eqref{Eqn:#1}}
\newcommand{\reflem}[1]{Lemma~\ref{Lem:#1}}
\newcommand{\refprop}[1]{Proposition~\ref{Prop:#1}}
\newcommand{\refthm}[1]{Theorem~\ref{Thm:#1}}
\newcommand{\Tr}{\textnormal{Tr}}
\newcommand{\Tw}{\textnormal{Tw}}
\newcommand{\STw}{\textnormal{STw}}
\newcommand{\Pu}{\textnormal{Pu}}
\title{Generalised Kauffman Clock Theorems}
\author{Nguyen Thanh Tung Le}
\address{School of Information Technology, Can Tho University of Technology, 256 Nguyen Van Cu, Ninh Kieu, Can Tho, Vietnam}
\email{lnttung@ctuet.edu.vn}
\author{Daniel V. Mathews}
\address{School of Mathematics, Monash University, 9 Rainforest Walk, Clayton VIC 3800, Australia; School of Physical and Mathematical Sciences, Nanyang Technological University, 21 Nanyang Link, Singapore 637371}
\email{dan.v.mathews@gmail.com}
\begin{document}

\begin{abstract}
Kauffman's clock theorem provides a distributive lattice structure on the set of states of a four-valent graph in the plane. We prove two distinct generalisations of this theorem, for four-valent graphs embedded in more general compact oriented surfaces. The proofs use results of Propp providing distributive lattice structures on matchings on bipartite plane graphs, and orientations of graphs with fixed circulation.
\end{abstract}

\maketitle

\setcounter{tocdepth}{1}

\tableofcontents

\section{Introduction}

\subsection{Overview}

This article considers the \emph{Clock Theorem} of Kauffman, proved in his 1983 book \emph{Formal Knot Theory} \cite{Kauffman}, and some generalisations.

Consider a 4-valent plane graph, such as the graph obtained from a knot diagram by flattening crossings. Such a graph has two more faces than vertices; choosing two adjacent faces as \emph{starred} defines a \emph{Kauffman universe}. As Kauffman noted in \cite[p. 1]{Kauffman}, ``I have taken a perhaps startling. but certainly memorable, set of terms". 

A \emph{state} on a Kauffman universe places a \emph{marker} in one of the 4 corners at each vertex of the graph, so that no face of the graph contains more than one marker, and starred faces are forbidden to have markers. The Clock Theorem, in the formulation of Gilmer--Litherland \cite{Gilmer_Litheland_duality_86}, asserts that the set of states has a nice combinatorial structure, in particular the structure of a \emph{distributive lattice} (in the sense of a partially ordered set with joins and meets). There are natural \emph{transposition} operations moving from one state to another, which provide the covering relation for the lattice. 

In this article we present two generalisations of the Clock Theorem. Both of them apply to more general classes of surfaces than the plane. We define \emph{multiverses} generalising Kauffman universes. Roughly, a multiverse is a 4-valent graph embedded on a compact connected oriented surface, with certain starred faces prohibited from containing state markers: a precise definition is given in \refdef{multiverse}. A multiverse is like a Kauffman universe but may have more endpoints, more boundary components, higher genus, more ``strings" and ``more stars".  See \reffig{exampleofmultiverse} for some examples.

\begin{theorem}[Planar Clock Theorem]\label{Thm:main_thm_1}
Let $U$ be a framed planar multiverse. Then the set of states of $U$ forms a distributive lattice, where plane transpositions provide the covering relation.
\end{theorem}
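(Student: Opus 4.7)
The plan is to reduce the theorem to Propp's distributive lattice theorem on perfect matchings of bipartite plane graphs, as signalled in the abstract. First I would associate to the multiverse $U$ an auxiliary bipartite plane graph $G_U$ whose two colour classes are (i) the vertices of $U$ and (ii) the non-starred faces of $U$. For each corner at which a vertex $v$ meets a non-starred face $f$, I place an edge of $G_U$ from $v$ to $f$, drawn into the corresponding corner; this produces a canonical planar embedding of $G_U$ from the planar embedding of $U$.

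Next I would show that a state of $U$ is the same data as a perfect matching of $G_U$: a marker at the corner $(v,f)$ selects the corresponding edge of $G_U$, so the no-two-markers-per-face condition says each face-vertex of $G_U$ is covered at most once, while placing one marker at every vertex of $U$ saturates every vertex-of-$U$. A numerical count (in a framed planar multiverse the number of vertices equals the number of non-starred faces, generalising the Kauffman identity $V+2-2=V$) forces the matching to be perfect. Once the bijection is set up, Propp's theorem supplies a distributive lattice structure on the set of matchings whose covering relation is the \emph{twist} around a bounded face of $G_U$, namely symmetric difference with a minimal cycle. The framing of $U$ provides the base matching used to normalise Propp's height function so that the ordering is canonical.

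The main obstacle, I expect, is the final step: verifying that Propp's twists on $G_U$ correspond precisely to the plane transpositions defined on states of $U$. Each minimal bounded cycle of $G_U$ should surround either a vertex or a non-starred face of $U$, and the corresponding symmetric-difference move should translate into moving a pair of adjacent markers across that vertex or that face. Checking this identification case by case, while simultaneously ensuring that starred faces, boundary arcs, and the outer region of the embedding do not contribute spurious twists (they should correspond only to the unbounded face of $G_U$ or to cycles that are not minimal in Propp's sense), is where the combinatorial bookkeeping will concentrate. Once the covering relations are shown to agree, the distributive lattice structure transfers directly from Propp's theorem, establishing the theorem.
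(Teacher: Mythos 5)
Your skeleton --- translate states of $U$ to perfect matchings of a bipartite plane graph (the paper's \emph{spine} $G$), invoke Propp's matching theorem, and translate covering relations --- is the correct high-level plan and matches the paper's. But several of the details you gloss over are exactly where the real difficulties lie, and as written the argument has genuine gaps. Most seriously, Propp's matching theorem requires the bipartite plane graph to be connected and to have neither forced nor forbidden edges, and the spine of a multiverse satisfies neither hypothesis in general: the starred faces can disconnect it, and corners of $U$ can be forced or forbidden. The paper addresses this by passing to a \emph{reduced spine} (deleting edges that lie in no matching) and separately proving a product decomposition of the matching lattice over connected components. Without such a reduction you cannot even invoke Propp. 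You also misconstrue the covering move: you expect each elementary cycle of the spine to ``surround a vertex or a non-starred face of $U$'' and the twist to ``move a pair of adjacent markers,'' but plane transpositions in this theorem are \emph{$n$-transpositions} along framed contours moving arbitrarily many markers through possibly non-adjacent corners, and the elementary cycles of the reduced spine can have any even length and need not encircle a single vertex, face, or edge of $U$. Showing that they do reduce to Kauffman $2$-transpositions when $U$ is a string universe is a separate, substantial argument (the paper's Section 5), not something built into the correspondence.

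The framing is also not what you think it is. It is not a base matching used to normalize a height function --- Propp's matching lattice does not need one, and no height function is involved on the matching side. The framing is the \emph{choice of embedding} of the spine $G$ itself, and it is needed to make ``plane transposition'' well-defined at all: when faces of $U$ are not simply connected, the combinatorial data of vertices, faces and corners does not determine a transposition contour up to isotopy, and the paper exhibits a counterexample in which unframed ``plane transpositions'' form a cycle, so the putative covering relation yields no partial order. Your construction of the bijection between Propp twists and plane transpositions therefore would not be well-posed without engaging with this structural role of the framing.
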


\begin{theorem}[Arbitrary Genus Clock Theorem]\label{Thm:main_thm_2}
Let $U$ be a framed multiverse. Then the set of states of $U$ with a fixed viable circulation function forms a distributive lattice, where surface transpositions provide the covering relation.
\end{theorem}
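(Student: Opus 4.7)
The plan is to reduce \refthm{main_thm_2} to Propp's distributive lattice theorem for orientations of a graph on a compact oriented surface with fixed circulation, mirroring the way \refthm{main_thm_1} reduces in the planar case to Propp's lattice of perfect matchings on a bipartite plane graph. In higher genus matchings are no longer adequate: the Euler-characteristic identity forcing a one-to-one correspondence between vertices and non-starred faces in the planar case fails, so in general there are too many vertices to pair up with non-starred faces. Orientations with prescribed circulation supply the needed flexibility, and Propp's theorem packages exactly the lattice structure we want.

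First I would associate to each state of $U$ an orientation of an auxiliary graph $G_U$ embedded on the same surface. A natural candidate takes the vertices of $G_U$ to be the non-starred faces of $U$ and the edges of $G_U$ to correspond to the vertices of $U$; the framing at each vertex of $U$ then determines how the choice of corner translates into an orientation of the corresponding edge of $G_U$ (the marker selects one of the two incident non-starred faces, and hence an endpoint). I would then show that this assignment is a bijection onto the set of orientations of $G_U$ satisfying an appropriate local condition at each vertex, and that the viable circulation function on the state side corresponds, under this bijection, to the circulation of the associated orientation in Propp's sense. Viability on the state side should match nonemptiness of the corresponding circulation class on the orientation side, so that states with a fixed viable circulation biject with orientations of $G_U$ with a fixed (realisable) circulation.

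The final step is to check that surface transpositions correspond to the face-reversal operation generating Propp's lattice: a surface transposition moves a marker around a single face of $U$, and this should reverse precisely the edges of $G_U$ bounding a corresponding face. Once established, Propp's theorem gives the distributive lattice structure on orientations with fixed circulation under face reversals, which by transport of structure yields the lattice claimed for states. The main obstacle will be setting up the auxiliary graph $G_U$ and the state-to-orientation bijection so that all three correspondences hold simultaneously: bijectivity, equality of circulations (viable on one side, realisable on the other), and the equality of surface transpositions with face reversals (rather than reversals of arbitrary cycles, which is the subtle point in higher genus where non-bounding cycles appear). The framing, the pattern of stars, the boundary components, and the surface geometry all feed into these checks, and a careful local-to-global analysis will be needed to confirm compatibility in every configuration.
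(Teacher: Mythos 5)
Your high-level instinct — reduce to Propp's distributive lattice theorem for orientations of a graph with fixed circulation — is exactly the right one, and it is what the paper does. But the construction you propose for the auxiliary graph, and the interpretation of the covering operation, both have genuine problems.

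First, your opening premise is incorrect. You assert that in higher genus ``there are too many vertices to pair up with non-starred faces,'' so that matchings are no longer adequate. But \refdef{multiverse} already requires the number of starred faces to be $F - V_{int}$, so that unstarred faces and interior vertices are in bijection by construction, in every genus. Matchings on the spine $G$ still classify states via \reflem{states_and_matchings}. What fails in higher genus is not the state--matching correspondence but Propp's \emph{plane} matching theorem (\refthm{Propp_matching}), since $G$ is no longer a plane graph. The paper therefore keeps the matching picture, but then passes from matchings of $G$ to orientations of the dual $G^\perp$ and applies Propp's orientation theorem (\refthm{Propp_pushing0}) there.

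Second, the graph $G_U$ you describe does not exist as stated. You take vertices of $G_U$ to be unstarred faces of $U$ and one edge per vertex of $U$, with the marker ``selecting one of the two incident non-starred faces.'' But a $4$-valent vertex of a multiverse graph is adjacent to up to four distinct unstarred faces, so there is no canonical pair of endpoints for your edge, and no way for a marker (which picks one of four corners, not one of two) to determine an orientation. In the classical planar setting this difficulty is resolved by the chequerboard colouring, which pairs opposite corners; but as the paper points out explicitly, multiverses in general admit no such colouring. The graph the paper actually orients is the dual $G^\perp$ of the spine, whose vertices correspond to faces of $G$ (generically quadrilaterals dual to interior edges of $U$) and whose edges correspond to corners of $U$ — not the graph you describe.

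Third, surface transpositions do not reduce to ``face reversals'' on your $G_U$. In Propp's orientation lattice the covering relation is pushing up/down on a minimal/maximal accessibility class, which for nonplanar graphs can be a large subset of vertices, not a single face. Correspondingly, in the paper a surface transposition is performed along an entire framed transposition surface $\Psi$, whose boundary may be several independent contours together with boundary components of $\Sigma$, and the matching-side operation is twisting along all $G$-boundary components of a twisting subsurface $\Delta$. Lemmas~\ref{Lem:big_twisting_and_pushing} and \ref{Lem:transposition_and_big_twisting} establish the chain ``surface transposition $\leftrightarrow$ surface twisting $\leftrightarrow$ pushing on an accessibility class,'' which is the step your proposal elides. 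Without it the identification of the covering relation fails, which is precisely the ``subtle point in higher genus'' you gesture at but do not address.
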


More precise and detailed versions of these theorems are given as \refthm{clock_on_genus_0} and \refthm{clock_on_positive_genus}. All notions will be precisely defined in due course; we present a rough idea in this introduction.
\begin{figure}
    \centering
    \includegraphics[width=0.7\textwidth]{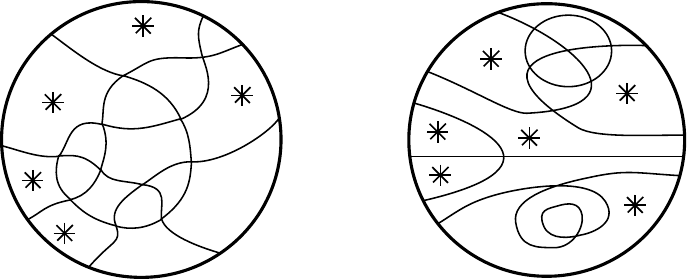}
    \includegraphics[width=\textwidth]{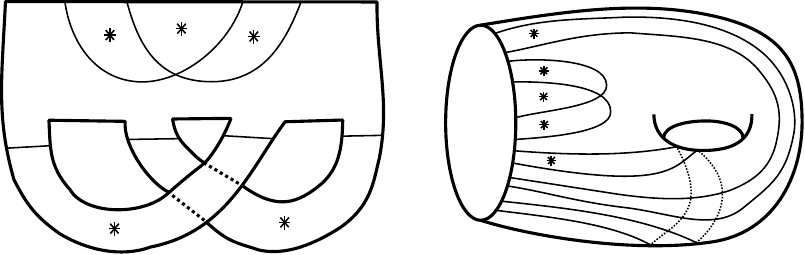}
    \caption{Top left and right: multiverses on a disc. Bottom: two depictions of the same multiverse on a punctured torus.}
    \label{Fig:exampleofmultiverse}
\end{figure}

As the name suggests, a \emph{planar multiverse} lies on a planar surface. The surface may have an arbitrary finite number of boundary components. The graph may have arbitrarily (finitely) many, or few, endpoints on each boundary component of the surface. 

A \emph{multiverse} more generally may lie on an arbitrary compact oriented surface, with arbitrarily high (finite) genus and arbitrarily (finitely) many boundary components. Again, the graph may have arbitrarily (finitely) many, or few, endpoints on each boundary component.

When the multiverse in \refthm{main_thm_1} or \refthm{main_thm_2} is on a disc, and has two endpoints on the boundary of the disc, it is essentially a Kauffman universe: a \emph{universe in string form} or \emph{string universe} in Kauffman's terminology \cite{Kauffman}. 

Although our two main results \refthm{main_thm_1} and \refthm{main_thm_2} clearly involve generalisations of the ideas of Kauffman's Clock theorem, and apply in more general contexts, their strict logical relationship with each other and with Kauffman's clock theorem is a little subtle. Our notion of ``plane transposition" in \refthm{main_thm_1} does \emph{not} immediately reduce to Kauffman's notion of ``transposition" on a Kauffman universe; plane transpositions are more general.
However, it turns out that on a Kauffman universe, the only plane transpositions that can arise are Kauffman transpositions, and in \refsec{applications_to_universes} of this paper we show
the following.
\begin{thm}
\label{Thm:Kauffman_plane_equivalence}
Let $U$ be a Kauffman universe. Then the distributive lattices of $U$ given by Kauffman's Clock Theorem and the Planar Clock \refthm{main_thm_1} are isomorphic, with Kauffman transpositions corresponding to plane transpositions.
\end{thm}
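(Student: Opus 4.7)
The plan is to prove the isomorphism by showing that it is induced by the identity map on states. The set of states of $U$ is the same object in both theorems, so any natural isomorphism between the two lattice structures should fix each state; the content of the theorem is then that the two partial orders coincide. Since a finite distributive lattice is determined by its covering relation, it suffices to show that the covering relation on states generated by Kauffman transpositions agrees with the one generated by plane transpositions. In fact I would aim at the stronger statement that, on a Kauffman universe, the two notions of transposition literally coincide as local moves on states.

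The first step is the easy direction: every Kauffman transposition is a plane transposition. This should follow immediately from unwinding the definitions, since a Kauffman transposition is the minimal local swap of two markers between two corners at a single vertex, and the plane-transposition framework set up for \refthm{main_thm_1} is designed to subsume such minimal swaps as its smallest case.

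The main obstacle, and the substantive content of the theorem, is the reverse direction: on a Kauffman universe, every plane transposition actually is a Kauffman transposition. A plane transposition is defined for an arbitrary planar multiverse and presumably acts on a bounded region whose boundary is some closed curve meeting the graph in a controlled way; a priori this region could be much larger than a one-vertex neighbourhood. I would take an arbitrary plane transposition $T$ on $U$ and analyse the region $R$ on which it acts. The hypotheses defining a Kauffman universe are extremely restrictive: $U$ lies on a disc, has exactly two endpoints on the boundary, has exactly two starred faces, those starred faces are adjacent, and in any state every non-starred face carries exactly one marker. I expect the argument to combine a face/marker count for the faces of $U$ contained in $R$ with the constraint that at most one marker lies in each face and that the two starred faces sit together; an Euler-characteristic or winding argument should then force $R$ to collapse to the two-corner neighbourhood of a single vertex, i.e.\ $T$ is a Kauffman transposition. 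The delicate point is ruling out configurations where $R$ wraps around the two starred faces in some nontrivial way, and this is where the adjacency of the starred faces (not just their existence) should be used decisively.

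Once the two classes of transpositions are shown to coincide, the two covering relations on the common set of states coincide, hence so do the two partial orders, and hence the two distributive lattice structures. The identity map on the set of states is then the desired lattice isomorphism, and it sends Kauffman transpositions to plane transpositions by construction, completing the proof.
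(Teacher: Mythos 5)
Your overall scheme matches the paper's: show the identity on states is an isomorphism by proving that Kauffman transpositions and plane transpositions coincide on a universe, with the easy direction (Kauffman $\Rightarrow$ plane) being essentially definition-chasing (Kauffman transpositions are contour $2$-transpositions with adjacent corners, so no interior-corner condition to check, and every contour on a $2$-cell embedded multiverse is framed). Your guess that an Euler-characteristic count is the engine of the hard direction is also right: the paper's \reflem{long_alternating_cycles_require_escape} is exactly such a count, showing that a vertex-simple alternating cycle of length $2L$ in the spine must have $L+2$ edges of the Tait graph escaping from its white vertices.

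However, you have misidentified the delicate point, and this leaves a genuine gap. You say the crux is ``ruling out configurations where $R$ wraps around the two starred faces'' and that the adjacency of the starred faces is used decisively; neither is the case. The paper's argument runs entirely through the translation to spines and matchings, which you do not mention: a plane transposition corresponds (via \reflem{transpositions_and_twisting}) to twisting at an elementary cycle $C$ of the \emph{reduced spine} $G_0$. Being elementary in $G_0$ means no edge of $G_0$ escapes a white vertex of $C$ into its interior, but the Euler count needs this for the full spine $G$ (equivalently the Tait graph $T$). The real delicate point is to upgrade ``no $G_0$-edge escapes inward'' to ``no $G$-edge escapes inward'', i.e.\ to show any $G$-edge escaping inward would actually lie in \emph{some} matching and hence survive the reduction, contradicting elementariness in $G_0$. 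The paper achieves this with \reflem{universe_key_construction}, a nontrivial greedy/lexicographic construction of an alternating path from the escape edge back out to $C$, adjusting the matching by twisting on inner cycles as obstructions arise; nothing resembling that construction, nor the distinction between $G$ and $G_0$ that necessitates it, appears in your outline. Once that lemma is in hand, the Euler count forces $E_{\partial W}=2L$ and $E_{\partial W}=L+2$, so $L=2$; the adjacency of the starred faces plays no special role beyond what a general multiverse already provides.
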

A precise statement is given in \refthm{clock_lattices_isomorphic}. Thus, \refthm{main_thm_1} \emph{is} a generalisation of Kauffman's Clock Theorem, but not obviously so.

It may appear that \refthm{main_thm_2} is a generalisation of \refthm{main_thm_1}, rendering \refthm{main_thm_1} redundant. However, the notions of transpositions involved in these theorems are quite distinct. The notion of ``surface transposition" in \refthm{main_thm_2} is considerably more restricted than the notion of ``plane transposition" in \refthm{main_thm_1}, and neither is in general a subset of the other.
The notion of plane transposition relies on a property unique to planar surfaces, namely that every simple closed curve has an inside and outside. Surface transpositions require possibly multiple curves to be involved, bounding a common surface. While the states in \refthm{main_thm_1} can all be connected by plane transpositions, the states in \refthm{main_thm_2} in general cannot be connected by surface transpositions; only those of a fixed circulation function (which we will define in due course) are connected to each other. Both theorems apply to a framed planar multiverse, but will in general produce distinct lattices: for example \reffig{Hasse_diagram_example1_thm1} and \reffig{Hasse_diagram_example1_thm2}, show the lattices obtained from \refthm{main_thm_1} and \refthm{main_thm_2}, for the same framed planar multiverse.

\begin{figure}
\begin{center}
    \includegraphics[width=\textwidth]{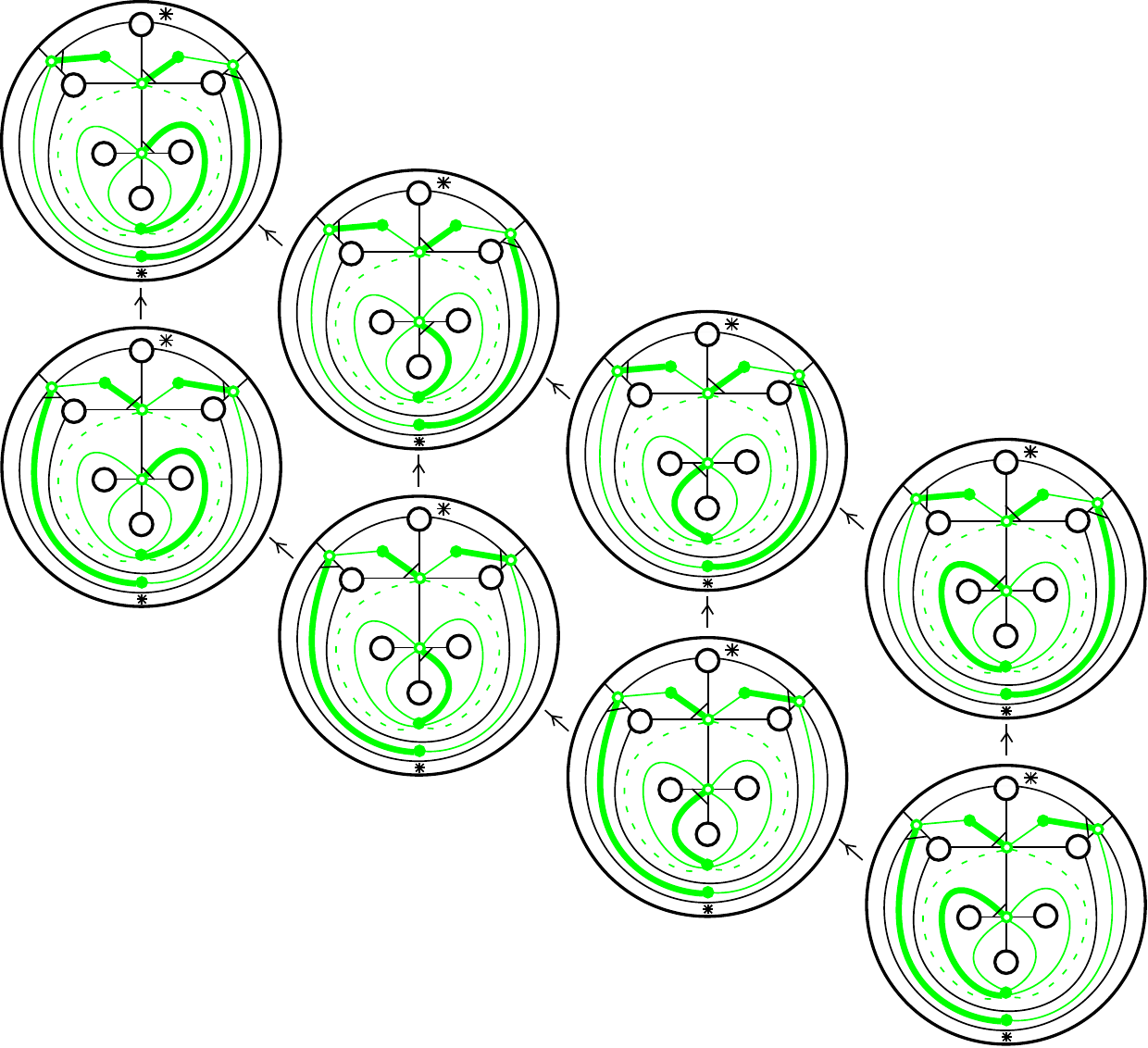}
\end{center}
    \caption{The Hasse diagram of the lattice obtained from \refthm{main_thm_1}, for a framed planar multiverse.}
    \label{Fig:Hasse_diagram_example1_thm1}
\end{figure}

\begin{figure}
\begin{center}
    \includegraphics[width=0.75\textwidth]{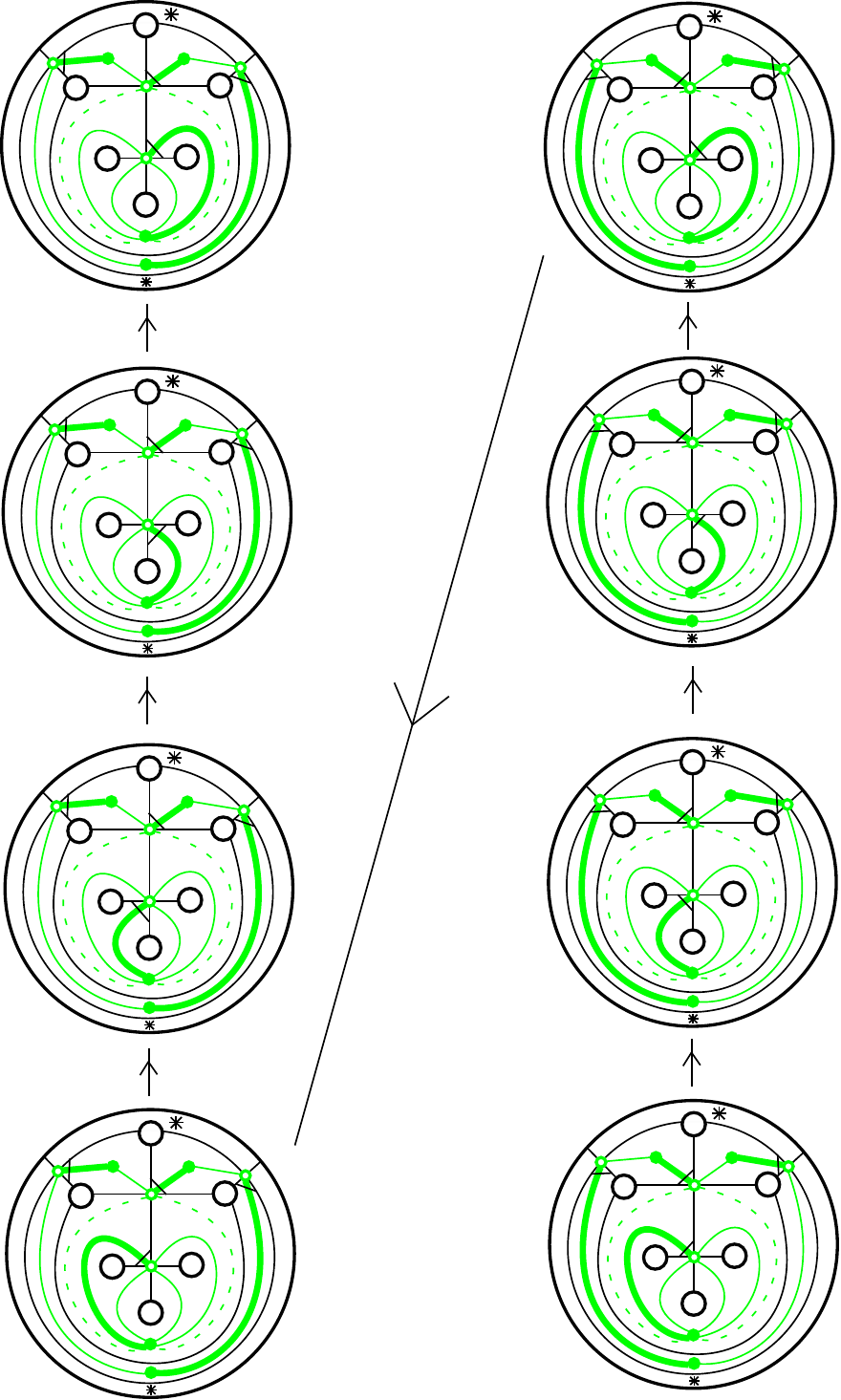}
\end{center}
    \caption{The Hasse diagram of the lattice obtained from \refthm{main_thm_2}, for a framed planar  multiverse.}
    \label{Fig:Hasse_diagram_example1_thm2}
\end{figure}

Moreover, \refthm{main_thm_2} is not, strictly speaking, a generalisation of Kauffman's Clock theorem. When $U$ is a Kauffman string universe, \refthm{main_thm_2} produces a distributive lattice, but it is in general different from the one provided by the original Clock theorem. See for instance \reffig{Hasse_diagram_example2_thm1} and \reffig{Hasse_diagram_example2_thm2}.

In the remainder of this introduction, we briefly give a precise recollection of Kauffman's Clock Theorem, as formulated by Gilmer--Litherland, and a rough idea of our generalisations, as well as general background, existing literature and context.

\begin{figure}
\begin{center}
\includegraphics[width=0.6\textwidth]{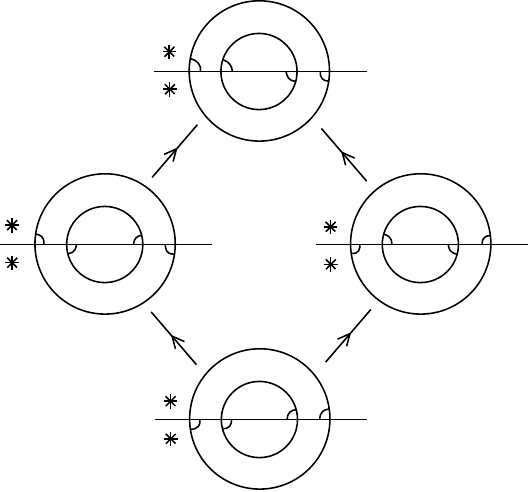}
\end{center}
    \caption{The Hasse diagram of the lattice obtained from Kauffman's clock theorem or \refthm{main_thm_1}, for a Kauffman string universe.}
    \label{Fig:Hasse_diagram_example2_thm1}
\end{figure}

\begin{figure}
\begin{center}
\includegraphics[width=0.25\textwidth]{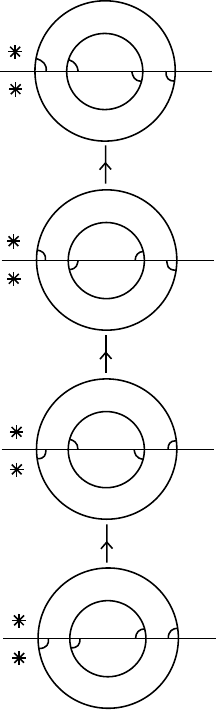}
\end{center}
    \caption{The Hasse diagram of the lattice obtained from \refthm{main_thm_2}, for a  Kauffman string universe.}
    \label{Fig:Hasse_diagram_example2_thm2}
\end{figure}

\subsection{Kauffman's clock theorem}
\label{Sec:string_universes}

We recount some definitions and results from Kauffman \cite{Kauffman}.
\begin{defn}
\label{Def:universe}
A \emph{Kauffman universe}, or just \emph{universe}, is a pair $(U, \mathscr{F})$ where
\begin{enumerate}
\item $U$ is a connected $4$-valent plane graph, and
\item $\mathscr{F} = \{ F_0, F_1 \}$ is a set of two distinct faces of $U$, called \emph{starred} faces, where $F_0$ is the unbounded face, and $F_1$ shares an edge with $F_0$.
\end{enumerate}
\end{defn}

Here, as usual, by a plane graph we mean a graph embedded in the plane $\R^2$, with edges embedded as curves which intersect only at vertex endpoints according to the incidence relations of the graph. The \emph{faces} of the  plane graph $U$ are the connected components of $\R^2 \setminus U$. Precisely one face is unbounded. 
When the faces are understood we often denote the universe simply by $U$.
We mark each starred face with a star. An example of a universe is shown in \reffig{example_string} (left).
\begin{figure}
    \centering
    \includegraphics[width=0.5\linewidth]{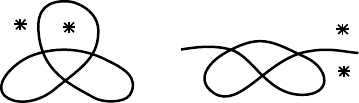}
    \caption{Left: a universe obtained by flattening the crossings of a trefoil knot. Right: the corresponding string universe.}
    \label{Fig:example_string}
\end{figure}

At each vertex $v$ of $U$, there are four adjacent \emph{corners}, namely the four connected components of the complement of $U$ in a small neighbourhood of $v$. Each corner of $v$ lies in a face of $U$, but there may be distinct corners of $v$ which lie in the same face.

A straightforward Euler characteristic argument shows that the number of faces of $U$ is 2 more than the number of vertices. (We prove a general result in \reflem{Euler_char_arg}.) Thus the number of unstarred faces is equal to the number of vertices. Hence the following definition makes sense.
\begin{defn} 
\label{Def:state_universe}
A \emph{state} of $(U, \mathscr{F})$ is a choice of corner at each vertex of $U$, so that each unstarred face is chosen precisely once.
\end{defn}
To draw a state, we place a marker in the corner chosen at each vertex. \refdef{state_universe} precisely requires that each unstarred face of $U$ contains precisely one marker, as in the examples of \reffig{ex_state}.
\begin{figure}
    \centering
    \includegraphics[width=0.5\linewidth]{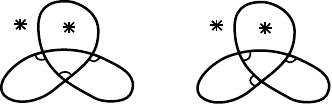}
    \caption{Two states of a universe.}
    \label{Fig:ex_state}
\end{figure}

Given a state $S$ on a universe $U$, we may consider altering the assignment of corners at vertices, by rotating the markers as follows.
\begin{defn}
\label{Def:transposition}
A \emph{clockwise (resp. counterclockwise) Kauffman transposition}, or just \emph{transposition}, on a state $S$ is a simultaneous $90^\circ$ clockwise (resp. counterclockwise) rotation of two distinct markers, which results in another state.
\end{defn}
Suppose $u$ and $v$ are the vertices involved in a transposition on a state $S$. Then the marker which rotates at $u$ rotates out of a face $F$ and into another face $F'$; the marker at $v$ must then rotate out of $F'$ and into $F$. Hence any transposition appears locally as in \reffig{transposition}. Note the box in \reffig{transposition} can contain an arbitrarily complicated (or trivial) 4-valent graph, but the graph inside the box only intersects the boundary of the box at two points, on edges incident to $u$ and $v$ respectively.

\begin{figure}[H]
\begin{center}
\includegraphics[width=\textwidth]{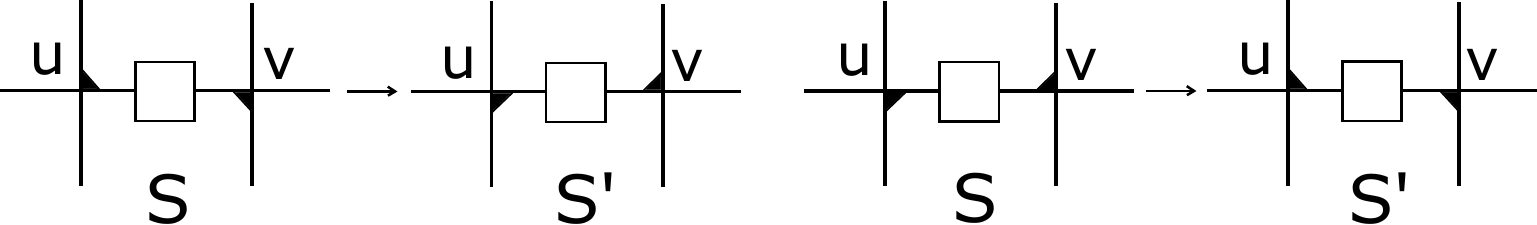}
\caption{Transpositions, clockwise (left) and counterclockwise (right).}
\label{Fig:transposition}
\end{center}
\end{figure}

We use transpositions to define a relation on states as follows.
\begin{defn}
\label{Def:leq_on_states}
Let $S,S'$ be states of $U$. We write $S \leqslant S'$ if there exists a sequence of states $S=S_0, S_1, \ldots, S_n = S'$ of $U$, for some integer $n \geqslant 0$, such that each $S_{j+1}$ is obtained from $S_j$ by a counterclockwise transposition.
\end{defn}
Note that we allow $n=0$, so $S \leqslant S$ for each state $S$. It is clear from the definition that $\leqslant$ is transitive. It is perhaps less clear, but true, that $\leqslant$ is a partial order, so that the symbol $\leqslant$ is justified. In fact, it is a distributive lattice: this is the content of Kauffman's Clock Theorem, as formulated by Gilmer--Litherland. The distributivity of the lattice was first stated in \cite{Gilmer_Litheland_duality_86}. 
\begin{theorem}[Kauffman's Clock Theorem]
\label{Thm:clock_universe}
Let $(U, \mathscr{F})$ be a universe and $\mathscr{S}$ its set of states. Then $\S$, equipped with the relation $\leqslant$, is a distributive lattice. Moreover, a state $S$ is covered by another state $S'$ if and only if $S'$ is obtained from $S$ by a counterclockwise transposition.
\end{theorem}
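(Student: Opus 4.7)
My approach is to recast states of $U$ as perfect matchings of a bipartite plane graph and invoke Propp's theorem on the distributive lattice structure of such matchings. First, I would build an auxiliary bipartite plane graph $G$ with a black vertex $\tilde v$ for each vertex $v$ of $U$, a white vertex $\tilde F$ for each unstarred face $F$ of $U$, and an edge joining $\tilde v$ and $\tilde F$ for each corner of $v$ lying in $F$. Drawing each edge of $G$ as a short arc through its corresponding corner gives a plane embedding of $G$ inherited from $U$.

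\textbf{Bijection and transpositions.} Next, I would verify that sending a state $S$ to the set of edges of $G$ through the marked corners defines a bijection with the perfect matchings of $G$: the ``one marker per vertex, one marker per unstarred face'' conditions translate exactly to the perfect matching conditions at black and white vertices respectively. Under this bijection, a counterclockwise Kauffman transposition at $u,v$ between faces $F,F'$ swaps $\{\tilde u,\tilde F\},\{\tilde v,\tilde F'\}$ for $\{\tilde u,\tilde F'\},\{\tilde v,\tilde F\}$: this is a rotation of the alternating $4$-cycle $\tilde u\,\tilde F\,\tilde v\,\tilde F'$ in $G$. Propp's theorem endows the perfect matchings of a connected bipartite plane graph with a distributive lattice structure whose covering relations are $4$-cycle rotations at elementary faces in a consistent direction; pulling this structure back via the bijection would yield the desired structure on $\S$, after checking that Kauffman's ``counterclockwise'' direction matches Propp's orientation convention.

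\textbf{Main obstacle.} The key subtlety, and what I expect to be the main technical hurdle, is that the transposition $4$-cycle in $G$ need not bound an \emph{elementary} face of $G$: the box in \reffig{transposition} can enclose arbitrary further graph structure, and hence further vertices of $G$. The main technical task is therefore to reconcile Propp's elementary covers with Kauffman's transposition covers. I would approach this by showing that a large-box Kauffman transposition corresponds to a single atomic toggle of a connected region of Propp's height function, which remains a covering relation in an appropriately stated version of Propp's theorem; if that proves too delicate, a backup is the direct argument of Gilmer--Litherland, exhibiting a monovariant height function on states to obtain antisymmetry, constructing joins and meets by coordinate-wise extrema of these height functions, and deducing distributivity from the resulting $\min/\max$ structure.
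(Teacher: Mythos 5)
Your plan — passing from states of $U$ to perfect matchings of the overlaid Tait graph restricted to unstarred faces (the paper's ``spine'' $G$), and applying Propp's theorem on distributive lattices of matchings of plane bipartite graphs — is exactly the strategy the paper uses: \refthm{clock_on_genus_0} is proved this way via \refprop{finite_bipartite_plane_lattice}, and \refthm{clock_lattices_isomorphic} then shows it reduces to Kauffman's clock theorem for a string universe. You have also correctly located the crux: a Kauffman transposition corresponds to a rotation of an alternating $4$-cycle in $G$, but that $4$-cycle may enclose an arbitrary subgraph, so it need not be an elementary cycle, and elementary-cycle twists are what Propp's covering relation is made of.

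Where the sketch falls short is in the resolution of that obstacle, and in missing that the reconciliation must go both ways. Your proposed fix — viewing a large-box Kauffman transposition as a ``single atomic toggle of a connected region of the height function'' — does not come for free: twisting at a non-elementary alternating cycle flips heights at several dual vertices simultaneously, and this is a Propp cover only if those vertices form a single accessibility class, which needs proof. What actually closes the gap in the paper is: (a) pass to the \emph{reduced spine} $G_0$, deleting forbidden edges, both because Propp's \refthm{Propp_matching} requires no forced/forbidden edges and because forbidden edges are precisely what fill the box; then (b) show that the Kauffman $4$-cycle \emph{is} elementary in $G_0$ — the interior-corner condition is vacuous for a $90^\circ$ rotation, and a parity/counting argument (as in the proof of \reflem{transpositions_and_twisting}) forces all $G$-edges departing from cycle vertices into the interior to be forbidden. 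Conversely, and this is the harder direction you don't address, one must show every Propp cover of $\mathscr{M}_{G_0}$ is a Kauffman transposition: here the paper needs an Euler-characteristic edge count (\reflem{long_alternating_cycles_require_escape}, which yields $E_{\partial W} = L+2$ and hence $L=2$, so elementary cycles have length $4$) combined with the alternating-path construction of \reflem{universe_key_construction} (a greedy ``turn rightmost'' walk with matching adjustments) to rule out $G$-edges of the full spine departing into the interior. Your Gilmer--Litherland backup is of course valid, but it abandons the Propp framework rather than completing it.
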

The lattice here is called the \emph{clock lattice} of $U$.
We will recall the definition of distributive lattice, covering, and the various generalisations we need, in \refsec{lattices}. 

Kauffman equivalently expresses the clock theorem using \emph{universes in string form}, or \emph{string universes}. Given a universe $(U, \mathscr{F})$, select the edge separating the two starred faces and break it. Extending the resulting half-edges infinitely to the left and right yields a universe in string form. Truncating to a sufficiently large closed disc $D$, whose boundary $\partial D$ contains endpoints of the half-edges, we obtain the following.
\begin{defn}[Universe on a disc]
\label{Def:universe_on_disc}
A \emph{universe on a disc} is a triple $(U, D, \mathscr{F})$ where
\begin{enumerate}
\item 
$D$ is a closed disc.
\item 
$U$ is a connected graph embedded in $D$, with $2$ \emph{boundary} vertices of degree $1$, and all other vertices of degree $4$. The boundary vertices lie on $\partial D$, and the other vertices in the interior of $D$.
\item 
$\mathscr{F}$ is a set of two distinct \emph{starred} faces of $U$, namely the $2$ faces of $U$ adjacent to $\partial D$.
\end{enumerate}
\end{defn}
Thus, a universe on a disc can be regarded as a Kauffman string universe, truncated to a disc. 
The two unbounded faces of a string universe correspond to the two faces of a universe on a disc adjacent to its boundary. These are the starred faces $\mathscr{F}$.
An example is shown in \reffig{example_string} (right).

Regarding the disc $D$ as a subset of $\R^2$, \refdef{state_universe} of state generalises to a universe on a disc (we place markers at the corners of degree-4 vertices only), as does \refdef{transposition} of transposition. The states and transpositions on a Kauffman universe are naturally bijective with those on the corresponding universe on a disc. We can then define a  relation $\leq$ on states by \refdef{leq_on_states}, and Kauffman's Clock Theorem (as formulated by Gilmer--Litherland) then has the following equivalent reformulation.
\begin{theorem}[Kauffman's Clock Theorem for universes on a disc]
\label{thm:clock_on_plane}
Let $(U, D, \mathscr{F})$ be a universe on a disc, and  $\mathscr{S}$  its set of states.
Then, $\mathscr{S}$ equipped with $\leqslant$, is  distributive lattice.
Moreover, a state $S$ is covered by another state $S'$ if and only if $S'$ is obtained from $S$ by a counterclockwise transposition.
\end{theorem}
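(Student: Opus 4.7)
The plan is to reduce \refthm{clock_on_plane} directly to \refthm{clock_universe} by exhibiting a canonical bijection between universes on a disc and Kauffman universes that carries states to states and counterclockwise transpositions to counterclockwise transpositions. The discussion preceding the theorem already sketches this correspondence; the remaining task is to verify that it interacts correctly with the definitions of state, transposition, and the relation $\leqslant$.

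First, construct the correspondence. Given a universe on a disc $(U, D, \mathscr{F})$, extend the two degree-$1$ half-edges at $\partial D$ outward to infinity (along rays in $\R^2$), then join them by a single edge passing through the point at infinity, producing a connected $4$-valent plane graph $U'$; the two faces of $U'$ adjacent to this new edge are declared the starred faces $\mathscr{F}'$, yielding a Kauffman universe. Conversely, from $(U', \mathscr{F}')$ one breaks the edge between the two starred faces, pushes the resulting endpoints to infinity, and truncates to a sufficiently large disc $D$. These two operations are mutually inverse up to ambient isotopy, and any choice of extension rays or truncating disc yields the same Kauffman universe (respectively, universe on a disc) up to isomorphism.

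Next, verify that this correspondence induces a bijection on states. The degree-$4$ vertices of $U$ and $U'$ coincide, and at each such vertex the four corners are identical as subsets of the plane. The two faces of $U$ adjacent to $\partial D$, which constitute $\mathscr{F}$, correspond under the construction to the two starred faces $\mathscr{F}'$ of $U'$; the remaining faces match under the obvious identification. Since a state in either setting places markers at corners subject to the condition that each unstarred face contains exactly one marker, the definitions agree verbatim. Moreover, a Kauffman transposition is a purely local operation at two vertices, supported entirely in the interior of a small neighbourhood of the strip between them, so it lifts and descends identically between $(U, D, \mathscr{F})$ and $(U', \mathscr{F}')$. Hence the partial order $\leqslant$ of \refdef{leq_on_states} is transported across the bijection.

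The conclusion is immediate: $\mathscr{S}$ is in order-preserving bijection with the set of states of $(U', \mathscr{F}')$, and by \refthm{clock_universe} the latter is a distributive lattice in which $S$ is covered by $S'$ iff $S'$ arises from $S$ via a single counterclockwise transposition. The same therefore holds for $(\mathscr{S}, \leqslant)$. There is no genuinely difficult step in this argument; the main point requiring attention is simply to check that the extension-and-join construction is well defined independently of auxiliary choices, which follows because any two such choices are related by an ambient isotopy of $\R^2$ fixing $U$ and $\mathscr{F}$ setwise, and this isotopy manifestly preserves vertices, corners, faces, and transpositions.
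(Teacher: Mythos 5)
Your proposal is correct and follows essentially the same route the paper takes: the paper presents \refthm{clock_on_plane} as an equivalent reformulation of \refthm{clock_universe} via the bijection between states (and transpositions) of a universe on a disc and those of the Kauffman universe obtained by joining the two boundary vertices, which is exactly the correspondence you spell out. Your write-up is merely more explicit than the paper's informal discussion about the naturality of the bijection and its compatibility with $\leqslant$.
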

From a universe on a disc, or a string universe, one can recover a Kauffman universe in two ways, by connecting the boundary vertices above or below the diagram. The starred faces then lie on either side of this joined edge.

Universes, whether as in the original \refdef{universe} or in string form on a disc as in \refdef{universe_on_disc}, can be unified by compactifying $\R^2$ or $D$ into the sphere $S^2$. Then a universe can be regarded as a connected 4-valent graph embedded in $S^2$, together with a choice of two distinct faces sharing an edge, and notions of states, transpositions, and clock theorem can be formulated accordingly and equivalently. 

\subsection{Generalised universes, states, and transpositions}

This paper essentially consists of defining generalisations of the notions in Kauffman's Clock Theorem, and showing that analogous Clock Theorems hold. We now briefly give some idea of these generalisations and our approach, but as they involve numerous details, precise definitions and technical justifications are deferred to later sections.

Our multiverses are designed as generalisations of universes on a disc. Thinking of the embedded graph $U$ in a Kauffman universe as a collection of intersecting \emph{strings}, a multiverse generalises a universe on a disc to more strings, embedded on more general surfaces.

To preserve notions of clockwise and counterclockwise, we require a multiverse to lie on an oriented surface $\Sigma$. To preserve a notion of ``exterior" face, we require $\Sigma$ to have a designated \emph{outer} boundary component. On this $\Sigma$ we embed a 4-valent graph $U$, similarly to a universe. To preserve the notion of state as a choice of corner at each vertex, providing a bijection between vertices and unstarred faces, we require an appropriate number of stars for this to be possible. A precise definition of multiverse is given in \refdef{multiverse}. A \emph{planar multiverse} is simply a multiverse where $\Sigma$ is planar. A \emph{state} on a multiverse is defined exactly as on a universe. In our figures, $\partial \Sigma$ is usually drawn in thick black, and the graph $U$ is drawn in thin black. See for example \reffig{exampleofmultiverse} through  \reffig{Hasse_diagram_example1_thm2}. 

While Kauffman transpositions adjust precisely two markers of a state, our notions of transpositions may in general adjust arbitrarily many state markers. We consider certain curves we call \emph{transposition contours}, which are closed curves $\gamma$ alternately passing through vertices and faces of the multiverse; a precise definition is given in \refdef{transposition_contour}. When $\gamma$ passes through a vertex at $v$, it passes through two corners at $v$, and our generalised transpositions move state markers from one of these corners to the other. See \reffig{alternating_cycle} (left). A Kauffman transposition can then be regarded as a particular type of transposition along a contour: see \reffig{Kauffman_as_contour_transposition}.

\begin{figure}
\begin{center}
\includegraphics[width=0.8\textwidth]{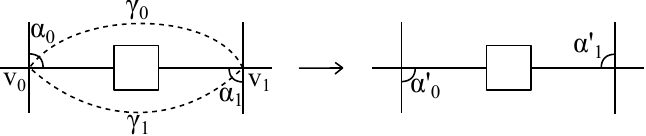}
\end{center}
    \caption{A Kauffman transposition as a contour $2$-transposition.}
    \label{Fig:Kauffman_as_contour_transposition}
\end{figure}

When the faces of $U$ are simply connected, a transposition contour is essentially determined by the vertices and corners through which it passes. However in general it turns out to be necessary to ``frame" transposition contours, ``guiding" them to run along a choice of graph $G$ on $\Sigma$ dual to $U$, which we call a \emph{spine} and which forms what we call a \emph{framing} for the multiverse. A spine is a certain subgraph of a generalised overlaid Tait graph; we define it precisely in \refdef{spine}. In our figures, spines are shown in green. See for example \reffig{Hasse_diagram_example1_thm1} and \reffig{Hasse_diagram_example1_thm2}. Without framings, \refthm{main_thm_1} fails, as we explain in \refsec{n-transpositions}, and \refthm{main_thm_2} cannot be formulated.

On a planar multiverse, a transposition contour has an interior and exterior. The \emph{plane transpositions} of \refthm{main_thm_1} are  transpositions along framed contours $\gamma$, where the state markers rotate through the interior of $\gamma$, and satisfy a further condition relating to corners in the interior of $\gamma$; a precise definition is given in \refdef{n_transposition}.

On a general multiverse, our transpositions are performed along \emph{subsurfaces} $\Psi$ of $\Sigma$ bounded by framed transposition contours, and certain boundary components of $\Sigma$. In a \emph{surface transposition}, as in \refthm{main_thm_2}, state markers again rotate along transposition contours, through the interior of $\Psi$; again further conditions are required, relating to the interior of $\Psi$. A precise definition is given in \refdef{transposition_on_positve_genus}.

Our proofs use several results of Propp \cite{Propp}, providing lattice structures on orientations of graphs, and on matchings of plane bipartite graphs. 
A spine of a multiverse is naturally a bipartite graph, and states of the multiverse correspond to matchings on the spine. Propp introduces a notion of \emph{twisting} a matching of a bipartite graph on an elementary cycle, which provides a covering relation in his lattice. This notion of twisting leads eventually to the notion of plane transposition in \refthm{main_thm_1}, although there are numerous subtleties in translating to multiverses.

From a spine of a multiverse $G$, we also construct a \emph{dual} $G^\perp$, similar to but more general than the notion of duality for plane graphs. Matchings on $G$ (and hence states of $U$) correspond precisely to certain \emph{orientations} on $G^\perp$. Propp uses a notion of \emph{pushing} an \emph{accessibility class} of an oriented graph, due to Pretzel \cite{Pretzel_86}. This notion eventually leads to the notion of surface transposition in \refthm{main_thm_2}, although again, numerous subtleties arise. In particular, surface transpositions preserve the \emph{circulation} of a state, which is a function keeping track of orientations around cycles of $G^\perp$; a precise definition is given in \refdef{circulation_of_state_matching}. \refthm{main_thm_2} yields a separate distributive lattice for each subset of the states with a fixed viable circulation function. Thus, the set of states can be regarded as a ``disconnected lattice", with one connected component for each viable circulation function.

\subsection{Related work and context}

Since its publication in 1983, Kauffman's Clock Theorem has seen numerous applications and generalisations. Kauffman states have become a standard notion in knot theory. The literature on the topic is too vast for us to attempt a comprehensive summary here. We merely mention some of the existing work on the topic, including those results which to our knowledge are most closely related to this paper. 

First,  Kauffman states have been related to various related graph-theoretic concepts, often by using various types of graphs associated to a knot diagram such as Tait graphs, and their spanning trees. 
In 1986, Gilmer--Litherland \cite{Gilmer_Litheland_duality_86} gave a simplified proof of Kauffman's clock theorem, showing that spanning trees of a certain Tait graph correspond bijectively to states, and devising an operation on spanning trees corresponding to a transposition.
In 2014, Cohen--Teicher \cite{Cohen_Teicher_14} gave a formula for the height of the clock lattice for a knot, by considering perfect matchings of an overlaid Tait graph.
Our arguments in this paper rely crucially on a generalised correspondence between states and matchings on the overlaid Tait graph.

Second, Kauffman states have been used directly to prove knot-theoretic results. For instance,
Stoimenow \cite{Stoimenow_det_07} used the fact that the number of states of an alternating link is equal to its determinant, to relate the determinant of an alternating link in $S^3$ to its hyperbolic volume. Kaplan--Krcatovich--O'Brien \cite{KKO_15_resolution_depth} used Kauffman states to give a bound on the resolution depth of the closure of strictly positive braids. Madaus--Newman--Russell \cite{MMR_Dehn_dimer_17} used Kauffman states, and corresponding matchings on an overlaid Tait graph, to study Dehn and Fox colourings of a knot and the related colouring modules.

Third, certain knot polynomials may be expressed as a sum over Kauffman states. A \emph{state sum} formulation of the Alexander--Conway polynomial of a knot was given by Kauffman already in \emph{Formal Knot Theory} in 1983 \cite{Kauffman}, at the same time as the Clock Theorem. Various generalistaions of state sums have been given.
In 2019, Kidwell--Luse \cite{Kidwell_Luse_19} related some terms of the Alexander polynomial of a rational link to twist regions in a particular diagram for the knot, using transpositions within the clock lattice. Most recently, in 2024 G\"{u}g\"{u}mc\"{u}--Kauffman \cite{Gugumcu_Kauffman} constructed ``mock Alexander polynomials" for starred links and linkoids in surfaces, defined as sums over a generaliesd notion of states.

The Jones polynomial also has state sum formulations. 
Kauffman in 1987 \cite{Kauffman_87_State_Jones} gave a state model for the Jones polynomial, although using a more general notion of state; see also \cite{Kauffman_combinatorial_Jones_23}. Thistlethwaite in 1987 \cite{Thistlethwaite_spanning_tree_Jones_87} showed that the Jones polynomial of a link may be expressed as a sum over spanning trees of a Tait graph. This spanning tree model for the Jones polynomial was extended to Khovanov homology independently by 
Wehrli \cite{Wehrli_08} and 
Champanerkar--Kofman \cite{CK_09_Spanning_Khovanov}. Champanerkar--Kofman--Stoltzfus \cite{CKS_07} showed that these spanning trees correspond to certain spanning ribbon subgraphs of a ribbon graph.

Fourth, Kauffman states have become a standard tool in Heegaard Floer theory. Ozsv\'{a}th--Szab\'{o} in 2003 \cite{OS_alternating_03} gave a description of the knot Floer homology chain complex where the generators are Kauffman states; they gave the states a multi-filtration in \cite{OS_04_KFH_genus_bounds_mutation}. This can be regarded as a categorification of the state sum model of the Alexander--Conway polynomial. Manion in \cite{Manion_21} 
related the Heegaard diagrams used in knot Floer homology which yield Kauffman states, to other standard Heegaard diagrams used in knot Floer homology. Kauffman--Silvero in 2016 \cite{Kauffman_Silvero_16} gave a detailed account of the relationship between knot Floer homology and  the Kauffman state sum model of the Alexander--Conway polynomial. See also Ozsv\'{a}th--Szab\'{o}'s 2018 overview \cite{OS_overview_18}.

Kauffman state generators for knot Floer homology have been widely used. For instance, Ozsv\'{a}th--Szab\'{o} used the Kauffman state generators to give a skein exact sequence for knot Floer homology \cite{OS09_cube_of_resolutions}. Kauffman state generators were also used by Lidman--Moore \cite{Lidman_Moore_16} to classify pretzel knots with L-space surgeries; by Varvarezos \cite{Varvarezos_21} to prove that 3-braid knots do not admit purely cosmetic surgeries; and by Troung \cite{Troung_thickness_23} to bound the dealternating number of a knot.

The relationship between Kauffman states and spanning trees of Tait graphs have also been used in the Heegaard Floer context. For instance,
Lowrance \cite{Lowrance_08} used Kauffman state generators and spanning trees of the Tait graph to give bounds on knot Floer width. Baldwin--Levine \cite{Baldwin_Levine_12} gave a combinatorial description of certain knot Floer homology groups in terms of spanning trees of Tait graphs.

More recently, in 2018 Ozsv\'{a}th--Szab\'{o} introduced the ``Kauffman states functor", associating to a knot the homology of a chain complex generated by Kauffman states, and considering type A and D structures, in the sense of bordered Floer homology, associated to the upper and lower parts of a knot diagram  split along a horizontal half plane. This involves a generalised notion of ``upper" Kauffman states. Manion related the algebras involved to Khovanov--Seidel quiver algebras \cite{Manion_17_Khovanov-Seidel} and quantum supergroup representations \cite{Manion_decat_bordered_19}, and Manion--Marengon--Willis gave descriptions via path algebras on quivers \cite{MMW_generators_relations_21}.

Fifth, and related to the Heegaard Floer applications, Kauffman states and related notions arise in 3-dimensional contact topology. In \cite{Mathews_strand_algebras_contact_categories_19}, the second author discusses partial orders on objects of a contact category as reminiscent of Kauffman’s clock theorem. In \cite{Kalman_Mathews_20}, the second author and K\'{a}lm\'{a}n showed that the number of states of a universe is equal to the number of isotopy classes of tight contact structures on a sutured 3-manifold which is topologically a handlebody, and that the trails corresponding to states have a natural interpretation as dividing sets on certain surfaces in the handlebody.

Sixth, several generalisations of Kauffman's theory have been made to singular links. Ozsv\'{a}th--Szab\'{o} \cite{OS09_singular_knots} extended link Floer homology to oriented singular knots in $S^3$, generalising the notion of Kauffman states to singular knots. Fielder \cite{Fiedler_10} extended Kauffman states to singular links and used them to define a Kauffman state model of Jones and Alexander polynomials of singular links. Manion \cite{Manion_singular_21} generalised the Kauffman states functor to singular crossings.

Seventh, recently connections have been found to cluster algebras.
In 2022 Bazier-Matte--Schiffler \cite{Bazier-Matte_Schiffler_22} constructed, from a knot diagram (universe), a quiver representation, which provides a representation-theoretic analogue of Kauffman states, and an associated cluster algebra. They show that the lattice of Kauffman states is isomorphic to the lattice of submodules of the corresponding representation.

Eighth, and finally, several explicit generalisations of Kauffman states and the  clock theorem have been given.

In 2003, Murasugi--Stoimenow \cite{Murasugi_Stoimenow_03} generalised Kauffman states to planar even valence graphs, defining an Alexander polynomial in this more general case.

In 2009, Roberts \cite{Roberts_09} considered knot Floer homology for string links in $D^2 \times I$. Considering these as planar graphs in the unit square with boundary points, and related spanning forests, Roberts shows that these are in bijective correspondence with generators of the Floer complex for the string link, and proves a generalised version of Kauffman's clock theorem in this context, extending spanning forests into a spanning tree to which Kauffman's clock theorem applies.

In 2014, Cohen and Teichner \cite{Cohen_12, Cohen_Teicher_14} showed how to associate a discrete Morse function to a Kauffman state on a given knot diagram in the plane. These can be regarded as perfect matchings on the balanced Tait graph. Celoria--Yerolemou subsequently \cite{Celoria_Yerolemou_21} proved several results about such states and moves on them, including a generalised clock theorem involving  matchings of the overlaid Tait graph, and ``click" and ``clock" moves.

In 2014, Bao \cite{Bao_14} generalised the construction of Heegaard Floer homology for a singular knot to certain bipartite graphs embedded in $S^3$, and defined states on certain graph diagrams in $S^2$, using  used them to define a state sum formula for a generalised Alexander polynomial.

In 2019, Zibrowius \cite{Zibrowius_states_19} studied tangles in $S^3$, defining generalised Kauffman states on them. These generalised states are a special case of states defined in this paper; both are straightforward generalisations of Kauffman's definition.
On such tangles, Zibrowius defined an Alexander polynomials, with a state sum formulation, as well as a Heegaard Floer invariant. He also proved a generalised clock theorem in this context; however the operations used on states there do not correspond precisely to any type of transpositions defined in this article.

In 2020, Bao--Wu \cite{Bao_Wu_MOY_Alexander_20} introduced an Alexander polynomial for MOY graphs, generalised Kauffman states to such graphs, and defined an Alexander polynomial as a state sum. Subsequently \cite{Bao_Wu_21} they related this polynomial to a sum over spanning trees.

In 2023, Celoria \cite{Celoria_uber_23} constructed a filtration on the simplicial homology of a finite simplicial complex using bi-colourings of its vertices, closely related to Kauffman states. Applied to a knot diagram in the plane, one obtains matchings on the overlaid Tait graph.

Perhaps most closely related to our approach is the 2018 work of Hine--K\'{a}lm\'{a}n \cite{Hine_Kalman}. They consider 3-coloured triangulations of the sphere and torus, also known as \emph{trinities}, together with certain matchings on such objects, generalising Kauffman states, and certain moves between them, generalising transpositions. The resulting set of states is somewhat different from ours. They show that the set of states and moves on a planar trinity is a connected distributive lattice. For a toric trinity, the set of states may have multiple components, some of them cyclic, but acyclic components form distributive lattices.

Most recently, as we were finalising this article, G{\"u}g{\"u}mc{\"u} and Kauffman posted a preprint of a clock theorem for knotoids and linkoids \cite{Gugumcu_Kauffman_25}.

\subsection{Structure of this paper}

In \refsec{background}, we recall necessary background results from lattice theory and the work of Propp which forms the basis for our two generalisations of the clock theorem. For Propp's theorem on orientations of graphs, we require notions of \emph{accessibility classes}, \emph{circulations}, \emph{pushing}. For Propp's theorem on matchings of plane bipartite graphs, we require notions of \emph{elementary cycles}, \emph{alternating paths}, and \emph{positive and negative cycles}. We also slightly generalise Propp's results, as we need them in the slightly more general context of disconnected graphs.

In \refsec{multiverses_general}, we introduce our notion of multiverses, and some general notions about multiverses required for our proofs. These include notions which have already been mentioned, such as \emph{Tait graph}, \emph{spine},  \emph{reduced spine}, ane \emph{dual of spine}, as well as several others.

In \refsec{genus_0} we specialise to planar multiverses and prove \refthm{main_thm_1}, applying Propp's theorem on matchings of plane bipartite graphs.

In \refsec{applications_to_universes} we specialise further to universes (on a disc), showing that \refthm{main_thm_1} reduces to Kauffman's Clock Theorem in this case, so that we have a bona fide generalisation.

Finally, in \refsec{positive_genus} we prove \refthm{main_thm_2}, applying Propp's theorem on orientations of graphs.

\subsection{Acknowledgments}

This article derives from part of the first author's  doctoral thesis, supervised by the second author.
The second author is supported by Australian Research Council grant DP210103136.
The second author thanks Tam\'{a}s K\'{a}lm\'{a}n for introducing him to the Kauffman Clock Theorem, and Dionne Ibarra for pointing out some relevant literature. The authors also thank James Propp for helpful comments.

\section{Background}
\label{Sec:background}

\subsection{Lattices}
\label{Sec:lattices}

We now introduce notions from the theory of orders and lattices required for our results. These notions are all standard. We follow \cite{Davey_Priestley_02} in the following definitions. All ordered sets and lattices we consider are finite.

Let $L$ be a partially ordered set. We denote the partial order relation by $\leqslant$. We write $x<y$ if $x \leqslant y$ and $x \neq y$.
\begin{defn}[Cover]
We say $x$ is \emph{covered} by $y$, or equivalently $y$ \emph{covers} $x$, if $x \leqslant y$, and $x \leqslant z < y$ implies $z=x$. We write $x \lessdot y$.
\end{defn}
If $L$ is finite (as in all cases we consider), $x \leqslant y$ if and only if there exists a finite sequence $x_0, \ldots, x_n$ in $L$ such that $x=x_0 \lessdot x_1 \lessdot \cdots \lessdot x_n = y$, for some $n \geqslant 0$.

We depict lattices by \emph{Hasse diagrams}. Each element of a lattice $L$ is drawn as a point, and points are joined by an arrow when one element covers another; if $x \lessdot y$ then an arrow points from $x$ to $y$.

\begin{defn}[Lattice]
\label{Def:lattice}
A \emph{lattice} is a partially ordered set such that every finite non-empty subset has a least upper bound, or \emph{join}, and a greatest lower bound, or \emph{meet}. 
\end{defn}
We denote the join of two elements $x,y$ as $x \vee y$ and the meet as $x \wedge y$.
\begin{defn}[Distributive lattice]
\label{Def:distributive_lattice}
A lattice is \emph{distributive} if any elements $x,y,z$ satisfy the distributive law
\[
x \wedge (y \vee z) = (x \wedge y) \vee (x \wedge z).
\]
\end{defn}

This distributive law is equivalent to its dual (see e.g. \cite[lem. 4.3]{Davey_Priestley_02}):
\[
x \vee (y \wedge z) = (x \vee y) \wedge (x \vee z).
\]

\begin{defn}
A map of partially ordered sets $\phi \colon L \To L'$ is an \emph{isomorphism} if it is bijective and, for all $x,y \in L$, $x \leqslant y$ if and only if $\phi(x) \leqslant \phi(y)$.
\end{defn}

If $\phi$ is an isomorphism $L \To L'$, then $x \lessdot y$ iff $\phi(x) \lessdot \phi(y)$.
Moreover, if one of $L$ or $L'$ has the further structure of a lattice or distributive lattice, then so does the other. 
If $L$ and $L'$ are lattices, then for all $x,y \in L$ we have $\phi(x \vee y) = \phi(x) \vee \phi(y)$ and $\phi(x \wedge y) = \phi(x) \wedge \phi(y)$.
Thus, we can also refer to such a $\phi$ as a \emph{lattice isomorphism} or \emph{distributive lattice isomorphism} as appropriate.

\begin{defn}[Product of partial orders and lattices]
\label{Def:product_orders}
Let $L_1, \ldots, L_n$ be partially ordered sets.
\begin{enumerate}
\item 
The \emph{product partial order} on the Cartesian product of sets $L_1 \times \cdots \times L_n$ is given by
\[
(x_1, \ldots, x_n) \leqslant (y_1, \ldots, y_n)
\quad \text{iff} \quad 
\text{each $x_i \leqslant y_i$ in $L_i$.}
\]
\item 
If $L_1, \ldots, L_n$ are also distributive lattices, then $L_1 \times \cdots L_n$ is also a distributive lattice, with
\begin{align*}
(x_1, \ldots, x_n) \vee (y_1, \ldots, y_n) &= (x_1 \vee y_1, \ldots, x_n \vee y_n) \\
(x_1, \ldots, x_n) \wedge (y_1, \ldots, y_n) &= (x_1 \wedge y_1, \ldots, x_n \wedge y_n).
\end{align*}
\end{enumerate}
\end{defn}
See e.g. \cite[sec. 1.25, 2.15, 4.7]{Davey_Priestley_02}.
In $L_1 \times \cdots \times L_n$, note that $(x_1, \ldots, x_n) \lessdot (y_1, \ldots, y_n)$ if and only if $x_j \lessdot y_j$ for a unique $j$, and $x_i = y_i$ for all $i \neq j$.

\subsection{Cycles and directed cycles in graphs}

We introduce various precise notions needed for our arguments.
We follow Propp \cite{Propp} but introduce some further elaborations. Propp works with graphs without loops, but we  consider graphs with loops. The results carry through without much difficulty.

Let $X$ be a finite graph, possibly disconnected, possibly with multiple edges and loops.  Let $V$ denote its set of vertices. Each edge of $X$ can be oriented two ways (even if it is a loop). An oriented edge has an \emph{inital} vertex $a \in V$ and a \emph{terminal} vertex $b \in V$; we say $e$ is \emph{oriented} or \emph{directed} from $a$ to $b$. Following Propp \cite{Propp}, we can denote an edge $e$ directed from $a$ to $b$ by $(e,a,b)$. (However, this notation can be misleading for loops: when $e$ is a loop at $v \in V$, $e$ has two orientations, but they are both denoted $(e,v,v)$.) The set of directed edges of $X$ is denoted $\vv{E}$. If $\vv{e} \in \vv{E}$ then we denote by $-\vv{e}$ the same edge with the opposite orientation.

\begin{defn}[Directed paths and cycles]
\label{Def:Propp_definitions0} \cite{Propp}
\begin{enumerate}
\item 
A \emph{directed path} in $X$ is a sequence of $n$ directed edges of $X$, of the form 
\begin{equation}
\label{Eqn:directed_path}
(e_1,v_0,v_1),\allowbreak(e_2,v_1,v_2),\allowbreak\dotsc,\allowbreak(e_n,v_{n-1},v_n)
\end{equation}
for $n \geq 1$. The vertices $v_0$ and $v_n$ are respectively called the \emph{initial} and \emph{terminal} vertices. We also allow \emph{null} directed paths where $n=0$, consisting of a single vertex $v_0$.  
\item 
A \emph{directed cycle} is a directed path whose initial and terminal vertices coincide. The set of directed cycles in $X$ is denoted $\mathscr{C}_X$ or just $\mathscr{C}$.
\item 
A directed cycle is \emph{simple} if all its directed edges are distinct.
\item
A directed cycle is \emph{vertex-simple} if all its vertices $v_1, \ldots, v_{n-1}, v_n = v_0$ are distinct.
\end{enumerate}
\end{defn}
We consider a null directed path as a directed cycle, with initial and terminal vertex $v_0$, and no edges; we refer to \emph{null directed cycles} accordingly. A null directed cycle is vacuously simple. A directed cycle of length $1$ consists of an oriented loop. If $X$ has no loops, a non-null directed cycle has length at least $2$. As directed cycles in general need not be simple, for any graph $X$ containing at least one edge, $\mathscr{C}_X$ is infinite.

Note that a simple directed cycle may visit the same edge twice, but not more than twice. If a simple directed cycle visits the same edge twice, the edge must be traversed in one direction, then the other direction. A loop traversed twice, once in each orientation, is a simple directed cycle. A simple directed cycle may visit the same vertex many times.

Note also that a vertex-simple directed cycle is simple. A directed cycle consisting of a single oriented loop is vertex-simple. If a vertex-simple directed cycle visits an edge more than once then it has length $2$ and traverses a single non-loop edge back and forth.

We will also be interested in cycles without orientations. Consider a non-null directed cycle $C$ in $X$, as in \refeqn{directed_path}, with $v_n = v_0$. We consider the following operations on $C$:
\begin{enumerate}
\item \emph{reversal}, which replaces
\[
\vv{e_1}, \vv{e_2}, \ldots, \vv{e_n}
\quad \text{with} \quad
-\vv{e_n}, \ldots, -\vv{e_2}, -\vv{e_1};
\]
\item \emph{cyclic permutation}, which replaces
\[
\vv{e_1}, \vv{e_2}, \ldots, \vv{e_n}
\quad \text{with} \quad
\vv{e_j}, \vv{e_{j+1}}, \ldots, \vv{e_n}, \vv{e_1}, \vv{e_2}, \ldots, \vv{e_{j-1}}
\]
for some $2 \leq j \leq n$.
\end{enumerate} 
Clearly directed cycles obtained from $C$ by reversal or cyclic permutation cover the same edges, in the same or reversed cyclic order.
Reversal and cyclic permutation generate an equivalence relation on directed simple cycles, which we denote $\sim$.
\begin{defn}[Undirected cycle]
\label{Def:cycle}
An \emph{undirected cycle}, or just \emph{cycle}, is an equivalence class of directed cycles under the equivalence relation $\sim$.
\end{defn}
We consider null directed cycles to form singleton equivalence classes and we accordingly obtain \emph{null cycles}.
Any cycle has a well-defined length, which is $0$ precisely when the cycle is null. The cycles of length $1$ are in bijection with loops. In a non-null cycle, each edge has two adjacent edges in the cycle.

\subsection{Orientations of graphs, accessibility, circulation and pushing}
\label{Sec:orientations_accessibility_circulation_pushing}

Again let $X$ be a finite graph, possibly disconnected, which may have multiple edges or loops. 
Again, we slightly generalise Propp's definitions in \cite{Propp} to include graphs with loops.

\begin{defn}
\label{Def:Propp_definitions} \cite{Propp} \
An \emph{orientation} on $X$ is a choice $R$ of orientation on each edge of $X$.
The pair $(X,R)$ forms an \emph{oriented graph}.
\end{defn}

When we have a directed path on an oriented graph, the orientations on edges arising in the directed path may agree with or differ from those arising from $R$, as in the following definition (also from \cite{Propp}).
\begin{defn}[Forward, backward paths and cycles] 
\label{Def:forward_backward_paths_cycles}
Let $(X,R)$ be an oriented graph.
\begin{enumerate}
\item 
Let $\vv{e}$ be an orientation on an edge $e$ of $X$.
If the orientation of $\vv{e}$ agrees with the orientation of $e$ in $R$, then $\vv{e}$ is \emph{forward} relative to $R$; otherwise, $\vv{e}$ is \emph{backward} relative to $R$.
\item 
A directed path on $X$ consisting purely of forward (resp. backward) edges relative to $R$ is called a \emph{forward path} (resp. \emph{backward path}).
\item 
A directed cycle on $X$ consisting purely of forward (resp. backward) edges relative to $R$ is called a \emph{forward cycle} (resp. \emph{backward cycle}).
\end{enumerate}
\end{defn}
We allow forward and backward paths and cycles to be null. We regard a null directed path or cycle as vacuously both forward and backward.

\begin{defn}[Accessibility] \cite{Propp} 
Let $x$ and $y$ be vertices of an oriented graph $(X,R)$. 
\begin{enumerate}
\item 
$y$ is \emph{accessible} from $x$ (relative to $R$) if there is a (possibly null) forward path from $x$ to $y$. 
\item 
$x$ and $y$ are \emph{mutually accessible} (relative to $R$) if $y$ is accessible from $x$ and $x$ is accessible from $y$.
\end{enumerate}
\end{defn}
Thus, $x$ and $y$ are mutually accessible iff there is a (possibly null) forward path from $x$ to $y$ and vice versa. 
It can be seen that mutual accessibility is an equivalence relation. Indeed, $x$ and $y$ are mutually accessible (with respect to $R$) if and only if there is a (possibly null) forward cycle passing through $x$ and $y$. The resulting equivalence classes are called \emph{accessibility classes}.

\begin{defn}[Circulation] \cite{Propp}
\label{Def:circulation}
Let $C$ be a directed cycle in $(X,R)$. 
\begin{enumerate}
\item 
The set of forward (resp. backward) edges of $C$ is denoted $C_R^+$ (resp. $C_R^-$). 
\item
The \emph{circulation} of $R$ around $C$ is $\abs{C_R^+}-\abs{C_R^-}$.
\end{enumerate}
\end{defn}
Thus a directed cycle is forward precisely when its circulation is equal to its length, and backward precisely when its circulation is equal to its negative length.

\begin{defn}[Circulation function] \ \cite{Propp}
\label{Def:circulation_function}
\begin{enumerate}
\item
The \emph{circulation function}, or just \emph{circulation}, of an orientation $R$ on $X$ is the function $c_R \colon \mathscr{C}_X \To \Z$ which assigns to each directed cycle $C$ the circulation of $R$ around $C$.
\item 
A function $c \colon \mathscr{C}_X \To \Z$ is a \emph{feasible circulation function} if it is the circulation function of some orientation on $X$.
\end{enumerate}
\end{defn}
When the orientation is clear, we simply write $c$ rather than $c_R$. Note that any feasible circulation function sends all null directed cycles to $0$. 

Any directed cycle has a well-defined homology class in $H_1(X)$, where we regard the graph $X$ as a 1-dimensional cell complex in the standard way. Two directed cycles in the same homology class have the same circulation, and $c_R$ in fact extends to an abelian group homomorphism $H_1(X) \To \Z$.

As we will see, in general there can be many orientations on $X$ with  the same circulation function.
\begin{lem} 
\label{Lem:accessibility_same_circulation}
\label{Lem:unchanged_cycle}
Let $R,R'$ be orientations on $X$ which have the same circulation function.
Then 
\begin{enumerate}
\item the accessibility classes of $R$ are identical to the accessibility classes of $R'$; and
\item the forward cycles of $R$ are identical to the forward cycles of $R'$.
\end{enumerate}
\end{lem}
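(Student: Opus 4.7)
The plan is to prove (ii) first, and then deduce (i) via the equivalence between mutual accessibility of two vertices and the existence of a forward cycle through both.

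For (ii), I would let $C$ be a forward cycle of $R$ of length $n$. By definition every directed edge of $C$ is forward relative to $R$, so $|C_R^+|=n$ and $|C_R^-|=0$, giving $c_R(C)=n$. The hypothesis $c_R=c_{R'}$ then yields $c_{R'}(C)=n$, i.e.\ $|C_{R'}^+|-|C_{R'}^-|=n$. Combining with $|C_{R'}^+|+|C_{R'}^-|=n$ (since $R'$ orients every edge of $X$, every directed-edge occurrence of $C$ is either forward or backward relative to $R'$) forces $|C_{R'}^-|=0$, so $C$ is also a forward cycle of $R'$. Reversing the roles of $R$ and $R'$ gives the opposite inclusion, hence equality of the forward-cycle sets.

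For (i), I would use the standard characterisation that two vertices $x,y$ lie in a common accessibility class of $(X,R)$ if and only if some forward cycle of $R$ passes through both of them. The ``only if'' direction concatenates a forward path from $x$ to $y$ with one from $y$ to $x$ (which is legitimate since directed cycles are not required to be simple), and the ``if'' direction reads the cycle forward from $x$ to $y$ and then from $y$ back to $x$. Applying (ii), this condition is insensitive to swapping $R$ for $R'$, so mutual accessibility and hence accessibility classes coincide.

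I do not anticipate a serious obstacle: the argument is essentially a length-count plus a concatenation trick. The main subtlety worth flagging is that a directed cycle may revisit edges, so $C_R^\pm$ must be understood as counting directed-edge occurrences of $C$ with multiplicity; the crucial identity $|C_R^+|+|C_R^-|=n$ (the length of $C$) is immediate with this convention and is what makes the sign-agreement argument work.
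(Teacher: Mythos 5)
Your proposal is correct and follows essentially the same route as the paper's proof: characterise forward cycles as those whose circulation equals their length (so they depend only on the circulation function), then use the equivalence between mutual accessibility and lying on a common forward cycle. The paper states this more tersely but the underlying argument is identical.
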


\begin{proof}
A directed cycle is forward if and only if its circulation is equal to its length. Hence $R$ and $R'$ have the same forward cycles. Two vertices lie in the same accessibility class if and only if they lie in a forward cycle. 
\end{proof}

Thus, if $C$ is a forward cycle for an orientation with circulation $c$, then $C$ is a forward cycle for every orientation with circulation $c$. In particular, every directed edge of $C$ lies in every orientation $R$ with circulation $c$. Such directed edges are ``forced" by the circulation $c$ in the following sense.
\begin{defn}[$c$-forced and $c$-forbidden directed edges]
\label{Def:forced_forbidden_edges}
Let $\vv{e} \in \vv{E}$ and let $c$ be a feasible circulation function.
\begin{enumerate}
\item
$\vv{e}$ is \emph{$c$-forced} if $\vv{e}$ belongs to every orientation of $X$ with circulation $c$.
\item 
$\vv{e}$ is \emph{$c$-forbidden} if $\vv{e}$ does not belong to any orientation of $X$ with circulation $c$.
\end{enumerate}
\end{defn}
Clearly, $\vv{e}$ is $c$-forced if and only if $-\vv{e}$ is $c$-forbidden.
The above lemma also motivates the following definition.
\begin{defn}
\label{Def:orientations_notation}
Let $c$ be a feasible circulation function on $X$.
\begin{enumerate}
\item The set of orientations on $X$ with circulation $c$ is denoted $\mathscr{R}_X^c$.
\item An \emph{accessibility class} of $c$ is an accessibility class of some (hence any) $R \in \mathscr{R}_X^c$.
\end{enumerate}
\end{defn}
Note in this definition that as $c$ is feasible, $\mathscr{R}_X^c$ is nonempty. 
When the graph $X$ is understood we simply write $\mathscr{R}^c$.

The following is Proposition 4 of  \cite{Propp}. Propp only needed it for graphs without loops; we slightly extend it to graphs with loops.
\begin{proposition}\label{Prop:Propp's_prop}
Let $c$ be a feasible circulation function on $X$, and $\vv{e}$ a directed edge of $X$. The following statements are equivalent.
\begin{enumerate}
\item	
$\vv{e}$ is $c$-forced or $c$-forbidden.
\item	
The endpoints of $\vv{e}$ belong to the same accessibility class of $c$.
\end{enumerate}
\end{proposition}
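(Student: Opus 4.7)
The plan is to prove both implications separately, after disposing of the loop case first. If $\vv{e}$ is a loop at some vertex $v$, then its endpoints trivially coincide so (ii) holds, and the length-one directed cycle consisting of $\vv{e}$ alone has $c$-value $\pm 1$, which forces $\vv{e}$ to be $c$-forced or $c$-forbidden according to the sign, so (i) holds too. Henceforth I assume $\vv{e}$ has distinct endpoints, with initial vertex $a$ and terminal vertex $b$.

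For (ii) $\Rightarrow$ (i), I would argue by contrapositive. If $\vv{e}$ is neither $c$-forced nor $c$-forbidden, then there exist $R, R' \in \mathscr{R}^c$ with $\vv{e} \in R'$ and $-\vv{e} \in R$. In $R'$ the edge $\vv{e}$ is a forward step from $a$ to $b$; combining it with a forward path $b \to a$ in $R'$ (which exists by the hypothesis that $a$ and $b$ lie in a common accessibility class of $c$) yields a forward cycle of $R'$ containing $\vv{e}$. By \reflem{unchanged_cycle}(ii), this same cycle is forward in $R$ as well, which is impossible since $\vv{e}$ is backward in $R$.

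For (i) $\Rightarrow$ (ii), the $c$-forbidden case reduces to the $c$-forced case by passing to $-\vv{e}$, so I would suppose $\vv{e}$ is $c$-forced and pick any $R \in \mathscr{R}^c$, making $\vv{e} \in R$ and $b$ accessible from $a$. I would then suppose for contradiction that $a$ is \emph{not} accessible from $b$ in $R$ and define $S$ to be the set of all vertices accessible from $b$ in $R$; thus $b \in S$, $a \notin S$, and no edge of $R$ can point from $S$ to $V \setminus S$ (such an edge would extend $S$), so every edge of $R$ crossing the cut $(S, V \setminus S)$ points from $V \setminus S$ to $S$. Call this set of crossing edges $F$, and note $\vv{e} \in F$. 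I would then define $R'$ by reversing precisely the edges in $F$ and aim to contradict $c$-forcing by exhibiting $-\vv{e} \in R'$ with $R' \in \mathscr{R}^c$.

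The heart of the argument is showing $R'$ has the same circulation function as $R$. For any directed cycle $C$, the number of times $C$ crosses the cut in each direction must be equal because $C$ returns to its starting vertex; since all of $F$ is uniformly oriented $V \setminus S \to S$ in $R$, this translates into $|C_R^+ \cap F| = |C_R^- \cap F|$. Reversing the edges of $F$ swaps forward/backward status within $C$ exactly on these edges, so $|C^+| - |C^-|$ is unchanged, and $R'$ has circulation $c$; but $-\vv{e} \in R'$, contradicting the $c$-forcing of $\vv{e}$. The main obstacle is precisely this cut-reversal construction: a single-edge flip would in general change the circulation along many cycles through that edge, so one must reverse an entire cut \emph{simultaneously} to preserve circulations. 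This is what turns what looks like a local property of $\vv{e}$ (being $c$-forced) into a global accessibility statement about its endpoints.
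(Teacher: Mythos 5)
Your proof is correct, and it is genuinely self-contained where the paper is not: the paper's proof of this proposition handles the loop case exactly as you do, and for the non-loop case simply defers to Propp's Proposition~4 without reproducing the argument. So what you have done is supply a direct proof of the content the paper cites.

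Your two implications are clean. For (ii)~$\Rightarrow$~(i), the reduction via \reflem{unchanged_cycle}(ii) is exactly the right tool: if $\vv{e}$ were neither $c$-forced nor $c$-forbidden, the forward cycle through $\vv{e}$ built in $R'$ would have to be forward in $R$ too, contradicting $-\vv{e}\in R$. For (i)~$\Rightarrow$~(ii), your cut-reversal construction is the standard but non-trivial step: defining $S$ as the set of vertices accessible from $b$, observing that every edge crossing the cut is oriented into $S$ (otherwise $S$ could be enlarged), and then reversing the entire cut $F$ at once. The circulation-preservation calculation is also correct --- a directed cycle enters and exits $S$ equally often, so reversing $F$ exchanges equally many forward and backward cut-edges of $C$, leaving $\abs{C_R^+} - \abs{C_R^-}$ unchanged. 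You correctly identify that reversing a single edge would not preserve circulation, so the simultaneous reversal of the whole cut is the essential idea; this is what ties the local $c$-forced/forbidden property to the global accessibility statement. Your reduction of the $c$-forbidden case to the $c$-forced case by replacing $\vv{e}$ with $-\vv{e}$ is also fine, since both share the same endpoints. In short: the paper's proof is a citation plus a loop check, and yours is an elementary reconstruction of what that citation supplies; both are valid, but yours makes the proposition independent of the reference.
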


\begin{proof}
If $\vv{e}$ is not a loop, then Propp's proof from \cite{Propp} applies. If $\vv{e}$ is a directed loop, then it forms a directed cycle, which $c$ sends to $1$ or $-1$, and $\vv{e}$ is $c$-forced or $c$-forbidden accordingly. So (i) holds, and of course the endpoints of $\vv{e}$, being equal, lie in the same accessibility class.
\end{proof}

For an accessibility class $K$, we denote its complement of $K$ in the vertex set of $X$ by $\mathring{K}$.
\begin{defn}[Maximal and minimal accessibility classes] \cite{Propp} \
An accessibility class $K$ of an oriented graph $(X,R)$ is
\begin{enumerate}
\item
\emph{maximal} if, for each edge $e$ connecting an $x \in K$ to a $y \in \mathring{K}$, $R$ orients $e$ from $y$ to $x$;
\item 
\emph{minimal} if, for each edge $e$ connecting an $x \in K$ to a  $y \in \mathring{K}$, $R$ orients $e$ from $x$ to $y$.
\end{enumerate}
We also say $K$ is \emph{maximal} or \emph{minimal relative to} $R$.
\end{defn}
Thus, $K$ is maximal when $R$ orients edges from $\mathring{K}$ to $K$, and minimal when $R$ orients edges from $K$ to $\mathring{K}$.

\begin{defn}[Pushing] \label{def:pushing} \cite{Pretzel_86, Propp} \
Let $K$ be an accessibility class of an oriented graph $(X,R)$.
\begin{enumerate}
\item 
If $K$ is maximal, the operation of reversing the directed edges between $K$ and $\mathring{K}$ is called \emph{pushing down}. 
\item 
If $K$ is  minimal, the operation of reversing the directed edges between $K$ and $\mathring{K}$ is called \emph{pushing up}.
\end{enumerate}
\end{defn}
Thus, pushing down on $K$ results in a new orientation $R'$ on $X$ with respect to which $K$ is a minimal accessibility class. Similarly, pushing up on $K$ results in a new orientation with respect to which $K$ is maximal. It is not difficult to see that such $R$ and $R'$ have the same circulation function, so if $R \in \mathscr{R}^c$ then $R' \in \mathscr{R}^c$. By \reflem{accessibility_same_circulation} then $R$ and $R'$ then have the same accessibility classes.
The notion of pushing down is due to Pretzel, based on earlier work of Mosesian in the Russian literature: see \cite{Pretzel_86} and references therein.

\subsection{Lattice structure on orientations of graphs}

Again let $X$ be a graph, possibly with multiple edges and loops,
but now assume that $X$ is connected.
Let $c$ be a feasible circulation function on $X$, so $\mathscr{R}^c$ is nonempty.
We fix an accessibility class $K_0$ of $c$, which we call \emph{unpushable}; all other accessibility classes are \emph{pushable}.
\begin{defn}
\label{Def:relation_on_orientations}
Let $R,R' \in \mathscr{R}^c$.
We write $R \leqslant R'$ if there exists a sequence of orientations $R=R_0, R_1, \ldots, R_n = R'$, all in $\mathscr{R}^c$, such that each $R_{j+1}$ is obtained from $R_{j}$ by pushing up on a pushable minimal accessibility class of $c$.
\end{defn}
Here the accessibility classes of all $R_j$, including $R$ and $R'$, are all identical to the accessibility classes of $c$.

We now state Propp's theorem on orientations of graphs from \cite{Propp}. Propp only considered graphs without loops, and we provide a very slight generalisation.
\begin{theorem}[Propp \cite{Propp}]
\label{Thm:Propp_pushing0}
Let $X$ be a finite connected graph, possibly with multiple edges and loops. Let $c$ a feasible circulation function, and $K_0$ a fixed unpushable accessibility class of $\mathscr{R}^c$. Then $(\mathscr{R}^c, \leqslant)$ is a distributive lattice.

Moreover, for $R,R' \in \mathscr{R}^c$, $R'$ \emph{covers} $R$ (i.e. $R \lessdot R'$) if and only if $R'$ is obtained from $R$ by pushing up on a pushable minimal accessibility class of $c$.
\end{theorem}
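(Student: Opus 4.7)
The plan is to reduce to the loopless case, where the theorem is Propp's original result in \cite{Propp}, and then to sketch Propp's approach via height functions on accessibility classes, which embeds $(\mathscr{R}^c,\leqslant)$ into a product of finite chains.

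For the reduction, observe that if $\vv{e}$ is an oriented loop of $X$, its two endpoints lie in a common accessibility class of $c$, so by \refprop{Propp's_prop} the edge $\vv{e}$ is either $c$-forced or $c$-forbidden. Hence every $R \in \mathscr{R}^c$ agrees on loops, and pushing is never affected by them (a push only reverses edges between a class and its complement). Letting $X'$ be $X$ with its loops deleted and $c'$ the induced feasible circulation on $X'$, edge restriction gives a poset isomorphism $(\mathscr{R}^c_X,\leqslant) \cong (\mathscr{R}^{c'}_{X'},\leqslant)$ that intertwines pushing. So it suffices to treat the loopless case.

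For loopless $X$, I would follow Propp by fixing a reference orientation $R_\ast \in \mathscr{R}^c$ and defining a height function $h_R$ on the set of accessibility classes (common to all elements of $\mathscr{R}^c$ by \reflem{accessibility_same_circulation}), normalised by $h_R(K_0)=0$. For $K \neq K_0$, set $h_R(K)$ equal to a signed count of disagreements of $R$ with $R_\ast$ along the edges of any edge-connected sequence of classes from $K_0$ to $K$; path-independence follows because the symmetric difference $R \oplus R_\ast$ has zero circulation around every directed cycle. By construction, a pushing-up step on a pushable minimal accessibility class $K$ changes $h_R$ only in the $K$-coordinate, and changes it by a fixed positive integer amount that one can normalise to $1$. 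Consequently $R \leqslant R'$ implies $h_R \leqslant h_{R'}$ coordinatewise; conversely, whenever the strict inequality $h_R < h_{R'}$ holds on some class, one locates a pushable minimal class of $R$ whose height is still strictly below its $h_{R'}$-value and pushes it up, giving the reverse implication by induction on $\sum_K (h_{R'}(K)-h_R(K))$. This yields antisymmetry of $\leqslant$, realises $(\mathscr{R}^c,\leqslant)$ as an order-subposet of a product of finite chains, and confirms that covering is exactly a single pushing-up step.

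The main obstacle is then showing that the image of $R \mapsto h_R$ is closed under coordinatewise $\max$ and $\min$, which promotes the order embedding to a distributive-lattice embedding. Given $R,R' \in \mathscr{R}^c$, the symmetric difference $R \oplus R'$ has zero circulation on every directed cycle and hence decomposes as a $\Z$-linear combination of oriented cycles of zero circulation; a careful choice of which of these cycles to reverse in $R$ should produce orientations in $\mathscr{R}^c$ realising $\max(h_R,h_{R'})$ and $\min(h_R,h_{R'})$. Once this closure is established, distributivity is inherited from the ambient product of chains, and the theorem follows.
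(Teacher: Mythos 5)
Your reduction to the loopless case is correct and is essentially the route the paper takes: both rely on the fact that each loop is $c$-forced or $c$-forbidden (so fixed across $\mathscr{R}^c$ and never involved in a push), making restriction to the loop-deleted graph a poset isomorphism that intertwines pushing. You argue this isomorphism directly; the paper instead extends Propp's function $\overline{F}$ and height functions $H_R$ over loops and observes the resulting structures on $\mathscr{R}^c_X$ and $\mathscr{R}^{\underline{c}}_{\underline{X}}$ are isomorphic. These are two formulations of the same reduction, and yours is a bit more economical. Your subsequent sketch of Propp's loopless argument goes beyond what the paper actually proves — it cites \cite{Propp} for that, as you do — and departs in detail from the paper's outline: you use disagreement counts against a reference orientation $R_\ast$ and heights on accessibility classes rather than $\overline{F}$ and heights on vertices (equivalent, since your $h_R$ is essentially $H_R - H_{R_\ast}$, which is integer-valued and constant on accessibility classes, and the pushing-up increment is exactly $1$, no normalisation needed). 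Two steps of your sketch are asserted rather than proved: locating a pushable minimal class of $R$ where $h_R < h_{R'}$, and closure of the image of $R \mapsto h_R$ under coordinatewise $\min$ and $\max$. For the latter, Propp's route is to characterise height functions by a local step condition at each edge via $\overline{F}$ and check that this condition is preserved by $\min$ and $\max$; your proposed decomposition of $R \oplus R'$ into zero-circulation cycles and reversal of ``a careful choice'' of them is more speculative and would need to be filled in. Since both you and the paper rely on Propp's loopless theorem as given, these sketch gaps do not affect the proof of the statement at hand.
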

We call this lattice the \emph{Propp lattice} of $X$ with respect to $c$ and $K_0$.

We briefly outline Propp's proof of this theorem, under the assumption that $X$ has no loops. Propp showed that for any feasible circulation function $c$, there exists a function $\overline{F}:\vv{E}\to [0,1]$ with the following properties:
\begin{enumerate}
\item
For any $\vv{e} \in \vv{E}$, $\overline{F}(\vv{e})+\overline{F}(-\vv{e})=1$.
\item
For any $C \in \mathscr{C}$, $\sum_{\vv{e}\in C} \overline{F}(\vv{e})=\frac{1}{2}(\abs{C}+c(C))$.
\item
If $\vv{e}$ is $c$-forced then $\overline{F}(\vv{e})=1$, and if $\vv{e}$ is $c$-forbidden then $\overline{F}(\vv{e})=0$.
\item 
If $\vv{e}$ is neither $c$-forced nor $c$-forbidden, then $0<\overline{F}(\vv{e})<1$.
\end{enumerate}

Fixing a 
vertex $v^* \in K_0$, and an
orientation $R$ with circulation $c$, Propp showed that one can then define a \emph{height function} $H_R:V\to\mathbb{R}$ 
such that
\begin{enumerate}
\item 
$H_R(v^*)=0$
\item 
For $\vv{e}=(e,v,w) \in \vv{E}$,
\begin{equation}
\label{Eqn:height_function_step}
H_R(w)-H_R(v)=
\begin{cases}
1-\overline{F}(\vv{e})   &\text{if } \vv{e}\in R,\\
-\overline{F}(\vv{e})    &\text{if } \vv{e}\notin R.
\end{cases}
\end{equation}
\end{enumerate}\
Moreover,  
every function $H:V\to\mathbb{R}$ satisfying these conditions is the height function $H_R$ of some $R\in\mathscr{R}^c$. 

Height functions $H_R$ have the property that, for all adjacent vertices $v$ and $w$, $H_R(v)=H_R(w)$ if and only if $v$ and $w$ belong to the same accessibility class. Moreover,
for any $R,R'\in\mathscr{R}^c$ and any $v\in V$, $H_R(v)-H_{R'}(v) \in\mathbb{Z}$.

One can define a partial order on height functions, and hence on orientations, in a standard way: for $R,R' \in \mathscr{R}^c$, let $R \leqslant R'$ iff $H_R (v) \leqslant H_{R'}(v)$ for all $v \in V$.

Propp proved that if $H_R$ and $H_{R'}$ are two height functions, then so are their \emph{meet} $H_R \land H_{R'}$ and their \emph{join} $H_R \lor H_{R'}$, where
\begin{align*}
(H_1\land H_2)(v) &=\min(H_1(v),H_2(v))\\
(H_1\lor H_2)(v) &=\max(H_1(v),H_2(v))
\end{align*}
These meet and join operations in fact provide the claimed  distributive lattice structure on $(\mathscr{R}^c, \leqslant)$.

\begin{proof}[Proof of \refthm{Propp_pushing0}]
The same proof as outlined above applies with a few further details. Let $\underline{X}$ be $X$ with loops removed, $\underline{\mathscr{C}}$ the set of directed cycles in $\underline{X}$, and $\underline{c}$ the restriction of $c$ to $\underline{\mathscr{C}}$. So $\underline{X}$ is a graph with a circulation $\underline{c}$ to which Propp's proof directly applies.

A directed loop $\vv{e}$ forms a directed cycle. For any orientation $R$, $\vv{e}$ is either forward or backward with respect to $R$, and $c_R(\vv{e}) = \pm 1$ accordingly. Thus $c(\vv{e}) = \pm 1$ and $\vv{e}$ is $c$-forced or $c$-forbidden accordingly. Hence we can define a function $\overline{F}:\vv{E}\to [0,1]$, defined as in Propp's proof for $\underline{X}$ and $\underline{c}$, and additionally setting $\overline{F}(\vv{e}) = 1$ or $0$ for directed loops accordingly as they are $c$-forced or $c$-forbidden. Combining Propp's proof with the definition on directed loops, the map $\overline{F}$ immediately satisfies conditions (i), (iii) and (iv) above. To see (ii), take a $C \in \mathscr{C}$, and note that it consists of a (possibly null) directed cycle $\underline{C} \in \underline{\mathscr{C}}$ , together with some number of loop edges. By Propp's proof, (ii) holds  for $\underline{C}$, and by our construction (ii) holds for loops. Both sides of the equation in (ii) are additive under concatenating cycles, so (ii) holds for $C$.

Given an orientation $R$ with circulation $c$, let $\underline{R}$ be its restriction to $\underline{X}$. We can then again define a height function $H_R \colon V \To \R$ by applying Propp's proof to $\underline{X}$ and $\underline{R}$. Condition (i) for $H_R$ then immediately holds, as does (ii) for all non-loop directed edges $\vv{e}$. When $\vv{e}$ is a directed loop at a vertex $v$, both sides of \refeqn{height_function_step} are $0$ since $\overline{F}(\vv{e}) = 1$ or $0$ accordingly as $\vv{e}$ is $c$-forced or $c$-forbidden, i.e. $\vv{e} \in R$ or $\vv{e} \notin R$. Since the height function $H_R$ coincides with the height function $H_{\underline{R}}$ for $\underline{X}$, the height functions again yield the desired  distributive lattice structure. Indeed, the distributive lattice structures on $\mathscr{R}^c_X$ and $\mathscr{R}^{\underline{c}}_{\underline{X}}$ are isomorphic.
\end{proof}

\subsection{Embedded graphs}

We will need to consider numerous embedded graphs, so although the notion is standard, we define them precisely as we need. We only consider finite graphs. So let $X$ be a finite graph, possibly disconnected, possibly with multiple edges and loops. We can realise $X$ as a 1-dimensional cell complex, which we denote $\widetilde{X}$. As $X$ is finite, $\widetilde{X}$ is compact.
\begin{defn}[Graph embedding]
\label{Def:graph_embedding}
Let $\Sigma$ be a surface.
A \emph{graph embedding} of $X$ in $\Sigma$ is a proper embedding $\phi \colon \widetilde{X} \hookrightarrow \Sigma$.
\end{defn}
Note for the purposes of this definition, $\Sigma$ need not be compact, orientable, or connected, though we will impose such conditions later on.
By \emph{proper} here we mean that only vertices of $X$ may map to boundary points of $\Sigma$; equivalently, that the interior of every arc of $X$ must lie in the interior of $\Sigma$. (Vertices of $X$ may map to boundary points or interior points of $\Sigma$.)
Under such an embedding, each vertex of $X$ maps to a distinct point of $\Sigma$, and each edge of $X$ maps to a simple arc connecting its two endpoints. Any two such arcs intersect only at common endpoints as prescribed by the incidence relations of $X$. 

The graph $U$ arising in a universe on a disc (\refdef{universe_on_disc}) is an embedded graph in the disc $D$.
Similarly, the graph $U$ arising in a Kauffman universe (\refdef{universe}) is an embedded graph in $\R^2$, also known as a \emph{plane graph}.

We use a standard notion of isotopy for embedded graphs, as in e.g. \cite{Verdiere_Mesmay_14, Ladegaillerie_84}.
\begin{defn}
\label{Def:isotopy_embedded_graphs}
Let $\phi_0, \phi_1 \colon \widetilde{X} \hookrightarrow \Sigma$ be two graph embeddings. Then $\phi_0, \phi_1$ are \emph{isotopic} if there exists a continuous family of graph embeddings $\phi_t \colon \widetilde{X} \hookrightarrow \Sigma$ for $t \in [0,1]$ from $\phi_0$ to $\phi_1$.
\end{defn}

\begin{defn}[Faces, 2-cell embedding]
\label{Def:faces}
Let $\phi \colon \widetilde{X} \hookrightarrow \Sigma$ be a graph embedding.
\begin{enumerate}
\item
The \emph{faces} of $\phi$ are the connected components of $\Sigma \setminus \phi(\widetilde{X})$.
\item 
$\phi$ is a \emph{2-cell embedding} if each face of $\phi$ is homemorphic to a disc.
\end{enumerate}
\end{defn}
The image of $\phi$ is a homeomorphic copy of $\widetilde{X}$ in $\Sigma$. In practice we abuse notation by  referring to $\widetilde{X}$ and its embedded image as $X$, referring to the images of vertices in $\Sigma$ as vertices of $X$, referring to the images of edges as edges of $X$, and referring to the faces of $\phi$ as  faces of $X$.

\refdef{faces} generalises the standard notion of face arising for plane graphs and universes, as in the introduction.

\subsection{Boundaries of faces and boundary cycles}
\label{Sec:boundaries_of_faces}

We now consider boundaries of faces in embedded graphs; again this notion is quite standard, but since there are some subtleties, we define them precisely as we need.

Let $G$ be a graph embedded on an orientable surface $\Sigma$. We assume $G$ is finite, but may be disconnected and may have multiple edges and loops.
We assume $\Sigma$ is compact, and may have boundary. 
For now we also assume that $G$ is embedded in the \emph{interior} of $\Sigma$.

Let $f$ be a face of $G$. Then $f$ is an open subsurface of $\Sigma$, and its closure in $\Sigma$ is obtained by adding finitely many boundary components. A boundary component of $f$ may consist of edges of $G$, an isolated vertex of $G$, 
or a circle of $\partial \Sigma$.

\begin{figure}
\begin{center}
\includegraphics[width=0.35\textwidth]{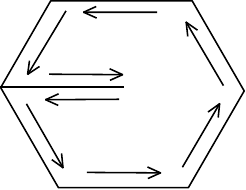}
\end{center}
    \caption{A face with boundary visiting an edge twice.}
    \label{Fig:boundary_with_repeated_edge}
\end{figure}

Consider a boundary component of $f$ consisting of edges of $G$. By walking around this boundary component, from a chosen vertex, in a chosen direction, we obtain a directed cycle $C$. Note this may involve walking along an edge twice. However, as $\Sigma$ is orientable, if $C$ visits an edge $e$ of $G$ twice, then $e$ is visited in opposite directions: see \reffig{boundary_with_repeated_edge}. Thus $C$ is a simple directed cycle (\refdef{Propp_definitions0}). However $C$ need not be vertex-simple. A boundary component consisting of an isolated vertex of $G$ can also be regarded as a null cycle, leading to the following definition.
\begin{defn}[Boundary cycle] 
\label{Def:boundary_cycle}
Let $f$ be a face of $G$.
\begin{enumerate}
\item
A \emph{boundary directed cycle} of $f$ is a simple directed cycle $C$ in $G$, obtained by walking around a boundary component of $f$, from a chosen vertex, in a chosen direction.
\item 
A \emph{boundary cycle} of $f$ is the simple (undirected) cycle $\overline{C}$ represented by a boundary directed cycle of $f$.
\end{enumerate}
\end{defn}
A null boundary cycle (directed or undirected) arises precisely when $G$ has an isolated vertex.

In general, a face $f$ may have many boundary components. Only those boundary components which lie along $G$ form boundary cycles.
A boundary component of $f$ along $G$ will in general yield many boundary directed cycles, as $C$ may start at an arbitrary vertex of $f$ and proceed in either direction around the boundary of $f$. These boundary directed cycles  form an equivalence class under $\sim$ as in \refdef{cycle}. Hence there is a unique boundary cycle $\overline{C}$ at each boundary component of $f$ which lies along $G$. 

\subsection{Boundary and elementary cycles of plane graphs}
\label{Sec:plane_boundary_elementary}

We now consider Propp's notion of \emph{elementary cycle} in \cite{Propp}. However, Propp only considers graphs $G$ which are connected and without loops, so we need a slight generalisation.

Let $G$ be a finite graph, possibly disconnected, possibly with multiple edges and loops.
We now suppose $G$ is embedded in $\R^2$, i.e. $G$ is a plane graph.

The faces of $G$ now all have genus zero. Each face has finitely many boundary components. Some faces may be nested inside each other. There is precisely one unbounded face. 

A bounded face $f$ of $G$ has a distinguished \emph{outermost} boundary component. 
Possibly after a small truncation of $f$ to avoid issues with double edges as in \reffig{boundary_with_repeated_edge}, the closure $\overline{f}$ of $f$ in $\R^2$ has boundary consisting of disjoint simple closed curves, one of which is outermost. By the Jordan curve theorem, each of these simple closed curves has an interior and exterior in $\R^2$. The outermost boundary component is distinguished by the fact that it contains $f$ in its interior.

\begin{defn}[Outer boundary, elementary cycle] \
\label{Def:outer_boundary_elementary}
\label{Def:elementary_cycle}
Let $f$ be a bounded face of $G$.
\begin{enumerate}
\item The \emph{outer boundary} of $f$ is the unique boundary component of $f$ which contains $f$ in its interior.
\item The \emph{outer boundary cycle} of $f$ is the simple (undirected) cycle $C$ of $G$ represented by any boundary directed cycle around the outer boundary of $f$. We say $C$ \emph{encircles} $f$.
\item An \emph{elementary cycle} of $G$ is an outer boundary cycle of some non-outer face of $G$.
\end{enumerate}
\end{defn}
Note that outer boundary cycles and elementary cycles are non-null.

A  directed cycle around the outer boundary of $f$ encircles $f$ in a \emph{clockwise} or \emph{counterclockwise} direction. A boundary (undirected) cycle does not have an orientation. 
Elementary cycles can thus be oriented clockwise or counterclockwise.

Each non-outer face has a unique elementary cycle encircling it, and each elementary cycle encircles a unique non-outer face. Hence non-outer faces and elementary cycles are naturally in bijection.

When $G$ is connected (as in Propp \cite{Propp}), then 
each bounded face of $G$ is homeomorphic to a disc, and the unbounded face is homeomorphic to a punctured disc. 
Each bounded face then has a single boundary component consisting of edges of $G$, yielding an elementary cycle.

\begin{lem}
\label{Lem:elementary_cycles_components}
Let $G$ be a finite plane graph (possibly with multiple edges and loops),
with connected components $G_1, \ldots, G_n$.
The set of elementary cycles of $G$ is the disjoint union of the sets of elementary cycles of the $G_j$.     
\end{lem}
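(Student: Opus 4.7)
The plan is to establish a bijection between the elementary cycles of $G$ and the disjoint union of the elementary cycles of the connected components $G_j$. Disjointness of the latter union is immediate: any elementary cycle of $G_j$ consists of edges of $G_j$, and distinct components share no edges, so no cycle can appear as an elementary cycle of two different $G_j$.

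First, I would show that every elementary cycle $C$ of $G$ is an elementary cycle of exactly one $G_j$. Let $f$ be the non-outer face of $G$ encircled by $C$. Since $C$ is a connected simple closed curve, all of its edges belong to a single component $G_j$, so $C$ is a cycle of $G_j$. Let $D$ be the closed disc in $\R^2$ bounded by $C$, so that $f\subseteq \Int D$, and let $f_j$ be the face of $G_j$ containing $f$. Because $G_j$ is connected and $f_j\subseteq \Int D$ is bounded, the fact recalled just before the lemma (each bounded face of a connected plane graph is an open disc with a single boundary cycle) gives that $f_j$ is a topological disc. A sufficiently small one-sided collar of $C$ inside $D$ is at positive distance from every other component of $G$, since those components are compact subsets disjoint from $C$; hence this collar lies in $f\subseteq f_j$, exhibiting $C$ as a boundary component of $f_j$. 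As $f_j$ has a single boundary cycle, that boundary cycle is $C$, so $C$ encircles $f_j$ in $G_j$ and is therefore an elementary cycle of $G_j$.

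Conversely, let $C$ be an elementary cycle of some $G_j$, encircling a non-outer face $f_j$ of $G_j$, which is a disc bounded by $C$. Any other components of $G$ that lie inside the closed disc bounded by $C$ are compact subsets at positive distance from $C$, so for sufficiently small $\varepsilon>0$ the $\varepsilon$-collar $N$ of $C$ inside $f_j$ avoids all of them. Then $N$ lies in a single face $f$ of $G$, and $N$ exhibits $C$ as a boundary component of $f$. Since $f\subseteq f_j\subseteq \Int D$, where $D$ is the disc bounded by $C$, the face $f$ lies in the interior of $C$, so $C$ is by definition the outer boundary cycle of $f$, i.e., an elementary cycle of $G$.

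The two assignments $C\mapsto C$ are evidently mutually inverse, identifying the elementary cycles of $G$ with those of the $G_j$ as required. The step requiring the most care is the topological verification that a connected plane graph has disc bounded faces (which is invoked to pin down the boundary of $f_j$), together with the local observation that a sufficiently small one-sided collar of $C$ sits inside a single face of $G$; both are used to transfer the outer-boundary condition cleanly between $G$ and $G_j$. Everything else reduces to bookkeeping with the definitions of \refdef{outer_boundary_elementary}.
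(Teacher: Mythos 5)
Your proof is correct and follows essentially the same two-inclusion argument as the paper's, fleshed out by invoking the disc property of bounded faces of a connected plane graph (recalled in the paper just before the lemma) to pin down the single boundary cycle of $f_j$. One caveat in the forward direction: for the collar of $C$ inside $D$ to lie in $f$, it must avoid not just the other components $G_k$ but also the rest of $G_j$ itself; this does hold, because an edge of $G_j$ departing a vertex of $C$ into $\Int D$ would be encountered in the outer boundary walk of $f$ before the next edge of $C$, contradicting that $C$ is the outer boundary cycle of $f$---and it is really this observation, rather than disjointness from the other $G_k$, that lets the disc property conclude $\partial f_j = C$.
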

In other words, each elementary cycle of a $G_j$ is an elementary cycle of $G$, and each elementary cycle of $G$ is an elementary cycle of precisely one $G_j$. 

\begin{proof}
Let $C$ be an elementary cycle of $G$. Then $C$ is the outer boundary cycle of some non-outer face $f$ of $G$. The cycle $C$ lies in some component $G_j$ of $G$, and $f$ forms part of some non-outer face $\overline{f}$ of $G_j$. Indeed, $C$ is also the outer boundary cycle of 
$\overline{f}$.
So $C$ is an elementary cycle of $G_j$. (Clearly $C$ is not an elementary cycle of any other component of $G$.) 

Conversely, suppose $C$ is an elementary cycle of $G_j$. Then $C$ is the outer boundary cycle of a non-outer face $f$ of $G_j$. This $f$ is a union of faces of $G$, and one of these faces of $G$ is adjacent to $C$, with $C$ as its outer boundary component. Thus $C$ is an elementary cycle of $G$.
\end{proof}

\subsection{Matchings, alternating paths and cycles}
\label{Sec:matchings}

We again follow Propp \cite{Propp} in the following definitions, which he considered for connected graphs without loops. Generalising them to our context involves some subtleties.

So, let $X$ be a finite graph, possibly disconnected, possibly with multiple edges and loops.  
\begin{defn}[Matching] \label{Def:matching}
A \emph{perfect matching}, or just \emph{matching}, $M$ on $X$ is a set of edges $\{e_1, \ldots, e_n\}$ of $X$ such that the $2n$ endpoints of the $e_j$ include every vertex of $X$ precisely once.
The set of all matchings on $X$ is denoted $\mathscr{M}_X$.
\end{defn}
Thus, if $X$ has a matching with $n$ edges, then $X$ has precisely $2n$ vertices. A loop can never occur in a matching. When $X$ has no loops, a matching may be defined as a set $M$ of of edges of $X$ such that each vertex belongs to precisely one edge of $M$. 

The following notion will be useful in the sequel.
\begin{defn}[Forced and forbidden edges]
\label{Def:forced_forbidden}
An edge of $X$ is
\begin{enumerate}
\item
\emph{forced} if it appears in all $M \in \mathscr{M}_X$;
\item 
\emph{forbidden} if it appears in no $M \in \mathscr{M}_X$.
\end{enumerate}
\end{defn}

\begin{defn}[Alternating paths and cycles]
\label{Def:alternating_path}
Let $M \in \mathscr{M}_X$.
A directed path, directed cycle, or undirected cycle in $X$ is \emph{alternating} relative to $M$ if it is non-null, and its edges alternately do and do not belong to $M$.
\end{defn}
Note that while undirected cycles do not have a direction, each edge does have two well-defined adjacent edges, hence speaking of edges alternately belonging and not belonging to $M$ makes sense. An alternating cycle must have positive even length.

A loop can never occur in an alternating cycle. For if a loop $e$ based at a vertex $v$ were in an alternating cycle $C$ for a matching $M$, then $e$ cannot be in $M$, so both edges in $C$ adjacent to $e$ must be in $M$, but both these edges are adjacent to $v$, so we have two distinct endpoints of edges of $M$ equal to $v$, contradicting $M$ being a matching.

An alternating cycle may be represented by a directed cycle
\begin{equation}
\label{Eqn:alternating_cycle_notation}
(e_1, v_0, v_1), \ldots, (e_{2n}, v_{2n-1}, v_{2n} = v_0)
\quad \text{where} \quad
e_j \in M \text{ for $j$ odd}, \quad
e_j \notin M \text{ for $j$ even}.
\end{equation}

Given an alternating cycle $A$ relative to a matching $M$, consider the operation of removing from $M$ all edges which lie in the cycle $A$, and adding to $M$ the edges of $A$ which do not belong to $M$. In the notation of \refeqn{alternating_cycle_notation}, we remove the $e_j$ with $j$ odd, and add the $e_j$ with $j$ even. This replaces $M$ with a different set $M'$ of edges of $M$.
In other words, regarding $A$ as a set of edges, we remove $A \cap M$ from $M$ and add $A \setminus M$, so that
\begin{equation}
\label{Eqn:matching_notation}
M' = (M \setminus A) \cup (A \setminus M) = M+A, 
\quad \text{or} \quad
M' = M \setminus \{e_j \mid j \text{ odd}\} \cup \{ e_j \mid j \text{ even} \}.
\end{equation}
Here $M+A$ denotes the the boolean addition of $M$ and $A$.

\begin{figure}
\begin{center}
\includegraphics[width=0.45\textwidth]{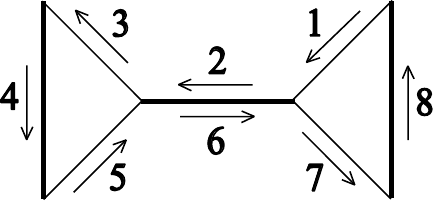}
\end{center}
    \caption{A non-vertex-simple alternating cycle. Thick edges  denote edges of the matching.}
    \label{Fig:non-vertex-simple_alternating}
\end{figure}

Note that in general an alternating cycle need not be vertex-simple: see e.g. \reffig{non-vertex-simple_alternating}. Moreover, for a general alternating cycle $A$, such as in \reffig{non-vertex-simple_alternating}, the set of edges $M'$ described above need not be a matching. The following lemma describes when $M'$ is a matching.
\begin{lem}
\label{Lem:when_twisting_works}
Let $M \in \mathscr{M}_X$ and let $A$ be a simple directed cycle of $X$, alternating relative to $M$, as in \refeqn{alternating_cycle_notation}. Let $M' = M+A$ as in \refeqn{matching_notation}.
Then $M'$ is a matching on $X$ if and only if $A$ is vertex-simple.
\end{lem}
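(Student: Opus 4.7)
The plan is to prove the two implications separately using a degree-counting identity at each vertex.

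For the easy direction ($\Leftarrow$), assume $A$ is vertex-simple, so $v_0, v_1, \ldots, v_{2n-1}$ are distinct. If a vertex $v$ does not appear in $A$, then no edges of $M$ at $v$ are altered in $M'$, so $v$ remains matched. If $v = v_i$ for some (necessarily unique) $i$, the two edges of $A$ incident to $v$ are $e_i$ and $e_{i+1}$, and by the alternating condition exactly one of these lies in $M$. Since $M$ is a matching, this edge must be the unique $M$-edge at $v$. Passing to $M' = M + A$ removes this edge from $M$ and adds the other, so $v$ is covered by exactly one edge of $M'$. Hence $M'$ is a matching.

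For the converse ($\Rightarrow$), I would argue by contrapositive. Write $d_S(v)$ for the number of edges of $S$ incident to $v$, and let $\bar A$ denote the set of undirected edges of $X$ traversed by $A$. Since $M' = M + A$ is a symmetric difference, at every vertex $v$
\[
d_{M'}(v) = d_M(v) + d_A(v) - 2\, d_{M \cap A}(v).
\]
If both $M$ and $M'$ are matchings then $d_M(v) = d_{M'}(v) = 1$, forcing $d_A(v) = 2\, d_{M \cap A}(v) \leq 2$. So $d_A(v) \in \{0,2\}$, with $d_A(v) = 2$ for $v \in \bar A$. Hence $\bar A$ is a connected $2$-regular subgraph of $X$, i.e., a cycle of some length $m$. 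Now suppose, for contradiction, that some vertex $v$ appears $k \geq 2$ times in $A$. The $2k$ directed edge-ends at $v$ must distribute among its two undirected $\bar A$-edges, and each such undirected edge absorbs at most two directed ends (one per direction, since the directed edges of $A$ are distinct). This forces $k = 2$ and both edges at $v$ to be traversed twice in $A$. Propagating around $\bar A$ (the same count applied at the other endpoint of any doubled edge forces the remaining edge there to be doubled too), every edge of $\bar A$ is doubled in $A$. Thus $A$ is an Eulerian circuit of the bidirected graph over $\bar A$, and $m = n$.

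The final contradiction, which I expect to be the main subtlety, comes from alternation combined with a parity argument. For any $f \in \bar A$ appearing in $A$ at positions $p < q$, alternation requires $p \equiv q \pmod{2}$ (both odd if $f \in M$, both even otherwise). The sub-walk of $A$ strictly between these positions starts and ends at the same endpoint of $f$, using $q - p - 1$ directed edges all drawn from $\bar A \setminus \{f\}$. But removing a single edge from the cycle $\bar A$ leaves a tree, namely a path of length $n - 1$, and any closed walk on a tree has even length, since each edge is traversed equally often in each direction. Hence $q - p - 1$ is even, so $q - p$ is odd and $p, q$ have opposite parities, contradicting alternation. This forces $A$ to be vertex-simple, completing the proof.
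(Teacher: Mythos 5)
Your proof is correct, and it takes a genuinely different route from the paper's. The paper's argument for the $(\Rightarrow)$ direction (also by contrapositive) is local: assuming $v_0 = v_k$, it shows $k$ is odd, $e_1$ and $e_k$ are the same undirected edge traversed oppositely, and then locates the first position $c$ where the two sub-walks from $e_1$ toward and from $e_k$ diverge, finding there two edges of the same $M$- or $M'$-membership sharing a vertex. Your argument instead first uses the degree identity $d_{M'}(v) = d_M(v) + d_{\bar A}(v) - 2d_{M\cap A}(v)$ to pin down the global structure of $\bar A$ as a connected 2-regular multigraph (a cycle), then propagates the "doubled-edge" condition around it, and finishes with a clean parity obstruction coming from the fact that a closed walk on a tree has even length. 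The approaches buy slightly different things: the paper's proof is self-contained and directly exhibits a violation of the matching property, while yours reveals more about what $\bar A$ must look like and isolates the contradiction in a single parity statement, which some may find more conceptual. Two small remarks: your edge-end counting tacitly relies on the absence of loops in $A$ (each traversal of an edge in $\bar A$ contributes exactly one end at each endpoint), which the paper establishes in the paragraph immediately preceding the lemma, so you should cite that observation explicitly; and your $(\Leftarrow)$ direction is slightly more careful than the paper's, which somewhat glosses over vertices not on $A$.
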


\begin{proof}
Suppose that $A$ is not vertex-simple, so two vertices of $A$ coincide. By a cyclic permutation if necessary, we may assume that the two coincident vertices are $v_0$ and $v_k$ for some $k \neq 0$ mod $2n$ (we take all indices mod $2n$).  Note also $k \neq  \pm 1$ mod $2n$, since $v_0 = v_{\pm 1}$ would imply $e_0$ or $e_1$ being a loop, and as noted above loops cannot occur in $A$.

Each vertex $v_{2j}$ and $v_{2j+1}$ of $A$ belongs to the edge $e_{2j+1}$ of $M$. In particular, each vertex $v_j$ in $A$ belongs to an edge of the matching in $A$, which is either $e_j$ or $e_{j+1}$.

The vertex $v_0$ lies in the edge $e_1$ of $M$, which is directed $(e_1, v_0, v_1)$ in $A$. Similarly, $v_k$ lies in the edge $e_l$ of $M$, where $l$ is the odd element in $\{ k, k+1 \}$. In $A$, accordingly as $l=k$ or $k+1$, $e_l$ appears in $A$ directed as $(e_k, v_{k-1}, v_k)$ and $(e_{k+1}, v_k, v_{k+1})$. As $k \neq 0,1$ mod $2n$, we have $k,k+1 \neq 1$, hence $l \neq 1$ mod $2n$. So the directed edges $(e_l, v_{l-1}, v_l)$ and $(e_1, v_0, v_1)$ are at different positions in $A$. But as $M$ is a matching, $e_1, e_l \in M$, and $v_0$ coincides with $v_{l-1}$ or $v_l$, the edges $e_1$ and $e_l$ are the same edge. As $A$ is simple, we conclude that $(e_1, v_0, v_1)$ and $(e_l, v_{l-1}, v_l)$ form the same edge with opposite directions. In particular $k=l$, $k$ is odd, and $v_1 = v_{k-1}$.

We now consider proceeding from $e_1$ along $A$ in the direction of $e_2, e_3, \ldots$ up to $e_k$; and proceeding from $e_k$ along $A$ in the direction of $e_{k-1}, e_{k-2}, \ldots$ up to $e_1$. Note that these directed paths commence from $e_1 = e_k$ in the same direction. We claim these directed paths cannot coincide. Indeed, if they coincide for $m+1$ edges, where $k=2m+1$, then we have equalities of directed edges
\[
(e_1, v_0, v_1) = (e_k, v_k, v_{k-1}), \;
(e_2, v_1, v_2) = (e_{k-1}, v_{k-1}, v_{k-2}), \;
\ldots, \;
(e_{m+1}, v_{m}, v_{m+1}) = (e_{m+1}, v_{m+1}, v_m),
\]
so that $v_m = v_{m+1}$ and $e_{m+1}$ is a loop, contradicting the fact that loops cannot occur in alternating cycles.

Thus, there exists a least positive integer $c$ such that $e_{1+c} \neq e_{k-c}$. Indeed, this $c$ satisfies $1 \leq c < m$, so $1+c < k-c$. Then $(e_{c}, v_{c-1}, v_c) = (e_{k-c+1}, v_{k-c+1}, v_{k-c})$, but $e_{1+c} \neq e_{k-c}$. Thus 
$v_c = v_{k-c}$. As $k$ is odd, $1+c = k-c$ mod $2$. Thus either both $e_{1+c}, e_{k-c} \in M$ or both $e_{1+c}, e_{k-c} \in M'$. But these two edges have in common the vertex $v_c = v_{k-c}$. So $M$ and $M'$ cannot both be matchings; as $M$ is a matching, then $M'$ is not.

For the converse, suppose $A$ is vertex-simple. If any undirected edges of $A$ coincide, then $A$ has length $2$ and consists of the same edge traversed in both directions, in which case $e_1 = e_2$, so both $e_1, e_2 \in M$ or $e_1, e_2 \notin M$, contradicting $A$ being alternating. Thus all $2n$ edges of $A$ are distinct, and since $M$ matches the $2n$ distinct vertices of $A$ in pairs, so too does $M'$.
\end{proof}

\begin{defn}[Twisting]
\label{Def:twisting}
The operation of replacing a matching $M$ with a matching $M' = M+A$  as above, where $A$ is a vertex-simple alternating cycle relative to $M$, is called \emph{twisting} $M$ at $A$.
\end{defn}

\subsection{Matchings on plane bipartite graphs, positive and negative cycles}
\label{Sec:matchings_plane_bipartite}

We again follow Propp \cite{Propp} in the following definitions, which generalise to our context without difficulty.

Let $G$ be a finite plane graph, possibly disconnected, possibly with multiple edges and loops.
As in \refsec{plane_boundary_elementary}, $G$ has a well-defined elementary cycles. 
Let $M$ be a matching on $G$. 
We consider elementary cycles which are alternating with respect to $M$.

We now additionally assume that $G$ is \emph{bipartite}, with vertices coloured black and white. Then every cycle in $G$ has even length, and $G$ has no loops.
The matching $M$ provides a bijection between the black and white vertices of $G$.
Consider an alternating elementary cycle $A$ relative to $M$ encircling a face $f$.
\begin{lem}
\label{Lem:alternating_elementary_cycles_super-simple}
Let 
$M \in \mathscr{M}_G$, and let $A$ be an alternating elementary cycle of $G$ relative to $M$. Then $A$ is vertex-simple and all its edges are distinct.
\end{lem}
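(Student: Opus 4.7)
My plan is to argue by contradiction, assuming $A$ is not vertex-simple, and derive a contradiction from the bipartite structure of $G$ together with the uniqueness of $M$-edges at each vertex. Once vertex-simplicity is established, distinctness of the edges of $A$ will follow almost immediately. Since $A$ is an elementary cycle, it admits a simple directed representative, which by the alternating hypothesis may be written as $(e_1, v_0, v_1), \ldots, (e_{2n}, v_{2n-1}, v_{2n} = v_0)$ with $e_j \in M$ for odd $j$ and $e_j \notin M$ for even $j$, as in \refeqn{alternating_cycle_notation}; note $2n \geqslant 2$.

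Suppose for contradiction that $v_i = v_j$ for some $0 \leqslant i < j \leqslant 2n-1$. Since $G$ is bipartite with vertices $2$-coloured so that every edge joins different colours, the vertices of $A$ alternate between the two colour classes as we walk along $A$, so $v_i = v_j$ forces $i \equiv j \pmod{2}$. Assume first that $i$ and $j$ are both even; the odd case is symmetric. At $v_i$ the two edges of $A$ incident to it are $e_i$ (not in $M$) and $e_{i+1}$ (in $M$), so the unique $M$-edge of $A$ meeting $v_i$ is $(e_{i+1}, v_i, v_{i+1})$, directed out of $v_i$. Likewise the unique $M$-edge of $A$ meeting $v_j$ is $(e_{j+1}, v_j, v_{j+1})$, directed out of $v_j$. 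Since $M$ is a matching, exactly one edge of $M$ is incident to $v_i = v_j$, so $e_{i+1}$ and $e_{j+1}$ must be the same undirected edge of $G$. The two directed edges then share both their underlying edge and their initial vertex $v_i = v_j$, so they coincide as directed edges. Simplicity of $A$ now forces $i+1 \equiv j+1 \pmod{2n}$, hence $i = j$, a contradiction. The case of $i, j$ both odd is handled identically using the $M$-edges $e_i$ and $e_j$ entering $v_i = v_j$.

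With $A$ vertex-simple, edge-distinctness is routine: if $e_i$ and $e_j$ are the same undirected edge with $i \neq j$, the unordered pairs $\{v_{i-1}, v_i\}$ and $\{v_{j-1}, v_j\}$ must agree, which vertex-simplicity of $v_0, \ldots, v_{2n-1}$ rules out unless $2n = 2$; and $e_1 = e_2$ at length $2$ would require the same edge to be simultaneously in $M$ and not in $M$. The main obstacle is the careful bookkeeping in the contradiction step: one must distinguish undirected from directed edges and invoke simplicity of $A$ at exactly the right moment, with the bipartite hypothesis doing the essential work of forcing $i$ and $j$ to have the same parity. Notably, the argument never uses that $A$ bounds a face of $G$; the elementary hypothesis enters only through supplying a simple directed representative, so in fact any simple alternating directed cycle in a bipartite graph relative to a perfect matching is automatically vertex-simple with distinct edges.
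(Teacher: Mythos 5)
Your proof is correct, but you take the opposite route to the paper. The paper establishes edge-distinctness first: a repeated edge is (by simplicity of $A$) traversed once in each direction, hence by bipartiteness once white-to-black and once black-to-white, so the two occurrences have opposite parity in the alternating sequence $\refeqn{alternating_cycle_notation}$, forcing the edge to be simultaneously in and not in $M$. The paper then deduces vertex-simplicity by citing the (sub-)argument inside the proof of \reflem{when_twisting_works}, which shows that a repeated vertex forces a repeated edge. You instead show vertex-simplicity directly — bipartiteness forces repeated vertices to have the same parity, then the uniqueness of the $M$-edge at that vertex plus simplicity of $A$ forces the two positions to coincide — and deduce edge-distinctness afterwards. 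Your version is self-contained and avoids the forward reference to the body of another proof, which is arguably cleaner; the paper's version is a bit shorter and reuses work already done. Both are sound. Your closing observation — that the elementary hypothesis is used only to guarantee $A$ is simple, so the conclusion holds for any simple alternating cycle in a bipartite graph relative to a perfect matching — is accurate, and is implicitly also true of the paper's argument, though the paper does not remark on it.
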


\begin{proof}
Represent $A$ with a directed cycle as in \refeqn{alternating_cycle_notation}. If $A$ contains a repeated edge $e$ then, as $A$ is simple, $e$ is visited twice, once in each direction. As $G$ is bipartite, one visit proceeds from a white to black and one visit from a black to white vertex. Thus the two edges $e_j, e_k$ of $A$ visiting $e$  have $j,k$ of opposite parity. But $e_j \in M$ precisely when $j$ is odd, so $e$ is simultaneously in and not in $M$, a contradiction.

Thus $A$ has all distinct edges. The proof of \reflem{when_twisting_works} shows that if two vertices of $A$ coincide, then two edges coincide also. Thus $A$ has distinct vertices.
\end{proof}
Thus, by \reflem{when_twisting_works} we can twist a matching $M \in \mathscr{M}_G$ at an alternating elementary cycle $A$ and obtain another matching $M'=M+A$.

Combining the structures of bipartite colouring and alternating matching, we endow elementary cycles with orientations as follows.
\begin{defn}[Positive, negative cycle] \cite{Propp}
\label{Def:pos_neg_cycle}
Let $M \in \mathscr{M}_G$, and let $A$ be an alternating elementary cycle relative to $M$ encircling a face $f$.
Direct the edges of $A$ that belong to $M$ (which we denote $A \cap M$) from their black vertices to their white vertices. These edges then either all encircle $f$ in a clockwise direction, or all encircle $f$ in a counterclockwise direction.
\begin{enumerate}
\item 
$A$ is a \emph{positive cycle} relative to $M$ if $A \cap M$ 
encircles $f$ in the counterclockwise direction.
\item 
$A$ is a \emph{negative  cycle} relative to $M$ if $A \cap M$
encircles $f$ in the clockwise direction.
\end{enumerate}
\end{defn}

If we twist a matching $M$ at a positive cycle $A$ and obtain a matching $M'$, then $A$ becomes negative relative to $M'$. Similarly, if $A$ is negative relative to $M$, then $A$ is positive relative to $M'$. 
\begin{defn}[Twisting up and down] \cite{Propp}
\label{Def:twisting_up_down}
The operation of twisting a matching $M$ at an alternating elementary cycle $A$ to obtain a matching $M'$ is called 
\begin{enumerate}
\item
\emph{twisting down}, if $A$ is positive relative to $M$ (hence negative relative to $M'$);
\item 
\emph{twisting up}, if $A$ is negative relative to $M$ (hence  positive relative to $M'$).
\end{enumerate}
We say $M'$ is obtained from $M$ by twisting up/down at $A$, or at $f$, where $f$ is the face encircled by $A$.
\end{defn}
Thus twisting down makes a positive cycle negative, and twisting up makes a negative cycle positive.

Note that twisting up and down can only happen at a face $f$ which is \emph{non-degenerate}, in the sense that the elementary cycle along its outer boundary is vertex-simple. Otherwise, by \reflem{alternating_elementary_cycles_super-simple}, twisting will not yield a matching.

As $G$ has only finitely many faces, it has only finitely many elementary cycles, and hence from a matching $M$ there are only finitely many twisting up or down operations.
Moreover, given the two matchings $M,M'$ related by a twisting up or down, one can recover the elementary cycle involved from the change between the matchings, and also the direction (up or down).

\subsection{Lattice structure on matchings}
\label{Sec:lattices_matchings}

Again let $G$ be a finite bipartite plane graph (hence without loops), possibly disconnected, possibly with multiple edges. Recall the set of matchings on $G$ is denoted $\mathscr{M}_G$. Let $f_0$ be the unbounded face of $G$, so that the bounded faces are precisely those other than $f_0$. 

\begin{defn} \cite{Propp}
\label{Def:relation_on_matchings1}
Define a relation $\leqslant$ on $\mathscr{M}_G$ by $M \leqslant M'$ if there exists a sequence $M=M_0, \ldots, M_n = M'$ in $\mathscr{M}_G$ for some $n \geq 0$, such that each $M_{j+1}$ is obtained from $M_{j}$ by twisting up at a bounded face.
\end{defn}
In other words, $M \leqslant M'$ if $M'$ is obtained from $M$ by a sequence of twisting up operations.  The trivial sequence, when $n=0$, means that $M \leqslant M$.

Propp in \cite{Propp} showed that this $\leqslant$ provides a distributive lattice structure, under two further assumptions: that $G$ is connected, and that each edge of $G$ belongs to some matchings but not others, i.e. is neither forced nor forbidden in the sense of \refdef{forced_forbidden}. We state his theorem and then discuss these assumptions.
\begin{theorem}[Propp \cite{Propp}] 
\label{Thm:Propp_matching}
Let $G$ be a finite connected plane bipartite graph (possibly with multiple edges). 
Suppose that $G$ has no forced or forbidden edges. Then $(\mathscr{M}_G, \leqslant)$ is a distributive lattice.

Moreover, for any two matchings $M,M' \in \mathscr{M}_G$, $M$ is covered by $M'$ (i.e. $M \lessdot M'$) if and only if $M'$ is obtained from $M$ by twisting up at a bounded face.
\end{theorem}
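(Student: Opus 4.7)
The plan is to adapt the height-function strategy used in the proof of \refthm{Propp_pushing0} to the matching setting. To each matching $M \in \mathscr{M}_G$ I would associate an integer-valued height function $h_M$ defined on the faces of $G$, normalised by $h_M(f_0) = 0$. Since $G$ is bipartite, every edge $e$ carries a canonical orientation from its white endpoint to its black endpoint. For adjacent faces $f, f'$ separated by such an oriented $e$, the height difference $h_M(f') - h_M(f)$ is defined by one formula when $e \in M$ and another when $e \notin M$, chosen so that contributions around each elementary cycle of $G$ sum to zero. Well-definedness of $h_M$ then reduces to a local verification at each elementary cycle, using the matching condition that exactly one incident edge of every vertex lies in $M$.

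\medskip

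The next step is to characterise the image of $M \mapsto h_M$ as a set $\mathcal{H}$ of integer-valued functions on faces cut out by local inequalities at each vertex. The hypothesis that no edge of $G$ is forced or forbidden (\refdef{forced_forbidden}) ensures this map is a bijection onto $\mathcal{H}$ and that no edges are trivially constrained. I would then define the partial order $M \leqslant M'$ iff $h_M(f) \leqslant h_{M'}(f)$ for every face $f$, and verify that it agrees with the twisting-up order of \refdef{relation_on_matchings1}: a twist up at a bounded face $f$ as in \refdef{twisting_up_down} changes only the value of $h_M$ at $f$ itself, increasing it by a fixed positive constant (the same constant for every twist, since all twists rearrange the same local pattern along an alternating elementary cycle). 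The fact that non-adjacent faces are unaffected can be seen by choosing walks from $f_0$ that avoid $\partial f$.

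\medskip

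The central structural claim is that $\mathcal{H}$ is closed under pointwise $\min$ and $\max$: given $h_M, h_{M'} \in \mathcal{H}$, both $\min(h_M, h_{M'})$ and $\max(h_M, h_{M'})$ again lie in $\mathcal{H}$, and therefore correspond to matchings realising the meet $M \wedge M'$ and the join $M \vee M'$. Since $\min$ and $\max$ satisfy the distributive law on $\Z$, $(\mathscr{M}_G, \leqslant)$ inherits a distributive lattice structure. Because the height at a single face changes by the fixed constant under a twist, and no smaller positive change between elements of $\mathcal{H}$ is possible, the covering relation $M \lessdot M'$ is precisely a single twisting up at a bounded face, giving the ``moreover'' clause.

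\medskip

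The main obstacle I anticipate is establishing the closure of $\mathcal{H}$ under pointwise $\min$ and $\max$: one must check that the local constraints defining a valid height function survive these operations at every vertex, and that the resulting function arises from an actual matching rather than from a ``fractional'' configuration. Connectedness of $G$ ensures that the normalisation at $f_0$ propagates uniquely to determine $h_M$ everywhere, and the no-forced/no-forbidden hypothesis prevents degenerate edges or faces from obstructing the height-shift interpretation of twists. Once this closure is in hand, the remaining lattice-theoretic and covering statements follow in parallel to the argument for \refthm{Propp_pushing0}.
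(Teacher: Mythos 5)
Your proposal is, in substance, Propp's own proof as the paper outlines it in \refsec{lattices_matchings}, but with the intermediate dualization step collapsed: your height function $h_M$ on faces of $G$ is exactly Propp's height function $H_{R^M}$ on vertices of the dual $G^\perp$ (faces of $G$ and vertices of $G^\perp$ being the same objects), and the closure of $\mathcal{H}$ under pointwise $\min$ and $\max$ is the same key lemma that Propp establishes. So this is essentially the same approach, presented directly rather than via orientations. Two small imprecisions are worth flagging. First, the well-definedness check for $h_M$ is a sum-to-zero condition around each \emph{vertex} of $G$ (equivalently, around each elementary cycle of the dual $G^\perp$, cf.\ \refdef{basic_cycle}), not ``around each elementary cycle of $G$'' as written; your subsequent appeal to the matching condition at vertices shows you intend the former. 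Second, the heights $h_M$ are generally real-valued, not integer-valued: the increments across edges must incorporate a reference flow (Propp's $\overline{F}:\vv{E}\to[0,1]$) to make the local sums vanish, and that flow can be fractional; only the \emph{differences} $h_M - h_{M'}$ are integer-valued, which is what the covering argument needs. Finally, note that the paper deliberately keeps the orientation-on-the-dual step explicit, since it is precisely this machinery (circulations, accessibility classes, pushing) that is reused to prove \refthm{main_thm_2} in \refsec{positive_genus}; your direct presentation proves the planar theorem cleanly but would not carry over to that setting.
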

We call this lattice the \emph{Propp lattice} of $G$.

We need this result for disconnected $G$, and moreover, without the assumption of no forced or forbidden edges. In \cite[example 28]{Propp}, Propp discusses these issues, noting that \refthm{Propp_matching} fails without the  assumption, but that a generalisation to disconnected graphs is possible, and that the assumption can be effectively removed since there exists a (possibly trivial) subgraph of $G$ whose connected components satisfy the assumption. We consider such matters in detail in \refsec{reductions} where we define a notion of \emph{reduction}, removing edges not in any matching. Then in \refsec{lattice_matchings_reduction} we prove a generalisation of this theorem, for possibly disconnected graphs, without the assumption, as \refprop{finite_bipartite_plane_lattice}.

As discussed in \refsec{string_universes}, a plane graph can be regarded as an embedded graph on $S^2$ by one-point compactification. The above theorem can then be restated for $G$ embedded in $S^2$, with the bounded face replaced by an arbitrarily chosen face on which twisting down is prohibited.

We briefly mention an outline of Propp's proof of \refthm{Propp_matching}, since we apply similar ideas later in \refsec{positive_genus} to prove \refthm{main_thm_2}. Propp proved \refthm{Propp_matching} by applying \refthm{Propp_pushing0} to a dual $G^\perp$ of $G$. This dual $G^\perp$ has a vertex for each face of $G$, and an edge $e^\perp$ for each edge $e$ of $G$, such that $e$ and $e^\perp$ intersect in a single point. Although $G^\perp$ is a connected plane graph, its embedding into $\R^2$ is not unique. 
Choosing an embedding arbitrarily,
we then have an elementary cycle of $G^\perp$ encircling each face of $G^\perp$, i.e. encircling each vertex of $G$.

The dual $G^\perp$ has a natural \emph{standard orientation} $R^0$, orienting each edge of $G^\perp$ such that every elementary cycle of $G^\perp$ encircling a black (resp. white) vertex of $G$ is oriented clockwise (resp. counterclockwise). 
Let $v_0^{\perp}$ be the vertex of $G^\perp$ dual to the unbounded face $f_0$ of $G$. 

For each $M \in \mathscr{M}_G$, we also have a natural orientation $R^M$ of $G^\perp$, called its \emph{prescribed orientation}, obtained from $R^0$ by reversing orientations on the edges of $G^\perp$ dual to the edges of $M$. 
It can be shown that all prescribed orientations of $G$ have the same circulation $c$, independent of the matching $M$. Recall (\refdef{orientations_notation}) $\mathscr{R}^c_{G^\perp}$ denotes the set of orientations of $G^\perp$ with circulation $c$.

Propp proved that the map $\mathscr{M}_G \To \mathscr{R}^c_{G^\perp}$ sending the matching $M \mapsto R^M$ is a bijection.
Moreover, the positive (resp. negative) cycles of $G$ relative to $M$ correspond precisely to the maximal (resp. minimal) accessibility classes of $(G^\perp, R^M)$.
Twisting up operations on $M$ at faces other than $f_0$ correspond bijectively to pushing up operations on $R^M$ at accessibility classes other than the singleton accessibility class $\{v_0^\perp\}$.
Applying \refthm{Propp_pushing0} to $G^\perp$ yields \refthm{Propp_matching}.

\subsection{Reductions}
\label{Sec:reductions}

In order to generalise \refthm{Propp_matching}, we introduce a notion of \emph{reduction}. Let $X$ be a finite graph, possibly disconnected, possibly with multiple edges and loops.
\begin{defn}
\label{Def:reduction}
The \emph{reduction} $X_0$ of $X$ is the graph with the same vertex set as $X$, obtained from $X$ by removing each forbidden edge.
\end{defn}
Since a loop cannot appear in a matching, $X_0$ has no loops.  If $X$ has no matchings, i.e. $\mathscr{M}_X = \emptyset$, then $X_0$ has no edges.

\begin{lem}
\label{Lem:remove_no_matchings}
Let $X_0$ be the reduction of $X$.
Then we have the following.
\begin{enumerate}
\item 
The matchings of $X$ are precisely the matchings of $X_0$, i.e. $\mathscr{M}_X = \mathscr{M}_{X_0}$. 
\item 
If $\mathscr{M}_X \neq \emptyset$, then each connected component $Y$ of $X_0$ satisfies precisely one of the following two possibilities.
\begin{enumerate}
\item $Y$ consists of a single edge $e$ between two distinct vertices, and $e$ is forced in $X$.
\item Each edge in $Y$ is neither forced nor forbidden in $X$.
\end{enumerate}
\end{enumerate}
\end{lem}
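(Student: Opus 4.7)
The plan is to dispatch (i) directly from the definitions, and then deduce (ii) by establishing a local structural claim: a forced edge of $X_0$ must isolate its endpoints within $X_0$.

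For (i), I would argue both inclusions. Given $M \in \mathscr{M}_X$, every $e \in M$ lies in the matching $M$ itself, so $e$ is not forbidden in $X$, hence $e$ is an edge of $X_0$. Since $X_0$ shares the vertex set of $X$, we get $M \in \mathscr{M}_{X_0}$. Conversely, any $M \in \mathscr{M}_{X_0}$ is a set of edges of $X$ covering each vertex of $X$ exactly once, hence $M \in \mathscr{M}_X$. A useful byproduct is that the notions of \emph{forced} and \emph{forbidden} in $X$ and in $X_0$ coincide on edges of $X_0$, which I will use freely in (ii).

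For (ii), assume $\mathscr{M}_X \neq \emptyset$. By construction no edge of $X_0$ is forbidden in $X$, so the dichotomy reduces to the following claim: if some edge $e$ of a component $Y$ of $X_0$ is forced, then $Y$ consists of the single edge $e$ between two distinct vertices. Granting this, every component of $X_0$ either contains a forced edge (and then falls into case (a)), or contains no forced edge (and then, since forbidden edges have also been removed, every edge of $Y$ is neither forced nor forbidden, which is case (b)).

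The main step, and the only one requiring any real care, is the local claim. Suppose $e$ is forced in $X$ with endpoints $v, w$. Since $e$ lies in at least one matching and a loop can never appear in a matching, we conclude $v \neq w$. For any other edge $e'$ of $X$ incident to $v$ or $w$, every matching $M$ of $X$ must contain $e$, and $e$ already uses the endpoint of $e'$ lying in $\{v, w\}$; since matchings use each vertex exactly once, $e' \notin M$. As this holds for every matching, $e'$ is forbidden and is removed in passing to $X_0$. Hence $v$ and $w$ each have degree $1$ in $X_0$, so the component $Y$ containing $e$ is precisely the two-vertex subgraph with vertex set $\{v, w\}$ and single edge $e$. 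I do not expect any of this to be an obstacle in the usual sense; the content is really just the local matching-uniqueness observation, and everything else is bookkeeping.
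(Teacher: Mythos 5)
Your proof is correct and takes essentially the same route as the paper: part (i) is the same direct two-inclusion check, and for part (ii) both arguments establish that a forced edge isolates its two endpoints in $X_0$ (forming a type~(a) component), whence any other component, having already lost its forbidden edges, has every edge neither forced nor forbidden and is therefore of type~(b). Your exposition just makes the local ``forced edges isolate their endpoints'' claim and the $v \neq w$ loop check slightly more explicit than the paper does.
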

Thus, every forced edge of $X$ becomes an isolated component of $X_0$. Every forbidden edge of $X$ is removed to obtain $X_0$. And every edge which is neither forced nor forbidden matchings remains so in some connected component of $X_0$. (This is a hypothesis of Propp's \refthm{Propp_matching}.)

The hypothesis that $\mathscr{M}_X \neq \emptyset$ in (ii) is necessary. If $X$ has no matchings, then $X_0$ has no edges, and each connected component $Y$ of $X_0$ is a single point, to which neither (a) nor (b) applies.

\begin{proof}
As $X$ and $X_0$ have the same vertex sets, and every edge that appears in a matching of $X$ is retained in $X_0$, then every matching on $X$ is also a matching on $X_0$; and as the edges of $X_0$ are a subset of the edges of $X$, every matching of $X_0$ is a matching of $X$. 
This gives (i).  

Let $e$ be a forced edge of $X$, with endpoints $v,w$. Then in any matching, the vertices $v$ and $w$ are paired by $e$. So if $e'$ is an edge other than $e$ incident with $v$ or $w$ then $e'$ is forbidden. Thus $e$ and $v,w$ form a component of type (a) in $X_0$.

Now consider a connected component $Y$ of $X_0$, which is not of type (a), and let $e$ be an edge of $Y$. Then $e$ lies in some matchings of $X$ (as it has not been removed), but does not lie in all matchings (otherwise it would arise in a component of type (a)). Thus $Y$ is of type (b).
\end{proof}

\subsection{Lattice of matchings on disconnected bipartite graphs}
\label{Sec:matchings_disconnected}
\label{Sec:lattice_matchings_reduction}

We now generalise Propp's \refthm{Propp_matching} to show that $\mathscr{M}_G$ has a distributive lattice structure, even when $G$ is disconnected and may contain forced and forbidden edges.

We will need the following lemma, which applies to components of type (a) in \reflem{remove_no_matchings}.
\begin{lem}
\label{Lem:single_edge_lattice}
Suppose $G$ is a plane bipartite graph consisting of a single edge between two distinct vertices. Then $\mathscr{M}_G$, with the relation $\leqslant$ of \refdef{relation_on_matchings1} is a distributive lattice.
\end{lem}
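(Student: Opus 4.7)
The plan is to observe that under the hypotheses, the set $\mathscr{M}_G$ is a singleton, and then remark that any one-element poset is trivially a distributive lattice.

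First I would note that since $G$ consists of a single edge $e$ with two distinct endpoints $v, w$ (necessarily one black and one white, since $G$ is bipartite), any matching $M$ of $G$ must cover both $v$ and $w$, and the only edge available is $e$. Hence $\mathscr{M}_G = \{\{e\}\}$ has exactly one element. The relation $\leqslant$ on this singleton is reflexive (take $n = 0$ in \refdef{relation_on_matchings1}), so it is a partial order.

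The unique element is vacuously the join and the meet of every nonempty subset (namely itself), so $\mathscr{M}_G$ is a lattice in the sense of \refdef{lattice}. The distributive law in \refdef{distributive_lattice} holds trivially since both sides equal the unique element. Thus $(\mathscr{M}_G, \leqslant)$ is a distributive lattice.

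There is really no main obstacle here; the lemma is essentially a degenerate base case used to handle the type-(a) components arising in \reflem{remove_no_matchings}, and the entire content is the observation that $G$ admits a unique matching. The only thing to be careful about is verifying that $\leqslant$ is a bona fide partial order on a one-element set, which is immediate from the definition.
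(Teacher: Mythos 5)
Your proof is correct and takes essentially the same approach as the paper: observe that $\mathscr{M}_G$ is a singleton, and that a one-element poset is trivially a distributive lattice. The paper additionally remarks that $G$ has no bounded faces, hence no elementary cycles and no twisting operations; this is consistent with but not strictly necessary for your argument, since a singleton set with the reflexive relation is a trivial distributive lattice regardless.
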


\begin{proof}
Indeed, $\mathscr{M}_G$ is a singleton set, consisting of a single matching $M$. Since $G$ has no bounded  faces, it has no elementary cycles, hence no twisting operations. The only sequence of twisting down operations is the vacuous one. Thus $\mathscr{M}_G$ forms a trivial distributive lattice.
\end{proof}

Now, let $G$ be a finite bipartite plane graph, possibly disconnected, possibly with multiple edges, and let $G_0$ be its reduction. 
Let the connected components of $G_0$ be $G_1, \ldots, G_n$.
All of $G$, $G_0$, and each $G_j$ for $1 \leq j \leq n$, are finite bipartite plane graphs, so have well-defined sets of matchings and relations $\leqslant$, as in \refsec{matchings} to \refsec{lattices_matchings}. 

By \reflem{remove_no_matchings}(i) we have equalities of sets $\mathscr{M}_G = \mathscr{M}_{G_0}$. Moreover, a matching on $G_0$ is equivalent to a matching $M_j$ on each $G_j$ for $1 \leq j \leq n$. We thus have a bijection of sets
\begin{equation}
\label{Eqn:product_bijection}
\mathscr{M}_{G_0} \cong \mathscr{M}_{G_1} \times \cdots \times \mathscr{M}_{G_n}.
\end{equation}
This bijection of \refeqn{product_bijection} holds even if $G$ has no matchings. In this case, some $G_j$ has no matchings, so the left hand side is the empty set, as is one of the factors on the right, and hence the product on the right hand side is also empty.

If $G$ has no matchings, then $G_0$ has no edges, and $\mathscr{M}_{G_0} = \emptyset$, which we can regard as a trivial distributive lattice. If $G$ has at least one matching, then each $G_j$ for $1 \leq j \leq n$ is of type (a) or (b) as in \reflem{remove_no_matchings}(ii). By \reflem{single_edge_lattice} in case (a), and \refthm{Propp_matching} in case (b), each $(\mathscr{M}_{G_j}, \leqslant)$ for $1 \leq j \leq n$ is a distributive lattice. Hence their product also obtains a distributive lattice structure, using \refdef{product_orders}. Thus both sides of \refeqn{product_bijection} have relations $\leqslant$, and the right hand side has a distributive lattice structure.

We now show that the bijection of \refeqn{product_bijection} extends to an isomorphism of distributive lattices, generalising \refthm{Propp_matching} as follows.
\begin{prop}
\label{Prop:finite_bipartite_plane_lattice}
Let $G$ be a finite bipartite plane graph, possibly disconnected, possibly with multiple edges, and let $G_0$ be its reduction. Let the connected components of $G_0$ be $G_1, \ldots, G_n$.
Then the bijection of sets \refeqn{product_bijection} extends to an isomorphism of distributive lattices
\[
\mathscr{M}_{G_0} \cong \mathscr{M}_{G_1} \times \cdots \times \mathscr{M}_{G_n},
\]
where the relation $\leqslant$ on $\mathscr{M}_{G_0}$ and each $\mathscr{M}_{G_j}$ arises from \refdef{relation_on_matchings1}, and the relation $\leqslant$ on $\mathscr{M}_{G_1} \times \cdots \times \mathscr{M}_{G_n}$ arises from \refdef{product_orders}.

Moreover, for any two matchings $M,M' \in \mathscr{M}_{G_0}$, $M$ is covered by $M'$ (i.e. $M \lessdot M'$) if and only if $M'$ is obtained from $M$ by twisting up at a bounded face of $G_0$.
\end{prop}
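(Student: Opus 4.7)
The plan is to show that the set-theoretic bijection in \refeqn{product_bijection} is in fact an isomorphism of partial orders, where both sides have covering relations that can be matched up precisely; the distributive lattice structure on both sides then follows from \refthm{Propp_matching} (for type (b) components), \reflem{single_edge_lattice} (for type (a) components), and \refdef{product_orders}. The pivotal geometric fact, used throughout, is \reflem{elementary_cycles_components}: every elementary cycle of $G_0$ lies in exactly one component $G_j$ and is an elementary cycle of that $G_j$. Hence a twisting operation on a matching $M \in \mathscr{M}_{G_0}$ along a cycle $A$ involves only edges of a single $G_j$, and alters only the $M_j$ coordinate of $\phi(M)$ under the bijection $\phi$.

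First I would handle the degenerate case $\mathscr{M}_G = \emptyset$: some factor in the product is then empty, both sides are empty, and the claim is vacuous. Assuming $\mathscr{M}_G \neq \emptyset$, \reflem{remove_no_matchings}(ii) gives that each $G_j$ is of type (a) or (b), so each $(\mathscr{M}_{G_j},\leqslant)$ is a distributive lattice, and the product $\mathscr{M}_{G_1} \times \cdots \times \mathscr{M}_{G_n}$ inherits a distributive lattice structure from \refdef{product_orders}. The recalled consequence of \refdef{product_orders} is that $\phi(M) \lessdot \phi(M')$ holds in the product if and only if there is a unique index $j$ with $M_j \lessdot M_j'$ and $M_i = M_i'$ for $i \neq j$.

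The heart of the argument is matching up the covering relations. Suppose $M'$ is obtained from $M$ by twisting up at a bounded face of $G_0$ along an elementary cycle $A$. By \reflem{elementary_cycles_components}, $A$ lies in a unique $G_j$ and bounds a bounded face $f_j$ of $G_j$ on the same side of $A$ as the bounded face $f_0$ of $G_0$ encircled by $A$. Because $A \cap M = A \cap M_j$, and clockwise/counterclockwise encirclement of $f_0$ coincides with clockwise/counterclockwise encirclement of $f_j$ (since $f_0 \subseteq f_j$), the positive/negative character of $A$ relative to $M$ in $G_0$ and relative to $M_j$ in $G_j$ agree; hence twisting up in $G_0$ yields twisting up in $G_j$, and $M_j \lessdot M_j'$ in $\mathscr{M}_{G_j}$ while all other components are unchanged. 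For the converse direction, given a covering $\phi(M) \lessdot \phi(M')$ in the product, the changed component $j$ provides a bounded face $f_j$ of $G_j$ whose elementary cycle $A$ is an elementary cycle of $G_0$ encircling a bounded face $f_0 \subseteq f_j$, and the same alignment of positive/negative shows that twisting up $M$ at $f_0$ yields $M'$, giving $M \lessdot M'$ in $\mathscr{M}_{G_0}$.

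With covering relations in bijection under $\phi$ on both sides, the relations $\leqslant$ agree (by \refdef{relation_on_matchings1} on the left and the characterisation of coverings in a product of finite posets on the right), so $\phi$ is a poset isomorphism, hence a distributive lattice isomorphism, and the ``moreover'' clause is exactly the matching of coverings established in the previous step. The main obstacle I anticipate is the bookkeeping in the previous paragraph: one must verify carefully that, even when components of $G_0$ are nested inside bounded faces of other components, the notions of ``bounded face'', ``positive/negative cycle'', and ``twisting up'' defined intrinsically in a single $G_j$ genuinely coincide with the same notions defined in $G_0$ for the cycle $A$. The geometric point that $f_0 \subseteq f_j$ lie on the same side of $A$, together with \reflem{elementary_cycles_components}, is what makes this coincidence work.
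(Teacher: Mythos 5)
Your proof is essentially the same as the paper's: both hinge on \reflem{elementary_cycles_components} to identify elementary cycles of $G_0$ with elementary cycles of the individual $G_j$, then conclude that a single twisting-up operation on $\mathscr{M}_{G_0}$ corresponds to a single-coordinate twisting-up on the product, after which \refthm{Propp_matching}, \reflem{single_edge_lattice}, and \refdef{product_orders} finish the argument. Your geometric justification that the positive/negative character of $A$ is read the same way in $G_0$ and $G_j$ (because $f_0 \subseteq f_j$ lie on the same side of $A$) is exactly right.

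One small presentational caution: in your final paragraph you phrase the conclusion as ``covering relations in bijection under $\phi$ on both sides, [so] the relations $\leqslant$ agree.'' Strictly, the previous step establishes a bijection between \emph{twisting-up operations} on $\mathscr{M}_{G_0}$ and \emph{covers} in the product; you cannot yet speak of ``covers in $\mathscr{M}_{G_0}$'' because $\leqslant$ on $\mathscr{M}_{G_0}$ is only known to be a partial order once you have compared it to the product order. The argument is nonetheless sound: since $\leqslant$ on $\mathscr{M}_{G_0}$ is by \refdef{relation_on_matchings1} the reflexive transitive closure of the twisting-up relation, and the product order is the reflexive transitive closure of its covering relation (as a finite poset), the two transfer across $\phi$ and the order isomorphism follows. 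The paper does the bookkeeping in the opposite order --- first showing sequences of twisting-ups correspond, hence $\leqslant$ is preserved, and then deducing the covering statement from the product characterisation plus \refthm{Propp_matching} --- but the substance is identical.
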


Since $\mathscr{M}_G = \mathscr{M}_{G_0}$ as sets, we can regard \refprop{finite_bipartite_plane_lattice} as providing a distributive lattice structure on $\mathscr{M}_G$, generalising the restricted case of \refthm{Propp_matching}, which we refer to as the \emph{Propp lattice} 
of $G$.

\begin{proof}
We first treat the trivial case where $\mathscr{M}_G = \emptyset$. Then $G_0$ is a finite set of isolated vertices, each $G_j$ is a single vertex, and all matching sets are empty. The desired isomorphism is then an isomorphism of vacuous lattices. We may henceforth assume that $\mathscr{M}_{G} = \mathscr{M}_{G_0} \neq \emptyset$, and hence all $\mathscr{M}_{G_j} \neq \emptyset$. Throughout this proof, $j$ denotes an index between $1$ and $n$.

By \reflem{elementary_cycles_components}, $G_0$ and the $G_j$ have the same set of elementary cycles on which to perform twisting down operations. Thus a twisting up operation on a matching of $G_0$ is equivalent to a twisting up operation on a matching $M_j$ of some $G_j$. More precisely, suppose $M \in \mathscr{M}_{G_0}$ corresponds to $(M_1, \ldots, M_n) \in \mathscr{M}_{G_1} \times \cdots \times \mathscr{M}_{G_n}$. Then a twisting up operation on $M$ yields $M' \in \mathscr{M}_{G_0}$ corresponding to $(M'_1, \ldots, M'_n) \in \mathscr{M}_{G_1} \times \cdots \times \mathscr{M}_{G_n}$, where $M'_j$ is obtained from $M_j$ by a twisting up on $G_j$, for a unique $j$, and $M'_i = M_i$ for all $i \neq j$. Conversely, a twisting down operation on $M_j$, resulting in $M'_j$, yields a twisting down from $M$ to $M'$, corresponding to $(M'_1, \ldots, M'_n)$ where all $M'_i = M_i$ for $i \neq j$.

Suppose $M, M' \in \mathscr{M}_{G_0}$, corresponding to $(M_1, \ldots, M_n), (M'_1, \ldots, M'_n) \in \mathscr{M}_{G_1} \times \cdots \times \mathscr{M}_{G_n}$, satisfy $M \leqslant M'$. Then by \refdef{relation_on_matchings1}, there is a sequence $M = M^0, M^1, \ldots, M^m = M'$ in $\mathscr{M}_{G_0}$, such that each $M^{i+1}$ is obtained from $M^{i}$ by twisting up on some negative cycle in $G$. Each such twisting up amounts to a twisting up on a negative cycle in  some $G_j$. Thus after performing all the twisting down operations, and arriving at $M'$, we have $M_1 \leqslant M'_1$, $M_2 \leqslant M'_2$, $\ldots$, $M_n \leqslant M'_n$, and hence $(M_1, \ldots, M_n) \leqslant (M'_1, \ldots, M'_n)$ in the distributive lattice $\mathscr{M}_{G_1} \times \cdots \times \mathscr{M}_{G_n}$.

Conversely, suppose $(M_1, \ldots, M_n) \leqslant (M'_1, \ldots, M'_n)$ in the distributive lattice $\mathscr{M}_{G_1} \times \cdots \times \mathscr{M}_{G_n}$, with corresponding matchings $M,M' \in \mathscr{M}_{G_0}$. Then each $M'_j$ is obtained from $M_j$ by a sequence of twisting up operations on negative cycles of $G_j$. Thus $(M'_1, \ldots, M'_n)$ is obtained from $(M_1, \ldots, M_n)$ by a sequence of twisting up operations on negative cycles on various $G_j$. This sequence of twisting up operations can also be performed on  corresponding matchings in $G_0$, by changing the matching on the appropriate $G_j$ at each step, and leaving all the matchings on the $G_i$ for $i \neq j$ unchanged at that step. Hence $M \leqslant M'$.

Thus the bijection \refeqn{product_bijection} preserves relations $\leqslant$. As $\mathscr{M}_{G_1} \times \cdots \times \mathscr{M}_{G_n}$ has a distributive lattice structure, then we have an isomorphism of distributive lattices as desired.

As noted after \refdef{product_orders}, in the distributive lattice structure on the product $\mathscr{M}_{G_1} \times \cdots \times \mathscr{M}_{G_n}$, we have $(M_1, \ldots, M_n) \lessdot (M'_1, \ldots, M'_n)$ if and only if $M_j \lessdot M'_j$ for a unique $j$, and $M_i = M'_i$ for all $i \neq j$. The component $G_j$ involved cannot be of type (a) of \reflem{remove_no_matchings}, hence must be of type (b), so by \refthm{Propp_matching}, $M'_j$ is obtained from $M_j$ by twisting up. A twisting up on a single $G_j$ is equivalent to a twisting up on $G_0$. Thus for $M,M' \in \mathscr{M}_{G_0}$, $M \lessdot M'$ if and only if $M'$ is obtained from $M$ by twisting up.
\end{proof}

\section{Multiverses in general}
\label{Sec:multiverses_general}

\subsection{Multiverse surface, interior and exterior}
\label{Sec:multiverse_surface}

As discussed in \refsec{string_universes}, Kauffman universes are equivalent to universes on discs. Defined in \refdef{universe_on_disc}, these are graphs embedded on discs with various properties and decorations. Kauffman's formal knot theory repeatedly uses a notion of ``exterior" as in the Jordan curve theorem, for instance in choosing starred faces adjacent to the exterior. More generally, when a compact orientable genus zero surface is  embedded in $\R^2$, it obtains an orientation, and one of the the boundary components is distinguished as outermost. A simple closed curve on the surface then obtains an interior and exterior. This motivates the following definitions.
\begin{defn}[Multiverse surface]
\label{Def:plane_surface}
A \emph{multiverse surface} $(\Sigma, \partial \Sigma_0)$ is a  compact connected oriented surface $\Sigma$, together with a distinguished boundary component $\partial \Sigma_0$, called the \emph{outer} boundary.
\end{defn}
When the distinguished boundary component is understood or irrelevant, we simply write $\Sigma$ for a multiverse surface. By definition, a multiverse surface $\Sigma$ has nonempty boundary. If $\Sigma$ has a single boundary component, it must be the outer boundary. 

\begin{defn}[Interior, exterior of curve]
\label{Def:interior_exterior}
Let $(\Sigma, \partial \Sigma_0)$ be a multiverse surface, and let $\gamma$ a separating simple closed curve in the interior of $\Sigma$. The \emph{exterior} (resp. \emph{interior}) of $\gamma$ is the component of $\Sigma \setminus \gamma$ containing $\partial \Sigma_0$ (resp. not containing $\partial \Sigma_0$).
\end{defn}

When a graph is embedded (\refdef{graph_embedding}) entirely in the interior of a multiverse surface $\Sigma$, similar considerations apply. 
As the graph is disjoint from $\partial \Sigma$, there is a unique face (\refdef{faces}) adjacent to each boundary component.
\begin{defn}[Outer face] 
\label{Def:outer_face}
\label{Def:outer_face_of_spine}
Let $G$ be a graph embedded in the interior of a multiverse surface $(\Sigma, \partial \Sigma_0)$.
The \emph{outer} face of $G$ is the face containing $\partial \Sigma_0$.
\end{defn}

In general, a face of a graph embedded on a multiverse surface may have multiple boundary components.

\subsection{Multiverse graphs}

In \refdef{universe_on_disc} we defined a universe on a disc. A multiverse graph involves a similar graph embedding on a multiverse surface.
\begin{defn}[Multiverse graph]
\label{Def:plane_multiverse_graph}
A \emph{multiverse graph} is a finite graph $U$, possibly disconnected, possibly with loops and multiple edges, embedded in a multiverse surface $(\Sigma, \partial \Sigma_0)$. Each vertex of $U$ has degree $1$ or $4$, with the degree $1$ vertices on $\partial \Sigma$, and the vertices of degree $4$ in the interior of $\Sigma$.
\end{defn}
Note that by \refdef{graph_embedding} of a graph embedding, which includes a properness requirement, the interior of every arc lies in the interior of $\Sigma$.

A multiverse graph may be embedded entirely in the interior of $\Sigma$, in which case all vertices have degree $4$. It may also have all its vertices on $\partial \Sigma$, in which case all vertices have degree $1$.

\begin{defn}[Vertices, edges and faces]
\label{Def:vef_labels}
Let $U$ be a multiverse graph on a multiverse surface $(\Sigma, \partial \Sigma_0)$. Consider its vertices, edges and faces.
\begin{enumerate}
\item 
A \emph{boundary vertex} is a vertex of degree $1$. The number of boundary vertices is denoted $V_\partial$.
\item 
An \emph{interior vertex} is a vertex of degree $4$. The number of interior vertices is denoted $V_{int}$.
\item 
A \emph{free edge} is an edge of which has an endpoint on $\partial \Sigma$.
\item 
An \emph{interior edge} of $U$ is an edge which has both endpoints in the interior of $\Sigma$.
\item 
The number of faces of $U$ is denoted $F$.
\end{enumerate}
\end{defn}
As the terminology suggests, boundary vertices lie on $\partial \Sigma$ and interior vertices lie in the interior of $\Sigma$.  Boundary vertices can lie on the distinguished outer boundary component $\partial \Sigma_0$ or other boundary components. A free edge can have one or both endpoints on $\partial \Sigma$, which may may not include the outer boundary.

Since the sums of degrees of vertices must be even we have the following.
\begin{lem}
\label{Lem:multiverse_boundary_vertices_even}
In a multiverse graph, $V_\partial$ is even. We write $V_\partial = 2N$. \qed
\end{lem}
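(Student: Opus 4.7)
The plan is to apply the handshake lemma (the sum of all vertex degrees in a finite graph equals twice the number of edges, hence is even). By \refdef{plane_multiverse_graph}, each vertex of $U$ has degree $1$ or $4$, with the degree $1$ vertices being precisely the boundary vertices and the degree $4$ vertices being precisely the interior vertices. So the sum of degrees of all vertices of $U$ equals $V_\partial + 4 V_{int}$.

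First I would invoke the handshake lemma to conclude that $V_\partial + 4 V_{int}$ is even. Since $4 V_{int}$ is clearly even, it follows immediately that $V_\partial$ is even, and so we may write $V_\partial = 2N$ for some nonnegative integer $N$.

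There is no real obstacle here; the only subtlety is making sure the handshake lemma applies to graphs with loops and multiple edges (it does, since a loop contributes $2$ to the degree of its base vertex, and each non-loop edge contributes $1$ to the degree of each of its two endpoints, so the total still equals $2|E(U)|$). This is standard and needs no further comment.
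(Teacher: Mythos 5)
Your proposal is correct and uses exactly the same argument as the paper, which justifies the lemma by the remark immediately preceding it: ``Since the sums of degrees of vertices must be even we have the following.'' Your write-up simply spells out the handshake-lemma computation $V_\partial + 4V_{int} \equiv 0 \pmod 2$ that the paper leaves implicit.
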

A multiverse graph on $\Sigma$ can be regarded as a collection of ``strings" on $\Sigma$, as a knot diagram with flattened crossings. Proceeding directly across each degree-4 vertex from edge to opposite edge, $U$ can be regarded as a union of closed curves (``closed strings"), and arcs between vertices on $\partial \Sigma$ (``open strings). Double counting open strings by their endpoints, $N$ is the number of open strings.

\subsection{Definition of multiverse}
\label{Sec:multiverse_defn}

Having defined a multiverse graph, to obtain a structure analogous to \refdef{universe_on_disc}, it remains to specify starred faces. Following \refdef{universe_on_disc}, we require them to be adjacent to the outer boundary. 

\begin{defn}[Multiverse] 
\label{Def:multiverse}
\label{Def:multiverse_on_surface_with_genus}
A \emph{multiverse} is a triple $(U, \Sigma, \mathscr{F})$ where
\begin{enumerate}
\item $\Sigma$ is a multiverse surface with outer boundary $\partial \Sigma_0$;
\item 
$U$ is a multiverse graph on $\Sigma$; 
\item 
$\mathscr{F}$ is a set of $F-V_{int}$ distinct faces of $U$ adjacent to $\partial \Sigma_0$, called \emph{starred faces}.
\end{enumerate}
\end{defn}
As with Kauffman universes, in diagrams we mark starred faces with stars, and refer to the other faces as \emph{unstarred}.  When $\Sigma$ has genus $0$, we call $(U, \Sigma, \mathscr{F})$ a \emph{planar multiverse}. When $U$ is 2-cell embedded in $\Sigma$, we call $(U, \Sigma, \mathscr{F})$ a \emph{2-cell embedded multiverse}.
When $\mathscr{F}$ or $\Sigma$ are understood or not relevant, we refer to a multiverse as $U$.

Part (iii) of the definition requires that $F \geq V_{int}$, and moreover, that there be at least $F-V_{int}$ distinct faces adjacent to the outer boundary. These conditions are by no means automatically satisfied, so condition (iii) restricts the possible graphs which may arise. We discuss these constraints further in \refsec{topology_multiverses}. 

\reffig{exampleofmultiverse} shows two examples of planar multiverses on a disc (the boundary of the disc is not shown). The left example has $U$ connected with $V_\partial = 2N = 8$, hence $N=4$ open strings. There are no closed strings, and we have $V_{int} = 11$, $F = 16$, hence $5$
stars. The right example has $U$ disconnected, with loops and multiple edges. It has $V_{\partial} = 2N = 10$, hence $N=5$ open strings, and  $1$ closed string. It also has $V_{int} = 9$, $F=15$, hence $6$ 
stars.

When $\Sigma$ is a disc, $U$ is connected, and $N=1$, one can show (see \reflem{Euler_char_arg}) that $F - V_{int} = 2$, giving 2 starred faces, and \refdef{multiverse} reduces to \refdef{universe_on_disc} of a universe on a disc.

\subsection{States and transposition contours}
\label{Sec:states_contours}

By design, the number of unstarred faces in a multiverse is equal to the number of interior vertices.
Hence \refdef{state_universe} of a state generalises immediately as follows, as suggested for universes on a disc in \refsec{string_universes}.
\begin{defn}[State]\label{Def:state}
A \emph{state} of a multiverse $(U, \Sigma, \mathscr{F})$ is a choice of corner at each interior vertex of $U$, so that each unstarred face is chosen precisely once.
The set of states of $U$ is denoted $\mathscr{S}_U$.
\end{defn}
Again, we denote states by placing a marker in each chosen corner.
There may be corners at which markers always or never arise.
\begin{defn}
\label{Def:forced_forbidden_corner}
Let $(U, \Sigma, \mathscr{F})$ be a multiverse. Let $\alpha$ be a corner at an interior vertex of $U$.
\begin{enumerate}
\item 
$\alpha$ is \emph{forced} if every state of $U$ places a marker at $\alpha$.
\item 
$\alpha$ is \emph{forbidden} if no state of $U$ places a marker at $\alpha$.
\end{enumerate}
\end{defn}
Clearly any corner in a starred face is forbidden. The same terminology as \refdef{forced_forbidden}, considering matchings on a graph, is intentional, and in \reflem{forced_forbidden_corners_edges} we justify it.

We will define two distinct generalisations of Kauffman transpositions (\refdef{transposition}), namely the \emph{plane transpositions} of \refthm{main_thm_1} and the \emph{surface transpositions} of \refthm{main_thm_2}. 
Both generalisations involve certain simple closed curves which we call \emph{transposition contours}, or just \emph{countours}.

To describe a contour, consider a state $S$ of a multiverse $(U, \Sigma, \mathscr{F})$, and consider $n \geq 1$ distinct interior (i.e. degree-4) vertices of $U$. Label these vertices $v_j$ over $j \in \Z/n\Z$. Let $\alpha_j$ be the corner at $v_j$ chosen by $S$, and let $F_j$ be the (necessarily unstarred) face of $U$ containing $\alpha_j$. Thus we have $n$ distinct faces $F_j$ over $j \in \Z/n\Z$. See \reffig{alternating_cycle} (left) for an illustration.

\begin{figure}
    \centering
    \includegraphics[width=0.45\textwidth]{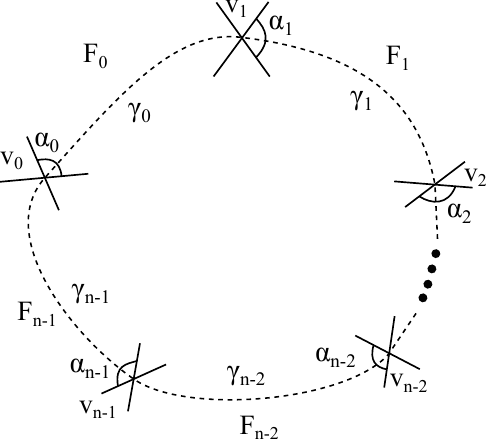}
    \includegraphics[width=0.45\textwidth]{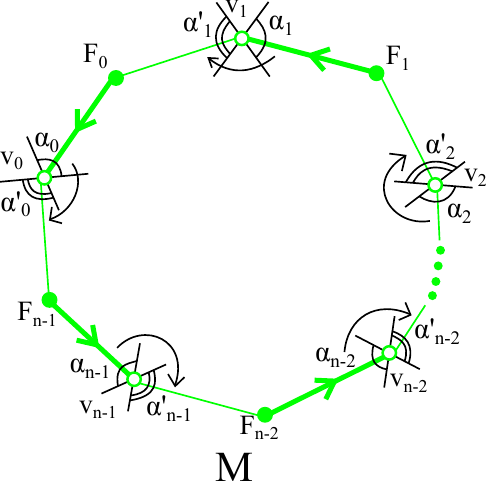}
    \caption{Left: A transposition contour. Right: The corresponding alternating cycle. Edges of $U$ are shown in black. The reduced spine is shown in green. Edges in the matching are drawn thick.}\label{Fig:alternating_cycle}
\end{figure}

\begin{defn}
\label{Def:transposition_contour}
Let $(U, \Sigma, \mathscr{F})$ be a multiverse, and $S$ a state.
An \emph{$n$-transposition contour}, or just \emph{transposition contour} for $S$ is a simple closed curve $\gamma$ on $\Sigma$, 
for which there exist $v_j, \alpha_j, F_j$ as above,
and $n \geq 1$ arcs $\gamma_j$ over $j \in \Z/n\Z$, where each $\gamma_j$:
\begin{enumerate}
\item has endpoints $v_j$ and $v_{j+1}$;
\item has interior lying in $F_j$;
\item is incident to $v_j$ through $\alpha_j$;
\end{enumerate}
such that $\gamma$ is made by joining the arcs $\gamma_j$.
The $v_j$ are the \emph{vertices} and the $F_j$ are the \emph{faces} of $\gamma$.
\end{defn}
Note that we allow $n=1$ in the above, in which case $\gamma = \gamma_1$ is a loop at a single vertex $v_1$, adjacent to the face $F_1$ through two distinct corners.

From a state and a transposition contour, we can obtain another state as follows. Let $\alpha'_{j+1}$ be the corner through which $\gamma_j$ is incident to $v_{j+1}$. Then each $F_j$ is incident to $v_j$ through the corner $\alpha_j$, and to $v_{j+1}$ through the corner $\alpha'_{j+1}$. Thus, at each vertex $v_j$ the faces $F_{j-1}$ and $F_j$ are incident through the corners $\alpha'_j, \alpha_j$ respectively. Replacing each $\alpha_j$ with $\alpha'_j$ turns the state $S$ into another state $S'$.
\begin{defn}[Contour $n$-transposition] \label{Def:partial_transposition}
Let $\gamma$ be a contour for a state $S$ on a multiverse $(U, \Sigma, \mathscr{F})$ as above. Then the operation of replacing each marker $\alpha_j$ in $S$ with $\alpha'_j$ to obtain a state $S'$ is called \emph{contour $n$-transposition}, or just \emph{contour transposition} of $S$ along $\gamma$.
\end{defn}

A Kauffman transposition (\refdef{transposition}) is a specific type of contour $2$-transposition. We may draw a contour as in \reffig{Kauffman_as_contour_transposition}, through the $2$ vertices and faces involved. However, in a Kauffman transposition, the corners $\alpha_j, \alpha'_j$ are adjacent at each $v_j$. This may not be the case in general. 

Transposition contours are in general far from unique. Even if we specify all the data of vertices $v_j$, faces $F_j$ and corners $\alpha_j, \alpha'_j$ involved, each $\gamma_j$ may not even be determined uniquely up to homotopy relative to endpoints. We  define two equivalence relations on contours, one using incidence relations in $U$, the other similar to \refdef{isotopy_embedded_graphs} of isotopy for graph embeddings.
\begin{defn}[Corner-equivalent, isotopic contours]
\label{Def:equivalent_contours}
\label{Def:isotopy_contour}
Let $\gamma, \gamma'$ be transposition contours for a state $S$. Then $\gamma, \gamma'$ are:
\begin{enumerate}
\item     
\emph{corner-equivalent} if they involve the same vertices $v_j$, faces $F_j$, and corners $\alpha_j, \alpha'_j$;
\item
\emph{isotopic} if there is a continuous family of transposition contours $\gamma_t$ for $t \in [0,1]$ from $\gamma_0 = \gamma$ to $\gamma_1 = \gamma'$.
\end{enumerate}
\end{defn}

Both corner-equivalence and isotopy provide equivalence relations on transposition contours. Two isotopic contours $\gamma_0, \gamma_1$ must pass through the same vertices and corners through the isotopy, hence are corner-equivalent. Two corner-equivalent contours are in general not isotopic, but we will see in \refsec{spines_framings} that in some circumstances they are isotopic. 

The state $S'$ obtained by contour transposition along $\gamma$ only depends on the incidence relations between faces and corners in $\gamma$, and hence only on the corner-equivalence class of $\gamma$.

\subsection{Tait graph and spine}
\label{Sec:Tait_spine}

Given a multiverse $(U, \Sigma, \mathscr{F})$, consider building a bipartite graph embedded in $\Sigma$, with vertices coloured black and white, as follows.

Place a white vertex at each interior vertex of $U$, and a black vertex in each face of $U$. We consider \emph{adjacency triples} $(w,\alpha,b)$ where $w$ is a white vertex (hence a 4-valent vertex of $U$), $\alpha$ is one of the 4 corners of $U$ at $w$, and $b$ is the black vertex in the same face of $U$ as $\alpha$. In other words, an adjacency triple $(w, \alpha, b)$ registers the incidence of a vertex $w$ of $U$ with a face $b$ of $U$ through a corner $\alpha$ of $w$. We place an edge for each adjacency triple, resulting in the following construction, similar to the notion of overlaid Tait graph used e.g. in \cite{Celoria_uber_23, Cohen_Teicher_14}. 
\begin{defn}[Tait graph]\label{Def:Tait}
A \emph{Tait graph} of a multiverse graph $U$ on a multiverse surface $\Sigma$ is a bipartite graph $T$ embedded on $\Sigma$ as follows:
\begin{enumerate}
\item
The \emph{white vertices} of $T$ are the interior vertices of $U$.
\item 
One \emph{black vertex} of $T$ is placed in each face of $U$.
\item 
$T$ has an edge for each adjacency triple $(w, \alpha, b)$, with endpoints $w$ and $b$, embedded as an arc with interior in the face containing $b$, and departing $w$ through the corner $\alpha$.
\end{enumerate}
\end{defn}
It is common to construct a Tait graph of a knot diagram using a chequerboard colouring of the faces, so that the graph described below is an ``overlay" of the two Tait graphs from each colour. However, in a multiverse there is in general no such colouring.

Note the above definition does not make use of starred faces; it is associated to a multiverse graph, rather than a universe. Using only the unstarred faces of the multiverse, we define the following  graph, which is  crucial in the sequel.
\begin{defn}[Spine]\label{Def:spine}
A \emph{spine} of a multiverse $(U, \Sigma, \mathscr{F})$ is the induced subgraph $G$ of a Tait graph $T$ on its white vertices, and black vertices in unstarred faces.
\end{defn}
Thus, a spine has a white vertex for each interior vertex of $U$, a black vertex in each unstarred face of $U$, and an edge at each corner of an interior vertex of $U$ in an unstarred face.

For each corner $\alpha$ of a vertex of $U$, there is at most one adjacency triple including $\alpha$, and hence at most one edge of $G$ or $T$. Thus an edge of a Tait graph or spine may be identified by its corner $\alpha$.

Similar to our discussion of contours in \refsec{states_contours}, neither a Tait graph nor a spine of $(U, \Sigma, \mathscr{F})$ is generally unique. Hence we refer to \emph{a} (rather than \emph{the}) Tait graph or spine. While a Tait graph or spine is not generally unique, the underlying (non-embedded) graph only depends on incidence relations in the multiverse, and thus \emph{is} uniquely determined by the multiverse. 

There is a natural notion of isotopy of Tait graphs or spines, following \refdef{isotopy_embedded_graphs} for embedded graphs and \refdef{isotopy_contour} for contours. 
\begin{defn}[Isotopy of Tait graphs and spines] \
\label{Def:isotopy_Tait_graph}
Two spines (resp. Tait graphs) are \emph{isotopic} if there is a continuous family of spines (resp. Tait graphs) from one to the other.
\label{Def:isotopy_spine}
\end{defn}
In such an isotopy, each white vertex is fixed at an interior vertex of $U$. Black vertices must stay in the interior of a face $f$ but may move freely inside $f$. Each edge may move freely, subject to the constraints of proceeding from a prescribed white vertex, through a prescribed corner, within a prescribed face, to a prescribed black vertex, without intersecting other edges.

When a face $f$ of $U$ is not a disc, the embedding of edges of $T$ or $G$ in $f$
is not unique, even up to homotopy fixing endpoints. However, when $f$ is homeomorphic to a disc, then $T$ and $G$ are unique in $f$ up to isotopy. 

A spine $G$ of a multiverse need not be connected, and may have multiple edges. For instance the top right example of \reffig{exampleofmultiverse} has disconnected spine (since the starred faces separate the unstarred faces) and has multiple edges around the loop in $U$. However, as $G$ is bipartite, it does not have loops. 

As both the Tait graph and the spine are embedded in the interior of the multiverse surface $\Sigma$, by \refdef{outer_face} they each have a distinguished outer face.

In a Tait graph, every white vertex has degree $4$; in a spine, 
every white vertex has degree at most $4$.

Since $G$ is obtained from $T$ by removing vertices and edges in starred faces of $U$, and starred faces are adjacent to $\partial \Sigma$, we immediately have the following, which will be useful in the sequel.
\begin{lem}
\label{Lem:where_Tait_spine_coincide}
Let $(U, \Sigma, \mathscr{F})$ be a multiverse, $T$ a Tait graph, and $G$ a spine subgraph of $T$. Let $\Pi$ be the subset of $\Sigma$ formed by the closure of all faces of $U$ not adjacent to $\partial \Sigma_0$. Then $G$ and $T$ coincide on $\Pi$. \qed
\end{lem}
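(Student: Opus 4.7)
The plan is to argue directly from the definitions. By \refdef{spine}, the spine $G$ is the induced subgraph of $T$ on all white vertices of $T$ together with the black vertices lying in unstarred faces. Thus $G$ is obtained from $T$ by removing only the black vertices that lie in starred faces, and all edges of $T$ incident to those vertices. So to prove the claim, it suffices to show that none of the black vertices removed, and none of the edges removed, meet the subsurface $\Pi$.

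First I would observe that by \refdef{multiverse}(iii), every face $f \in \mathscr{F}$ is adjacent to $\partial \Sigma_0$; equivalently, any face not adjacent to $\partial \Sigma_0$ is unstarred. Since $\Pi$ is the closure of the union of faces not adjacent to $\partial \Sigma_0$, the only black vertices of $T$ meeting $\Pi$ are those in unstarred faces, and all of these remain in $G$. Hence no removed black vertex lies in $\Pi$.

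Next I would handle the edges. By \refdef{Tait}(iii), each edge $e$ of $T$ corresponds to an adjacency triple $(w,\alpha,b)$, and is embedded as an arc with interior contained in the face of $U$ containing $b$ (and endpoint at the black vertex $b$, with its other endpoint at the white vertex $w$). If $e$ meets the interior of $\Pi$, then the open face containing $b$ meets $\Pi$, which forces that face to be one of the unstarred faces whose closure makes up $\Pi$; in particular $b$ is in an unstarred face and so $b$, together with $e$, lies in $G$. The remaining points of $e$ in $\Pi$ lie on $\partial \Pi$, and these are either the white vertex $w$ (which is in $G$ by construction) or points on the boundary of the face, which are accounted for by the same argument applied to an adjacent face.

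Finally, since every white vertex of $T$ is retained in $G$, the intersection of $T$ with $\Pi$ consists only of vertices and edges that belong to $G$, and conversely every piece of $G$ in $\Pi$ clearly comes from $T$. Therefore $G \cap \Pi = T \cap \Pi$, as claimed. The argument is essentially an unpacking of definitions, and I expect no real obstacle beyond being careful that the embedded edges of $T$ associated to a black vertex $b$ are entirely contained in $\overline{f}$, where $f$ is the face containing $b$, so that removing or retaining them can be decided face-by-face.
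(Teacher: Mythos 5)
Your proof is correct and follows essentially the same route as the paper: the only material of $T$ absent from $G$ is the set of black vertices in starred faces together with the edges incident to them, all of which (apart from their retained white endpoints) lie inside the open starred faces, and since starred faces are by definition adjacent to $\partial\Sigma_0$ while $\Pi$ is a closed set disjoint from every open face adjacent to $\partial\Sigma_0$, nothing deleted meets $\Pi$. The paper compresses exactly this observation into the single sentence preceding the lemma and gives no further proof.
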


\subsection{Spines and framings}
\label{Sec:spines_framings}

A spine provides a way to ``frame" or ``guide" transposition contours, and so we make the following crucial definition.
\begin{defn}
\label{Def:framed_multiverse}
A \emph{framed multiverse} is a quadruple $(U, \Sigma, \mathscr{F}, G)$ consisting of a multiverse $(U, \Sigma, \mathscr{F})$ together with a spine $G$. We also refer to $G$ as a \emph{framing} of $(U, \Sigma, \mathscr{F})$.
\end{defn}

We now discuss how to use the spine $G$ to ``frame" a transposition contour. So let $(U, \Sigma, \mathscr{F}, G)$ be a framed multiverse, $S$ a state, and $\gamma$ an $n$-transposition contour for $S$. Thus as in \refsec{states_contours} and \refdef{transposition_contour}, $\gamma$ consists of arcs $\gamma_j$, over $j \in \Z/n\Z$, passing through interior vertices $v_j$, corners $\alpha_j$ and $\alpha'_j$, and faces $F_j$. Each arc $\gamma_j$ travels from $v_j$ via the corner $\alpha_j$, through face $F_j$, to $v_{j+1}$ via the corner $\alpha'_{j+1}$.

Consider the restriction of $G$ to the closure of the face $F_j$. There is a white vertex $w$ at each interior vertex of $U$ on the boundary of $F_j$, and a single black vertex which we denote $b_j$. There is an edge connecting $b_j$ to each white vertex $w$ through each corner $\alpha$ of $F_j$ at $w$. In other words, its edges are given by adjacency triples $(w, \alpha, b_j)$, and are uniquely identified by corners.
\begin{defn}
\label{Def:framed_contour}
Let $(U, \Sigma, \mathscr{F}, G)$ be a framed multiverse and $S$ a state. A \emph{framed transposition contour} is an transposition contour $\gamma$ for $S$ such that each arc $\gamma_j$ is homotopic in $F_j$, relative to endpoints, to the directed path in $G$ from $v_j$ to $v_{j+1}$ along the edges corresponding to corners $\alpha_j, \alpha'_{j+1}$.
\end{defn}
Thus, a framed transposition contour must be guided, or ``framed", by the edges of the spine $G$. \reffig{contours_framed_and_not} shows examples of transposition contours, framed and not. 

\begin{figure}
\begin{center}
\includegraphics[width=0.6\textwidth]{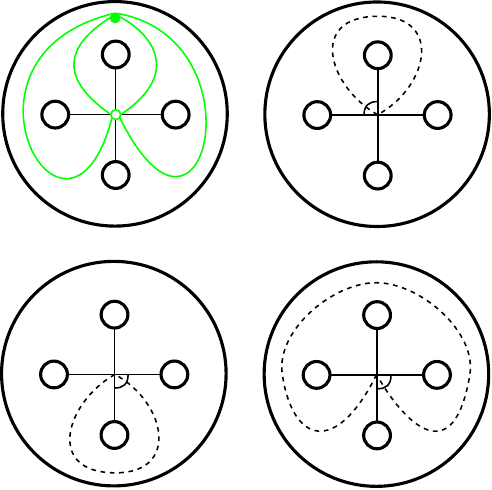}
\end{center}
    \caption{Top left: a framed planar multiverse, with spine/framing shown in green. Top and bottom right: Framed transposition contours. Bottom left: A transposition contour which is not framed.}
    \label{Fig:contours_framed_and_not}
\end{figure}

Because they are constrained to be homotopic in each face $F_j$ to arcs of a spine, we immediately have the following statement about framed contours.
\begin{lem}
\label{Lem:framed_contour_equivalent}
Two framed transposition contours are corner-equivalent if and only if they are isotopic.
\qed
\end{lem}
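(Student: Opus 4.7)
The plan is to prove the two implications separately. The forward direction (isotopic contours are corner-equivalent) does not use the framing hypothesis and is essentially immediate: in a continuous family of transposition contours $\gamma_t$, the vertices $v_j(t)$ visited by $\gamma_t$ lie in the discrete set of interior vertices of $U$ and are hence constant in $t$; by continuity the corners $\alpha_j(t), \alpha'_j(t)$ and the faces $F_j(t)$ containing the arcs are likewise constant, so $\gamma_0$ and $\gamma_1$ are corner-equivalent.

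For the reverse direction, I would start with framed corner-equivalent contours $\gamma$ and $\gamma'$, decomposing into arcs $\gamma_j$ and $\gamma'_j$ sharing the same endpoints $v_j, v_{j+1}$, incident corners $\alpha_j$ and $\alpha'_{j+1}$, and containing face $F_j$. Applying \refdef{framed_contour} to each contour, both $\gamma_j$ and $\gamma'_j$ are homotopic rel endpoints in $\overline{F_j}$ to the same two-edge spine path $v_j \to b_j \to v_{j+1}$, hence homotopic to each other rel endpoints in $\overline{F_j}$.

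The key step is then to promote this homotopy to an ambient isotopy, invoking the classical surface-topology result (Epstein, \emph{Curves on $2$-manifolds and isotopies}) that two simple properly embedded arcs in a compact surface with common endpoints which are homotopic rel endpoints are ambient-isotopic rel endpoints; this applies to $\overline{F_j}$ regardless of its genus or number of boundary components. I would first perform a preliminary isotopy in a small collar neighborhood of $v_j$ and $v_{j+1}$ to make $\gamma_j$ and $\gamma'_j$ coincide there, using that they approach each endpoint through the same corner, after which Epstein's theorem yields an isotopy in $\overline{F_j}$ with correct corner incidence throughout. Because the faces $F_j$ are pairwise distinct with disjoint interiors, and the arcs meet only at the vertices $v_j$, these isotopies can be performed simultaneously and glued into a single continuous family of simple closed curves from $\gamma$ to $\gamma'$, each of which is a transposition contour by construction. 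The main obstacle will be invoking the Epstein-type isotopy result in a face $F_j$ that need not be a disc and guaranteeing corner incidence throughout; both should reduce to a standard collar argument at the vertices of $U$.
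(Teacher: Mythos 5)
Your proof is correct, and it fills in details the paper elides: the paper merely remarks that the lemma is immediate from the defining homotopy constraint and gives no proof, tacitly assuming the standard surface-topology fact (Epstein/Baer, or the bigon criterion) that homotopic properly embedded simple arcs with the same endpoints are ambient isotopic rel endpoints. Your argument makes this explicit and correctly handles the collar of the vertices and the gluing of face-by-face isotopies, so it is the same approach as the paper intends, spelled out carefully.
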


When each unstarred face of $U$ is a disc, $G$ is unique up to isotopy, so there is a unique framing up to isotopy. Hence all transposition contours are framed. So contours are corner-equivalent if and only if they are isotopic. 

In particular, when $U$ is 2-cell embedded, it has a unique framing up to isotopy, all contours are framed, and contours are corner-equivalent if and only if they are isotopic.

\subsection{States and matchings on a spine}
\label{Sec:states_matchings}

We now consider the set of matchings (\refdef{matching}) $\mathscr{M}_G$ of a spine $G$ of a multiverse $(U, \Sigma, \mathscr{F})$. 

Given a state $S$ of $U$, we may find a matching $M$ of $G$ as follows.
Each white vertex $w$ of $G$ lies at a 4-valent vertex of $U$, at which $S$ assigns a corner $\alpha$. This corner $\alpha$ lies in an unstarred face of $U$ corresponding to a black vertex $b$ of $G$. We thus obtain an adjacency triple $(w, \alpha, b)$, hence an edge of $G$, at each white vertex $w$ of $G$. Let $M$ be the set of these edges. By construction, each white vertex of $G$ belongs to precisely one edge of $M$. As $S$ is a state, each unstarred face of $U$ contains one marker, and hence every black vertex of $G$ belongs to precisely one edge of $M$. Hence $M$ is a matching.

\begin{lemma}\label{Lem:states_and_matchings}
Let $U$ be a multiverse and $G$ a spine. The association of matchings to states defined above is a bijection $\mathscr{S}_U \To \mathscr{M}_G$.
\end{lemma}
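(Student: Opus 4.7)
The plan is to exhibit an explicit two-sided inverse to the map $S \mapsto M$ already described in the paragraph preceding the lemma, using the correspondence between edges of $G$ and adjacency triples $(w,\alpha,b)$ provided by \refdef{Tait} and \refdef{spine}.

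First, I would reiterate that the given map $\Phi\colon \mathscr{S}_U \To \mathscr{M}_G$ is well-defined. Given a state $S$, at each white vertex $w$ the assigned corner $\alpha$ lies in an unstarred face of $U$ (since starred corners are forbidden by \refdef{state}), so the adjacency triple $(w,\alpha,b)$ genuinely corresponds to an edge of the spine $G$, not merely of the Tait graph $T$. The set $M$ of these edges hits each white vertex exactly once (one corner chosen per interior vertex of $U$) and hits each black vertex exactly once (each unstarred face contains exactly one marker), so $M \in \mathscr{M}_G$.

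Next I would define the inverse $\Psi\colon \mathscr{M}_G \To \mathscr{S}_U$. Given $M \in \mathscr{M}_G$, at each white vertex $w$ of $G$ there is, by \refdef{matching}, precisely one edge $e \in M$ incident to $w$. Since $G$ has edges indexed by adjacency triples, this $e$ corresponds to a unique triple $(w,\alpha,b)$, hence to a unique corner $\alpha$ at the interior vertex $w$ of $U$, lying in the unstarred face that contains $b$. Define $S = \Psi(M)$ to assign this corner $\alpha$ at each interior vertex. To verify $S$ is a state, note that each unstarred face of $U$ houses exactly one black vertex $b$, and $b$ is covered by exactly one edge of $M$; thus exactly one marker is placed in each unstarred face, satisfying \refdef{state}. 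Interior vertices of $U$ not touched by $G$ cannot occur, since every white vertex of $G$ by construction coincides with an interior vertex of $U$ and every such vertex has all of its (possibly fewer than four) adjacent unstarred-face corners represented in $G$; but every interior vertex has at least one corner in an unstarred face, as will be visible from an Euler-characteristic count (or by appealing forward to the existence of states, which is vacuous if $\mathscr{S}_U = \emptyset$).

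Finally, $\Phi$ and $\Psi$ are mutually inverse essentially by construction: the corner chosen by $S$ at $w$ determines the unique edge of $\Phi(S)$ at $w$, from which $\Psi$ recovers that same corner, so $\Psi \circ \Phi = \Id_{\mathscr{S}_U}$; conversely, starting with $M$, the corner chosen by $\Psi(M)$ at $w$ is extracted from the unique edge of $M$ at $w$, and $\Phi(\Psi(M))$ reassembles precisely those edges, giving $\Phi \circ \Psi = \Id_{\mathscr{M}_G}$. The only potential subtlety is the case when $\mathscr{S}_U$ or $\mathscr{M}_G$ is empty, which I would handle by noting that the argument above shows both are empty simultaneously, and a bijection of empty sets holds trivially. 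I do not expect a serious obstacle; the statement is really a bookkeeping identity made possible by the deliberate design of the spine in \refdef{spine}.
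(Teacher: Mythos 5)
Your proposal is correct and follows essentially the same approach as the paper: both construct the inverse map $\mathscr{M}_G \To \mathscr{S}_U$ explicitly by reading off the corner $\alpha$ from the unique edge $(w,\alpha,b)$ of $M$ at each white vertex $w$, and observe that the matching's bijection between black and white vertices makes the result a state. Your treatment is somewhat more verbose (including a slight digression about interior vertices with no unstarred corners, which you correctly resolve via the empty case) but the underlying argument is the same.
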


\begin{proof}
Indeed, every edge $(w,\alpha, b)$ of the matching $M$ provides a corner $\alpha$ in which to place a marker. As $M$ is a matching and $G$ is bipartite, the edges of $M$ provide a  bijection between black vertices and white vertices of $G$, i.e. between interior vertices of $U$ and unstarred faces of $U$. The corners $\alpha$ given by $M$ thus form a state of $U$, providing an inverse $\mathscr{M}_G \To \mathscr{S}_U$. 
\end{proof}

We can now justify the common  terminology of \refdef{forced_forbidden} and \refdef{forced_forbidden_corner}.
\begin{lem}
\label{Lem:forced_forbidden_corners_edges}
Let $U$ be a multiverse and $G$ a spine. A corner $\alpha$ of $U$ in an unstarred face is forced (resp. forbidden) if and only if the corresponding edge $e$ of $G$ is forced (resp. forbidden).
\end{lem}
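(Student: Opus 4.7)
The plan is to derive the lemma as a direct consequence of the bijection between states and matchings established in \reflem{states_and_matchings}. The key observation is that this bijection is not merely a set bijection but is built ``edge-by-edge'' from the local data of corners: a state $S$ chooses, at each interior vertex $w$, a corner $\alpha$ lying in some unstarred face corresponding to a black vertex $b$, and the corresponding matching $M$ is defined to contain precisely the edge determined by the adjacency triple $(w, \alpha, b)$. Thus the local correspondence ``$S$ places a marker at $\alpha$'' $\Longleftrightarrow$ ``$M$ contains the edge $e$ corresponding to $\alpha$'' is built into the very construction of the bijection.

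First, I would spell out this local compatibility precisely: given the unique edge $e$ of $G$ associated to a corner $\alpha$ of $U$ lying in an unstarred face, state $S \in \mathscr{S}_U$ has a marker at $\alpha$ if and only if the matching $M \in \mathscr{M}_G$ corresponding to $S$ under \reflem{states_and_matchings} contains $e$. This follows immediately from examining how the bijection and its inverse are defined.

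Next, I would translate the definitions directly. The corner $\alpha$ is forced (\refdef{forced_forbidden_corner}) if \emph{every} state of $U$ places a marker at $\alpha$, which by the local compatibility is equivalent to \emph{every} matching of $G$ containing $e$, i.e.\ $e$ is forced in the sense of \refdef{forced_forbidden}. The same argument, with ``every'' replaced by ``no'', handles the forbidden case.

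There is no real obstacle here; the content of the lemma is simply that the bijection \reflem{states_and_matchings} preserves the local ``presence/absence'' structure. The only thing to be careful about is that the statement restricts to corners $\alpha$ in unstarred faces of $U$, which is exactly the condition guaranteeing that $\alpha$ corresponds to an edge of $G$ at all (corners in starred faces correspond to edges of the Tait graph $T$ that are removed when passing to the spine). Provided we restrict to such corners, the equivalence is immediate.
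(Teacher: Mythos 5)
Your proof is correct and follows exactly the same route as the paper: use the state–matching bijection of \reflem{states_and_matchings}, note that a state marks $\alpha$ if and only if the corresponding matching contains $e$, and translate the universally/existentially quantified definitions of forced/forbidden accordingly. You spell out the local-compatibility step a bit more explicitly than the paper, but there is no substantive difference.
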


\begin{proof}
A state of $U$ choosing the corner $\alpha$ corresponds to a matching of $G$ containing $e$. Every (resp. no) state of $U$ chooses $\alpha$, if and only if every (resp. no) matching of $G$ contains $e$.
\end{proof}

Since matchings only use the underlying graph of $G$, the set $\mathscr{M}_G$ does not depend on the choice of spine.

We can apply the reduction construction of \refdef{reduction} to a spine.
\begin{defn}[Reduced spine]\label{Def:reduced_spine}
A \emph{reduced spine} $G_0$ of $U$ is the reduction of a spine $G$ of $U$.
\end{defn}

Combining \reflem{states_and_matchings} with \reflem{remove_no_matchings}(i) then yields bijections
\begin{equation}
\label{Eqn:states_matchings_reduced_spine}
\mathscr{S}_U \cong \mathscr{M}_G \cong \mathscr{M}_{G_0}.
\end{equation}
In the case that $\mathscr{M}_G = \emptyset$, then $G_0$ simply consists of isolated vertices, and \refeqn{states_matchings_reduced_spine} is a bijection of empty sets.

\subsection{Alternating cycles and transposition contours}
\label{Sec:alternating_cycles_contours}

The notions of alternating cycle for a matching $M$, and framed transposition contour for a state $S$, are closely related. We now discuss how. Let $(U, \Sigma, \mathscr{F}, G)$ be a framed multiverse, $S$ a state of $U$, and $M$ the corresponding matching of $G$.

\begin{lem}
\label{Lem:alternating_cycles_contours}
A vertex-simple alternating cycle of $G$ relative to $M$, regarded as a simple closed curve on $\Sigma$, is a framed transposition contour for $S$.
\end{lem}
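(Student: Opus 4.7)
The plan is to translate the combinatorial data of a vertex-simple alternating cycle directly into the geometric data of \refdef{transposition_contour} and \refdef{framed_contour}. Let $A$ be a vertex-simple alternating cycle of $G$ relative to $M$. Since the spine $G$ is bipartite, with white vertices at interior vertices of $U$ and black vertices inside unstarred faces of $U$, the cycle $A$ has even length $2n$ and alternates between white and black vertices. Cyclically permuting, and reversing orientation if necessary, we may list $A$ as $w_0, b_0, w_1, b_1, \ldots, w_{n-1}, b_{n-1}, w_0$ with indices taken mod $n$, where the spine edge between $w_j$ and $b_j$ lies in $M$ and the spine edge between $b_j$ and $w_{j+1}$ does not.

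First I would name the data required by the definition of a transposition contour. Let $v_j$ be the interior vertex of $U$ at $w_j$ and $F_j$ the unstarred face of $U$ containing $b_j$. Because each spine edge is an adjacency triple $(w, \alpha, b)$ uniquely specified by its corner, the two edges of $A$ at $w_j$ determine corners $\alpha_j, \alpha'_j$ at $v_j$, where $\alpha_j$ is the corner of the edge in $M$ and $\alpha'_j$ the corner of the edge not in $M$. Vertex-simplicity of $A$ makes the $v_j$ pairwise distinct and the $F_j$ pairwise distinct. By \reflem{states_and_matchings}, the edge of $M$ at $w_j$ records exactly the corner chosen by $S$ at $v_j$, so $\alpha_j$ is the $S$-marker at $v_j$ and $F_j$ is the face of $U$ containing it; these match the roles of $v_j, \alpha_j, F_j$ in \refdef{transposition_contour}.

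Next I would construct the arcs $\gamma_j$. Take $\gamma_j$ to be the concatenation inside $\overline{F_j}$ of the spine edges $\{w_j, b_j\}$ and $\{b_j, w_{j+1}\}$, smoothed at $b_j$ if one prefers an arc without a corner. By \refdef{Tait} and \refdef{spine}, each of these edges has interior in $F_j$ and leaves its white endpoint through the prescribed corner, so $\gamma_j$ is an arc from $v_j$ to $v_{j+1}$ with interior in $F_j$, entering $v_j$ through $\alpha_j$ and $v_{j+1}$ through $\alpha'_{j+1}$; conditions (i)--(iii) of \refdef{transposition_contour} are immediate. Moreover $\gamma_j$ is by construction the spine path from $v_j$ through $b_j$ to $v_{j+1}$ along the two edges attached to corners $\alpha_j$ and $\alpha'_{j+1}$, so it is trivially homotopic relative to endpoints to itself and hence framed in the sense of \refdef{framed_contour}.

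Finally, I would verify that $\gamma = \bigcup_j \gamma_j$ is a simple closed curve. Pairwise disjointness of the open faces $F_j$, guaranteed by vertex-simplicity of $A$ on its black vertices, makes the interiors of the arcs mutually disjoint; distinctness of the $v_j$, guaranteed by vertex-simplicity on the white vertices, ensures that consecutive arcs $\gamma_{j-1}, \gamma_j$ meet only at $v_j$ and non-consecutive arcs do not meet at all. Hence $\gamma$ is a simple closed curve on $\Sigma$ and is a framed transposition contour for $S$ carrying the data $(v_j, \alpha_j, F_j, \alpha'_j)$ identified above. I expect the only genuinely subtle point to be the simplicity check, where one must use both flavours of vertex-simplicity, together with the special case $n=1$ (an $n=1$ contour, explicitly permitted by \refdef{transposition_contour}, arising from a length-$2$ alternating cycle running along two parallel spine edges between a single white--black pair).
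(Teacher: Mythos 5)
Your proof is correct and follows essentially the same route as the paper: take the spine path (the two spine edges at each black vertex $b_j$) as the arc $\gamma_j$ in $\overline{F_j}$, read off $v_j$, $F_j$, $\alpha_j$, $\alpha'_j$ from the cycle's data and the states--matchings correspondence of \reflem{states_and_matchings}, observe that vertex-simplicity makes the $v_j$ and $F_j$ pairwise distinct, and note that framedness is automatic because each $\gamma_j$ \emph{is} the spine path it is required to be homotopic to. The only difference is that you spell out the simplicity of the resulting closed curve (disjointness of arc interiors via distinctness of the open faces $F_j$, meeting of consecutive arcs only at the $v_j$) and the $n=1$ loop case, both of which the paper leaves implicit in the sentence ``Thus the $\gamma_j$ form the arcs of a transposition contour.''
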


\begin{proof}
Let $C$ be a vertex-simple alternating cycle of $G$. The vertices of $C$ alternate between white vertices (corresponding to interior vertices $v_j$ of $U$) and black vertices (corresponding to unstarred faces $F_j$ of $U$), through edges corresponding to corners $\alpha_j, \alpha'_{j+1}$ of $U$. Letting $C$ have length $2n$, we may take $j \in \Z/n\Z$. We may direct $C$ so that it passes in order through $v_1, \alpha_1, F_1, \alpha'_2, v_2, \alpha_2 F_2, \ldots, v_n, \alpha_n, F_n, \alpha'_1$, and so that the edges corresponding to $\alpha_j$ are precisely the edges of $M$ in $C$.

The directed cycle $C$ is cut by the vertices $v_j$ into $n$ paths $\gamma_j$ of length $2$, with each path proceeding from $v_j$ to $v_{j+1}$ via edges corresponding to $\alpha_j$ and $\alpha'_{j+1}$, with $\alpha_j \in M$ and $\alpha'_{j+1} \notin M$. As the matching $M$ corresponds to the state $S$, the first edge of $\gamma_j$ corresponds to the corner $\alpha_j$ selected by $S$ at $v_j$, and indeed is incident to $v_j$ through the corner $\alpha_j$. As $C$ is vertex-simple, all the $v_j$ and $F_j$ are distinct over all $j \in \Z/n\Z$. Thus the $\gamma_j$ form the arcs of a transposition contour $\gamma$ as in \refdef{transposition_contour}. Each $\gamma_j$ is homotopic to (indeed coincides with) the edges of $G$ corresponding to $\alpha_j, \alpha'_{j+1}$, so $\gamma$ is framed.
\end{proof}

For a converse statement, and indeed equivalence, let $\gamma$ be a framed transposition contour for $S$, with arcs $\gamma_j$, vertices $v_j$, corners $\alpha_j, \alpha'_j$, and faces $F_j$ as in \refdef{transposition_contour} and \refsec{states_contours}, with $\gamma_j$ lying in the face $F_j$, having endpoints at corners $\alpha_j, \alpha'_{j+1}$ of vertices $v_j, v_{j+1}$, and homotopic to the edges of $G$ corresponding to $\alpha_j, \alpha'_{j+1}$. We can ``straighten" $\gamma$ into a cycle on $G$ as follows.
\begin{defn} [Straightening] \
\label{Def:straightening}
\begin{enumerate}
\item 
The \emph{straightening} of $\gamma_j$ is the path $C_j$ of length $2$ in $G$ from $v_j$ to $v_{j+1}$ along edges corresponding to corners $\alpha_j, \alpha'_{j+1}$.
\item 
The \emph{straightening} of $\gamma$ is the cycle of $G$ formed by the $C_j$.
\end{enumerate}
\end{defn}

\begin{lem}
\label{Lem:straightening_is_alternating_cycle}
\label{Lem:contours_alternating_cycles_equiv}
The straightening $C$ of $\gamma$ is a vertex-simple alternating cycle relative to $M$, isotopic to $\gamma$.
Straightening provides a bijection between isotopy classes of framed transposition contours for $S$, and vertex-simple alternating cycles of $G$ relative to $M$.
\end{lem}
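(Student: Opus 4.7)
The plan is to establish the three properties of $C$—alternation relative to $M$, vertex-simplicity, and isotopy to $\gamma$—and then upgrade the straightening construction to a bijection using Lemmas \ref{Lem:alternating_cycles_contours} and \ref{Lem:framed_contour_equivalent}. Before starting I observe that the $C_j$ concatenate to a genuine closed cycle, since $C_j$ runs from $v_j$ to $v_{j+1}$ with indices in $\Z/n\Z$.

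For alternation I would note that each arc $C_j$ consists of two edges of $G$: the edge $e_j$ at corner $\alpha_j$ of $v_j$ and the edge $e'_{j+1}$ at corner $\alpha'_{j+1}$ of $v_{j+1}$, both meeting at the black vertex $b_j$ sitting in $F_j$. Under the bijection $\mathscr{S}_U \cong \mathscr{M}_G$ of Lemma \ref{Lem:states_and_matchings}, the unique $M$-edge at the white vertex $v_j$ is the one at the corner $\alpha_j$ chosen by $S$, so $e_j \in M$. The unique $M$-edge at $v_{j+1}$ is the one at corner $\alpha_{j+1}$, which lies in the face $F_{j+1}$; since Definition \ref{Def:transposition_contour} guarantees that the faces $F_1,\ldots,F_n$ are distinct, the corner $\alpha'_{j+1}$ (which lies in $F_j$) is distinct from $\alpha_{j+1}$ (which lies in $F_{j+1}$), hence $e'_{j+1} \notin M$. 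This yields the alternation. Vertex-simplicity is then immediate: the $v_j$ are $n$ distinct interior vertices of $U$ by Definition \ref{Def:transposition_contour}, the $b_j$ lie in the $n$ distinct faces $F_j$ and so are distinct, and black and white vertices are disjoint, so $C$ visits $2n$ distinct vertices. Having established these two properties, Lemma \ref{Lem:alternating_cycles_contours} shows that $C$, viewed as a simple closed curve on $\Sigma$, is itself a framed transposition contour for $S$; since $\gamma$ and $C$ share the same vertices, faces, and corners, they are corner-equivalent, and hence isotopic by Lemma \ref{Lem:framed_contour_equivalent}.

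For the bijection, I observe that the straightening depends only on the corner data $(v_j, \alpha_j, \alpha'_j, F_j)$ of a framed contour, which is invariant under isotopy, so straightening descends to a well-defined map from isotopy classes of framed transposition contours to vertex-simple alternating cycles of $G$ relative to $M$. Injectivity again reduces to Lemma \ref{Lem:framed_contour_equivalent}: two framed contours with identical straightenings have identical corner data, so they are corner-equivalent, and hence isotopic. For surjectivity, given a vertex-simple alternating cycle $A$ of $G$ relative to $M$, Lemma \ref{Lem:alternating_cycles_contours} shows $A$ is already a framed transposition contour for $S$, and the straightening construction applied to $A$ returns $A$ itself, since each pair of consecutive edges of $A$ already forms the required length-two path through the intermediate black vertex.

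The main delicate step is verifying that $e'_{j+1} \notin M$: this is where one uses in an essential way that a transposition contour visits $n$ \emph{distinct} faces, so that $\alpha'_{j+1}$ and $\alpha_{j+1}$ lie in different faces of $U$ and are therefore distinct corners at $v_{j+1}$. Everything else is a direct unpacking of definitions together with the previously established Lemmas \ref{Lem:states_and_matchings}, \ref{Lem:framed_contour_equivalent}, and \ref{Lem:alternating_cycles_contours}.
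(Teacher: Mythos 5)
Your proof is correct and follows the same route as the paper's: establish that $C$ alternates (because the $\alpha_j$-edges are exactly the $M$-edges by the state--matching correspondence and the $\alpha'_j$-edges are not), that it is vertex-simple (because the $v_j$ and the $F_j$ are each distinct, and black and white vertices are disjoint), that $C$ is itself a framed contour by Lemma~\ref{Lem:alternating_cycles_contours}, and that corner-equivalence plus Lemma~\ref{Lem:framed_contour_equivalent} gives both the isotopy with $\gamma$ and the bijection, with surjectivity supplied by the ``forgetful'' direction of Lemma~\ref{Lem:alternating_cycles_contours}. The only point worth flagging is that your justification of $e'_{j+1} \notin M$ via ``$\alpha'_{j+1}$ lies in $F_j$ while $\alpha_{j+1}$ lies in $F_{j+1}$, and the faces are distinct'' silently assumes $n \geq 2$; for $n = 1$ the indices coincide and $F_j = F_{j+1}$, so that argument degenerates, and one must instead invoke the explicit stipulation (noted after Definition~\ref{Def:transposition_contour}) that a $1$-transposition contour meets $F_1$ through two distinct corners. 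With that one case noted, the proof is complete.
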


\begin{proof}
The edges of $G$ corresponding to corners $\alpha_j$ selected by $S$ are precisely the edges of $M$ in $C$, so $C$ is alternating. As the $v_j$ are distinct, $C$ is vertex-simple. By \reflem{alternating_cycles_contours} then $C$ is a contour for $S$, and as $\gamma$ is framed, $C$ is isotopic to $\gamma$. A framed contour $\gamma'$ isotopic to $\gamma$ is also corner-equivalent to $\gamma$, hence has the same straightening. 
We conclude that regarding the cycle $C$ as a curve provides a ``forgetful" map from alternating cycles to transposition contours, to which straightening provides an inverse.
\end{proof}

When each unstarred face of $U$ is a disc, then as discussed in \refsec{spines_framings}, $U$ has a unique framing $G$, all contours are framed, and contours are corner-equivalent if and only if they are isotopic. Straightening then provides a bijection between isotopy classes of transposition contours for $S$, and vertex-simple alternating cycles of $G$ relative to $M$. In particular, this bijection arises if $U$ is 2-cell embedded.

\subsection{Topology of multiverses}
\label{Sec:topology_multiverses}

We now discuss some of the topological constraints on a multiverse.

\begin{lem}
\label{Lem:trivial_2-cell_embed}
Suppose $(U, \Sigma, \mathscr{F})$ is a 2-cell embedded multiverse. If some boundary component of $\Sigma$ is disjoint from $U$, then $U$ is empty and $\Sigma$ is a disc.
\end{lem}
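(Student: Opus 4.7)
The plan is to show that the face of $U$ containing $\partial \Sigma_i$ must be all of $\Sigma$, forcing $U = \emptyset$ and $\Sigma$ to be a disc.

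Let $\partial \Sigma_i$ be a boundary component of $\Sigma$ disjoint from $U$. Since $\partial \Sigma_i \subset \Sigma \setminus U$ is connected, it lies in a single face $f$ of $U$. Choose a closed annular collar $N \cong \partial \Sigma_i \times [0, \epsilon]$ of $\partial \Sigma_i$ in $\Sigma$ with $\epsilon$ small enough that $N \cap U = \emptyset$; then $N$ is connected, meets $f$, and is disjoint from $U$, so $N \subset f$. Its inner boundary $C = \partial \Sigma_i \times \{\epsilon\}$ is a simple closed curve in $f \cap \text{int}(\Sigma)$.

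Now invoke the 2-cell hypothesis: $f$ is homeomorphic to a disc, hence simply connected, so $C$ is null-homotopic in $f$ and thus in $\Sigma$. Since $\Sigma$ is a compact orientable surface, $C$ bounds a closed 2-disc $D \subset \Sigma$. The boundary circle $C$ of the annulus $N$ is not null-homotopic in $N$, so $D$ cannot lie in $N$, and $D$ sits on the opposite side of $C$. A clopen argument in the connected complementary region $\overline{\Sigma \setminus N}$ then identifies $D$ with this entire region, so $\Sigma = N \cup D$ is an annulus glued to a disc along a common boundary circle---topologically a disc with $\partial \Sigma = \partial \Sigma_i$.

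Finally, suppose for contradiction that $U$ is nonempty. Since $\partial \Sigma = \partial \Sigma_i$ is disjoint from $U$, every vertex of $U$ is 4-valent and $U$ lies in the interior of the disc $\Sigma$. A nonempty 4-valent graph has no leaves and therefore contains at least one cycle; its outermost representative contributes a boundary cycle of $f$ distinct from $\partial \Sigma_i$. Thus $f$ has at least two boundary cycles, contradicting the fact that $f$ is homeomorphic to a disc (whose boundary is a single circle). Hence $U = \emptyset$, and $\Sigma = f$ is a disc.

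The main technical hurdle is the clopen argument identifying $D$ with $\overline{\Sigma \setminus N}$: one must check that a codimension-zero closed disc in a connected surface-with-boundary, whose manifold boundary matches one of the ambient boundary components, fills the entire surface. Once this topological fact is established, the remainder of the proof is a short assembly of connectedness and the structure of 4-valent graphs.
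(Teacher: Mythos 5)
Your argument is correct, but it takes a genuinely longer and more topologically heavy route than the paper's. The paper's proof is essentially two lines: the face $f$ containing $C$ is homeomorphic to a disc whose boundary circle must be $C$ itself, so $f$ is a compact open subset of $\Sigma$, hence clopen, hence a connected component, hence all of $\Sigma$ by connectedness; this forces $U = \emptyset$ and $\Sigma = f$ to be a disc. You use only the weaker consequence that $f$ is simply connected, and then pay for that with extra machinery: you push $C$ inward to a collar circle $C'$, argue $C'$ is null-homotopic in $f$ and hence in $\Sigma$, invoke the (standard but nontrivial) fact that a null-homotopic simple closed curve in a surface bounds an embedded disc $D$, reconstruct $\Sigma = N \cup D$ via a clopen argument, and finally eliminate nonempty $U$ by a boundary-cycle count. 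The individual steps are sound, though two of them deserve more care than you gave them: the clopen argument should note that $D^\circ$ and $N \setminus C'$ are both clopen in $\Sigma \setminus C'$, which forces $D \cup N$ to be clopen in $\Sigma$; and the final step is cleaner if, instead of counting boundary cycles of $f$ (whose closure need not obviously be a nice surface), you observe that for any $p \in U$ the circle $C = \partial\Sigma$ is not null-homotopic in $\Sigma \setminus \{p\} \supset f$, contradicting $\pi_1(f) = 0$. In short, the paper reads off $\partial f = C$ directly from the disc hypothesis and is done; you re-derive that information in the reverse order, and the detour through the bounding-disc theorem is the price of using only $\pi_1(f) = 0$.
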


\begin{proof}
The boundary component $C$ disjoint from $U$ lies in a single face $f$, which must be a disc with boundary $C$. 
So $f$ is a connected component of $\Sigma$, and as $\Sigma$ is connected then $f = \Sigma$. Hence $U$ is empty.
\end{proof}

For the following statement and proof, we recall the notation $F,V_{int},N, V_\partial, V_{int}$ of \refdef{vef_labels} and \reflem{multiverse_boundary_vertices_even}.
\begin{lem}
\label{Lem:euler_char_nearly_2-cell}
\label{Lem:Euler_char_arg}
Let $U$ be a multiverse graph embedded in a multiverse surface $(\Sigma, \partial \Sigma_0)$, such that every face $f$ of $U$ is either:
\begin{enumerate}
\item a disc, or
\item an annulus, one of whose boundary components is a component of $\partial \Sigma$, with the other boundary component being a cycle of $U$.
\end{enumerate}
Let $b$ be the number of boundary components of $\Sigma$ forming boundary components of annular faces. Then
\[
F - V_{int} = N + \chi(\Sigma) + b.
\]
\end{lem}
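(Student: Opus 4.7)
The plan is to produce a CW decomposition of $\Sigma$ whose 1-skeleton contains both $U$ and $\partial\Sigma$, and then read the claimed identity off $\chi(\Sigma)=V-E+F$.

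First I would adjoin $\partial\Sigma$ to the 1-skeleton. Each boundary circle of $\Sigma$ already carrying a $U$-vertex is automatically subdivided into arcs; on every remaining (``bare'') boundary circle I would insert a single auxiliary 0-cell so that the circle becomes a loop edge. Setting aside the degenerate case $\Sigma\cong D^2$, $U=\emptyset$ (which I verify by direct substitution), I would argue that each bare circle must be the $\partial\Sigma$-boundary of some annular face of type (ii): the bare circle lies in a single face $f$, and $f$ cannot be a disc (otherwise $\Sigma$ would be that closed disc and $U$ would be empty), so by hypothesis $f$ is annular. Conversely, the $\partial\Sigma$-side of any annular face is a full component of $\partial\Sigma$ carrying no $U$-vertex. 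Hence the number of auxiliary 0-cells equals exactly $b$.

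Next I would cut each annular face into a disc 2-cell. In each such face I would insert one arc from the auxiliary vertex on its $\partial\Sigma$-side to any vertex of $U$ on the cycle bounding the other side; such a $U$-vertex exists because multiverse graphs have no isolated vertices, so the bounding cycle is non-null. Meanwhile, each disc face of $U$ automatically survives as a 2-cell, since deleting the $\partial\Sigma$-arcs from a closed disc $\overline{f}$ still leaves an open disc.

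Finally I would count. The degree-sum identity $4V_{int}+V_\partial=2\,|E(U)|$ gives $|E(U)|=2V_{int}+N$. Adjoining $\partial\Sigma$ contributes $V_\partial+b=2N+b$ arcs, and the cross-cuts contribute a further $b$ edges, so
\[
V=V_{int}+2N+b,\qquad E=(2V_{int}+N)+(2N+b)+b=2V_{int}+3N+2b,\qquad F^{\mathrm{cw}}=F.
\]
Substituting into $\chi(\Sigma)=V-E+F^{\mathrm{cw}}$ yields $\chi(\Sigma)=F-V_{int}-N-b$, which rearranges to the desired $F-V_{int}=N+\chi(\Sigma)+b$. The main hurdle is the topological bookkeeping: establishing the bijection between bare boundary circles of $\Sigma$ and annular faces, and verifying that the 2-cells of the constructed decomposition really are open discs. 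Both follow from the hypothesis on face types and the local structure of a surface near $\partial\Sigma$, but they must be nailed down carefully before the Euler characteristic calculation is invoked.
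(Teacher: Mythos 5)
Your proof is correct, and it arrives at the same Euler-characteristic bookkeeping by a mildly different route. The paper first proves the case $b=0$ (every face a disc) by taking the cell decomposition of $\Sigma$ given by $U$ together with the $2N$ boundary arcs, and then handles $b>0$ by capping each bare boundary circle with a disc to obtain a new surface $\overline{\Sigma}$ with $\chi(\overline{\Sigma})=\chi(\Sigma)+b$, on which $U$ is $2$-cell embedded, and invoking the $b=0$ case there. You instead stay on $\Sigma$ throughout, adding one auxiliary $0$-cell on each bare boundary circle and one cross-cut arc in each annular face, which gives a genuine CW structure on $\Sigma$ directly; the $b$ auxiliary vertices and $2b$ extra edges (boundary loops and cross-cuts) play exactly the role that the $b$ capping discs play in the paper. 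Both approaches bottom out in the same identity $2E(U)=4V_{int}+2N$ and the same final arithmetic. The paper's reduction is conceptually tidy (the general case becomes the $b=0$ case on a modified surface), while yours avoids changing the ambient surface and so avoids having to relate two Euler characteristics; in exchange you must verify, as you note, the bijection between bare boundary circles and annular faces (which you do, modulo the degenerate $\Sigma\cong D^2,\,U=\emptyset$ case) and check that the cross-cut $2$-cells are honest open discs. Those checks are the ones that need to be nailed down, and your sketches of them are sound.
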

Here  $\chi(\Sigma)$ denotes the Euler characteristic of $\Sigma$. 

\begin{proof}
We first prove the result in the case $b=0$, so that (ii) never arises, and $U$ is 2-cell embedded.

If $\Sigma$ has a boundary component $C$ disjoint from $U$, then \reflem{trivial_2-cell_embed} implies $U$ is empty and $\Sigma$ is a disc. Then $F=1$, $V_{int} =0$, $N=0$ and $\chi(\Sigma) = 1$, so the result holds. We may thus assume that every boundary component of $\Sigma$ contains vertices of $U$. 

Since $U$ is 2-cell embedded, the vertices, edges and faces of $U$, together with $2N$ arcs along $\partial \Sigma$, form a cell decomposition of $\Sigma$. There are $V_{int} + 2N$ vertices and $F$ faces in this cell decomposition. 
Counting each edge twice via the degrees of vertices, the number $E$ of edges in the decomposition satisfies 
$2E = 4V_{int} + 3V_\partial = 4 V_{int} + 6N$, so $E = 2 V_{int} + 3N$.  Euler's formula gives
\[
(V_{int}  + 2N) - (2V_{int} + 3N ) + F = \chi(\Sigma),
\]
which simplifies to $F-V_{int}=N+\chi(\Sigma)$, proving the result when $b=0$.

Now consider the general case. Fill in the $b$ boundary components bounding annular faces with discs. This extends $\Sigma$ to a compact oriented surface $\overline{\Sigma}$ of the same genus, with fewer boundary components, satisfying $\chi(\overline{\Sigma}) = \chi(\Sigma) + b$. On $\overline{\Sigma}$, $U$ is 2-cell embedded, and we obtain a cell decomposition of $\overline{\Sigma}$ as in the $b=0$ case. The Euler characteristic argument above again applies (even if we fill every component of $\partial \Sigma$ with a disc), and we obtain 
$F-V_{int} = N + \chi(\overline{\Sigma}) = N + \chi(\Sigma) + b$.
\end{proof}

Consider the special case where $\Sigma$ is a disc and $U$ is connected. If $N \geq 1$, then $U$ is automatically 2-cell embedded, and we have $F-V_{int}=N+1$. If $N=1$ then we precisely have a universe on a disc (\refdef{universe_on_disc}), and $F-V_{int}=2$, as mentioned in \refsec{string_universes}. 
If $N=0$, then $U$ lies entirely in the interior of the disc and $b=1$, so $F-V_{int}=2$. This is the situation in a Kauffman universe (\refdef{universe}), truncating $\R^2$ to a disc. 

A useful consequence is the following statement, showing that certain isolated components in multiverses prohibit them from having any states. 
\begin{prop}
\label{Prop:multiverse_embedded_in_disc_no_states}
Let $(U, \Sigma, \mathscr{F})$ be a multiverse, and suppose $U$ has a connected component embedded in a disc $D$ in the interior of $\Sigma$.
Then $\mathscr{S}_U = \emptyset$.
\end{prop}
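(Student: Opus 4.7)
My plan is a contradiction argument: assume a state $S \in \mathscr{S}_U$ exists and derive an impossibility by double-counting markers of $S$ against faces of $U$ that are bounded off inside the disc $D$ by the component $U'$. First, since $D \subset \mathrm{int}(\Sigma)$, every vertex of $U'$ is an interior ($4$-valent) vertex of $U$; letting $V'$ denote their number, $U'$ is a connected $4$-valent plane graph in $D$ with $E' = 2V'$ edges. Euler's formula then yields exactly $V' + 1$ bounded complementary regions $R_1, \ldots, R_{V'+1}$ in $D$, each a topological disc, together with the outer region meeting $\partial D$.

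Next I would count the faces of $U$ contained in $\bigcup_i R_i$. Since the connected components of $U$ are disjoint, any component of $U \setminus U'$ meeting $R_i$ lies entirely in $R_i$. A short induction using Euler's formula --- each connected $4$-valent subgraph with $v \geq 1$ vertices inserted into an ambient face subdivides it into $v + 2$ subregions --- should give that $R_i$ contains exactly $1 + V^{(i)} + k_i$ faces of $U$, where $V^{(i)}$ and $k_i$ are, respectively, the total number of interior vertices and the number of connected components of $U \setminus U'$ inside $R_i$. Summing over $i$ gives
\[
F_{\mathrm{inner}} \;=\; (V' + 1) + V_{\mathrm{other}} + k,
\]
with $V_{\mathrm{other}} = \sum_i V^{(i)}$ and $k = \sum_i k_i \geq 0$.

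Finally I would match markers to faces. Each $R_i$ lies in $\mathrm{int}(\Sigma)$, so none of the $F_{\mathrm{inner}}$ faces is adjacent to $\partial \Sigma_0$; by \refdef{multiverse} all are unstarred, and hence by \refdef{state} each must contain exactly one marker of $S$. Such markers can only come from: (i) interior vertices of $U \setminus U'$ lying in some $R_i$, whose four corners all lie in the same $R_i$ (contributing precisely $V_{\mathrm{other}}$ markers); or (ii) vertices of $U'$ itself whose chosen corner happens to land in some $R_i$ (contributing at most $V'$ markers). Hence
\[
(V' + 1) + V_{\mathrm{other}} + k \;=\; F_{\mathrm{inner}} \;\leq\; V' + V_{\mathrm{other}},
\]
forcing $k \leq -1$, a contradiction. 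The main point of care will be verifying the face-count formula $1 + V^{(i)} + k_i$; this is a routine Euler characteristic computation, but depends on the topological observation that each component of $U \setminus U'$ in $R_i$ is embedded in $\mathrm{int}(R_i)$ and disjoint from $\partial R_i \subset U'$.
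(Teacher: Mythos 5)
Your proof is correct. The core pigeon-hole idea --- more unstarred faces inside the disc than vertices available to mark them --- matches the paper's, but the implementation differs: the paper applies \reflem{euler_char_nearly_2-cell} after first passing to an \emph{innermost} component $U_0$, so that the bounded complementary regions of $U_0$ contain no further components of $U$ and are themselves faces of $U$; then the count is simply $V_{int}+1$ faces versus $V_{int}$ vertices. You instead work with the original component $U'$ and explicitly account for everything nested inside its bounded regions, via the inductive face count $1 + V^{(i)} + k_i$ for each region $R_i$, summing to $F_{\mathrm{inner}} = (V'+1) + V_{\mathrm{other}} + k$. Your version avoids the ``take innermost'' reduction (and the dependence on \reflem{euler_char_nearly_2-cell}) at the cost of more bookkeeping. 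Both are valid; the paper's route is shorter to write, while yours is somewhat more self-contained and makes the role of nested components explicit. Your inductive face-count formula does hold --- inserting a connected $4$-valent graph with $v\geq 1$ vertices into a disc face yields $v+1$ bounded subfaces plus an annular one, for $v+2$ total, a net gain of $v+1$, and the induction on the number of components closes cleanly because each new component sits in a single disc face of the graph built so far. One small point of care that you correctly handle implicitly: the faces of $U$ contained in $\bigcup_i R_i$ really are faces of $U$ (not just of $U|_{R_i}$) because $\partial R_i \subset U' \subset U$, so no face of $U$ can straddle $\partial R_i$.
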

In particular, if $U$ is multiverse on a disc $D$ and $\mathscr{S}_U \neq \emptyset$, then $U$ has no components in the interior of $D$

\begin{proof}
Taking an innermost component if necessary, we have a connected multiverse graph $U_0$ embedded in the interior of a disc $D$, which satisfies the hypotheses of \reflem{euler_char_nearly_2-cell}. Thus numbers of interior vertices $V_{int}$ and faces $F$ of $U_0$ in $D$ satisfy $F=V_{int}+2$. One of these faces is the exterior face of $U_0$, leaving $F-1 = V+1$ non-outer faces of $U_0$. In any state of $U$, these $V+1$ faces must have markers from the $V$ vertices of $U_0$, a contradiction.
\end{proof}

\subsection{Decomposing along a 2-cell embedded multiverse and spine}
\label{Sec:decompose_universe_spine}

Let $(U, \Sigma, \mathscr{F})$ be a 2-cell embedded multiverse. Cutting $\Sigma$ along $U$ decomposes $\Sigma$ into discs. We may now consider further cutting along the edges of a spine $G$, to obtain a finer description of the resulting pieces. We suppose $U$ is nonempty, so by \reflem{trivial_2-cell_embed}, each boundary component of $\Sigma$ contains vertices of $U$.

Let $f$ be a face of $U$. Then $f$ is a disc, and may  be regarded as a polygon. The vertices of the polygon are vertices of $U$, which may be degree-4 interior vertices (hence white vertices of $G$), or degree-1 boundary vertices. The sides of the polygon are edges of $U$ or arcs of $\partial \Sigma$. 
Each side of $f$ is of one of three types.
\begin{enumerate}
\item \emph{Interior} edges (\refdef{vef_labels}); these have endpoints which are degree-4 vertices of $U$, or equivalently, white vertices of $G$.
\item \emph{Free} edges (\refdef{vef_labels}); these have at least one endpoint on $\partial \Sigma$.
\item \emph{Boundary} edges, arcs of $\partial \Sigma$.
\end{enumerate}
Proceeding around the boundary of $f$, each boundary edge is adjacent to a free edge on either side. Interior vertices of $U$ (white vertices of $G$) are precisely those not adjacent to a boundary edge.

If $f$ is starred, then $f$ contains no vertices or edges of $G$, so $f$ is not decomposed further. Each side of $f$ is an interior, free, or boundary edge, with at least one boundary edge, since starred faces are adjacent to the outer boundary.

Now assume $f$ is unstarred. The interior of $f$ then contains a single black vertex $v$ of $G$, as well as edges of $G$ connecting $v$ to each corner of $f$ which is an interior vertex of $U$.

It is possible that there are no edges of $G$ in $f$. In this case all vertices of $f$ lie on $\partial \Sigma$, and the boundary of $f$ consists of alternating boundary and free edges. In this case $G$ has an isolated vertex in $G$, so $\mathscr{M}_G = \emptyset$, hence by \reflem{states_and_matchings} $\mathscr{S}_U = \emptyset$. Cutting $G$ out of $f$ results in a punctured disc.

Otherwise, there is some number $m \geq 1$ of edges of $f$ in $G$, and cutting $f$ along these $m$ edges of $G$ cuts $f$ into $m$ smaller faces. Each such face $f'$ can again be regarded as a polygon, which contains two consecutive edges around its boundary which are edges of $G$, and the remaining edges around its boundary form a sequence of edges $e_1, \ldots, e_r$ around the boundary of $f$ from one white vertex of $G$ to the next. If $e_1$ is an interior edge then both its endpoints are white vertices of $G$, so $r=1$. Otherwise the edges $e_j$ alternate between free and boundary edges, beginning and ending with free edges, so $r$ is odd, $r=2s-1$, and $f'$ is a $(2s+1)$-gon.

We summarise this discussion in the following lemma.
\begin{lem}
\label{Lem:spine_decomposition}
Let $(U, \Sigma, \mathscr{F}, G)$ be a nonempty 2-cell embedded framed multiverse. Cutting $\Sigma$ along $U$ and $G$, the resulting faces are all of precisely one of the following types.
\begin{enumerate}
\item A triangle, whose sides are two edges of $G$ and an interior edge of $U$.
\item A $(2s+1)$-gon for some $s \geq 2$, whose sides consist of two consecutive edges of $G$, and $2s-1$ edges alternating between $s$ free edges of $U$ and $s-1$ arcs of $\partial \Sigma$.
\item A starred face of $U$, whose sides consist of at least one arc of $\partial \Sigma_0$, at least one free edge, and possibly interior edges of $U$.
\item A punctured $2s$-gon for some $s \geq 1$, whose sides alternate between $s$ free edges of $U$ and $s$ boundary arcs of $\partial \Sigma$.
\end{enumerate}
If there is a punctured disc then $\mathscr{S}_U = \emptyset$.
\qed
\end{lem}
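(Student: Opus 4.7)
The plan is to follow the case analysis already laid out in the discussion preceding the lemma, and simply verify that no additional cases arise. Since $U$ is 2-cell embedded, each face of $U$ is a polygonal disc, and the faces of $\Sigma \setminus (U \cup G)$ are obtained by further cutting the unstarred faces along the edges of $G$ inside them. So the classification reduces to understanding what the pieces look like inside a single face $f$ of $U$, split into the cases $f$ starred versus $f$ unstarred.

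First I would dispose of the starred case. Since $G$ has no black vertex in a starred face, a starred face $f$ is left uncut; the fact that $f$ is adjacent to $\partial\Sigma_0$ contributes a boundary arc on $\partial\Sigma_0$, and the observation (from the discussion before the lemma) that boundary arcs are flanked by free edges on either side produces the free edges required for type (iii). Next, for an unstarred face $f$ that has no interior vertices of $U$ on its boundary, there is a black vertex $b \in G$ in $f$ but no edge of $G$ in $f$. Then $b$ is isolated in $G$, so $\mathscr{M}_G = \emptyset$, and by \reflem{states_and_matchings} $\mathscr{S}_U = \emptyset$; the face, with $b$ removed, is a punctured polygon whose boundary sides alternate between free edges and boundary arcs, giving type (iv).

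The main content is the remaining case: an unstarred face $f$ with some $m\geq 1$ edges of $G$, meeting at the single black vertex $b$. These $m$ edges cut $f$ into $m$ smaller polygons $f'$, each bounded by two consecutive edges of $G$ at $b$ together with a sub-arc $e_1,\dots,e_r$ of $\partial f$ running from one white vertex $w_1$ to the next white vertex $w_2$. The key observation is the dichotomy for the first edge $e_1$: it is either an interior edge (both endpoints white) or a free edge (one endpoint on $\partial\Sigma$). If $e_1$ is interior then $r=1$ immediately, yielding a triangle as in type (i). If $e_1$ is free, then proceeding around $\partial f$ the edges must alternate free/boundary/free/$\dots$/free, ending at $w_2$ by a free edge, giving $r = s + (s-1) = 2s-1$ with $s$ free edges and $s-1$ boundary arcs.

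The one subtlety I expect to check carefully is why $s\geq 2$ in type (ii), i.e.\ why $s=1$ cannot occur. The point is that $s=1$ would mean $w_1$ and $w_2$ are joined directly by a single free edge $e_1$, but a free edge has at least one endpoint on $\partial\Sigma$ whereas $w_1, w_2$ are both interior vertices of $U$, a contradiction. Hence $s\geq 2$, and the polygon $f'$ is a $(2s+1)$-gon as in type (ii). This completes the classification, and the final clause about $\mathscr{S}_U = \emptyset$ has already been observed inside the type (iv) step.
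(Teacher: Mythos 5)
Your proof is correct and follows essentially the same case analysis as the paper's discussion preceding the lemma (of which the lemma is presented as a summary). You do add a welcome bit of explicitness on the bound $s\geq 2$ in type (ii), which the paper's discussion leaves implicit: the point that a free edge cannot directly join the two interior vertices $w_1$ and $w_2$ is exactly the right observation.
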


Note this statement fails when $U$ is empty: then $\Sigma$ is a disc, there is a single starred face, and $G$ is empty; this fits none of the cases above.

Triangles of type (i) can degenerate. The two sides which are edges of $G$ may coincide; this happens if and only if the side which is an edge of $U$ is in fact a loop of $U$. In this case, the triangle is the face inside the loop.

\subsection{2-cell embedded spines}
\label{Sec:spines_almost_2-cell}

We will show that a spine of a 2-cell embedded multiverse is ``almost" 2-cell embedded, in the following sense.

\begin{defn}[Almost 2-cell embedding]\label{def:almost_2-cell}
An \emph{almost $2$-cell embedding} of a graph $X$ in a surface $\Sigma$ is an embedding of $X$ in $\Sigma$ such that
\begin{enumerate}
\item 
each face of $X$ that is not adjacent to a boundary component of $\Sigma$ is a disc, and
\item 
each boundary component of $\Sigma$ is adjacent to a unique face, which is not a disc.
\end{enumerate}
\end{defn}
Note that if (i) were to apply to all faces of $X$, this would be the usual definition of 2-cell embedded. Condition (ii) instead requires quite different behaviour near a boundary component. It will be satisfied, for instance, if $X$ is disjoint from $\partial \Sigma$, and includes edges and vertices which, around each boundary component, proceed around a boundary-parallel curve, cutting off an annular collar. Then, each boundary component of $\Sigma$ will be adjacent to a single annular face. But it is also satisfied in more general circumstances. For instance, two boundary components of $\Sigma$ may be adjacent to the same (non-disc) face. More trivially, if $\Sigma$ is not a disc and $X$ is the empty graph, then the empty embedding is trivially almost 2-cell embedded.

\begin{lemma}\label{Lem:spine_is_2-cell}
Let $(U, \Sigma, \mathscr{F}, G)$ be a nonempty 2-cell embedded framed multiverse. Then we have the following.
\begin{enumerate}
\item Each face of $G$ not adjacent to $\partial \Sigma$ is a 4-gon, and in particular a disc.
\item Each boundary component of $\Sigma$ is adjacent to a unique face of $G$, which is not a disc.
\end{enumerate}
In particular, $G$ is almost $2$-cell embedded.
\end{lemma}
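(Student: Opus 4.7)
The plan is to analyze each face of $G$ as a union of the cells from \reflem{spine_decomposition}, reglued along the edges of $U$; since $U$-edges lie outside $G$, they sit in the interior of faces of $G$ and are exactly the gluing data. Because the multiverse is 2-cell embedded, no type-(iv) cells occur, so every cell is a triangle (type (i)), a $(2s{+}1)$-gon (type (ii)), or a starred face (type (iii)). Two features of \reflem{spine_decomposition} drive the argument: only type-(i) cells contain no $\partial\Sigma$-arc on their boundary, and only type-(iii) cells contain no $G$-edge on their boundary.

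For (i), let $F$ be a face of $G$ not adjacent to $\partial\Sigma$. A $\partial\Sigma$-arc in any cell of $F$ would persist on $\partial F$, so $F$ consists entirely of triangles. Each triangle has a unique interior $U$-edge $e_U$; since no $U$-edge can lie on $\partial F$, the cell across $e_U$ must be another triangle in $F$. Gluing the pair across $e_U$ yields a quadrilateral whose boundary is four $G$-edges (with $e_U$ an interior diagonal); no $U$-edges remain on its boundary, so no further gluing is possible and $F$ is exactly this 4-gon, in particular a disc. Degenerate loops in $U$ collapse this 4-gon to a lower-order polygon but preserve its disc topology.

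For (ii), let $\partial_i\Sigma$ be a boundary component of $\Sigma$. Since $G\subset\mathrm{int}\,\Sigma$, the connected set $\partial_i\Sigma$ lies in a unique face $F_i$ of $G$. Suppose for contradiction that $F_i$ is a disc, so $\partial\overline{F_i}$ is a single simple closed curve. Because $G$ is disjoint from $\partial\Sigma$, any $G$-arc on $\partial\overline{F_i}$ is disjoint from $\partial_i\Sigma$; since a simple closed curve cannot properly contain another, we conclude $\partial\overline{F_i}=\partial_i\Sigma$, and hence no $G$-edge appears on $\partial F_i$. Every cell of $F_i$ must then be of type (iii), so every cell adjacent to $\partial_i\Sigma$ is a starred face, forcing $\partial_i\Sigma=\partial\Sigma_0$. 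Thus $\overline{F_i}$ is a disc in $\Sigma$ bounded by $\partial\Sigma_0$. By the standard fact that a boundary component of a compact connected oriented surface bounds an embedded disc only when the surface is itself that disc, we obtain $\Sigma=\overline{F_i}$ and hence $G=\emptyset$, contradicting the nondegenerate setting in which $G$ has at least one vertex. The ``almost 2-cell embedded'' conclusion then follows from (i) and (ii) together with the definition.

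The main obstacle is the topological step in (ii): one must argue, via circle-counting on $\partial\overline{F_i}$ and the fact that $\partial_i\Sigma$ is already a complete simple closed curve, that $\partial\overline{F_i}=\partial_i\Sigma$; the surface-theoretic statement that a boundary circle bounds an embedded disc only when $\Sigma=\overline{F_i}$ then delivers the contradiction. Part (i) is essentially mechanical once one observes that triangles pair up across their unique interior $U$-edges.
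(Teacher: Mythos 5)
Your part (i) is essentially the paper's argument: a face away from $\partial\Sigma$ has only type-(i) triangles, which pair up across their unique $U$-edge into a quadrilateral. Your part (ii) diverges and deserves a closer look.

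The paper's proof of (ii) is very short: the face $f$ adjacent to $C$ must have at least one other boundary component (otherwise $\overline{f}$, being open and closed in the connected $\Sigma$, equals $\Sigma$, forcing $G$ to be empty and hence $U$ empty), so $f$ is not a disc. Your proof instead supposes $F_i$ is a disc and, via circle-counting on $\partial\overline{F_i}$, deduces $\partial\overline{F_i}=\partial_i\Sigma$; then applies the surface-theoretic fact that a boundary circle of a compact connected surface bounds an embedded disc in it only when the surface \emph{is} that disc. The endgame is the same contradiction $G=\emptyset$. Your route works, and in fact makes the ``open and closed'' step more explicit, which is useful.

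There are two issues with your intermediate steps, though neither is fatal. First, the claim ``Because the multiverse is 2-cell embedded, no type-(iv) cells occur'' is false: \reflem{spine_decomposition} explicitly allows type-(iv) punctured $2s$-gons under the 2-cell embedding hypothesis — they arise when an unstarred face of $U$ contains an isolated black vertex of $G$ and imply $\mathscr{S}_U=\emptyset$, but they are not excluded. Since type-(iv) cells are always adjacent to $\partial\Sigma$, this error is harmless for part (i), but it does affect the chain of reasoning in your part (ii). Second, and relatedly, once you have $\partial\overline{F_i}=\partial_i\Sigma$ (so that $\overline{F_i}$ is a disc bounded by a $\partial\Sigma$-component), the conclusion $\Sigma=\overline{F_i}$ follows immediately from the surface-theory fact you cite, for \emph{any} boundary component. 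The detour through ``every cell is type (iii), hence $\partial_i\Sigma=\partial\Sigma_0$'' is unnecessary, and it is precisely the step that (wrongly) relies on excluding type (iv). If you delete that detour, your argument is correct and gives a more explicit version of the paper's one-line topological claim.
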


\begin{proof}
Cutting $U$ along $\Sigma$ and $G$ results in the faces of types (i)--(iv) of \reflem{spine_decomposition}.
We obtain the faces of $G$ by gluing together these faces along the edges of $U$.
Note that types (ii)--(iv) are all adjacent to the boundary of $\Sigma$.

Let $f$ be a face of $G$ not adjacent to $\partial\Sigma$. Then $f$ is obtained by gluing triangles of type (i). But each triangle of type (i) contains precisely one edge of $U$. Hence $f$ is obtained from two triangles of type (i), glued along an edge of $U$. Thus $f$ is a 4-gon and, in particular, a disc.

Let $C$ be a boundary component of $\Sigma$. By construction, $G$ is disjoint from $C$, 
and hence $C$ is adjacent to a unique face $f$ of $G$. The face $f$ has $C$ as a boundary component, but as $U$ is nonempty, $f$ has at least one other boundary component, and hence is not a disc.
\end{proof}

To summarise: a spine $G$ is an embedded graph in the interior of $\Sigma$, whose faces away from the boundary  are quadrilaterals. Note that in such quadrilaterals, some of the 4 edges or 4 vertices may coincide.

\subsection{Dual of spine}
\label{Sec:dual_of_spine}

Consider a framed multiverse $(U, \Sigma, \mathscr{F}, G)$. The spine $G$ is embedded in $\Sigma$, and we can construct a dual in a standard way, as follows.

\begin{defn}[Dual of spine]
\label{Def:dual_of_spine}
A \emph{dual} of a spine $G$ is a graph $G^\perp$ embedded in $\Sigma$, constructed as follows.
\begin{enumerate}
\item One vertex of $G^\perp$ is placed in the interior of each face of $G$.
\item $G^\perp$ has an edge $e^\perp$ for each edge $e$ of $G$. The edge $e^\perp$ joins the vertices of $G^\perp$ in the faces of $G$ on either side of $e$, avoids $\partial \Sigma$ and vertices of $G$, and is drawn to intersect $e$ exactly once.
\end{enumerate}
\end{defn}
We say the edge $e^\perp$ of $G^\perp$ is \emph{dual} to the edge $e$ of $G$.
If the same face of $G$ appears on both sides of $e$, then $e^\perp$ is a loop. 
In our diagrams, $G$ is drawn in green and $G^\perp$ in red.

As $G$ is embedded in the interior of the multiverse surface $\Sigma$, it has a distinguished outer face (\refdef{outer_face_of_spine}), and we have the following.
\begin{defn}
\label{Def:outer_vertex}
The \emph{outer vertex} of a dual of spine $G^\perp$ is the vertex placed in the outer face of $G$.
\end{defn}

Note that the embedded graph $G^\perp$ described above is not unique, even up to homotopy. For instance, consider a face $f_1$ of $G$ adjacent to $\partial \Sigma$. If $U$ is nonempty, then $f_1$ is not a disc, and contains some edge $e$ of $G$ in its boundary with a face $f_2$ on the other side. (Possibly $f_1 = f_2$.) Let $f_1^\perp, f_2^\perp$ be the vertices of $G^\perp$ dual to $f_1, f_2$ respectively. Since $f_1$ is not a disk, the edge $e^\perp$ dual to $e$ can be drawn in at least two ways which are not homotopic to each other in $f_1$ or $f_2$ relative to endpoints. See for example \reffig{ex_dual_of_spine}. Thus, $G^\perp$ is not unique as an embedded graph on $\Pi$. 

However, $G^\perp$ is uniquely determined as a graph by $G$, since it is determined by incidence relations between edges and faces of $G$. 

Note that $G$ may contain isolated vertices, but these have no effect on $G^\perp$: if we remove isolated vertices from $G$, and then construct $G^\perp$ as above, it is also a dual for the original $G$.

	\begin{figure}
		\centering
		\includegraphics[width=0.6\textwidth]{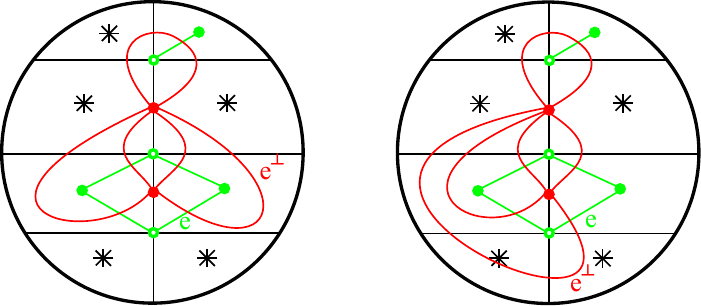}
		\caption{A multiverse on a disc. A spine $G$ is drawn in green. Two distinct duals $G^\perp$ are drawn in red.}
        \label{Fig:ex_dual_of_spine}
	\end{figure}

By definition, the vertices of $G^\perp$ are in bijection with faces of $G$, and edges of $G^\perp$ are in bijection with edges of $G$. However, faces of $G^\perp$ are generally not in bijection with vertices of $G$. For instance, in \reffig{ex_dual_of_spine} there are faces of $G^\perp$ containing more than one vertex of $G$.

One might hope that, at least if $U$ is 2-cell embedded, so that by \reflem{spine_is_2-cell}  $G$ is almost 2-cell embedded, then $G^\perp$ is also close to 2-cell embedded in some sense. However, \reffig{non-2-cell_dual} shows an example of a 2-cell embedded multiverse $U$ on a surface $\Sigma$, together with a spine $G$ and dual $G^\perp$, such that $G^\perp$ has a face, away from $\partial \Sigma$, with nonzero genus.

\begin{figure}
\begin{center}
\includegraphics[width=0.9\textwidth]{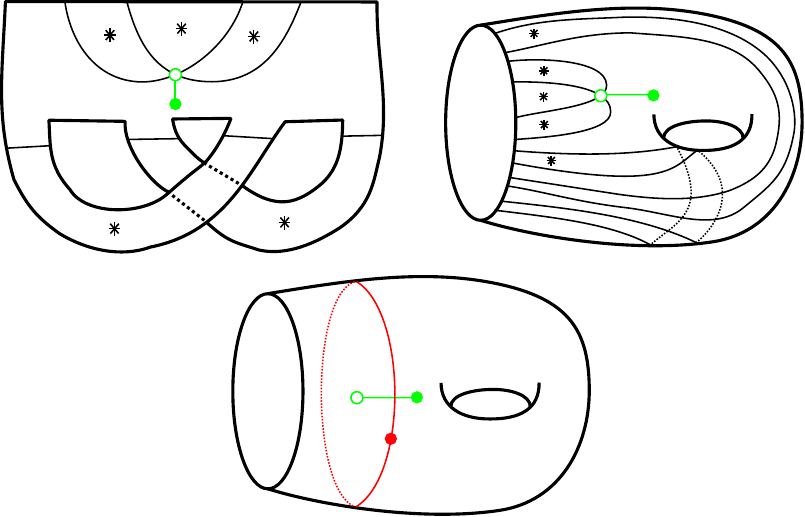}
\end{center}
    \caption{A 2-cell embedded multiverse $U$ (black) with a spine $G$ (green) and dual $G^\perp$ (red).}
    \label{Fig:non-2-cell_dual}
\end{figure}

\begin{lem}
\label{Lem:dual_of_spine_connected}
A dual of a spine $G^\perp$ is a connected graph.
\end{lem}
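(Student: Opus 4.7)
The plan is to use the connectedness of $\Sigma$ itself to transfer connectedness to $G^\perp$. Since the vertices of $G^\perp$ are in bijection with the faces of $G$, it suffices to show that any two faces $f_1, f_2$ of $G$ can be joined by a sequence $f_1 = h_0, h_1, \ldots, h_k = f_2$ of faces of $G$ such that consecutive faces $h_{i-1}$ and $h_i$ share an edge of $G$; then the edges of $G^\perp$ dual to these shared edges provide a path in $G^\perp$ from the vertex dual to $f_1$ to the vertex dual to $f_2$.

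To produce such a sequence, first I pick interior points $p_i \in f_i$ for $i=1,2$. Since $\Sigma$ is connected (and hence path-connected), there is a continuous path $\alpha \colon [0,1] \to \Sigma$ with $\alpha(0) = p_1$ and $\alpha(1) = p_2$. By a standard transversality/general position argument, and because $G$ is a properly embedded finite graph in the interior of $\Sigma$, I can perturb $\alpha$ (fixing its endpoints) so that $\alpha$ avoids all vertices of $G$ and meets each edge of $G$ transversely at finitely many interior points. Label the crossing parameters $0 < t_1 < \cdots < t_k < 1$, and let $h_0 = f_1$, $h_k = f_2$, and for $1 \leq i \leq k-1$ let $h_i$ be the face of $G$ containing $\alpha(s)$ for any $s \in (t_i, t_{i+1})$ (this is well-defined since $\alpha$ on $(t_i, t_{i+1})$ lies in $\Sigma \setminus G$, which is locally constant on faces).

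At each crossing time $t_i$, the path $\alpha$ crosses a single edge $e_i$ of $G$, and on either side of this crossing the path lies in the two faces $h_{i-1}$ and $h_i$ adjacent to $e_i$. Hence consecutive $h_{i-1}, h_i$ share the edge $e_i$ of $G$, so the dual edge $e_i^\perp$ of $G^\perp$ joins the vertex dual to $h_{i-1}$ and the vertex dual to $h_i$. Concatenating these dual edges produces a path in $G^\perp$ from the vertex dual to $f_1$ to the vertex dual to $f_2$. Since $f_1, f_2$ were arbitrary, and $G^\perp$ has no vertices other than those dual to faces of $G$, this shows $G^\perp$ is connected.

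The only mild subtlety, and the step that requires a little care, is the transversality step: $G$ may have complicated local structure (multiple edges, high-valence vertices) and $\Sigma$ may have nontrivial topology and boundary, so I need to be sure that a perturbation missing the vertices and transverse to the edges exists and only produces finitely many crossings. This is a standard fact for finite 1-complexes properly embedded in surfaces, so it presents no real obstacle; the argument is otherwise essentially the standard proof that the dual of any embedded graph in a connected surface is connected.
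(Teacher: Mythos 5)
Your proof is correct and follows essentially the same approach as the paper: connect interior points of the two faces by a path in the connected surface $\Sigma$, put it in general position with respect to $G$, and read off the sequence of face-to-face crossings as a path in $G^\perp$. The paper's version is terser but identical in substance; your elaboration of the transversality step is fine.
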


\begin{proof}  
Let $v_1, v_2$ be vertices of $G^\perp$, corresponding to faces $f_1, f_2$ of $G$. A multiverse surface $\Sigma$ is connected, so there is a path in $\Sigma$ connecting $f_1$ and $f_2$. This path may be chosen to avoid vertices of $G$. It passes through various edges and faces of $G$, which correspond to various edges and vertices of $G^\perp$, yielding a path in $G^\perp$ from $v_1$ to $v_2$. 
\end{proof}

Around any vertex $v$ of $G$, some vertices of $G$ depart: let them be, in cyclic order (say, counterclockwise), $e_1, \ldots, e_n$. As $G$ is bipartite, it contains no loops, so all the $e_j$ are disjoint. The dual edges $e_1^\perp, \ldots, e_n^\perp$ then form a simple directed cycle in $G^\perp$, and its equivalence class forms a simple cycle in $G$.
\begin{defn}
\label{Def:basic_cycle}
The simple cycle in $G^\perp$ represented by $e_1^\perp, \ldots, e_n^\perp$
is called the \emph{basic cycle} of $G^\perp$ around $v$.
\end{defn}
Basic cycles are in some ways like elementary cycles of plane graphs (\refdef{elementary_cycle}). However, unlike elementary cycles, they may not bound faces. For instance, in the left multiverse of \reffig{ex_dual_of_spine}, the bottom vertex of $G$ has a basic cycle around it, but this basic cycle does not bound a face.

Note that every edge $e^\perp$ of $G^\perp$ appears twice in basic cycles: if the dual edge $e$ of $G$ has vertices $v,w$, then $e^\perp$ lies in the basic cycles around $v$ and $w$.

The edges of $G$ adjacent to a vertex $v$ may be cyclically ordered in clockwise or counterclockwise fashion. As a multiverse surface is oriented, there is a well defined notion of clockwise and counterclockwise at each point. These cyclic orderings of vertices about $v$ yield orientations of edges of a basic cycle, and we can make the following definition.
\begin{defn}
\label{Def:orientations_basic_cycles}
If $e_1, \ldots, e_n$ are the edges about $v$ in counterclockwise (resp. clockwise) order, then the directed basic cycle $e_1^\perp, \ldots, e_n^\perp$ about $v$ is oriented \emph{counterclockwise} (resp. clockwise).
\end{defn}

Note that the clockwise orientation of a basic cycle may not appear ``globally clockwise" in a multiverse. For example, in the left diagram of \reffig{ex_dual_of_spine}, the basic cycle about the bottom vertex of $G$, oriented counterclockwise, appears clockwise as a curve on the disc.

\section{Clock theorem in genus zero}
\label{Sec:genus_0}

In this section we consider planar multiverses $(U, \Pi, \mathscr{F})$, i.e. where $\Pi$ has genus zero, and prove \refthm{main_thm_1}. Since $\Pi$ can be embedded in $\R^2$, we may regard $U$ and a spine $G$ as plane  graphs. Our results in the plane case essentially rely on the Jordan curve theorem, so that every cycle has an interior and exterior. 

The rough idea is to apply Propp's \refthm{Propp_matching} to the spine $G$, which provides the set of matchings with a distributive lattice structure, and \reflem{states_and_matchings}, which identifies matchings of $G$ with states of $U$. However, a spine may be disconnected, and may contain forced or forbidden edges, violating the hypotheses of \refthm{Propp_matching}. Thus, we consider the reduced spine (\refdef{reduced_spine}), and our generalisation \refprop{finite_bipartite_plane_lattice} of Propp's theorem.

\reffig{Hasse_diagram_example1_thm1} and \reffig{Hasse_diagram_example2_thm1} provide examples of planar multiverses and their lattices of states given by \refthm{main_thm_1}, which may be useful guides to our constructions. \reffig{Hasse_diagram_example1_thm1} shows the  spine of the multiverse (and the reduced spine, which is obtained by removing the dotted green edges), with states and matchings.

\subsection{Faces of graphs embedded in plane surfaces, boundaries, and elementary cycles}
\label{Sec:elementary_on_plane_surface}
\label{Sec:cycles_twisting_plane_surfaces}

Let $\Pi$ be a planar multiverse surface. Then $\Pi$ can be regarded as a subsurface of $\R^2$, realising its outer boundary (\refdef{plane_surface}) as its outermost boundary component. Indeed, $\Pi$ can be so regarded in a unique way up to isotopy, using the fact that $\Pi$ is oriented. As $\Pi$ has genus $0$, every simple closed curve in $\Pi$ is separating, hence has an interior and exterior (\refdef{interior_exterior}), agreeing with the usual notions from the Jordan curve theorem. Using  the orientation on $\Pi$ and the Jordan curve theorem in $\R^2$, every oriented simple closed curve in $\Pi$ can be regarded as  counterclockwise/positive or clockwise/negative.

Now let $G$ be a finite graph (possibly disconnected, possibly with multiple edges and loops) embedded in the interior of $\Pi$. As a graph embedded in the interior of a multiverse surface, $G$ has an outer face (\refdef{outer_face}). Since we regard $\Pi \subset \R^2$ uniquely up to isotopy, $G$ can be regarded as a plane graph, unique up to isotopy (\refdef{isotopy_embedded_graphs}). The faces of $G$, regarded as an embedded graph in $\Sigma$, are naturally in bijection with the faces of $G$, regarded as a plane graph. Under this bijection, the outer face of $G$ (\refdef{outer_face}) corresponds to the unbounded face of $G$ as a plane graph. As a finite graph embedded in an orientable surface, as discussed in \refsec{boundaries_of_faces}, each face of $G$ has well-defined boundary cycles (\refdef{boundary_cycle}). As a plane graph, as discussed in \refsec{plane_boundary_elementary}, bounded (i.e. non-outer) faces of $G$ have well-defined \emph{outer} boundary cycles, which form elementary cycles (\refdef{outer_boundary_elementary}). Moreover, elementary cycles have well-defined clockwise and counterclockwise orientations. As in \refsec{matchings}, we may consider matchings and alternating cycles on $G$.

Suppose now additionally that $G$ is bipartite. Then $G$ can be regarded as a plane bipartite graph, again unique up to isotopy, so \refsec{matchings_plane_bipartite} applies, and we may consider positive and negative cycles relative to a matching (\refdef{pos_neg_cycle}), and twisting a matching up and down (\refdef{twisting_up_down}) on alternating elementary cycles of $G$. Moreover, \refsec{lattices_matchings} applies, so that set $\mathscr{M}_G$ of matchings of $G$ has a well-defined relation $\leqslant$ from \refdef{relation_on_matchings1}, to which the generalisation \refprop{finite_bipartite_plane_lattice} of Propp's \refthm{Propp_matching} applies, endowing $\mathscr{M}_{G_0}$ (here $G_0$ is the reduction of $G$), and hence $\mathscr{M}_G$ (equal to $\mathscr{M}_{G_0}$ as a set), with the structure of a distributive lattice, the Propp lattice of $G$.

A spine/framing, or reduced spine, of a planar multiverse $\Pi$ is precisely a graph of this type: it is a finite bipartite graph embedded in the interior of $\Pi$.  Hence all the above applies, and we have a Propp lattice. We summarise this discussion in the following statement.

\begin{prop}
\label{Prop:reduced_spine_lattice}
Let $(U, \Pi, \mathscr{F}, G)$ be a framed planar multiverse, and $G_0$ the reduced spine. Then $\mathscr{M}_{G_0}$, with the relation $\leqslant$ of \refdef{relation_on_matchings1}, is a distributive lattice. Moreover, for any two matchings $M, M' \in \mathscr{M}_{G_0} = \mathscr{M}_G$, $M \lessdot M'$ if and only if $M'$ is obtained from $M$ by twisting up at a non-outer face of $G_0$.
\qed
\end{prop}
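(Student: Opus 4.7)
The proposition is essentially a direct application of Proposition~\ref{Prop:finite_bipartite_plane_lattice} to the reduced spine $G_0$, and the plan is to verify that all the hypotheses of that proposition are satisfied so that it can be invoked without further work.

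First, I would check that $G_0$ is a finite bipartite plane graph (possibly disconnected, possibly with multiple edges). By \refdef{spine}, a spine $G$ is a finite bipartite graph embedded in the interior of the multiverse surface $\Pi$; in particular it is finite, bipartite, and has no loops. The reduction $G_0$, obtained from $G$ by deleting the forbidden edges (\refdef{reduction}), inherits each of these properties: it is still finite, bipartite, and has no loops. Since $\Pi$ is planar and oriented, the paragraphs preceding \refprop{reduced_spine_lattice} explain that $\Pi$ embeds in $\R^2$ uniquely up to isotopy, carrying $G$ and hence $G_0$ to a plane graph that is unique up to isotopy. Thus $G_0$ qualifies as a finite bipartite plane graph as required by \refprop{finite_bipartite_plane_lattice}.

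Next, I would note that the relation $\leqslant$ on $\mathscr{M}_{G_0}$ required in \refprop{reduced_spine_lattice} is precisely the relation of \refdef{relation_on_matchings1}, which is defined via twisting up on bounded (i.e.\ non-outer) faces of $G_0$. The notions of elementary cycle, positive and negative cycle, and twisting up/down for $G_0$ are precisely the ones discussed in \refsec{elementary_on_plane_surface}, where it is observed that they apply to any finite bipartite graph embedded in the interior of $\Pi$. Hence the hypotheses and the conclusion of \refprop{finite_bipartite_plane_lattice} can be applied verbatim to $G_0$: its reduction is $G_0$ itself (since deleting forbidden edges is idempotent, or equivalently by \reflem{remove_no_matchings}(ii)), and we obtain that $\mathscr{M}_{G_0}$ is a distributive lattice under $\leqslant$.

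Finally, the covering statement in \refprop{reduced_spine_lattice} is literally the covering statement of \refprop{finite_bipartite_plane_lattice} applied to $G_0$: $M \lessdot M'$ if and only if $M'$ is obtained from $M$ by twisting up at a bounded face of $G_0$, which in our planar multiverse terminology means a non-outer face of $G_0$. Since no real obstacle arises, I expect the proof to be a one-line invocation of \refprop{finite_bipartite_plane_lattice}; the only conceptual point worth emphasising is that the outer face of $G_0$ as an embedded graph in $\Pi$ (\refdef{outer_face}) coincides with the unbounded face of $G_0$ when $\Pi$ is realised as a subsurface of $\R^2$ with outer boundary outermost, which was noted in the preceding discussion and ensures that the two notions of ``non-outer'' used in \refprop{finite_bipartite_plane_lattice} and \refprop{reduced_spine_lattice} agree.
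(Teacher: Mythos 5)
Your proposal is correct and follows essentially the same approach as the paper: the result is a direct application of \refprop{finite_bipartite_plane_lattice} to the (reduced) spine, using the discussion in \refsec{elementary_on_plane_surface} that a planar multiverse surface embeds in $\R^2$ in a way that identifies its outer face with the unbounded face. The only cosmetic difference is that the paper invokes \refprop{finite_bipartite_plane_lattice} with $G$ as the input graph (so its reduction is $G_0$), whereas you invoke it with $G_0$ directly and observe $G_0$ is its own reduction; both routes yield identically the stated conclusion about $\mathscr{M}_{G_0}$.
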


\subsection{Plane transpositions}
\label{Sec:n-transpositions}

We now define a generalisation of Kauffman transpositions (\refdef{transposition}) to planar multiverses. Just like Kauffman's original notion of transposition, these send states to states, by rotating markers at vertices, and they can be clockwise or counterclockwise. However, this generalised notion can involve an arbitrary number $n \geq 1$ of vertices. They are a special case of the countour $n$-transpositions of \refdef{partial_transposition}.
We call them \emph{plane $n$-transpositions} or just \emph{plane transpositions}.

Let $(U, \Pi, \mathscr{F}, G)$ be a framed planar multiverse, and $S$ a state. Let $n \geq 1$ and let $\gamma$ be a framed $n$-transposition contour (\refdef{framed_contour}) for $S$, involving interior vertices $v_j$ of $U$ over $j \in \Z/n\Z$. As in \reffig{alternating_cycle} (left), let $\alpha_j$ be the corner at $v_j$ chosen by $S$, and $F_j$ be the unstarred face of $U$ containing $\alpha_j$, so that $\gamma$ consists of arcs $\gamma_j$ passing from $v_j$ via $\alpha_j$ through $F_j$ to $v_{j+1}$. Let $\alpha'_{j+1}$ be the corner through which $\gamma_j$ is incident to $v_{j+1}$.
Then contour $n$-transposition on $S$ along $\gamma$ sends $S$ to the state $S'$ obtained by replacing each $\alpha_j$ with $\alpha'_j$. 

As a  simple closed curve $\gamma$ on a plane surface, $\gamma$ is separating, and has an interior in $\Pi$ as per \refdef{interior_exterior}.
Since $\gamma$ passes through $v_j$ via the two corners $\alpha_j$ and $\alpha'_j$, these corners lie partly in the interior and partly in the exterior of $\gamma$. The other two corners of $v_j$ lie either in the interior of $\gamma$, or in the exterior of $\gamma$. There is a direction, clockwise or counterclockwise, such that the rotation from $\alpha_j$ to $\alpha'_j$ about $v_j$ in this direction passes through the interior of $\gamma$. This direction is the same at each $v_j$. We consider rotating the marker at each $v_j$ in this direction, from $\alpha_j$ (the marker in $S$) around to $\alpha'_j$ (the marker in $S'$), through the interior of $\gamma$. The rotation angle at each vertex is $90^\circ$, $180^\circ$ or $270^\circ$, and may be different at different vertices. 
\begin{defn}[Plane transposition]\label{Def:n_transposition}
Suppose that $(U, \Pi, \mathscr{F}, G)$, $S$, $S'$, $\gamma$,
$\alpha_j$, $\alpha'_j$, 
are as above and further satisfy the following condition:
any corner
at any $v_j$ which lies entirely in the interior of $\gamma$ is forbidden (\refdef{forced_forbidden_corner}).

\emph{Plane $n$-transposition}, or just \emph{plane transposition}, of $S$ along $\gamma$ is the contour $n$-transposition of $S$ along $\gamma$, yielding $S'$. The plane $n$-transposition is \emph{clockwise} (resp. \emph{counterclockwise}) accordingly as the markers rotate clockwise (resp. counterclockwise) through the interior of $\gamma$ from $\alpha_j$ to $\alpha'_j$.
\end{defn}
Thus, plane transposition is contour transposition, along a framed contour, in the planar case, where interior corners of the contour are forbidden. 

Since planar transposition is determined by the framed contour $\gamma$, we can refer to plane transposition on the curve $\gamma$.
Alternatively, plane transposition is determined by corners $\alpha_j$ and $\alpha'_j$, or by the states $S$ and $S'$. In particular, if there is a plane transposition taking a state $S$ to the state $S'$, then the vertices $v_j$, corners $\alpha_j, \alpha'_j$ and their number $n$ are determined, as is $\gamma$ up to corner-equivalence (\refdef{equivalent_contours}), hence (as $\gamma$ is framed, by \reflem{framed_contour_equivalent}) up to isotopy.

We regard two plane transpositions as equivalent if they involve the same corners $\alpha_j, \alpha'_j$ or equivalently, the same states $S,S'$ (or equivalently, framed contours $\gamma$ which are corner-equivalent or isotopic). From a state $S$ there are only finitely many plane transpositions possible.  Between any two states $S,S'$ there is at most one $n$ for which there exists a plane $n$-transposition between them, and if such a plane $n$-transposition exists, then it is unique up to equivalence, and is of a specified direction (clockwise or counterclockwise). Note that this relies on $\gamma$ being framed: in general there may be two distinct contours such that contour transposition on each takes $S \mapsto S'$, with one being clockwise and the other counterclockwise. See the bottom two diagrams of \reffig{contours_framed_and_not} for an example.

We define a relation $\leqslant$ on the set of states $\mathscr{S}_U$ of $U$, similar to \refdef{leq_on_states}.
\begin{defn}
\label{Def:leq_on_plane_multiverse_state}
Let $U$ be a framed planar multiverse and let $S,S' \in \mathscr{S}_U$. We write $S \leqslant S'$ if there exists a sequence of states $S=S_0, S_1, \ldots, S_m = S'$ of $U$, for some integer $m \geq 0$, such that each $S_{j+1}$ is obtained from $S_j$ by a counterclockwise plane transposition.
\end{defn}

The requirement for $U$ to be framed, and for plane transpositions to be along framed contours is necessary. Without  it, \refthm{main_thm_1} fails. For example, \reffig{framing_countrexample} shows a cycle of counterclockwise ``plane transpositions" that satisfy all conditions except that they cannot all be framed.
\begin{figure}
\begin{center}
\includegraphics[width=0.6\textwidth]{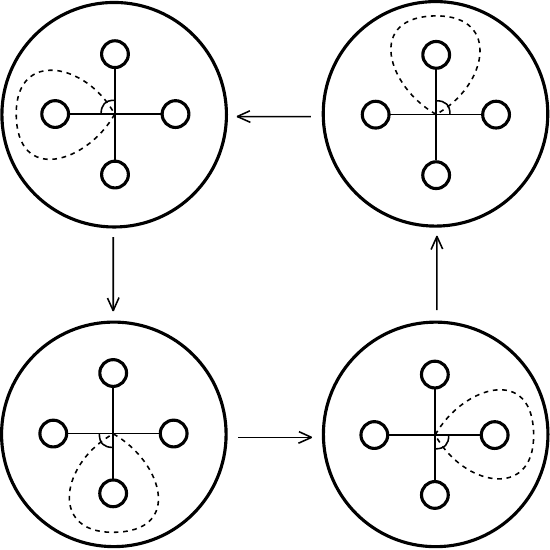}
\end{center}
    \caption{Counterexample to \refthm{main_thm_1} without framings.}
    \label{Fig:framing_countrexample}
\end{figure}

\subsection{Equivalence of twisting and transpositions on planar multiverses}
\label{Sec:twisting_transpositions}

Again let $(U, \Pi, \mathscr{F}, G)$ be a framed planar multiverse, and $G_0$  a reduced spine, so as in 
\refsec{elementary_on_plane_surface}, $G_0$ has well-defined notions of outer face, elementary cycle, positive and negative alternating cycles, twisting up and down, and a relation $\leqslant$ on $\mathscr{M}_{G_0}$, which has a distributive lattice structure as in \refprop{reduced_spine_lattice}. 

By \reflem{states_and_matchings} and \reflem{remove_no_matchings}(i), states of $U$ are in bijection with matchings of $G_0$ as in \refeqn{states_matchings_reduced_spine}. Let $S$ be a state of $U$, and $M$ its corresponding matching on $G_0$. 

Denote by $\Tr_S$ be the set of all counterclockwise plane transpositions that can be done on $S$. As discussed at the end of \refsec{n-transpositions}, $\Tr_S$ is a finite set. Moreover, between any two states there is at most one plane $n$-transposition, for one value of $n$, in a specific direction (clockwise or counterclockwise). Similarly, denote by $\Tw_M$ be the set of all twisting up operations that can be done on $M$; again, this is a finite set. Between two matchings there is at most one twisting up or down, in a specific direction (up or down), on a specific elementary cycle. These two sets are naturally bijective as follows.
\begin{lemma}\label{Lem:transpositions_and_twisting}
Let $S,S'$ be states of $U$ corresponding to matchings $M,M'$ of $G_0$.
There is a counterclockwise (resp. clockwise) plane transposition taking $S$ to $S'$, if and only if there is a twisting up (resp. down) taking $M$ to $M'$.
\end{lemma}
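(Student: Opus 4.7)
The plan is to use the straightening of framed transposition contours to translate plane transpositions into twisting operations on the reduced spine.

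Given a counterclockwise plane transposition from $S$ to $S'$ along a framed contour $\gamma$, let $C$ be the straightening of $\gamma$. By \reflem{straightening_is_alternating_cycle}, $C$ is a vertex-simple alternating cycle of $G$ relative to $M$, isotopic to $\gamma$. Each edge of $C$ corresponds to a corner $\alpha_j \in M$ or $\alpha'_j \in M'$, both of which are edges of $G_0$; hence $C$ is an alternating cycle in $G_0$ relative to $M$. The twist $M + C$ exchanges $\alpha_j$-edges for $\alpha'_j$-edges, producing precisely the matching $M'$ corresponding to $S'$ via \reflem{states_and_matchings}.

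The central step is showing that $C$ is in fact an \emph{elementary} cycle of $G_0$. Since $C$ and $\gamma$ are isotopic, the interior of $C$ in $\R^2$ (under the embedding $\Pi \subset \R^2$) coincides with the interior of $\gamma$ in $\Pi$. The plane transposition condition guarantees that every corner at any $v_j$ lying in the interior of $\gamma$ is forbidden, which by \reflem{forced_forbidden_corners_edges} means no edge of $G_0$ emanates from any $v_j$ of $C$ into the interior. Combining this with the framing of $\gamma$ and the planarity of $\Pi$, the interior of $C$ contains no part of $G_0$ attached to $C$, so $C$ bounds a face of $G_0$ and is elementary. To match directions, orient $C$ consistently with the counterclockwise direction of $\gamma$ (interior on the left); then the edges of $C \cap M$ run from white $v_j$ to black $b_j$ in this direction, so orienting them from black to white (as in \refdef{pos_neg_cycle}) encircles the interior face clockwise, making $C$ a negative cycle relative to $M$. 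By \refdef{twisting_up_down}, twisting at a negative cycle is a twisting up, so the counterclockwise plane transposition corresponds precisely to a twisting up of $M$ yielding $M'$.

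The converse direction is parallel. Given a twisting up from $M$ to $M'$ at an alternating elementary cycle $A$ of $G_0$, view $A$ as a closed curve on $\Pi$; by \reflem{alternating_cycles_contours} it is a framed transposition contour for $S$. Since $A$ is elementary, the face of $G_0$ it bounds contains no edge of $G_0$, so any edge of $G_0$ at a $v_j$ entering the interior cannot exist; hence the corresponding corner is forbidden (\reflem{forced_forbidden_corners_edges}), verifying the plane transposition condition. Contour transposition along $A$ sends $S$ to $S'$, and the same sign analysis shows the direction is counterclockwise. The clockwise/twisting-down case is exactly analogous, with positive cycles in place of negative ones. The hard part will be the central step of the forward direction: showing that the local plane transposition condition (forbidden interior corners at each $v_j$), together with the framing and planarity of $\Pi$, globally forces the straightening $C$ to bound a single face of $G_0$, i.e., ruling out chords and pendant edges of $C$ lying in $G_0$.
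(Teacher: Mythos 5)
Your proposal correctly identifies the strategy (straighten the framed contour into an alternating cycle of $G_0$, show it is elementary, match signs), and the converse direction is essentially right. However, there is a genuine gap in the forward direction, which you yourself flag at the end but do not resolve.

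The issue is this. To show the straightening $C$ of $\gamma$ is an elementary cycle of $G_0$, you must rule out \emph{all} edges of $G_0$ departing from vertices of $C$ into its interior. The vertices of $C$ alternate between white vertices $v_j$ (interior vertices of $U$) and black vertices $F_j$ (unstarred faces of $U$). The plane transposition condition in \refdef{n_transposition} only forbids corners at the $v_j$ lying in the interior of $\gamma$; by \reflem{forced_forbidden_corners_edges} this rules out $G_0$-edges emanating from the white vertices $v_j$ into the interior. It says nothing about $G_0$-edges emanating from the black vertices $F_j$ into the interior, and your appeal to ``framing and planarity'' does not address these. Such an edge would be a chord or pendant attached at some $F_j$, and a priori nothing in the transposition definition prevents it.

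The paper closes this gap with a counting argument that you have not supplied. Since $C$ is alternating relative to $M$ and separates $\Pi$, the matching $M$ pairs the vertices on $C$ with each other, so $M$ must also pair the vertices of $G$ strictly inside $C$ with each other; hence the numbers of black and white vertices of $G$ strictly inside $C$ are equal. It follows that for \emph{any} matching $N$ of $G$, the number of $N$-edges from interior black vertices to the $v_j$ equals the number of $N$-edges from interior white vertices to the $F_j$. The plane transposition condition forces the first number to be zero for every $N$, hence so is the second, and therefore every edge of $G$ from an $F_j$ into the interior is forbidden and absent from $G_0$. Without this (or an equivalent) argument, the elementarity of $C$ in $G_0$ is unestablished and the forward direction is incomplete.
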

Hence there is a bijection
\[
p \colon \Tr_S \To \Tw_M
\]
which takes the counterclockwise plane transposition $S \mapsto S'$ to the twisting up $M \mapsto M'$.
Considering \refdef{leq_on_plane_multiverse_state} and \refdef{relation_on_matchings1} of the relations $\leqslant$ on $\mathscr{S}_U$ and $\mathscr{M}_{G_0}$, it then follows that the bijection of sets $\mathscr{S}_U \cong \mathscr{M}_{G_0}$ of \refeqn{states_matchings_reduced_spine} respects these relations. 

\begin{proof}
Let $T \in \Tr_S$ be a counterclockwise plane transposition taking $S \mapsto S'$; we construct a twisting up $O \in \Tw_M$ taking $M \mapsto M'$.
Let $v_j, \alpha_j, \alpha'_j, F_j, \gamma_j, \gamma$ for $j \in \Z/n\Z$ be the vertices, corners, faces, arcs, and framed contour involved in $T$, as in \refsec{n-transpositions}.
Recall that each vertex $v_j$ is adjacent to faces $F_j$, $F_{j-1}$ via corners $\alpha_j, \alpha'_j$ respectively.
Note each $v_j$ is also a white vertex of $G$ and $G_0$. Label the vertex of $G$ and $G_0$ in each $F_j$ also by $F_j$, and the edge of $G$ in the corner $\alpha_j$ or $\alpha'_j$ also by $\alpha_j$ or $\alpha'_j$. By the correspondence $\mathscr{S}_U \cong \mathscr{M}_{G_0}$ of \refeqn{states_matchings_reduced_spine}, the corners $\alpha_j$ of $U$ are marked in $S$ and the edges $\alpha_j$ lie in $M$; the corners $\alpha'_j$ of $U$ are marked in $S'$ and the edges $\alpha'_j$ lie in $M'$. Since all the edges $\alpha_j, \alpha'_j$ lie in at least one matching $M$ or $M'$, they also lie in $G_0$.

Let $A$ be the straightening of $\gamma$ (\refdef{straightening}). By \reflem{contours_alternating_cycles_equiv}, $A$ is a vertex-simple alternating cycle of $G$ relative to $M$. Its vertices are the $v_j$ and $F_j$, and its edges are the $\alpha_j$ and $\alpha'_j$. As all its vertices and edges also lie in $G_0$, $A$ is also a vertex-simple alternating cycle of $G_0$ relative to $M$. See \reffig{alternating_cycle}. We will show that twisting up on $A$ provides the desired $O$. 

We claim $A$ is elementary, i.e. (\refdef{outer_boundary_elementary}) that $A$ encircles a face $f$. For this, it is sufficient to prove that at each vertex $v_j$ or $F_j$ of $A$, there are no edges of $G_0$ departing the vertex into the interior of $A$. Since $A$ separates its interior from the exterior, and the matching $M$ (or $M'$) matches all the vertices along the cycle $A$, it must also match the vertices of $G$ in the interior of $A$ to each other. In particular, the number of black and white vertices of $G$ inside $A$ must be equal. Thus, in any matching $N$ of $G$, the number of edges connecting black vertices in the interior of $A$ to white vertices of $A$ (i.e. the $v_j$), and the number of edges connecting white vertices in the interior of $A$ to black vertices of $A$ (i.e. the $F_j$), must be equal; let this number be $k(N)$. But by \refdef{n_transposition} of a plane transposition, at any $v_j$, any corner 
in the interior of $\gamma$ is forbidden (\refdef{forced_forbidden_corner}), hence the corresponding edge of $G$ is forbidden (\reflem{forced_forbidden_corners_edges}), so
does not appear in any matching. Thus for any matching $N$ we have $k(N) = 0$. Hence all edges of $G$ from $v_j$ or $F_j$ into the interior of $A$ are forbidden, and do not appear in $G_0$. So $A$ is elementary as claimed.

Observe that $A$ is alternating relative to both $M$ and $M'$,
and that twisting $M$ at $A$ yields $M'$.
Since $T$ is counterclockwise, the marker at each $v_j$ rotates counterclockwise through the interior of $A$ from its position in $S$ to its position in $S'$. Thus, in $M$, the edges $\alpha_j$, directed from black to  white vertices, encircle $f$ in the clockwise direction, making $A$ negative relative to $M$. 
Similarly, $A$ is positive relative to $M'$,
and we have constructed an $O \in \Tw_M$ taking $M \mapsto M'$.

Reversing the above, we now construct, from an $O \in \Tw_M$ taking $M \mapsto M'$ by twisting up on a negative cycle $A$, a $T \in \Tr_S$ taking $S \mapsto S'$.
The elementary cycle $A$ of $G_0$ encircles a face $f$ and is alternating relative to $M$ and $M'$. Let its white vertices be $v_j$ and its black vertices $F_j$ in cyclic order, for $j \in \Z/n\Z$,
so that the edge $\alpha_j$ joining $F_j$ and $v_j$ belongs to $M$, and the edge $\alpha'_j$ joining $F_{j-1}$ and $v_j$ belongs to $M'$; we denote the corresponding faces of $U$ by $F_j$ and the corners of $U$ at $v_j$ by $\alpha_j, \alpha'_j$ also.
So $S$ has a marker at each $v_j$ in corner $\alpha_j$ of face $F_j$, while $S'$ has a marker at each $\alpha'_j$ in face $F'_j$. 

By \reflem{alternating_cycles_contours}, when $A$ is regarded as a simple closed curve $\gamma$, it is a framed transposition contour for $S$. Contour transposition along $A$ takes $S \mapsto S'$.
As $A$ is elementary in $G_0$, encircling a face $f$, no edge of $G_0$ passes from a $v_j$ or $F_j$ into the interior of $A$. Hence any edges of $G$ passing from a $v_j$ or $F_j$ into the interior of $A$ are forbidden. Thus, by \reflem{forced_forbidden_corners_edges}, at any $v_j$, any corner in the interior of $\gamma$ is forbidden. So there is a plane transposition $T$ along $\gamma$ taking $S \mapsto S'$.

The fact that $O$ is a twisting up implies that, in $M$, the edges $\alpha_j$ of $G_0$, when directed from black to white vertices, encircle $f$ in the clockwise direction.
Similarly, in $M'$, the edges $\alpha'_j$, directed from black to white vertices, encircle $f$ in the counterclockwise direction.

It follows that $T$ is a counterclockwise plane transposition as desired. The constructions of twisting up from counterclockwise plane transposition and vice versa yield mutually inverse bijections.
\end{proof}

We note that the original notion of transposition, from Kauffman's \refdef{transposition}, also makes sense in any multiverse. On a planar multiverse, we can also translate it as follows.
\begin{lem}
\label{Lem:Kauffman_transposition_twisting}
Let $(U, \Pi, \mathscr{F}, G)$ be a framed planar multiverse, and $S,S'$ states of $U$ corresponding to matchings $M,M'$ of $G$. There is a Kauffman transposition taking $S$ to $S'$ if and only if the following conditions hold:
\begin{enumerate}
\item $M$ is obtained from $M'$ by twisting at an alternating cycle $A$ of $G$ of length $4$; and
\item at each white vertex $v$ of $A$, no edges of $G$ depart from $v$ into the interior of $A$.
\end{enumerate}
\end{lem}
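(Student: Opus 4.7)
The plan is to realize a Kauffman transposition as a special case of a framed contour $2$-transposition and translate it into alternating cycles of $G$, using the bijection between isotopy classes of framed transposition contours for $S$ and vertex-simple alternating cycles of $G$ relative to $M$ supplied by \reflem{alternating_cycles_contours} and \reflem{straightening_is_alternating_cycle}. Because a contour $2$-transposition has $2$ white vertices and $2$ black vertices (faces), its straightening is an alternating cycle of length exactly $4$, which explains the form of condition (i). Condition (ii) should encode the extra geometric requirement that the rotation at each $v_j$ is $90^\circ$ rather than $180^\circ$.

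For the forward direction, start from a Kauffman transposition $S \to S'$ and take the framed $2$-transposition contour $\gamma$ realising it as in \reffig{Kauffman_as_contour_transposition}. Let $A$ be its straightening. Then $A$ is a vertex-simple alternating cycle of $G$ of length $4$ relative to $M$, and by \reflem{when_twisting_works}, twisting $M$ at $A$ produces $M' = M + A$, establishing (i). For (ii), I would analyse each white vertex $v_j$ locally: since the rotation is $90^\circ$, the two corners $\alpha_j, \alpha'_j$ through which $\gamma$ passes are adjacent, and a small-disc analysis shows that the two remaining corners at $v_j$ both lie on the exterior side of $\gamma$ locally. Thus any edge of $G$ departing $v_j$ through one of these other corners leaves into the exterior of $A$, yielding (ii).

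For the backward direction, take a length-$4$ alternating cycle $A$ satisfying (ii) and regard it as a simple closed curve, which by \reflem{alternating_cycles_contours} is a framed $2$-transposition contour $\gamma$ for $S$. Contour transposition along $\gamma$ produces a state $S'$ corresponding to $M' = M + A$. The heart of the matter, which I expect to be the main obstacle, is to verify that this is actually a Kauffman transposition, i.e. that at each white vertex $v_j$ of $A$, the corners $\alpha_j$ and $\alpha'_j$ are adjacent rather than opposite. Suppose for contradiction they are opposite at some $v_j$. Then the two remaining corners $c_3, c_4$ at $v_j$ lie on opposite local sides of $\gamma$, and one of them, say $c$, is locally on the interior side. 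If $c$ lies in an unstarred face, the corresponding edge of $G$ departs $v_j$ into the interior of $A$, violating (ii). If $c$ lies in a starred face $F_*$, then $F_*$ contains a point locally in the interior of $\gamma$ (near $v_j$) as well as points in the exterior (near $\partial \Sigma_0$, to which $F_*$ is adjacent by \refdef{multiverse}); but \refdef{transposition_contour} forces $\gamma$ to lie entirely in unstarred faces, so $F_* \cap \gamma = \emptyset$, and $F_*$ would be disconnected, a contradiction. Hence the rotation at each $v_j$ is $90^\circ$ and the move is a Kauffman transposition.

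The main obstacle is the local-to-global argument just outlined in the backward direction: condition (ii) alone says nothing directly about corners lying in starred faces, so one must combine it with the global observation, following from \refdef{transposition_contour}, that any starred face is trapped entirely in the exterior of $\gamma$. Everything else is bookkeeping via the straightening correspondence and \reflem{when_twisting_works}.
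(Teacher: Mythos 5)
Your argument follows essentially the same strategy as the paper's: identify the four relevant vertices and edges of $G$ as an alternating $4$-cycle $A$, and analyse locally at each white vertex how the $90^\circ$ (versus $180^\circ$) rotation corresponds to the absence of edges of $G$ entering the interior of $A$. Two observations. First, your backward direction is in fact more careful than the paper's. The paper states directly that condition (ii) forces the two occupied corners at each white vertex to be adjacent, but (ii) only constrains corners lying in \emph{unstarred} faces (those are the only corners at which edges of $G$ exist), and is silent if the corner lying locally in the interior of $A$ belongs to a \emph{starred} face. Your connectivity argument — such a starred face would meet both the interior and exterior of $\gamma$, yet $\gamma$, by \refdef{transposition_contour}, lies in the closures of unstarred faces, so the starred face would be disconnected — disposes of this case, which the published proof does not explicitly address. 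Second, you omit the final check: after establishing the rotation is $90^\circ$ at each vertex, one must still verify that both markers rotate in the \emph{same} direction (both clockwise or both counterclockwise), as \refdef{transposition} requires. This follows because both rotations pass through the interior wedge of $A$, and the boundary orientation of the interior disc forces a consistent rotational sense at $u$ and $v$; the paper notes this explicitly. You should add this sentence. Your packaging via the contour/straightening correspondence (\reflem{alternating_cycles_contours}, \reflem{straightening_is_alternating_cycle}) is somewhat heavier machinery than the paper's direct construction of $A = u$--$Q$--$v$--$R$--$u$, but it is correct; indeed, realizing the $4$-cycle directly as a curve on edges of $G$ is what makes the framing assertion automatic in your first sentence.
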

Note that in this lemma, the alternating cycle $A$ need not be elementary, and hence the twisting need not be  twisting up or down in the sense of \refdef{twisting_up_down}. 
We need $\Pi$ to be planar in order to identify an interior of $A$.

\begin{proof}
Let $u,v$ be the two vertices of $U$ involved in the transposition (as in \reffig{transposition}). Let $Q,R$ be the the faces in which markers at $u,v$ respectively lie in $S$. Then $u,v$ may be regarded as white and $Q,R$ as black vertices of $G$. These four vertices, together with the four edges between them corresponding to the corners involved in the transposition from $S$ to $S'$, naturally form a cycle $A$ in $G$ of length $4$, which proceeds $u$--$Q$--$v$--$R$--$u$. The matching $M$ contains the edges $u$--$Q$ and $v$--$R$, while $M'$ contains $u$--$R$ and $v$--$Q$. In particular, $M'$ is obtained from $M$ by twisting at the alternating cycle $A$.  In a transposition, markers rotate precisely $90^\circ$, so that there are no faces between $Q$ and $R$ at $u$ or $v$, and hence there are no edges in $G$ departing from $u$ or $v$ into the interior of $A$.

For the converse, suppose we have $M,M'$ and $A$ as in (i) and (ii). Let the cycle $A$ be $u$--$Q$--$v$--$R$--$u$ where $u,v$ are white and $Q,R$ are black, and let $M$ contain $u$--$Q$ and $v$--$R$, so $M'$ contains $u$--$R$ and $v$--$Q$. In the state $S$ we then have a marker at $u$ in a corner incident with $Q$, and a marker at $v$ in a corner incident with $R$, corresponding to two edges of $A$; in the state $S'$, we have a marker at $u$ in a corner incident with $R$, and a marker at $v$ in a corner incident with $Q$. By condition (ii), the two corners occupied at $u$ by $S$ and $S'$ are adjacent, and the two corners occupied at $v$ by $S$ and $S'$ are adjacent. Thus $S$ and $S'$ are related by a $90^\circ$ rotation at $u$ and $v$. Accordingly as $A$ (oriented with the order of vertices written above) encloses its interior with counterclockwise or clockwise orientation, the markers both rotate counterclockwise or both rotate clockwise, and the states are related by a Kauffman transposition.
\end{proof}

\subsection{Planar clock theorem}
\label{Sec:planar_clock_thm}

We can now state a generalised clock theorem for planar multiverses, a precise version of \refthm{main_thm_1}. Recall that the set of states $\mathscr{S}_U$ of a framed planar multiverse $(U, \Pi, \mathscr{F}, G)$ has a relation $\leqslant$ from \refdef{leq_on_plane_multiverse_state}.
The matchings $\mathscr{M}_{G_0}$ of the reduced spine $G_0$ have the relation $\leqslant$ of \refdef{relation_on_matchings1}, and by \refprop{reduced_spine_lattice}, $\mathscr{M}_{G_0}$ then forms a distributive lattice.

Recall also that we have the bijection $\mathscr{S}_U \cong \mathscr{M}_{G_0}$ of \refeqn{states_matchings_reduced_spine}. As discussed after the statement of \reflem{transpositions_and_twisting}, this bijection preserves the relations $\leqslant$ on the sets. Hence the same is true for $\mathscr{S}_U$ and we have a bijection of distributive lattices.
\begin{theorem}[Planar clock theorem]\label{Thm:clock_on_genus_0}
Let $(U, \Pi, \mathscr{F}, G)$ be a framed planar multiverse. 
Then $\mathscr{S}_U$, equipped with the relation $\leqslant$ of \refdef{leq_on_plane_multiverse_state}, is a distributive lattice.
Moreover, $S \lessdot S'$ if and only if $S'$ is obtained from $S$ by a counterclockwise plane transposition.
\end{theorem}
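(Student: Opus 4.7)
The plan is to transfer the distributive lattice structure from matchings on the reduced spine $G_0$ to states of $U$ via the bijection already established in the paper. The groundwork is essentially complete: \refprop{reduced_spine_lattice} endows $\mathscr{M}_{G_0}$ with a distributive lattice structure in which $M \lessdot M'$ iff $M'$ is obtained from $M$ by twisting up at a non-outer face of $G_0$, and \reflem{transpositions_and_twisting} provides a bijection between counterclockwise plane transpositions $S \mapsto S'$ on states and twisting-up operations $M \mapsto M'$ on the corresponding matchings. The bijection \refeqn{states_matchings_reduced_spine}, namely $\mathscr{S}_U \cong \mathscr{M}_G \cong \mathscr{M}_{G_0}$, is the bridge.

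First I would verify that the bijection $\mathscr{S}_U \To \mathscr{M}_{G_0}$ respects the relation $\leqslant$. By \refdef{leq_on_plane_multiverse_state}, $S \leqslant S'$ means there is a sequence $S = S_0, \ldots, S_m = S'$ with each $S_{j+1}$ obtained from $S_j$ by a counterclockwise plane transposition. By \reflem{transpositions_and_twisting}, the corresponding sequence $M_0, \ldots, M_m$ of matchings is such that each $M_{j+1}$ is obtained from $M_j$ by twisting up, which by \refdef{relation_on_matchings1} gives $M \leqslant M'$ in $\mathscr{M}_{G_0}$. The converse goes the same way, reading \reflem{transpositions_and_twisting} in the other direction. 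Hence the bijection intertwines the two relations.

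Once this is established, the rest is formal. As \refdef{relation_on_matchings1} together with \refprop{reduced_spine_lattice} shows $(\mathscr{M}_{G_0}, \leqslant)$ is a partial order, and in particular a distributive lattice, the same follows for $(\mathscr{S}_U, \leqslant)$ by transport of structure: reflexivity, antisymmetry, transitivity, and the existence of joins and meets satisfying the distributive law are all preserved by an order-preserving bijection. Thus $(\mathscr{S}_U, \leqslant)$ is a distributive lattice, and it is isomorphic (as a distributive lattice) to the Propp lattice of $G_0$.

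Finally, for the covering statement, note that the covering relation in any finite poset is determined purely by $\leqslant$, so an order isomorphism automatically sends covers to covers. Applying \refprop{reduced_spine_lattice}, $M \lessdot M'$ iff $M'$ is obtained from $M$ by twisting up at a non-outer face of $G_0$; transporting through the bijection and applying \reflem{transpositions_and_twisting} once more, this happens iff $S'$ is obtained from $S$ by a counterclockwise plane transposition. I do not anticipate a genuine obstacle here; the real work was already done in \reflem{transpositions_and_twisting}, where the subtle identifications (that a framed transposition contour straightens to a vertex-simple elementary alternating cycle of $G_0$, and that the forbidden-interior-corner condition in \refdef{n_transposition} forces the straightened cycle to be elementary) were carried out. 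The present proof is therefore essentially a one-line assembly of \refprop{reduced_spine_lattice}, \reflem{transpositions_and_twisting}, and the bijection \refeqn{states_matchings_reduced_spine}.
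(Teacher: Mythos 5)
Your proof is correct and takes essentially the same approach as the paper: transfer the distributive lattice structure from $\mathscr{M}_{G_0}$ to $\mathscr{S}_U$ via the bijection \refeqn{states_matchings_reduced_spine}, using \reflem{transpositions_and_twisting} to see the bijection is an order isomorphism, and then combine \refprop{reduced_spine_lattice} with \reflem{transpositions_and_twisting} to characterise the covering relation. The paper's own proof is likewise a short assembly of these pieces, with the genuine work carried out earlier in \reflem{transpositions_and_twisting}, exactly as you identify.
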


\begin{proof}
It only remains to prove the statement about covering. Let $M,M' \in \mathscr{M}_{G_0}$ be matchings corresponding to $S,S' \in \mathscr{S}_U$. By \refprop{reduced_spine_lattice}, $M \lessdot M'$ if and only if $M'$ is obtained from $M$ by twisting up at a non-outer face of $G_0$, i.e. by twisting up on a negative elementary cycle of $G_0$. By \reflem{transpositions_and_twisting} this occurs if and only if there is a clockwise plane transposition taking $S$ to $S'$.
\end{proof}

\section{Application to universes}
\label{Sec:applications_to_universes}

Consider a Kauffman universe $(U, \mathscr{F})$ (\refdef{universe}), which as discussed in \refsec{string_universes} is equivalent to a universe in string form, or a universe on a disc $(U, D, \mathscr{F})$  (\refdef{universe_on_disc}). Whichever of these ways we view $U$, we obtain an isomorphic set of states $\mathscr{S}$, and relation $\leqslant$ (\refdef{leq_on_states}) based on Kauffman transpositions (\refdef{transposition}). By Kauffman's clock theorem (\refthm{clock_universe}), this relation endows $\mathscr{S}$ with a distributive lattice structure.

However, such a $(U, D, \mathscr{F})$ is also a planar multiverse.  As $U$ is connected, it is 2-cell embedded in $D$. Hence as discussed in \refsec{spines_framings}, it has a unique framing $G$ up to isotopy. Throughout this section, we regard a string universe $U$ as a framed planar multiverse in this way. The set of states $\mathscr{S}$ then has a relation $\leqslant$ as in \refdef{leq_on_plane_multiverse_state}, based on plane transpositions (\refdef{n_transposition}). By the planar clock theorem (\refthm{clock_on_genus_0}), this relation also provides $\mathscr{S}$ with a distributive lattice structure.

Thus we have two relations on the set of states of $U$. In this section we prove the following precise version of \refthm{Kauffman_plane_equivalence}.
\begin{thm}
\label{Thm:clock_lattices_isomorphic}
Two states $S,S'$ of $U$ satisfy $S \leqslant S'$ with respect to the relation $\leqslant$ of \refdef{leq_on_states}, if and only if $S \leqslant S'$ with respect to the relation $\leqslant$ of \refdef{leq_on_plane_multiverse_state}.
\end{thm}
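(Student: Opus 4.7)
The plan is to exploit that both partial orders on $\mathscr{S}$ are, by \refdef{leq_on_states} and \refdef{leq_on_plane_multiverse_state}, the reflexive transitive closures of their respective covering relations, and that by Kauffman's Clock Theorem \refthm{clock_universe} and the Planar Clock Theorem \refthm{clock_on_genus_0} these covers are exactly the counterclockwise Kauffman transpositions and counterclockwise plane transpositions respectively. Hence it suffices to show that on a Kauffman universe the two classes of elementary moves coincide: for any two distinct states $S, S'$, there exists a counterclockwise Kauffman transposition from $S$ to $S'$ if and only if there exists a counterclockwise plane transposition from $S$ to $S'$.

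For the direction $(\Rightarrow)$, let $\sigma$ be a counterclockwise Kauffman transposition at vertices $u, v$ and faces $F_1, F_2$. By \reflem{Kauffman_transposition_twisting} the associated $4$-cycle $A = u{-}F_1{-}v{-}F_2{-}u$ in the spine $G$ is vertex-simple and alternating relative to the matching of $S$, and by \reflem{alternating_cycles_contours} I may regard $A$ as a framed $2$-transposition contour $\gamma$ for $S$. The crucial verification is that no corner at $u$ or $v$ lies \emph{entirely} in the interior of $\gamma$: since each of the two rotations is $90^{\circ}$, the contour enters and exits each of $u, v$ through two \emph{adjacent} corners of that vertex, so the two uninvolved corners lie entirely on the exterior side while the two involved corners are only partly on the interior side. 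Hence the forbidden-interior-corner condition of \refdef{n_transposition} is vacuously satisfied, and $\sigma$ qualifies as a counterclockwise plane $2$-transposition, with the orientation preserved because both senses of ``counterclockwise'' describe the marker rotating through the interior of the contour.

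For the direction $(\Leftarrow)$, let $\sigma$ be a counterclockwise plane $n$-transposition. By \reflem{transpositions_and_twisting}, $\sigma$ corresponds to a twisting up at an elementary cycle $A$ of the reduced spine $G_0$ of length $2n$, bounding a non-outer face of $G_0$. Since $U$ is connected with $N = 1$ on the disc, it is $2$-cell embedded, and by \reflem{spine_is_2-cell} the spine $G$ is almost $2$-cell embedded with every non-outer face a $4$-gon. The heart of the argument is the combinatorial lemma that in a Kauffman universe every edge of $G$ belongs to some perfect matching---equivalently, every corner in an unstarred face is occupied by some state---so that $G_0 = G$ and no face merging occurs in passing to the reduced spine. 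Granted this, $A$ bounds a non-outer $4$-gon face of $G$, forcing $n = 2$. Moreover, the interior of $\gamma$ lies in the interior of $D$ and hence contains no starred face, so any corner at $u$ or $v$ entirely in the interior would lie in an unstarred face and would, by \refdef{n_transposition}, have to be forbidden---contradicting $G_0 = G$; thus both rotations are $90^{\circ}$, and $\sigma$ is a counterclockwise Kauffman transposition in the sense of \refdef{transposition}. The main obstacle is this no-forbidden-edges lemma for the spine of a Kauffman universe, which I would prove by combining Kauffman's Clock Theorem with the planar $N = 1$ structure of $U$ to transport, along a chain of counterclockwise Kauffman transpositions starting from a suitable extremal state, a marker to any prescribed unstarred corner; equivalently, one can verify Hall's marriage condition on the balanced bipartite spine using the Euler identity $F - V_{int} = 2$ established in \reflem{Euler_char_arg}.
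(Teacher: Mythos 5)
Your $(\Rightarrow)$ direction is essentially the paper's lemma: a Kauffman transposition is a framed contour $2$-transposition with no corners of the two vertices lying entirely in the interior of the contour, so the forbidden-interior-corner condition holds vacuously. That part is fine.

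The $(\Leftarrow)$ direction, however, rests on a claim that is \emph{false}: it is not true that for a Kauffman universe every edge of the spine $G$ lies in some perfect matching, i.e. that $G_0 = G$. A minimal counterexample: take the universe $U$ with two vertices $v_0, v_1$, a loop at each, and two parallel edges between them (the flattened $2$-crossing unknot); star the unbounded face and the $1$-gon at $v_0$. The two unstarred faces are the bigon $f_1$ and the $1$-gon $f_3$ at $v_1$. The marker at $v_0$ is forced into $f_1$, so the marker at $v_1$ is forced into $f_3$, and the edge $v_1$--$f_1$ of $G$ is forbidden. Thus $G_0 \neq G$, and this happens whenever the diagram has a Reidemeister-I kink (or, more generally, a forced corner) away from the starred faces. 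Neither transporting markers along the clock lattice nor Hall's condition can rescue this, because the claim itself is untrue.

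What the paper actually proves is a strictly weaker, \emph{local} statement that suffices: for a positive or negative elementary cycle $C$ of $G_0$, no edge of the \emph{full} Tait-subgraph $G$ departs from a white vertex of $C$ into the interior of $C$. Since $C$ is elementary only in $G_0$, this is not automatic; the proof (\reflem{universe_key_construction}) is an algorithmic ``greedy, turn-clockwise'' path construction in the interior of $C$, which may adjust the matching by twisting on an auxiliary vertex-simple alternating cycle, terminating by a well-ordering on paths under a lexicographic order; the upshot is that any $G$-edge escaping from $C$ into its interior would lie in \emph{some} matching, hence in $G_0$, contradicting elementarity of $C$. Once that is in hand, the length $4$ conclusion is not read off \reflem{spine_is_2-cell} (which only describes faces of $G$, not of $G_0$, and your $C$ is a priori only a face of $G_0$); instead the paper uses the Euler-characteristic count of \reflem{long_alternating_cycles_require_escape}, which gives $E_{\partial W} = L + 2$, and since every white vertex of $C$ now contributes exactly $2$ escaping $T$-edges, $2L = L + 2$, so $L = 2$. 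You would need to replace your ``no-forbidden-edges lemma'' and the $4$-gon-face argument with this two-lemma machinery (or an equivalent) to close the gap.
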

Thus, the distributive lattices of the two clock theorems, \refthm{clock_universe} and \refthm{clock_on_genus_0}, are isomorphic. One direction is straightforward.

\begin{lem}
Let $U$ be a string universe. Then any counterclockwise (resp. clockwise) Kauffman transposition is a plane $2$-transposition.
\end{lem}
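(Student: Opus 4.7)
The plan is to take a Kauffman transposition and construct an explicit framed $2$-transposition contour realising it, then verify the extra conditions of \refdef{n_transposition} are vacuous.

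Set up the notation so that the Kauffman transposition involves vertices $v_1=u$, $v_2=v$, and faces $F_1=F$, $F_2=F'$, where the marker at $v_1$ (resp.\ $v_2$) sits in the corner $\alpha_1$ of $F_1$ (resp.\ $\alpha_2$ of $F_2$) in $S$, and in corner $\alpha'_1$ of $F_2$ (resp.\ $\alpha'_2$ of $F_1$) in $S'$. By \refdef{transposition}, $\alpha_j$ and $\alpha'_j$ are adjacent corners at $v_j$, differing by a $90^\circ$ rotation in a fixed sense (both clockwise or both counterclockwise). Since $u\neq v$, $F\neq F'$, and $F_1,F_2$ contain markers (hence are unstarred), everything is set up to match the data of a $2$-transposition contour.

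Next, construct $\gamma$ as follows. Draw an arc $\gamma_1$ from $v_1$ to $v_2$ whose interior lies in $F_1$, leaves $v_1$ through $\alpha_1$, and enters $v_2$ through $\alpha'_2$; analogously draw $\gamma_2$ from $v_2$ to $v_1$ through $F_2$, via $\alpha_2$ and $\alpha'_1$. Since $F_1$ and $F_2$ are discs (a string universe is $2$-cell embedded in $D$ by \reflem{trivial_2-cell_embed} together with \reflem{Euler_char_arg}) and the requisite incidences hold, these arcs exist and can be drawn disjoint except at their endpoints, so $\gamma=\gamma_1\cup\gamma_2$ is a simple closed curve meeting the data of \refdef{transposition_contour}. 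Moreover, because $U$ is $2$-cell embedded, the framing $G$ is unique up to isotopy, so as remarked after \reflem{framed_contour_equivalent} every transposition contour is framed; in particular so is $\gamma$. By construction, contour $2$-transposition along $\gamma$ sends $S$ to $S'$.

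Now verify the condition distinguishing a plane transposition from a mere contour transposition in \refdef{n_transposition}: every corner at $v_j$ lying entirely in the interior of $\gamma$ must be forbidden. Because $\alpha_j$ and $\alpha'_j$ are the two adjacent corners at $v_j$ through which $\gamma$ passes, and a Kauffman rotation sweeps through only $90^\circ$ between them, the interior of $\gamma$ near $v_j$ meets $v_j$ only in (parts of) the two corners $\alpha_j,\alpha'_j$ themselves; no \emph{other} corner at $v_j$ lies entirely in the interior of $\gamma$. The condition is therefore vacuously satisfied.

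Finally, identify directions. The marker at $v_j$ in $S$ occupies $\alpha_j$ and in $S'$ occupies $\alpha'_j$; the $90^\circ$ rotation between them passes through the interior of $\gamma$ by construction. Hence a counterclockwise Kauffman transposition rotates both markers counterclockwise through the interior of $\gamma$, matching \refdef{n_transposition}'s definition of counterclockwise plane $2$-transposition; similarly in the clockwise case. The only mild point of care is confirming that $\gamma$ really is a \emph{simple} closed curve and that its interior is the small region swept out by the $90^\circ$ rotation; both follow by drawing $\gamma_1,\gamma_2$ inside sufficiently small neighbourhoods of the two arcs of $U$ emanating from $u$ and $v$ that bound the rotation sector, which is possible since $F_1,F_2$ are discs and the corners $\alpha_j,\alpha'_j$ are adjacent.
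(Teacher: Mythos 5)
Your proof is correct and follows essentially the same route as the paper's: view the Kauffman transposition as a contour $2$-transposition, invoke the $2$-cell embedding of a string universe to conclude all contours are framed, observe that adjacency of the corners $\alpha_j,\alpha'_j$ (a $90^\circ$ rotation) means no corner at $v_j$ lies entirely in the interior of $\gamma$, so the forbidden-corner condition of \refdef{n_transposition} is vacuous, and then match directions. The paper's proof is terser because it defers the contour construction to the earlier discussion around \refdef{partial_transposition} and \reffig{Kauffman_as_contour_transposition}; you have simply spelled out that construction explicitly.
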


\begin{proof}
As discussed in \refsec{states_contours} after \refdef{partial_transposition}, a Kauffman transposition is a contour $2$-transposition, with contour $\gamma$ as illustrated in \reffig{Kauffman_as_contour_transposition}. As discussed in \refsec{spines_framings}, since $U$ is 2-cell embedded, all contours are framed. As the corners involved in a Kauffman transposition are adjacent, there are no interior corners of $\gamma$, so their requirement to be forbidden is vacuously satisfied. Hence \refdef{n_transposition} of a plane $2$-transposition is satisfied, with the same direction of rotation of markers as the Kauffman transposition.
\end{proof}

The rest of this section is devoted to the following, which completes the proof of \refthm{clock_lattices_isomorphic} and \refthm{Kauffman_plane_equivalence}.
\begin{prop}
\label{Prop:universe_n-transpositions}
Let $U$ be a string universe. Any plane transposition between two states of $U$ is a Kauffman transposition.
\end{prop}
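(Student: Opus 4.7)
The plan is to apply \reflem{Kauffman_transposition_twisting}, which characterises Kauffman transpositions as those arising from twisting at an alternating $4$-cycle $A$ of the spine $G$ subject to the condition that no edges of $G$ depart from the white vertices of $A$ into the interior of $A$. Given a plane transposition from $S$ to $S'$ on $U$ with framed contour $\gamma$ through vertices $v_1,\dots,v_n$, its straightening $A$ (\refdef{straightening}) is a vertex-simple alternating cycle of $G$ of length $2n$ by \reflem{contours_alternating_cycles_equiv}, and an elementary cycle of the reduced spine $G_0$ by the proof of \reflem{transpositions_and_twisting}. The matchings $M, M'$ of $G_0$ corresponding to $S, S'$ then differ by twisting at $A$, so it suffices to prove that $n=2$ and no edges of $G$ enter the interior of $A$ from its white vertices.

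\medskip

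I would first carry out a local analysis at each $v_j$, classifying by the geometry of $\gamma$: the corners $\alpha_j, \alpha'_j$ are either (i) adjacent at $v_j$ with the remaining two corners both exterior to $\gamma$, (ii) adjacent with the remaining corners both interior, or (iii) opposite. Let $n_1, n_2, n_3$ denote the respective counts, so $n = n_1 + n_2 + n_3$; in each case the degree of $v_j$ in the subgraph $U \cap \Psi$, where $\Psi$ is the closed interior disc of $\gamma$, is $1$, $3$, or $2$ respectively. Since $A$ is elementary in $G_0$, no $G_0$-edges cross $A$, so the matching $M$ forces the interior white vertices of $A$ to be paired with interior black vertices, giving $V^A_W = V^A_B$. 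A cell count of $\Psi$ using $U \cap \overline{\Psi}$ together with the arcs of $\gamma$, combined with $\chi(\Psi)=1$ and the degree sum formula, yields
$$n_1 - n_2 = 2.$$

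\medskip

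The main step is to rule out cases (ii) and (iii) for string universes. At a $v_j$ in either case, there is an edge of $G$ at $v_j$ going into the interior of $A$; by \refdef{n_transposition} and \reflem{forced_forbidden_corners_edges}, this edge is forbidden in $G$. I would argue that no such configuration can arise in a connected Kauffman universe: using \reflem{spine_is_2-cell} (every non-outer face of $G$ is a $4$-gon) together with a kink-reduction argument, one would show that forbidden corners in a connected Kauffman universe occur only at nugatory (kink) vertices, where they are paired with a unique forced corner; such vertices cannot play the role of $v_j$ in any plane transposition, since moving the marker away from the forced corner would have to land on a forbidden one. This forces $n_2 = n_3 = 0$, hence $n = n_1 = 2$, with both $v_j$'s in case (i); then $A$ has length $4$ and no $G$-edges enter the interior of $A$, and by \reflem{Kauffman_transposition_twisting} the plane transposition is a Kauffman transposition. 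The main obstacle is this last structural claim, as rigorously characterising the forbidden corners of a connected Kauffman universe requires a careful argument beyond what the almost $2$-cell embedding of $G$ alone provides.
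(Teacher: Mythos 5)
Your Euler count giving $n_1-n_2=2$ is essentially equivalent to the paper's \reflem{long_alternating_cycles_require_escape}, which shows $E_{\partial W}=L+2$ for a vertex-simple alternating cycle of length $2L$ in the spine of a string universe. But the way the paper rules out edges of $G$ entering the interior of $A$ is entirely different from what you propose. The paper's key device is \reflem{universe_key_construction}: assuming some edge of $G$ leaves a white vertex $v_0$ of the elementary cycle $C$ into its interior, a greedy path-building algorithm (which may repeatedly adjust the matching inside $C$) produces an alternating path $P$ through the interior of $C$ from $v_0$ to another vertex $v_n$ of $C$, with all vertices distinct. Joining $P$ to a suitable arc $P_1$ of $C$ yields a shorter vertex-simple alternating cycle $C^!$; twisting at $C^!$ (\reflem{when_twisting_works}) produces a matching of $G$ containing the first edge $e_1$ of $P$. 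Hence $e_1$ is not forbidden, so it survives in $G_0$, and its presence at $v_0$ contradicts the elementarity of $C$ in $G_0$.

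Your proposal instead rests on the claim that forbidden corners in a connected Kauffman universe occur only at nugatory (kink) vertices, paired with forced corners. You are right to flag this as a gap, and it is a substantial one. That structural claim is not proved or used anywhere in the paper, and I do not see a proof: it would at minimum seem to require a reducedness or primeness hypothesis on the diagram, and even then an argument comparable in length and delicacy to \reflem{universe_key_construction}. The paper's route is more robust precisely because it avoids any global characterisation of forbidden edges --- it only needs to show that the one candidate edge $e_1$ entering the interior of $C$ is not forbidden, which it does constructively by exhibiting a matching containing it. That constructive path argument is the real content of the paper's proof, and it is what your proposal is missing.
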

In particular, there are no plane $n$-transpositions on $U$ for $n \neq 2$. 

Throughout the rest of this section, $(U, D, \mathscr{F}, G)$ is a string universe, regarded as a 2-cell embedded planar multiverse on the disc $D$, with $G$ a framing unique up to isotopy. In the notation of \refdef{vef_labels} and \reflem{multiverse_boundary_vertices_even}, $V_\partial = 2$, $N=1$. There are two outer faces, which are precisely the starred faces of $\mathscr{F}$.

\subsection{Alternating cycles in spines of universes}

Let $T$ be a Tait graph containing $G$. We now prove two technical lemmas.

\begin{lem}
\label{Lem:long_alternating_cycles_require_escape}
Let $M \in \mathscr{M}_G$, and let $C$ be a vertex-simple alternating cycle of $G$ relative to $M$, of length $2L>0$. Let $E_{\partial W}$ be the number of edges of $T$ departing from a white vertex of $C$ into the exterior of $C$. Then $E_{\partial W} = L+2$.
\end{lem}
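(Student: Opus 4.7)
My plan is to apply Euler's formula to the closed topological disc $\Delta$ bounded by $C$ inside $D$ (the interior of $C$), combined with a matching argument showing how $M$ restricts cleanly along $\partial\Delta = C$.

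First set notation: let $V^\circ$ be the number of interior $U$-vertices strictly inside $\Delta$ (equivalently, the number of white vertices of $G$ in the interior of $\Delta$); let $F^\circ$ be the number of faces of $U$ whose black vertex lies strictly inside $\Delta$; and let $E^U_\Delta$ be the number of edges of $U$ contained in $\Delta$. For each white vertex $v_j$ of $C$, let $k_{v_j}$ be the number of edges of $U$ at $v_j$ going into $\Delta$, and $j_{v_j}$ the number of unused corners at $v_j$ (those not traversed by $C$) lying on the interior side of $C$; set $K = \sum_j k_{v_j}$ and $J = \sum_j j_{v_j}$. Since each $v_j$ has exactly two non-$C$ edges of $T$, one per unused corner, and each such edge departs into the interior or exterior of $C$ according to the side of its corner, we have $E_{\partial W} = 2L - J$, and it suffices to show $J = L - 2$.

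Next, give $\Delta$ the CW structure with 1-skeleton $(U \cap \Delta) \cup \partial\Delta$: its vertices are the $V^\circ$ interior $U$-vertices together with the $2L$ vertices of $C$, its edges are the $E^U_\Delta$ edges of $U$ in $\Delta$ together with the $2L$ edges of $C$, and its 2-cells consist of the $F^\circ$ faces of $U$ entirely inside $\Delta$ along with the $L$ interior-side pieces of those faces of $U$ whose black vertex lies on $C$ (each a disc, since $U$ is 2-cell embedded and each such face is cut by a tree-like V-shape emanating from its black vertex). Using $\chi(\Delta) = 1$ this gives $E^U_\Delta = V^\circ + F^\circ + L - 1$, and combining with the degree sum $2E^U_\Delta = 4V^\circ + K$ yields
\[
K = 2F^\circ - 2V^\circ + 2L - 2.
\]
The key step is to prove $V^\circ = F^\circ$. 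Every starred face of $U$ lies in the exterior of $\Delta$ (being adjacent to $\partial D$), so $F^\circ$ counts only unstarred faces and equals the number of black vertices of $G$ in the interior of $\Delta$. At each $v_j$, the two $C$-edges at $v_j$ are consecutive in $C$, so alternation forces exactly one to be in $M$; since $v_j$ has a unique $M$-edge, that edge is a $C$-edge. The same reasoning at each $b_j$ shows its unique $M$-edge is a $C$-edge. Hence no edge of $M$ crosses $\partial\Delta$, and $M$ restricts to a perfect matching between the interior white and black vertices of $G$, giving $V^\circ = F^\circ$ and hence $K = 2L - 2$.

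Finally, a local case analysis at each $v_j$ shows $k_{v_j} = j_{v_j} + 1$: the two used corners at $v_j$ are either adjacent (in which case the interior wedge of $C$ at $v_j$ contains either the single $U$-edge between them and no unused corner, or the other three $U$-edges together with both unused corners) or opposite (in which case the interior wedge contains two $U$-edges and one unused corner); in every case the difference is $1$. Summing gives $K = J + L$, so $J = L - 2$ and $E_{\partial W} = 2L - J = L + 2$ as required. The principal obstacle is the matching identity $V^\circ = F^\circ$, which requires carefully pinning down that the $M$-edge at each boundary vertex of $\Delta$ must be a $C$-edge; once that is established, the Euler computation and the local case analysis are straightforward.
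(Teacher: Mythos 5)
Your proof is correct, and it reaches the identity $E_{\partial W}=L+2$ by a route that is parallel to, but combinatorially distinct from, the paper's. The paper restricts the spine to the closed disc $D'$ bounded by $C$ and runs Euler's formula on that subgraph of $G$, using three counts: the white-degree count $4V=E+E_{\partial W}$, the fact that every face of the spine inside $D'$ is a quadrilateral (\reflem{spine_is_2-cell}), giving $4F=2E-2L$, and $2V-E+F=1$. You instead run Euler's formula on the CW structure given by $U\cap\Delta$ together with the contour $C$ itself, replace the quadrilateral-face input by the $4$-valence of $U$ plus the local corner identity $k_{v_j}=j_{v_j}+1$ at each white vertex of $C$, and convert to $E_{\partial W}$ via $E_{\partial W}=2L-J$. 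Both arguments hinge on the same pivotal observation, which you correctly identify as the crux: since the two $C$-edges at each vertex of $C$ are consecutive, alternation forces the unique $M$-edge at every vertex of $C$ to lie on $C$, so $M$ restricts to a perfect matching of the strictly interior vertices and the interior black and white counts agree ($V^\circ=F^\circ$ in your notation, $V$ black $=V$ white in the paper's). Your bookkeeping is all sound: the $L$ interior-side pieces of the faces met by $C$ are indeed discs because $C$ is vertex-simple (so each such face is cut by a single embedded V-arc), starred faces are excluded from $\Delta$ since they abut $\partial D$, and the case analysis at $v_j$ (adjacent used corners giving $(k,j)=(1,0)$ or $(3,2)$, opposite giving $(2,1)$) is exhaustive. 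What your version buys is independence from the quadrilateral structure of the spine; what the paper's buys is a slightly shorter ledger, since it never needs to track corners of $U$ individually.
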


\begin{proof}
Since $C$ is alternating relative to $M$, $M$ pairs each vertex of $C$ to another vertex of $C$ via an edge of $C$. Thus, each vertex of $G$ in the interior of $C$ must be paired by $M$ to another vertex in the interior of $C$. 

Let $D' \subset D$ be the closed disc consisting of $C$ and its interior.
By \reflem{where_Tait_spine_coincide}, $G$ and $T$ coincide on $D'$. 
Let $G'$ be the subgraph of $G$ (or $T$) lying in $D'$ (i.e. in the interior or on the boundary of $D'$). Then $M$ restricts to a matching of $G'$. Since the matching pairs black and white vertices then the numbers of black and white vertices in $G'$ must be equal; let this number be $V$.

Let the number of edges of $G'$ be $E$. Each edge of $G'$ is incident to precisely one white vertex. Each white vertex in $T$ has degree $4$. Each white vertex of $G'$ in the interior of $C$ thus has degree $4$, but the white vertices of $C$ may have degree less than $4$ in $G'$, because the edges of $T$ counted by $E_{\partial W}$ are not in $G'$. Counting edges of $G'$ via degrees of white vertices we thus have $4V = E + E_{\partial W}$.

On the other hand, by \reflem{spine_is_2-cell}, each non-outer face of $G$ is a (possibly degenerate) quadrilateral, so each face of $G'$ in $D'$ is also a quadrilateral. Each edge of $G'$ has a face on both sides in $D'$ (possibly the same face twice, in a degenerate quadrilateral), except for the $2L$ edges along $C$, which only have a face in $D'$ on one side. Letting $F$ be the number of faces of $G'$ in $D'$, we have $4F = 2E - 2L$.

As $G'$ provides a 2-cell decomposition of the disc $D'$ with $2V$ vertices, $E$ edges and $F$ faces, Euler's formula gives $2V-E+F = 1$. 

The three equations of the previous three paragraphs can be written as equating $E$ to $4V - E_{\partial W} = 2F+L = 2V+F-1$ respectively. The first equality gives $4V-2F = E_{\partial W} + L$, and the second equality gives $4V-2F = 2L+2$. Thus we obtain $E_{\partial W} + L = 2L+2$ and hence the desired result. 
\end{proof}

The idea of the following lemma is that, under certain circumstances, if $G$ contains an alternating cycle which is not elementary, then there is a smaller alternating cycle inside it (but possibly for a different matching).

\begin{lem}
\label{Lem:universe_key_construction}
Let $M \in \mathscr{M}_G$. Suppose there is a vertex-simple alternating cycle $C$ of $G$ relative to $M$,
containing a white vertex $v_0$ with an incident edge in the interior of $C$. 
Then there exist $M' \in \mathscr{M}_G$ and a directed path $P$ in $G$, say $(e_1, v_0, v_1), \ldots, (e_n, v_{n-1}, v_n)$, of length $n \geq 1$, starting at $v_0$, such that the following conditions hold.
\begin{enumerate}
\item The edges $e_j$ are all distinct and all lie in the interior of $C$.
\item The vertices $v_0, v_1, \ldots, v_n$ are all distinct.
\item The vertices $v_1, \ldots, v_{n-1}$ all lie in the interior of $C$.
\item The terminal vertex $v_n$ of $P$ lies on $C$.
\item $M'$ agrees with $M$ on the edges of $C$ and in the exterior of $C$.
\item $P$ is alternating with respect to $M'$.
\end{enumerate}
\end{lem}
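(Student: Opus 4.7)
The strategy is to translate the question into a bipartite-matching augmenting-path problem on the interior of $C$ and resolve it via König--Hall combined with the planar quadrangular structure of the spine. Let $D'$ denote the closed disc bounded by $C$ and let $W_C,B_C$ (resp.\ $W_I, B_I$) be the white/black vertices of $G$ on $C$ (resp.\ strictly inside $C$). Since $D'$ lies in the interior of $D$, \reflem{where_Tait_spine_coincide} gives $G \cap D' = T \cap D'$ and \reflem{spine_is_2-cell} forces every face of $G$ inside $D'$ to be a 4-gon, so every $w \in W_I$ has degree $4$ in $G$. Because $C$ is alternating, $M$ pairs $W_C \leftrightarrow B_C$ along the edges of $C$ and $W_I \leftrightarrow B_I$ perfectly via edges strictly inside $C$. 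Let $H$ be the subgraph of $G$ with the full vertex set but with edge set restricted to those edges whose interior lies in the interior of $C$; then $M|_H$ saturates $W_I \cup B_I$ and leaves $W_C \cup B_C$ unmatched. Write $a_I \geq 1$ for the number of edges of $G$ from $v_0$ into the interior of $C$.

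Finding $(M',P)$ is equivalent to choosing a perfect matching $M'|_H$ of $W_I \cup B_I$ and an $M'|_H$-alternating path in $H$ from $v_0$ to some $v_n \in B_C$: one then sets $M'$ to be $M'|_H$ on the interior and $M$ on the $C$-edges and the exterior. A standard symmetric-difference argument shows that such a pair exists iff $H$ admits a matching $M''$ of size $|W_I|+1$ saturating $\{v_0\} \cup W_I \cup B_I$ together with exactly one vertex of $B_C$, and by König--Hall this reduces to the condition $|N_H(S)| \geq |S|$ for every $S \subseteq \{v_0\} \cup W_I$. For $S \subseteq W_I$ this is automatic from $M|_H$. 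The only potential obstruction is $S = \{v_0\} \cup S'$ with $|N_H(v_0) \cup N_H(S')| \leq |S'|$: a \emph{tight set} $S' \subseteq W_I$ satisfying $|N_H(S')| = |S'|$ (which forces $T' := N_H(S') \subseteq B_I$ to be exactly the $M$-partners of $S'$) together with $N_H(v_0) \subseteq T'$.

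To rule out this tight case I plan to adapt the Euler-characteristic argument of \reflem{long_alternating_cycles_require_escape} to the closed subsurface $\Omega \subseteq D'$ obtained as the union of the closures of the $U$-faces corresponding to $T'$. Every $s \in S'$ has all four corners in faces of $T'$, so $s$ is interior to $\Omega$ with all four of its $U$-edges inside $\Omega$; the $a_I$ interior edges of $v_0$ similarly place $a_I$ corners of $v_0$ inside $\Omega$, so $v_0$ meets $\partial \Omega$ along $C$. Applying Euler's formula to $\Omega$, together with the 4-gon structure of interior faces of $G$ available throughout $\Omega$ by \reflem{spine_is_2-cell}, shows that the $|T'| = |S'|$ faces of $T'$ can absorb at most $4|S'|$ corners from $U$-vertices strictly interior to $\Omega$, strictly less than the $4|S'| + a_I$ corners the tightness hypothesis requires; any excess has to come from a white boundary vertex of $\Omega$ outside $S' \cup \{v_0\}$, which is a $G$-neighbor of some $t \in T'$ not belonging to $S' \cup \{v_0\}$ and thereby forces $|N_H(S')| > |S'|$, contradicting tightness. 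Hall's condition is then verified, $M''$ exists, $P$ emerges as the augmenting path inside $M'' \oplus M|_H$, and properties (i)--(iv), (vi) are immediate from the augmenting-path structure while (v) holds by construction. The principal obstacle is the final Euler-characteristic step: delicate bookkeeping is needed because $\Omega$ can meet $C$ at $v_0$ along one or two corners depending on whether $C$ uses opposite or adjacent corners at $v_0$, and because faces of $U$ may visit the same $U$-vertex multiple times, producing degenerate configurations in $\Omega$.
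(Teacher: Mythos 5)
Your reduction to a Hall--König obstruction is a clean and genuinely different framing from the paper's proof: the paper builds $P$ directly by a greedy ``turn as clockwise as possible'' algorithm, re-twisting the matching whenever the growing path self-intersects, and proves termination via a total order on partial paths. Your augmenting-path reformulation (existence of $(M',P)$ is equivalent to a matching of $H$ of size $|W_I|+1$ saturating $\{v_0\}\cup W_I\cup B_I$ and one vertex of $B_C$, hence to Hall's condition for $\{v_0\}\cup W_I$) is correct, modulo the routine symmetric-difference step upgrading a matching that saturates $\{v_0\}\cup W_I$ to one that also saturates $B_I$, and it isolates exactly the right combinatorial obstruction.

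However, the step that actually does the work --- ruling out the tight set --- is not there, and the conclusion you sketch for it does not follow even if the Euler-characteristic count were carried out. Producing a white vertex $w\in\partial\Omega$ with $w\notin S'\cup\{v_0\}$ and $w\in N_H(t)$ for some $t\in T'$ tells you something about $N_H(T')$, not about $N_H(S')$; it does \emph{not} imply $|N_H(S')|>|S'|$, so the sentence ``thereby forces $|N_H(S')|>|S'|$, contradicting tightness'' is a non sequitur. Some further argument is needed to convert the existence of such a $w$ into a contradiction (for instance, $w$'s $M$-partner lies in $B_I\setminus T'$, which suggests an iterative enlargement of $S'$, but that has to be set up and shown to terminate). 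Beyond that logical gap, the Euler-characteristic bookkeeping itself is unresolved: $\Omega$ can be disconnected, giving $\chi(\Omega)$ larger than $1$; the cell structure you want to count must be the joint $U\cup G$ triangulation of \reflem{spine_decomposition} rather than the $G$-quadrangulation (since $\partial\Omega$ is made of $U$-edges, which bisect $G$-quadrilaterals); and, as you note, $\Omega$ may fail to be a surface at $v_0$ when $a_I=2$ with opposite corners. You flag this yourself as ``the principal obstacle,'' and I agree: as it stands the tight-set ruling-out is a genuine missing piece, not a routine verification. Compare with the paper's route, where the corresponding geometric input is the single statement \reflem{long_alternating_cycles_require_escape} applied to the self-intersection cycle $C'$; it is not obvious that your $\Omega$-based count reduces to it, and it is worth either finding the correct counting statement or reconsidering whether the greedy-path argument is ultimately simpler.
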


\begin{proof}
We algorithmically construct $P$, possibly adjusting $M$ along the way.

Let $\mathscr{P}$ be the set of all directed paths $P$ given by $(e_1, v_0, v_1), \ldots, (e_n, v_{n-1}, v_n)$ for some $n \geq 1$, such that (i)--(iii) of the statement hold, and such that $v_n$ lies on or in the interior of $C$.
Clearly $\mathscr{P}$ is a finite set. Roughly, it consists of directed paths, without self-intersections, which start at $v_0$, proceed inside $C$, and which stop if they hit $C$. 

We introduce a total order $\preceq$ on $\mathscr{P}$, in a lexicographic fashion, as follows: two paths $P_1, P_2$ in $\mathscr{P}$ satisfy $P_1 \preceq P_2$ iff at the first edge where they differ, $P_2$ proceeds counterclockwise of $P_1$. Roughly, the idea is that a path is lesser in this order if it turns ``more clockwise" or ``more to the right" at each step. 
For instance, the edge $e_1$ of a path in $\mathscr{P}$ is among the set of edges of $G$ departing $v_0$ into the interior of $C$. As there are no loops in $G$, this set may be totally ordered, say in counterclockwise order. (Note that as $v_0$ is a white vertex of the spine, incident to two edges of $C$, there are at most two possibilities for $e_1$.)
The path will be lesser in $\preceq$ if $e_1$ takes the ``more clockwise" or ``more rightwards" option, and greater  if it takes the ``more counterclockwise" or ``more leftwards" option.
Similarly, after choosing an initial sequence of $m$ directed edges $(e_1, v_0, v_1), \ldots, (e_{m}, v_{m-1}, v_{m})$, the next edge $e_{m+1}$ is among the set of edges departing $v_m$ other than $e_m$. This set of edges may again be totally ordered counterclockwise. 

Our approach is to try to build up elements of $\mathscr{P}$, adding one edge at a time, making our path alternating with respect to some matching $M'$, which initially agrees with $M$ but may be adjusted at each step, but so that $M'$ only ever differs from $M$ in the interior of $C$. Eventually our path will hit $C$ and satisfy conditions (i)--(vi) as required. We build the path by using the matching where we must, and a greedy algorithm otherwise, turning as clockwise as possible (hence $\preceq$-minimally) at each step. Roughly, the idea is that if this algorithm produces a path which intersects itself, then we can adjust the matching $M'$, and obtain a path which is lesser with respect to $\preceq$, which can be continued without self-intersection. Since $\preceq$ is a total order on a finite set, this process must terminate by hitting $C$, yielding the desired path.

So, start with $M' = M$; we construct $e_1$ to be the $\preceq$-minimal (i.e. most clockwise) edge departing $v_0$ into the interior of $C$. In the matching $M=M'$, $v_0$ is matched via an edge of $C$. Thus $e_1 \notin M'$.  Let $v_1$ be the vertex of $e_1$ other than  $v_0$. As $G$ is bipartite, $v_1$ is black (hence $v_1 \neq v_0$). If $v_1$ is a vertex of $C$ then we may take $P$ to consist simply of $(e_1, v_0, v_1)$, which is trivially alternating with respect to $M'$, and we are done.

Otherwise, $v_1$ lies in the interior of $C$. We choose $e_2$ to be the unique edge of $M'$ incident to $v_1$. Since $v_1$ is in the interior of $C$, so is $e_2$. Let the white endpoint of $e_2$ be $v_2$. Then $v_2$ also lies in the interior of $C$, since vertices of $C$ are matched via edges of $C$. Thus $v_2$ is distinct from $v_0$.

Note that $v_2$ must have degree $4$ in $G$, since it is a white vertex in the interior of $C$ (where $G$ coincides with $T$ by \reflem{where_Tait_spine_coincide}). Hence there exist $3$ edges of $G$ incident to $v_2$ other than $e_2$. We choose $e_3$ to be $\preceq$-minimal (i.e. most clockwise) among them. As $e_2 \in M'$ and $e_3$ share the vertex $v_2$ we have $e_3 \notin M'$. By construction, there is no edge of $G$ departing $v_2$ clockwise of $e_3$ and counterclockwise of $e_2$. Let $v_3$ be the black vertex of $e_3$. If $v_3$ is a vertex of $C$ then we may take $P$ to consist of $e_1, e_2, e_3$ and $M' = M$, and we are done. Otherwise, $v_3$ lies in the interior of $C$ and we continue the process.

Indeed, suppose we have constructed a directed path $(e_1, v_0, v_1), \ldots, (e_{2k-1}, v_{2k-2}, v_{2k-1})$ for some integer $k \geq 1$, and a matching $M'$ differing from $M$ only in the interior of $C$, such that the following conditions hold.
\begin{enumerate}
    \item[(a)] The edges $e_1, \ldots, e_{2k-1}$ are all distinct and in the interior of $C$.
    \item[(b)] The vertices $v_0, \ldots, v_{2k-1}$ are all distinct.
    \item[(c)] The initial vertex $v_0$ is the chosen white vertex of $C$.
    \item[(d)] The vertices $v_1, \ldots, v_{2k-1}$ all lie in the interior of $C$.
    \item[(e)] $e_j \in M'$ for $j$ even, and $e_j \notin M'$ for $j$ odd.
    \item[(f)] For each integer $j$ such that $1 \leq j \leq k-1$, there is no edge of $G$ departing $v_{2j}$ clockwise of $e_{2j+1}$ and counterclockwise of $e_{2j}$.
\end{enumerate}  
Condition (f) says that $e_{2j+1}$ is chosen to be $\preceq$-minimal at each $v_{2j}$. 
We attempt to extend the path, adding new distinct vertices $v_{2k}, v_{2k+1}$ and edges $e_{2k}, e_{2k+1}$ similarly.

To add $e_{2k}$, we note that since $e_{2k-1} \notin M'$, the edge of $M'$ containing the black vertex $v_{2k-1}$ is not among the edges already chosen, so we let $e_{2k}$ be the edge of $M$ containing $v_{2k-1}$, and let $v_{2k}$ be its white endpoint. As $v_{2k-1}$ lies in the interior of $C$, so does $e_{2k}$. The vertex $v_{2k}$ cannot be equal to any of the previous vertices $v_0, \ldots, v_{2k-1}$. For if $v_{2k} = v_j$ for some $j < 2k$, then $e_{2k}$ is the edge of $M$ incident with $v_j$; but all the edges of $M$ incident to previous $v_j$ for $1 \leq j \leq 2k-2$ have already been chosen as previous edges, and the edge of $M$ incident with $v_0$ lies on $C$, while $e_{2k}$ is in the interior of $C$. Further, $v_{2k}$ lies in the interior of $C$, since it is matched via $M$ to $v_{2k-1}$ via $e_{2k}$, and interior vertices are matched with interior vertices. Hence, appending the directed edge $(e_{2k}, v_{2k-1}, v_{2k})$, we have an alternating directed path with respect to $M'$.

Now consider adding $e_{2k+1}$. As $v_{2k}$ is a white vertex in the interior of $C$, the edge $e_{2k+1}$ must be chosen among those edges of $G$ incident to $v_{2k}$ other than $e_{2k}$. By \reflem{where_Tait_spine_coincide}, $v_{2k}$ has degree $4$ in $G$. Thus, there are $3$ choices for $e_{2k+1}$, and we choose $e_{2k+1}$ to be $\preceq$-minimal (most clockwise) among them. Let $v_{2k+1}$ be the black vertex of $e_{2k+1}$. The edge $e_{2k+1}$ by construction is distinct from all previous edges $e_1, \ldots, e_{2k}$. Since $e_{2k} \in M'$ and $e_{2k+1}$ share the vertex $v_{2k}$ we have $e_{2k+1} \notin M'$. Moreover, by construction, there is no edge of $G$ departing $v_{2k}$ clockwise of $e_{2k+1}$ and counterclockwise of $e_{2k}$.

If $v_{2k+1}$ is distinct from all $v_0, \ldots, v_{2k}$ and in the interior of $C$, then we have an alternating directed path
\begin{equation}
\label{Eqn:extended_path}
(e_1, v_0, v_1), \ldots, (e_{2k+1}, v_{2k}, v_{2k+1})
\end{equation}
satisfying (a)--(f) above, with $k$ replaced with $k+1$, and we proceed inductively.

If $v_{2k+1}$ lies on $C$, then the black vertex $v_{2k+1}$ is disjoint from all $v_0, \ldots, v_{2k}$, since $v_0$ is white and all $v_1, \ldots, v_{2k}$ all lie  in the interior of $C$. Then the directed path \refeqn{extended_path} satisfies the desired conditions (i)--(vi), and we are done.

The remaining case is that $v_{2k+1}$ coincides with some previous vertex $v_j$ in the interior of $C$. As the vertices alternate in colour, we have $v_{2k+1} = v_{2j+1}$ for some $0 \leq j < k$. Thus the directed path $(e_{2j+2}, v_{2j+1}, v_{2j+2}), \ldots, (e_{2k+1}, v_{2k}, v_{2k+1} = v_{2j+1})$ forms a non-null vertex-simple directed cycle $C'$, alternating relative to $M'$. Note that $C'$ lies entirely in the interior of the cycle $C$. Let $D'$ be the closed disc consisting of $C'$ and its interior, so $D'$ lies entirely in the interior of $C$.

Now $C'$ may be oriented clockwise or counterclockwise around the boundary of $D'$. We claim in fact that the orientation is clockwise. Suppose to the contrary that the orientation is counterclockwise; see \reffig{if_cycle_counterclockwise}. Then, by construction, at each white vertex $v$ of $C'$, 
every incident edge of $G$ (and $T$) either forms part of $C'$ or departs into the interior of $C'$. 
Thus the vertex-simple non-null alternating cycle $C'$ and the matching $M'$ thus satisfy the hypotheses of \reflem{long_alternating_cycles_require_escape}, and $E_{\partial W} = 0$. Thus the length of $C'$ is negative, a contradiction.

\begin{figure}
\begin{center}
\includegraphics[width=\textwidth]{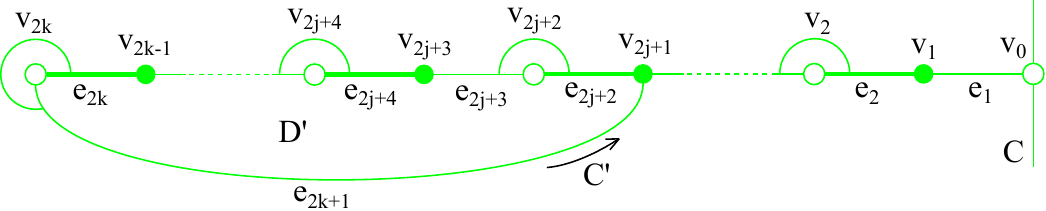}
\end{center}
    \caption{Counterclockwise $C'$. Edges of $M'$ are thickened. Semicircular arcs around white vertices indicate that no edges depart the vertex in that direction.}
    \label{Fig:if_cycle_counterclockwise}
\end{figure}

Hence $C'$ is oriented counterclockwise. We then adjust $M'$ by twisting at $C'$ (\refdef{twisting}), as shown in \reffig{if_cycle_clockwise_change_matching}. Note that $M'$ only changes in the interior of $C$. 

\begin{figure}
\begin{center}
\includegraphics[width=\textwidth]{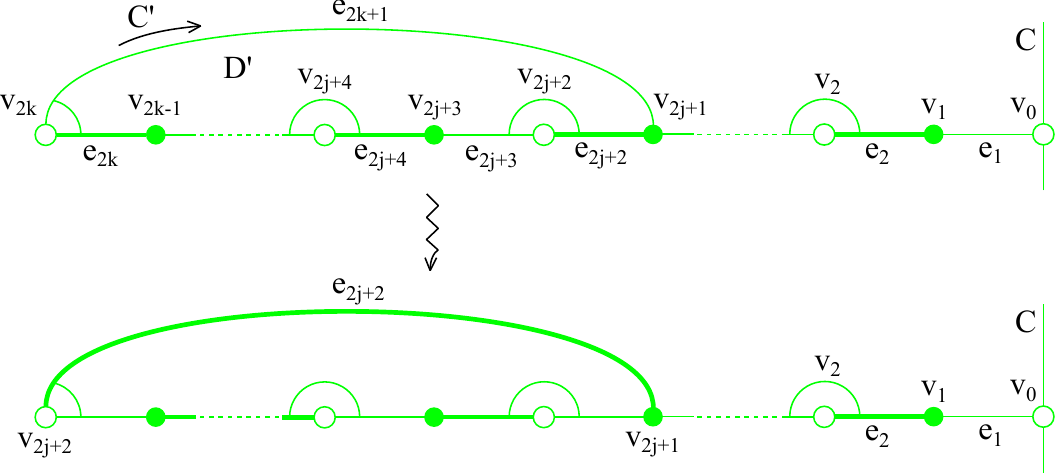}
\end{center}
    \caption{Clockwise $C'$. We twist $M'$ on $C'$, and adjust the path.}
    \label{Fig:if_cycle_clockwise_change_matching}
\end{figure}

We now restart our algorithm on the adjusted matching $M'$. Because the change is only in $C'$, the algorithm applied to the adjusted $M'$ will produce the same first $2j+1$ directed edges $(e_1, v_0, v_1), \ldots, (e_{2j+1}, v_{2j}, v_{2j+1})$. However, in choosing the next edge $e_{2j+2}$ the algorithm will choose the edge of the adjusted matching (which is the edge previously known as $e_{2k+1}$). This is clockwise of, i.e. $\preceq$-less than, the previous choice for $e_{2j+2}$, and hence any path so obtained form the adjusted matching will be lesser with respect to $\preceq$ than a path obtained from the matching prior to its adjustment. We then proceed with the algorithm.

Thus, applying the algorithm, the algorithm either terminates, or we adjust the matching and restart, obtaining paths which are lesser with respect to $\preceq$. Since $\preceq$ is a total order on a finite set, this process will in fact terminate and we obtain a path with the desired properties.
\end{proof}

\subsection{Plane transpositions on universes are Kauffman transpositions}

\begin{proof}[Proof of \refprop{universe_n-transpositions} and \refthm{clock_lattices_isomorphic}]
By \reflem{transpositions_and_twisting}, a plane $n$-transposition on a state of $U$ is equivalent to a twisting up or down on a positive or negative cycle of $G_0$ of length $2n$. By \reflem{Kauffman_transposition_twisting}, a Kauffman transposition on a state of $U$ is equivalent to twisting on an alternating cycle $A$ of $G$ of length $4$, such that at each white vertex $v$ of $A$, no edges of $G$ depart from $v$ into the interior of $C$. 

So, let $C$ be a positive cycle of $G_0$. We will show that $C$ has length $4$, and there is no edge of $G$ departing from a white vertex of $C$ into the interior of $C$.

This $C$ is an elementary cycle of $G_0$, forming the outer boundary of some bounded face of $G_0$. It is also an alternating cycle of $G_0$ with respect to a matching $M \in \mathscr{M}_G = \mathscr{M}_{G_0}$, hence vertex-simple by \reflem{alternating_elementary_cycles_super-simple}.

As $C$ is elementary in $G_0$, it is clear that there is no edge of $G_0$ departing from a white vertex into the interior of $C$. It is less clear that there is no edge of $G$ doing so, and this is what we will prove.

Suppose for contradiction that there exists a white vertex $v_0$ of $C$ at which an edge of $G$ departs into the interior of $C$.  Then we may apply \reflem{universe_key_construction}, so there exists a matching $M'$ on $G$, differing from $M$ only in the interior of $C$, and a directed path $P$, alternating relative to $M'$, departing from $v_0$, through the interior of $C$, passing through distinct vertices and edges $e_1, \ldots, e_n$, to arrive at a distinct vertex $v_n$ of $C$. 

All vertices in $C$ are matched in $M$, hence also in $M'$, via edges of $C$. Since the extremal edges $e_1, e_n$ of $P$ are interior to $C$ but incident to vertices of $C$, they do not lie in $M'$. In particular, $n$ is odd and $v_n$ is black.

The vertices $v_0, v_n$ split the cycle $C$ into two paths, both alternating relative to $M'$. As $v_0, v_n$ have opposite colour, these two paths have odd length. Hence one of these two paths begins and ends with edges of $M'$; denote it by $P_1$. Joining $P$ and $P_1$ we thus obtain an alternating cycle $C^!$ relative to $M'$, enclosing a strictly smaller number of faces of $G$. As $P$ and $P_1$ contain distinct vertices except at their common endpoints, $C^!$ is vertex-simple. Hence by \reflem{when_twisting_works} we may adjust $M'$ by twisting at $C^!$ to obtain a matching $M^!$.

Note that the edges $e_1, e_n$ are not in $M'$ but lie in the cycle $C^!$. Hence both $e_1, e_n$ lie in $M^!$. In particular, $e_1$ lies in some matching of $G$. Thus $e_1$ is not removed in constructing the reduction of $G$. So $e_1$ is also an edge of $G_0$. But $e_1$ is not in the cycle $C$, and in fact departs from the vertex $v_0$ of $C$ into the interior of $C$. Hence $C$ is not an elementary cycle of $G_0$. This is a contradiction.

Hence, at each white vertex $v$ of $C$, no edge of $G$ departs into the interior of $C$. In the Tait graph $T$, each white vertex has degree $4$, and $T$ coincides with $G$ inside $C$ (\reflem{where_Tait_spine_coincide}). At $v$ there are $2$ incident edges from the cycle $C$. Therefore there are precisely $2$ edges of $T$ departing from $v$ into the exterior of $C$. Letting $2L$ be the length of $C$, we have $L$ white vertices on $C$, and hence $E_{\partial W} = 2L$ edges of $T$ departing from white vertices of $C$ into the exterior of $C$. Applying \reflem{long_alternating_cycles_require_escape} to $C$ we then obtain $2L=L+2$, so $L=2$, and $C$ has length $4$. 

We have now shown that $C$ has length $4$, and that at each white vertex, no edges of $G$ depart into the interior of $C$. The argument for negative cycles is similar, completing the proof.
\end{proof}

\section{Multiverses in positive genus}
\label{Sec:positive_genus}

\subsection{Orientations on a dual of a spine}
\label{Sec:orientations_on_dual_spine}

In \refsec{lattices_matchings} we gave an outline of Propp's proof of \refthm{Propp_matching}, which provides a distributive lattice structure on the set of matchings of certain plane bipartite graphs $G$ (generalised to all finite plane bipartite graphs in \refprop{finite_bipartite_plane_lattice}). The proof proceeds by considering orientations on the dual $G^\perp$ of the bipartite plane graph $G$: it has a \emph{standard} orientation, which orients each edge of $G^\perp$ clockwise around black vertices of $G$ and counterclockwise around white vertices of $G$; and given a matching $M$ of $G$ there is a \emph{prescribed} orientation on $G^\perp$, obtained from the standard orientation by switching orientations on edges of $G^\perp$ dual to those of $M$. In a similar way, we now define standard and prescribed orientations on a dual of a spine of a multiverse, using the \emph{basic cycles} of $G^\perp$ of \refdef{basic_cycle}. 

Throughout this section, $(U, \Sigma, \mathscr{F}, G)$ is a framed multiverse, and $G^\perp$ is a dual of the spine $G$.

As noted in \refsec{dual_of_spine}, every edge $e^\perp$ of $G^\perp$ appears in two basic cycles, namely in the basic cycles around $v$ and $w$, the endpoints of the dual edge $e$ of $G$. As $G$ is bipartite, one of $v,w$ is black and the other is white. In \refdef{orientations_basic_cycles} we defined \emph{clockwise} and \emph{counterclockwise} orientations on a basic cycle. If $e^\perp$ is oriented as in the clockwise basic cycle about $v$, then it will be counterclockwise in the basic cycle around $w$, and vice versa.

\begin{defn}[Standard, prescribed orientations on dual of spine] 
\label{Def:standard_orientation} \
\begin{enumerate}
\item 
The \emph{standard orientation} $R_0$ of $G^\perp$ is the orientation where each basic cycle around a black vertex of $G$ is clockwise, and each basic cycle around a white vertex of $G$ is counterclockwise.
\item 
For each matching $M$ of $G$, the \emph{prescribed orientation} $R_M$ of $G^\perp$ corresponding to $M$ is obtained from $R_0$ by switching the orientations of edges of $G^\perp$ dual to edges of $M$.
\end{enumerate}
\end{defn}
In light of (i) above, we can refer to the two orientations on an edge of $G^\perp$ as its \emph{standard} and \emph{nonstandard} orientations.

Recall in \refdef{orientations_notation} we defined $\mathscr{R}_X^c$ to be the set of orientations on a graph $X$ with circulation $c$. We add a $\mathscr{P}$ to denote ``prescribed" as follows.
\begin{defn}[Set of prescribed orientations]
\label{Def:prescribed_orientations}
Let $c$ be a feasible circulation function on $G^\perp$.
\begin{enumerate}
\item 
The set of all prescribed orientations of $G^\perp$ with circulation $c$ is denoted $\mathscr{PR}^c_G$.
\item
The set of all prescribed orientations of $G^\perp$ is denoted $\mathscr{PR}_G$.
\end{enumerate}
\end{defn}
Thus $\mathscr{PR}^c_G \subseteq \mathscr{R}^c_{G^\perp}$.
(We drop the $\perp$ from $\mathscr{PR}_{G^\perp}$ to avoid cumbersome notation.)
In other words,
\begin{equation}
\label{Eqn:prescribed_orientations}
\mathscr{PR}_G = \left\{ R_M \mid M \in \mathscr{M}_G \right\}
\quad \text{and} \quad
\mathscr{PR}_G = \bigsqcup_c \mathscr{PR}^c_G,
\end{equation}
the disjoint union over feasible circulation functions $c$ on $G^\perp$.

\begin{lemma}\label{Lem:matchings_and_orientations}
The map $M \mapsto R_M$ yields a bijection $\mathscr{M}_G \To \mathscr{PR}_G$.
\end{lemma}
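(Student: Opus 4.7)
The plan is short because both directions are nearly immediate from the definitions; the proof just needs to assemble them cleanly.

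First I would dispatch surjectivity: by \refdef{prescribed_orientations} together with the identity displayed in \refeqn{prescribed_orientations}, we have $\mathscr{PR}_G = \{R_M \mid M \in \mathscr{M}_G\}$, so the map $M \mapsto R_M$ is surjective by construction. I would also note at this point that the map is well-defined, i.e.\ that $R_M$ really is an orientation on $G^\perp$: starting from the orientation $R_0$ and flipping the orientation on a specified subset of edges (namely $\{e^\perp : e \in M\}$) again produces an orientation.

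For injectivity, the key observation is that the construction $M \mapsto R_M$ has a natural left inverse. Given any prescribed orientation $R_M$, let $E_{\mathrm{ns}}(R_M) \subseteq E(G^\perp)$ denote the set of edges on which $R_M$ disagrees with the standard orientation $R_0$. By \refdef{standard_orientation}(ii) we have $E_{\mathrm{ns}}(R_M) = \{e^\perp : e \in M\}$. Since duality provides a bijection between edges of $G$ and edges of $G^\perp$ (\refdef{dual_of_spine}), we recover $M$ from $R_M$ via
\[
M = \bigl\{ e \in E(G) \;\bigm|\; e^\perp \in E_{\mathrm{ns}}(R_M)\bigr\}.
\]
Hence $R_M = R_{M'}$ forces $E_{\mathrm{ns}}(R_M) = E_{\mathrm{ns}}(R_{M'})$ and therefore $M = M'$.

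Combining surjectivity with injectivity gives the desired bijection. I expect no real obstacle here: the statement is essentially bookkeeping built into \refdef{standard_orientation}(ii), and the only thing to be careful about is to articulate precisely that the ``switching'' operation is invertible because standard and nonstandard orientations on each edge are exchanged by the switch. The more substantive use of this bijection (e.g.\ matching up positive/negative cycles with maximal/minimal accessibility classes, and twisting with pushing) will appear in subsequent lemmas rather than here.
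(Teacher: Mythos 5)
Your proof is correct and follows essentially the same approach as the paper: surjectivity is immediate from the definition of $\mathscr{PR}_G$, and injectivity is established by recovering $M$ from $R_M$ as the set of edges dual to those on which $R_M$ disagrees with the standard orientation $R_0$. The paper's version is just a one-line compression of exactly this argument.
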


\begin{proof}
By definition we have a surjection,
and from a prescribed orientation $R_M$, the matching $M$ can be recovered by observing where $R_M$ differs from the standard orientation $R_0$.
\end{proof}

Combining
\reflem{states_and_matchings} and
\reflem{matchings_and_orientations}, we have bijections
\begin{equation}
\label{Eqn:triple_bijection}
\mathscr{S}_U \cong \mathscr{M}_G \cong \mathscr{PR}_G.
\end{equation}

\subsection{Viable circulation functions}

Any orientation $R$ on a graph yields a circulation function (\refdef{circulation_function}) $c_R$. Recall a function $\mathscr{C}_X \to \Z$ is a \emph{feasible circulation function} if it is the circulation function $c_R$ (\refdef{circulation}) of some orientation $R$ on $X$. 

As usual, let $(U, \Sigma, \mathscr{F}, G)$ be a framed multiverse, and $G^\perp$ a dual of $G$.
In light of the bijections \refeqn{triple_bijection}, we can associate circulation functions to states and matchings as well as orientations.
\begin{defn}[Circulation of a state or matching]
\label{Def:circulation_of_state_matching}
Suppose $S \in \mathscr{S}_U$,  $M \in \mathscr{M}_G$ and $R \in \mathscr{PR}_G$ correspond under \refeqn{triple_bijection}. Let $c \colon \mathscr{C}_{G^\perp} \To \Z$ be the circulation function of $R$.
\begin{enumerate}
\item
The \emph{circulation} of $S$ is $c$. The set of all states of $U$ with circulation $c$ is denoted $\mathscr{S}^c_U$.
\item 
The \emph{circulation} of $M$ is $c$. The set of all matchings of $G$ with circulation $c$ is denoted $\mathscr{M}_G^c$.
\end{enumerate}
\end{defn}
Thus
\begin{equation}
\label{Eqn:states_matchings_orientations}
\mathscr{S}_U = \bigsqcup_c \mathscr{S}_U^c
\quad \text{and} \quad
\mathscr{M}_G = \bigsqcup_c \mathscr{M}_G^c,
\end{equation}
the disjoint union over feasible circulation functions $c$ on $G^\perp$.

We now define a useful subset of the feasible circulation functions on $G^\perp$.

\begin{defn}[Viable circulation]
\label{Def:viable_circulation}
A function $c \colon \mathscr{C}_{G^{\perp}} \To \Z$ is a \emph{viable} circulation function if it is the circulation of a prescribed orientation of $G^\perp$. 
\end{defn}
Thus a function $\mathscr{C}_{G^\perp} \To \Z$ is a viable circulation function if and only if it is the circulation of a state, or a matching.
By definition, a viable circulation function is feasible. 
In \refeqn{prescribed_orientations} and \refeqn{states_matchings_orientations}, the nonzero sets are those with viable $c$, so the disjoint unions can in fact be taken over viable $c$.

All viable circulation functions are the same on basic cycles, as we now see. A more general version of the following lemma is stated by Propp in \cite{Propp}, in the planar context;
we give the version we need.
\begin{lemma}\label{Lem:standard_circulation}
Let $C$ be a basic cycle of $G^\perp$ around a vertex $v$ of $G$ with degree $d_v$, oriented counterclockwise. Let $c$ be a viable circulation function on $G^\perp$.
 Then
\[
c \left( C \right) = \left\{
\begin{array}{ll} 2-d_v & \text{if $v$ is black}, \\
d_v-2 & \text{if $v$ is white}.
\end{array} \right.
\]
\end{lemma}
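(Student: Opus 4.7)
The plan is to reduce the computation to two easy cases: first the circulation of $C$ relative to the standard orientation $R_0$, and then the circulation relative to a prescribed orientation $R_M$, which differs from $R_0$ by a single edge-flip along $C$. Since $c$ is viable, by \refdef{viable_circulation} and \reflem{matchings_and_orientations} we may choose $M \in \mathscr{M}_G$ so that $c = c_{R_M}$, and it suffices to compute $c_{R_M}(C)$.

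The first step is to compute $c_{R_0}(C)$. By \refdef{standard_orientation}, the standard orientation $R_0$ makes the basic cycle around a black vertex clockwise, and the one around a white vertex counterclockwise. Since $C$ is oriented counterclockwise, each of its $d_v$ directed edges is backward relative to $R_0$ when $v$ is black, and forward when $v$ is white. Hence $c_{R_0}(C) = -d_v$ if $v$ is black, and $c_{R_0}(C) = d_v$ if $v$ is white.

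The second step is to account for the effect of passing from $R_0$ to $R_M$. By \refdef{standard_orientation}, the two orientations differ exactly on those edges of $G^\perp$ dual to edges of $M$. The edges of $C$ are the duals of the $d_v$ edges of $G$ incident to $v$, and exactly one of these edges lies in $M$, since $M$ is a perfect matching and (as $G$ is bipartite) there are no loops at $v$. Thus precisely one directed edge of $C$ switches its status between forward and backward: the flip converts one backward edge into a forward edge if $v$ is black (changing the circulation by $+2$), and one forward edge into a backward edge if $v$ is white (changing the circulation by $-2$). Combining these yields $c_{R_M}(C) = 2 - d_v$ in the black case and $c_{R_M}(C) = d_v - 2$ in the white case, as required.

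There is no real obstacle here; the only point requiring a brief check is that exactly one edge in the cyclic list of edges dual to the edges around $v$ is switched, which uses that $G$ is loopless (so a matching edge at $v$ contributes a single directed edge to $C$, not two) and that $M$ is perfect (so there is exactly one such edge, not zero or more than one).
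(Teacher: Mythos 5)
Your proof is correct and follows essentially the same approach as the paper: choose $M$ with $c=c_{R_M}$, note that exactly one edge of the basic cycle $C$ is flipped from the standard orientation (since $M$ matches $v$ to exactly one incident edge and $G$ is loopless), and compute. The paper counts $|C_{R_M}^+|$ and $|C_{R_M}^-|$ directly rather than first computing $c_{R_0}(C)$ and then the $\pm2$ correction, but that is only a cosmetic rearrangement of the same computation.
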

Although viable circulations are fixed on basic cycles, they may differ on other cycles. See for example \reffig{Hasse_diagram_genus-1_example}, which shows all matchings on $G$ and prescribed orientations on $G^\perp$ on a specific multiverse; all orientations not connected by arrows have distinct circulations.

\begin{figure}
\begin{center}
\includegraphics[width=\textwidth]{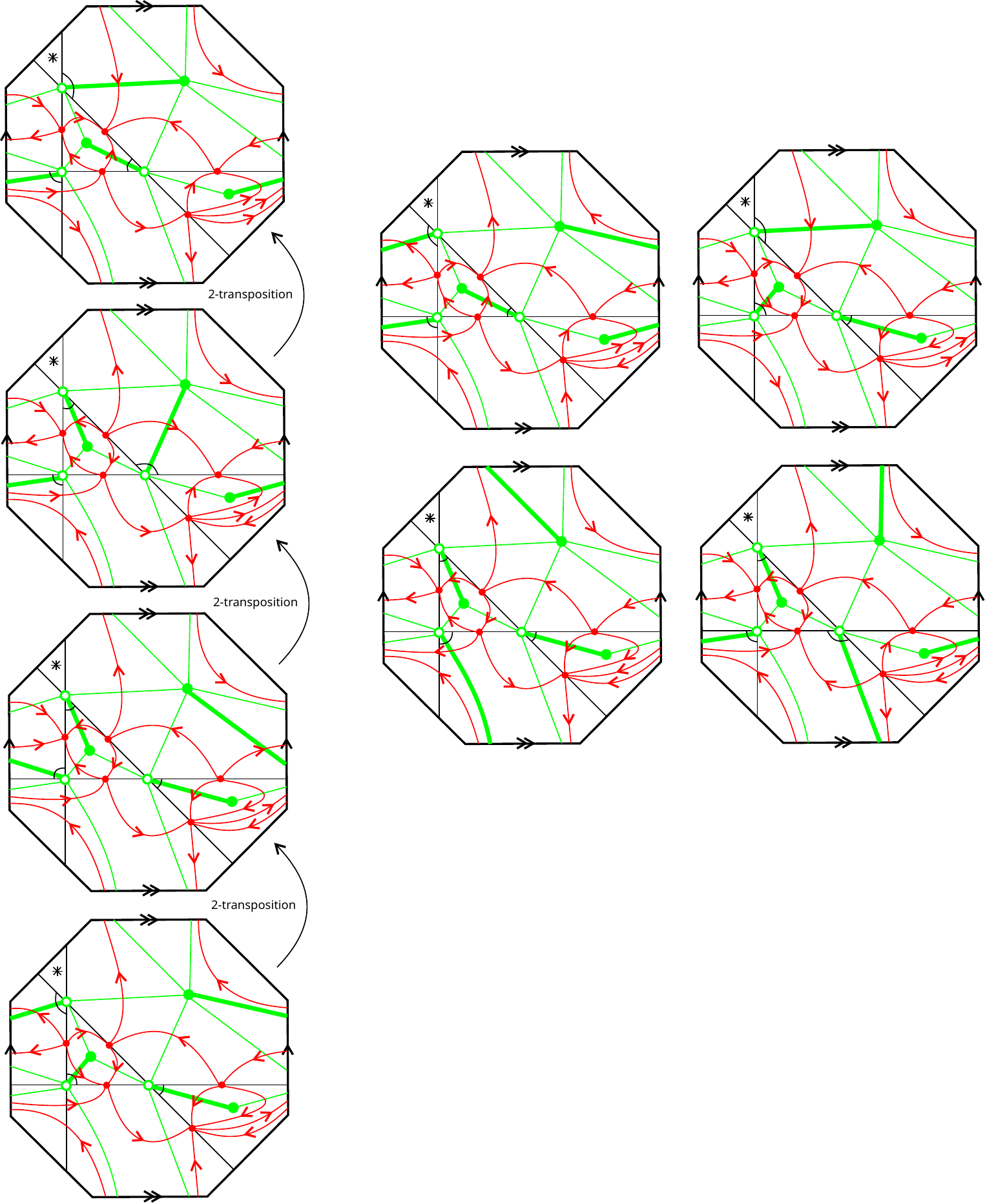}
\end{center}
    \caption{Hasse diagram of a multiverse on a punctured torus. }
    \label{Fig:Hasse_diagram_genus-1_example}
\end{figure}

\begin{proof}
Let $M \in \mathscr{M}^c_G$ correspond to $R=R_M \in \mathscr{PR}^c_G$.
In $M$ there is precisely one edge incident to $v$, say $e$; let its dual edge in $G^\perp$ be $e^\perp$. 
The length $|C|$ of $C$ is $d_v$.
So if $v$ is black, then in $R$, all edges of $C$ are oriented clockwise except $e^\perp$, hence (in the notation of \refdef{circulation}) $| C_{R}^+ | = 1$ and $| C_{R}^- | = d_v - 1$, thus $c (C) = 2-d_v$.
Similarly, if $v$ is white, then in $R$, all edges are oriented  counterclockwise except $e^\perp$, so $c(C) = d_v - 2$. 
\end{proof}

\begin{lem}
\label{Lem:viable_orientation_prescribed}
If $c$ is a viable circulation function then $\mathscr{PR}_G^c = \mathscr{R}_G^c$. 
\end{lem}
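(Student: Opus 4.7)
The plan is to prove the nontrivial inclusion $\mathscr{R}_G^c \subseteq \mathscr{PR}_G^c$. Given $R \in \mathscr{R}_G^c$, the natural candidate matching is
\[
M = \{ e \in E(G) : e^\perp \text{ is oriented nonstandardly in } R \},
\]
and by construction $R_M$ and $R$ then agree on every edge of $G^\perp$, so once we verify that $M$ is in fact a perfect matching of $G$, we obtain $R = R_M \in \mathscr{PR}_G^c$.

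To show $M$ is a perfect matching, the key step is a counting argument on each basic cycle, using \reflem{standard_circulation}. Fix a vertex $v$ of $G$ of degree $d_v$, and let $C_v$ be the basic cycle of $G^\perp$ around $v$, oriented counterclockwise; its directed edges are the duals $e^\perp$ of the edges $e$ of $G$ incident to $v$, each taken with its counterclockwise orientation. By \refdef{standard_orientation}, each such $e^\perp$ appears in $C_v$ with its standard orientation iff $v$ is white, and with its nonstandard orientation iff $v$ is black. Writing $c(C_v) = |C_{v,R}^+| - |C_{v,R}^-| = 2|C_{v,R}^+| - d_v$ and applying \reflem{standard_circulation} (using viability of $c$), we obtain $|C_{v,R}^+| = 1$ if $v$ is black and $|C_{v,R}^+| = d_v - 1$ if $v$ is white. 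In both cases, exactly one of the $d_v$ edges $e^\perp$ about $v$ is oriented nonstandardly in $R$. Thus each vertex $v$ of $G$ is incident to exactly one edge of $M$, so $M$ is a perfect matching.

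The main obstacle is chiefly bookkeeping: translating carefully between the local data \emph{standard versus nonstandard} on individual edges of $G^\perp$ and the global data \emph{forward versus backward} relative to the counterclockwise basic cycle, and verifying the formula $2|C_{v,R}^+| - d_v = c(C_v)$ holds even in mildly degenerate situations (a vertex of degree $1$ where $C_v$ is a single loop, or parallel edges where $C_v$ has length $2$). One should also briefly note that viability of $c$ rules out isolated vertices of $G$, since such a vertex would force $\mathscr{M}_G = \emptyset$ and hence no viable circulation could exist. All of this is routine once the counting set-up is in place, so no substantial new ideas beyond \reflem{standard_circulation} and \reflem{matchings_and_orientations} are required.
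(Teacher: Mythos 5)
Your proposal is correct and takes essentially the same approach as the paper: both use \reflem{standard_circulation} together with $\abs{C_R^+}+\abs{C_R^-}=d_v$ to count that exactly one edge around each basic cycle is nonstandardly oriented. Your version is marginally cleaner in that defining $M$ directly as the nonstandard edges makes $R=R_M$ immediate and avoids the paper's short step of checking that the edges selected via black vertices and via white vertices agree.
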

Since by definition $\mathscr{PR}_G^c \subseteq \mathscr{R}_G^c$, the content of this lemma is that any orientation of $G^\perp$ with viable circulation is a prescribed orientation. In other words, if $R \in \mathscr{R}_G^c$, then $R=R_M$ for some $M \in \mathscr{M}_G$. 

\begin{proof}
Let $R \in \mathscr{R}_G^c$, where $c$ is viable; we explicitly construct a matching $M \in \mathscr{M}_G$ such that $R=R_M$. Consider a black vertex $b$ of $G$, with degree $d_b$, and the counterclockwise basic cycle $C$ of $G^\perp$ about $b$. By \reflem{standard_circulation}
$c(C) = 2-d_b$. So
$|C_R^+|-|C_R^-|=2-d_b$ and $|C_R^+|+|C_R^-|=d_v$, hence $|C_R^+|=1$ and $|C_R^-|=d_b - 1$. Hence, there is precisely one directed edge in $C$ that is forward relative to $R$. Since $C$ is assigned a counterclockwise direction around $b$, there is precisely one edge $e^\perp$ in $C$ that is directed counterclockwise around $b$ in $R$. We add the edge $e$ of $G$ dual to $e^\perp$ to the matching $M$.

Similarly, consider a white vertex $w$ of degree $d_w$ and let $D$ be the basic counterclockwise cycle of $G^\perp$ around $w$. Again 
$c(D)=d_w-2$ and 
$|D_R^+|+|D_R^-|=d_w$ so 
$|D_R^+|=d_w-1$ and $|D_R^-|=1$. Thus, there is exactly one edge in $D$ that is backward relative to $R$. Since $D$ is counterclockwise direction, there is exactly one edge $e^\perp$ in $D$ that is directed clockwise around $w$ in $R$. We add the edge $e$ of $G$ dual to $e^\perp$ to the matching $M$.

As $G$ is bipartite, every edge has a black and white vertex. A directed edge of $G^\perp$ lies in two basic cycles, about a black and a white vertex. It is oriented clockwise with about one and counterclockwise about the other. Thus the collection of edges chosen for $M$ via their white vertices coincides with the  collection of edges chosen for $M$ at black vertices. Moreover,  each vertex of $G$ is adjacent to exactly one edge of $M$, so $M$ is a matching. By construction, $R=R_M$ as desired.
\end{proof}

\subsection{Forced and forbidden corners and edges}
Again let $(U, \Sigma, \mathscr{F}, G)$ be a framed multiverse, and $G^\perp$ a dual of $G$.

In \refdef{forced_forbidden_edges} we defined directed edges of a graph as $c$-forced and $c$-forbidden, for a feasible circulation function $c$. A feasible circulation function $c \colon \mathscr{C}_{G^\perp} \To \Z$ on $G^\perp$ thus determines $c$-forced and $c$-forbidden directed edges of $G^\perp$.

In \refdef{forced_forbidden} we defined the notion of forced and forbidden edges, based on matchings, so $G$ may have forced and forbidden edges. In \refdef{forced_forbidden_corner} we defined the notion of forced and forbidden corners in $U$, based on states. We showed in \reflem{forced_forbidden_corners_edges} that these notions of forced and forbidden on edges of $G$ and states of $U$ are equivalent.

We now also define corners of $U$ and edges of $G$ as forced and forbidden by a circulation function, following \refdef{forced_forbidden_edges}, using \refdef{circulation_of_state_matching} and the bijections of \refeqn{triple_bijection}.
\begin{defn}[$c$-forced and $c$-forbidden edges, corners]
\label{Def:forced_forbidden_edges_spine}
Let $c \colon \mathscr{C}_{G^\perp} \To \Z$ be a viable circulation function.
\begin{enumerate}
\item 
An edge $e$ of $G$ is
\begin{enumerate}
\item
\emph{$c$-forced} if $e$ lies in every matching $M \in \mathscr{M}^c_G$;
\item 
\emph{$c$-forbidden} if $e$ does not lie in any matching $M \in \mathscr{M}^c_G$.
\end{enumerate}
\item 
A corner $\alpha$ at an interior vertex of $U$ is
\begin{enumerate}
\item 
\emph{$c$-forced} if every state $S \in \mathscr{S}^c_U$ has a marker at $\alpha$;
\item 
\emph{$c$-forbidden} if no state $S \in \mathscr{S}^c_U$ has a marker at $\alpha$.
\end{enumerate}
\end{enumerate} 
\end{defn}
(Although it may seem that \refdef{forced_forbidden_edges} and \refdef{forced_forbidden_edges_spine}(i) provide conflicting definitions of $c$-forced and $c$-forbidden edges, the former is for directed edges, the latter for undirected edges. We apply the former to $G^\perp$ and the latter to $G$.)
The same proof as \reflem{forced_forbidden_corners_edges} immediately yields the following.
\begin{lem}
\label{Lem:c-forced_forbidden_corners_edges}
A corner $\alpha$ of $U$ in an unstarred face is $c$-forced (resp. $c$-forbidden) if and only if the corresponding edge $e$ of $G$ is $c$-forced (resp. $c$-forbidden).
\qed
\end{lem}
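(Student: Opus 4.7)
The plan is to simply transport the statement across the circulation-preserving bijections set up earlier. By \reflem{states_and_matchings}, there is a bijection $\mathscr{S}_U \cong \mathscr{M}_G$ under which a state $S$ places a marker at the corner $\alpha$ (at an interior vertex $v$ of $U$, in an unstarred face $F$) if and only if the corresponding matching $M$ contains the edge $e$ of $G$ determined by the adjacency triple at $\alpha$. This is exactly the content used in the proof of \reflem{forced_forbidden_corners_edges}.

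Now, by \refdef{circulation_of_state_matching}, the circulation of a state agrees with the circulation of the corresponding matching (both being the circulation of the associated prescribed orientation via \refeqn{triple_bijection}). Consequently, restricting the bijection $\mathscr{S}_U \cong \mathscr{M}_G$ to the preimage of a fixed viable circulation function $c$ yields a bijection $\mathscr{S}^c_U \cong \mathscr{M}^c_G$, and this restricted bijection still satisfies the marker-at-$\alpha$ versus edge-$e$-in-$M$ equivalence.

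Applying \refdef{forced_forbidden_edges_spine}, the corner $\alpha$ is $c$-forced (resp. $c$-forbidden) precisely when every (resp. no) $S \in \mathscr{S}^c_U$ has a marker at $\alpha$, which by the bijection happens precisely when every (resp. no) $M \in \mathscr{M}^c_G$ contains $e$, i.e.\ when $e$ is $c$-forced (resp. $c$-forbidden). The main (and really only) step is checking that the bijection of \reflem{states_and_matchings} preserves the assignment of circulation functions; but this is built into \refdef{circulation_of_state_matching} by design, so no genuine obstacle arises. The proof is a direct transcription of the argument of \reflem{forced_forbidden_corners_edges} with $\mathscr{S}_U$ and $\mathscr{M}_G$ replaced throughout by $\mathscr{S}^c_U$ and $\mathscr{M}^c_G$.
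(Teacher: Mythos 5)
Your proof is correct and follows the same route as the paper, which simply remarks that ``the same proof as Lemma~\ref{Lem:forced_forbidden_corners_edges} immediately yields the following.'' You have spelled out the one small observation the paper leaves implicit — that the bijection of Lemma~\ref{Lem:states_and_matchings} respects circulations (since by Definition~\ref{Def:circulation_of_state_matching} both a state and its corresponding matching are assigned the circulation of the same prescribed orientation), hence restricts to a bijection $\mathscr{S}^c_U \cong \mathscr{M}^c_G$ — which is exactly what makes the transcription go through.
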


A corner of $U$ is forced (resp. forbidden) in the sense of \refdef{forced_forbidden_corner}, if and only if it is $c$-forced (resp. $c$-forbidden) for all viable circulation functions $c$. (The circulation of a state is always viable, by \refdef{viable_circulation} and subsequent comments.) Similarly, an edge of $G$ is forced (resp. forbidden) in the sense of \refdef{forced_forbidden}, if and only if it is $c$-forced (resp. $c$-forbidden) for all viable $c$.

Following Propp's \refprop{Propp's_prop}, which characterised forced or forbidden edges in terms of accessibility classes, we have the following.
\begin{prop}\label{Prop:all_nothing}
Let $c \colon \mathscr{C}_{G^\perp} \To \Z$ be a viable circulation function. 
Let $e$ be an edge of $G$, corresponding to a corner $\alpha$ of $U$, and let $e^\perp$ be the dual of $e$ in $G^\perp$. The following are equivalent.
\begin{enumerate}
\item 
$\alpha$ is $c$-forced or $c$-forbidden.
\item 
$e$ is $c$-forced or $c$-forbidden. 
\item
$e^\perp$, directed arbitrarily, is $c$-forced or $c$-forbidden.
\item 
The endpoints of $e^\perp$ belong to the same accessibility class of $c$.
\end{enumerate}
\end{prop}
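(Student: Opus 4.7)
The plan is to establish the equivalences by chaining together results already established, treating the four conditions in the order (i) $\Leftrightarrow$ (ii), (ii) $\Leftrightarrow$ (iii), (iii) $\Leftrightarrow$ (iv), so that the proposition reduces to known lemmas applied in sequence.

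First, (i) $\Leftrightarrow$ (ii) is immediate: this is precisely \reflem{c-forced_forbidden_corners_edges}, which translates the $c$-forced and $c$-forbidden conditions on the corner $\alpha$ into the corresponding conditions on the edge $e$ of $G$ it labels. Likewise, (iii) $\Leftrightarrow$ (iv) follows by direct application of \refprop{Propp's_prop} to the graph $G^\perp$: since viable circulations are in particular feasible (by \refdef{viable_circulation} and the comment immediately after), Propp's equivalence of ``$\vv{e^\perp}$ is $c$-forced or $c$-forbidden'' with ``the endpoints of $\vv{e^\perp}$ lie in the same accessibility class of $c$'' applies. Note that ``$c$-forced or $c$-forbidden'' is independent of the chosen orientation of $e^\perp$, since \refdef{forced_forbidden_edges} gives that $\vv{e^\perp}$ is $c$-forced iff $-\vv{e^\perp}$ is $c$-forbidden; this is also why (iii) is well-posed.

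The substantive step is (ii) $\Leftrightarrow$ (iii). Here the key observation is the following: by \reflem{matchings_and_orientations}, $M \mapsto R_M$ is a bijection $\mathscr{M}_G \to \mathscr{PR}_G$, and by \refdef{standard_orientation} the orientation $R_M$ differs from the standard orientation $R_0$ precisely on those edges $e^\perp$ dual to edges $e \in M$. Restricting to a fixed viable circulation $c$, \reflem{viable_orientation_prescribed} yields the identification $\mathscr{PR}_G^c = \mathscr{R}_{G^\perp}^c$, so $M \mapsto R_M$ restricts to a bijection $\mathscr{M}_G^c \to \mathscr{R}_{G^\perp}^c$. Thus, letting $\vv{e^\perp}$ denote the nonstandard orientation of $e^\perp$, we have $e \in M$ if and only if $\vv{e^\perp}$ belongs to $R_M$. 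It follows that $e$ lies in every $M \in \mathscr{M}_G^c$ iff $\vv{e^\perp}$ lies in every $R \in \mathscr{R}_{G^\perp}^c$, and $e$ lies in no $M \in \mathscr{M}_G^c$ iff $\vv{e^\perp}$ lies in no $R \in \mathscr{R}_{G^\perp}^c$. In other words, $e$ is $c$-forced (respectively $c$-forbidden) iff $\vv{e^\perp}$ is $c$-forced (respectively $c$-forbidden). Since ``$c$-forced or $c$-forbidden'' is independent of orientation as noted above, this gives (ii) $\Leftrightarrow$ (iii).

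There is no real obstacle; everything follows by pasting together lemmas already in place. The only thing to be careful about is being precise about the bookkeeping between ``edge $e \in M$'' and ``directed dual edge in the orientation $R_M$'', making sure the standard vs.\ nonstandard orientation bookkeeping is consistent with \refdef{standard_orientation}, and noting that the statement (iii) is orientation-independent so the equivalence is genuinely symmetric in the choice of $\vv{e^\perp}$ or $-\vv{e^\perp}$.
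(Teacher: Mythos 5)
Your proof is correct and takes essentially the same approach as the paper: (i)$\Leftrightarrow$(ii) via Lemma \ref{Lem:c-forced_forbidden_corners_edges}, (ii)$\Leftrightarrow$(iii) via the bijection $M \mapsto R_M$ together with Lemma \ref{Lem:viable_orientation_prescribed}, and (iii)$\Leftrightarrow$(iv) via Proposition \ref{Prop:Propp's_prop} applied to $G^\perp$. The bookkeeping about standard versus nonstandard orientation and the orientation-independence of ``$c$-forced or $c$-forbidden'' match the paper's reasoning exactly.
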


\begin{proof}
The equivalence of (i) and (ii) is immediate from \reflem{c-forced_forbidden_corners_edges}.

Note that $e$ lies in a matching $M \in \mathscr{M}_G^c$ precisely when $e^\perp$ has nonstandard orientation in the corresponding prescribed orientation $R_M \in \mathscr{PR}_G^c$. Suppose $e$ is $c$-forced; the argument when $e$ is $c$-forbidden is similar. Then $e^\perp$ has nonstandard orientation in every prescribed orientation of $G^\perp$ with circulation $c$, i.e in every $R \in \mathscr{PR}_G^c$. But by \reflem{viable_orientation_prescribed}, $\mathscr{PR}_G^c = \mathscr{R}_G^c$. That is, every orientation of $G^\perp$ with circulation $c$ is a prescribed orientation. Thus $e^\perp$ has nonstandard orientation in every orientation of $G^\perp$ with circulation $c$, i.e. in every $R \in \mathscr{R}_G^c$. Hence $e^\perp$ with its standard orientation is $c$-forbidden, and $e^\perp$ with its nonstandard orientation is $c$-forced. Either way, $e^\perp$ is $c$-forced or $c$-forbidden.  Thus (ii) implies (iii). 

For the converse, suppose $e^\perp$ has the same direction in all orientations $R$ of $G^\perp$ with circulation $c$, i.e. standard or non-standard. All such orientations are prescribed orientations, and accordingly $e$ either lies in none or all of the corresponding matchings $M$, so is $c$-forced or $c$-forbidden.

\refprop{Propp's_prop} gives the equivalence of (iii) and (iv). 
\end{proof}

\subsection{Twisting along subsurfaces}
\label{Sec:twisting_surfaces}

In \refdef{pos_neg_cycle} we defined positive and negative cycles on a plane bipartite graph $G$ relative to a matching $G$. We now generalise this to multiverses. On a multiverse, a face need not have a unique outermost boundary component, and so we instead define \emph{subsurfaces} to be positive or negative.

We consider subsurfaces of a multiverse surface which are, roughly, unions of faces of the spine. However, we want compact surfaces, and taking a closure of a union of faces of $G$ might not be a subsurface. For instance, if two faces share a common vertex $v$ but no a common edge, then their closure contains $v$ but not a neighbourhood of $v$. Thus we make the following definition.
\begin{defn}[Face subsurface]
\label{Def:face_subsurface}
Let $G$ be a finite graph embedded in the interior of a connected compact orientable surface $\Sigma$, possibly with boundary. A \emph{face subsurface} of $G$ is a connected subsurface $\Delta$ of $\Sigma$ (possibly with boundary) which is the closure in $\Sigma$ of a union of faces $\Delta_1, \ldots, \Delta_n$ of $G$, i.e. $\Delta = \overline{\bigcup_{j=1}^n \Delta_j}$.
\end{defn}
As $G$ is embedded in the interior of $\Sigma$, each boundary component of a face subsurface $\Delta$ is either a boundary component of $\Sigma$, or a
cycle of $G$. Additionally, every edge $e$ of $G$ lying in a face subsurface $\Delta$ of $G$ either lies entirely on $\partial \Delta$, or has its interior entirely in the interior of $\Delta$.
\begin{defn}
\label{Def:face_subsurface_bdy_edges}
A boundary component of a face subsurface $\Delta$ is:
\begin{enumerate}
\item 
A \emph{$\Sigma$-boundary component} if it is a component of $\partial \Sigma$. These components are denoted $\partial_\Sigma \Delta$.
\item
A \emph{$G$-boundary component} if it is a 
cycle of $G$. These components are denoted $\partial_G \Delta$.
\end{enumerate}

An edge $e$ of $G$ in $\Delta$ is:
\begin{enumerate}
\item 
A \emph{$G$-boundary edge}, if it lies in $\partial \Delta$ (indeed $\partial_G \Delta)$.
\item 
An \emph{interior edge}, if it lies in the interior of $\Delta$.
\end{enumerate}
\end{defn}
An edge $e$ of $G$ in $\Delta$ is a $G$-boundary or interior edge accordingly as it has faces $\Delta_j$ of $\Delta$ on just one side, or on both sides.

\begin{lem}
\label{Lem:face_subsurface_boundary_cycles}
Let $\Delta$ be a face subsurface of $G$, with $G$-boundary components given by cycles of $G$-boundary edges $C_1, \ldots, C_n$. Then each $C_j$ is vertex-simple, and the sets of vertices arising in the $C_j$ are pairwise disjoint.
\end{lem}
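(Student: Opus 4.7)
The plan is to exploit the hypothesis that $\Delta$ is a subsurface of $\Sigma$, interpreted (as the terminology and \refdef{face_subsurface} dictate) as a $2$-dimensional submanifold with boundary. Consequently $\partial\Delta$ is a compact $1$-manifold without boundary, hence a disjoint union of embedded circles in $\Sigma$. Each $G$-boundary component of $\Delta$ in the sense of \refdef{face_subsurface_bdy_edges} is one of these circles, and the associated cycle $C_j$ records the sequence of edges of $G$ traversed along it.

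The key step is a local analysis at a vertex $v$ of $G$ lying on $\partial\Delta$. Since $\Delta$ is a manifold with boundary, a small neighbourhood $N$ of $v$ in $\Delta$ is homeomorphic to a closed half-disk with $v$ on its straight edge, so $N \cap \partial\Delta$ is a single embedded arc through $v$. Because this arc lies along $G$, it is composed of two edge-fragments, one on either side of $v$. Hence precisely two edges of $G$ incident to $v$ are $G$-boundary edges of $\Delta$, and they both belong to the same boundary circle of $\Delta$, and so to the same cycle $C_j$.

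The conclusion is then immediate. If some vertex $v$ of $G$ occurred twice in one cycle $C_j$, or appeared in two distinct cycles $C_j$ and $C_k$, then at least four edges of $G$ incident to $v$ would be $G$-boundary edges of $\Delta$, contradicting the local analysis. Therefore every vertex of $G$ appears at most once among all of $C_1, \ldots, C_n$, which means each $C_j$ is vertex-simple and the vertex sets of the $C_j$ are pairwise disjoint.

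There is no serious obstacle; the content is essentially that the boundary of a $2$-manifold with boundary cannot self-touch or cross itself at a graph vertex. The only caveat is that the hypothesis in \refdef{face_subsurface} that $G$ is embedded in the interior of $\Sigma$ ensures that a vertex $v$ of $G$ lying on $\partial\Delta$ meets $\partial\Delta$ only along edges of $G$, never along a portion of $\partial\Sigma$, so the two boundary edges at $v$ really are edges of $G$.
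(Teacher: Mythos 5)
Your proposal is correct and matches the paper's own argument: both proofs perform the same local analysis, observing that a half-disc neighbourhood of a boundary vertex $v$ in $\Delta$ meets $\partial\Delta$ in exactly two edge-ends of $G$, so $v$ can appear at most once across all the $C_j$. The only minor slip is your phrasing ``precisely two edges'' (and ``four edges'') where it should strictly be ``edge-ends'' or ``edge-fragments'', since as the paper notes a $G$-boundary component can be a loop with both ends at $v$; your earlier wording ``two edge-fragments'' was the accurate one.
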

In particular, any $G$-boundary component is a vertex-simple cycle of $G$.
Note that a $G$-boundary component can be a cycle of length $1$, i.e. a loop in $G$.

\begin{proof}
Let $v$ be a vertex on some $C_j$. As a boundary point of the surface $\Delta$, a neighbourhood of $v$ in $\Delta$ is a half disc, bounded by $v$ and two ends of edges of $G$. These two ends of edges must be the ends of the adjacent edges to $v$ in $C_j$ (they may be the ends of a single edge which is a loop). Thus $v$ cannot lie in $C_j$ more than once, nor can it lie in any $C_k$ with $k \neq j$.
\end{proof}

Now let $(U, \Sigma, \mathscr{F}, G)$ be a framed multiverse, $G^\perp$ a dual of $G$, and $M \in \mathscr{M}_G$. Obviously the above definitions apply to $G$ and $\Sigma$. 
\begin{defn}[Alternating subsurface]
\label{Def:alternating_subsurface}
An \emph{alternating subsurface} of $G$ relative to $M$ is a face subsurface $\Delta$  of $G$ all of whose $G$-boundary components are alternating cycles relative to $M$.
\end{defn}

We now introduce a generalised notion of \emph{twisting}, in the spirit of \refdef{twisting}. Suppose $\Delta$ is an alternating surface of $G$ relative to $M$. We now consider removing from $M$ all edges which lie in $\partial \Delta$, and adding to $M$ the edges of $\partial \Delta$ which do not belong to $M$, to obtain a set of edges $M'$. In similar notation to \refsec{matchings} and \refeqn{matching_notation}, regarding $G$-boundary components $\partial_G \Delta$ as sets of edges,
\[
M' = \left( M \setminus \partial_G \Delta \right) \cup \left( \partial_G \Delta \setminus M \right) = M + \partial_G \Delta.
\]
\begin{defn}[Twisting]
\label{Def:twisting_on_alternating_subsurface}
Let $\Delta$ be an alternating subsurface relative to $M$.
The operation of replacing the matching $M$ with the matching $M' = M + \partial_G \Delta$ as above is called \emph{twisting} $M$ at $\partial \Delta$.
\end{defn}

As a multiverse surface $\Sigma$ is orientable, it determines an orientation of a face subsurface $\Delta$. This orientation in turn determines an orientation of its boundary $\partial \Delta$, which we call the \emph{boundary orientation} of $\partial \Delta$. We denote by $\partial_G \Delta \cap M$ the edges of $G$-boundary components of $\Delta$ that belong to $M$. Recall also that $G$ is bipartite, so every edge has a black and white vertex. We can now introduce a notion of positive and negative, in the spirit of \refdef{pos_neg_cycle}; we first deal with one boundary component at a time.
\begin{defn}[Positive, negative boundary component]
\label{Def:pos_neg_subsurface_bdy_component}
Let $\Delta$ be an alternating subsurface of $G$ relative to $M$, and let $C$ be a $G$-boundary component of $\Delta$.
\begin{enumerate}
\item 
$C$ is \emph{positive} relative to $M$ if every edge of $C \cap M$, oriented from black to white, agrees with the boundary orientation of $\partial \Delta$.
\item 
$C$ is \emph{negative} relative to $M$ if every edge of $C \cap M$, oriented from black to white, disagrees with the boundary orientation of $\partial \Delta$.
\end{enumerate}
\end{defn}
Equivalently, $C$ is positive (resp. negative) relative to $M$ if, when oriented as $\partial \Delta$, the edges in $M$ are oriented from black to white (resp. white to black).

\begin{defn}[Positive, negative subsurface]
\label{Def:pos_neg_subsurface}
Let $\Delta$ be alternating subsurface of $G$ relative to $M$. 
$\Delta$ is \emph{positive} (resp. \emph{negative}) relative to $M$ if each of its $G$-boundary components is positive (resp. negative) relative to $M$.
\end{defn}
Thus, $\Delta$ is positive (resp. negative) relative to $M$ if every edge of $\partial_G \Delta \cap M$, oriented from black to white, agrees (resp. disagrees) with the boundary orientation of $\partial \Delta$. Equivalently, $\Delta$ is positive (resp. negative) relative to $M$ if, when oriented as $\partial \Delta$, the edges of $M \cap \partial \Delta$ are oriented from black to white (resp. white to black).

When $U$ is a planar multiverse, a positive (resp. negative) cycle in the sense of \refdef{pos_neg_cycle} and \refsec{elementary_on_plane_surface} bounds a  positive (resp. negative) surface.

Note that if $M'$ is obtained from $M$ by twisting along $\Delta$, then $M$ is positive (resp. negative) if and only if $M'$ is negative (resp. positive).
Applying this idea, we now define the notion of \emph{surface twisting up and down}, generalising the notion of twisting up and down from \refdef{twisting_up_down}, but also requiring a condition on interior edges. Just as twisting up and down in \refdef{twisting_up_down} was performed  on elementary cycles (\refdef{elementary_cycle}), which form the outer boundary of non-outer faces, we also require the surfaces involved to avoid the outer boundary of our multiverse.
\begin{defn}[Twisting surface]
\label{Def:twisting_surface}
Let $c$ be the circulation of $M$. A \emph{twisting surface} relative to $M$ is a positive or negative subsurface $\Delta$ relative to $M$ such that 
\begin{enumerate}
\item 
$\Delta$ is disjoint from the outer boundary $\partial \Sigma_0$ of $\Sigma$, and 
\item 
each interior edge of $\Delta$ is either $c$-forced or $c$-forbidden.
\end{enumerate}
\end{defn}

Although the definition of twisting surface involves a significant amount of detail, they can be straightforwardly identified as follows.
\begin{lem}
\label{Lem:twisting_surface_G-boundary}
Two twisting surfaces are equal if and only if they have the same $G$-boundary components.
\end{lem}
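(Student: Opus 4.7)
My plan is to prove the nontrivial ``if'' direction by showing that two twisting surfaces with the same $G$-boundary must contain the same collection of faces of $G$, from which equality is immediate since by \refdef{face_subsurface} each face subsurface is the closure of its constituent faces. Let $\chi_i(f) \in \{0,1\}$ indicate whether face $f$ of $G$ lies in $\Delta_i$, and set $S = \{f : \chi_1(f) \neq \chi_2(f)\}$; the goal becomes $S = \emptyset$.

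The key combinatorial observation will be that for any edge $e$ of $G$ with two \emph{distinct} adjacent faces $f,f'$, we have $e \in \partial_G \Delta_i$ precisely when $\chi_i(f) \neq \chi_i(f')$. The hypothesis $\partial_G \Delta_1 = \partial_G \Delta_2$ therefore forces $\chi_1(f) \oplus \chi_2(f) = \chi_1(f') \oplus \chi_2(f')$, i.e.\ $f \in S$ iff $f' \in S$. Edges of $G$ with the same face on both sides (loops in $G^\perp$) impose no constraint but are automatically interior to any face subsurface that contains that common face.

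Next I plan to show that $Y := \overline{\bigcup_{f \in S} f}$ is both closed and open in $\Sigma$, so that by connectedness of $\Sigma$ one has $Y \in \{\emptyset, \Sigma\}$. Closedness is by construction; openness will follow from a case-by-case verification that no point of $Y$ has a nearby point outside $Y$. Interior points of faces in $S$ are clear; points on edges and at vertices of $G$ follow from the XOR observation above, propagated around each vertex via the fact that consecutive faces around $v$ are either equal or separated by a $G$-edge; and points of $\partial \Sigma$ lie in the closure of a unique face of $G$ (since $G$ is disjoint from $\partial \Sigma$), so a half-disk neighborhood is either contained in $Y$ or disjoint from $Y$ depending on whether that face lies in $S$. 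If $Y = \emptyset$, then $\chi_1 = \chi_2$ and $\Delta_1 = \Delta_2$. If instead $Y = \Sigma$, then every face of $G$ lies in $S$, so the face sets of $\Delta_1$ and $\Delta_2$ partition the faces of $G$ and $\Delta_1 \cup \Delta_2 = \Sigma$; but by condition (i) of \refdef{twisting_surface} both $\Delta_i$ are disjoint from the nonempty $\partial \Sigma_0$, a contradiction.

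The main subtlety I expect is the propagation of $S$-status around vertices of $G$: since the face-adjacency graph of $G$ (the underlying graph of $G^\perp$ with loops deleted) need not be connected in general, a purely combinatorial argument on face adjacencies would be insufficient. The ``clopen in $\Sigma$'' formulation bypasses this by invoking connectedness of the ambient surface directly, and the twisting-surface hypothesis of disjointness from $\partial \Sigma_0$ is then used precisely to exclude the complementary-decomposition case $Y = \Sigma$.
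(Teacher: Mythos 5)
Your proof is correct but takes a genuinely different route from the paper's. The paper observes that the set of $G$-boundary cycles $C = \{C_1, \ldots, C_n\}$ is a separating collection of curves on $\Sigma$, and that $\Delta$ is the unique complementary component whose boundary contains every $C_j$ and which avoids $\partial \Sigma_0$: two distinct such components would together cover all of $\Sigma$ (each $C_j$ has exactly two sides) yet both miss $\partial \Sigma_0$, a contradiction. You instead track the symmetric difference $S$ of face sets via the indicator functions $\chi_i$, show $Y = \overline{\bigcup_{f \in S} f}$ is clopen using the per-edge XOR relation and a local case analysis (face interiors, edge points, vertices, $\partial\Sigma$), and then rule out $Y = \Sigma$ by disjointness from $\partial\Sigma_0$. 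Both arguments rest on the same two inputs — that $\Delta$ is a face subsurface of $G$, and that $\Delta$ misses $\partial\Sigma_0$ — but the paper's version is shorter and global/geometric, while yours is more explicit about the local picture. One small remark: your caution about the face-adjacency graph possibly being disconnected is unnecessary here, since $G^\perp$ is connected by \reflem{dual_of_spine_connected} and removing loops preserves connectivity of a graph; a purely combinatorial propagation of $S$-membership along $G^\perp$ minus loops would therefore also close the argument, without needing the clopen formulation.
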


\begin{proof}
Let $C = \{ C_1, \ldots, C_n\} $ be the set of $G$-boundary cycles of a twisting surface $\Delta$. Observe that $C$ forms a separating collection of curves on $\Sigma$, and $\Delta$ is one of the complementary components. Indeed, $\Delta$ is the unique complementary component of $C$ containing all the $C_j$ as boundary components and not containing the outer boundary $\partial \Sigma_0$: if there were two such components, then they would form the whole of $\Sigma$ yet not contain the outer boundary, a contradiction. Thus two twisting surfaces with $G$-boundary $C$ are equal. The converse is clear.
\end{proof}

\begin{defn}[Surface twisting]
\label{Def:big_twisting}
Let $\Delta$ be a twisting surface relative to $M$.  
The operation of twisting $M$ at $\partial \Delta$ to obtain a matching $M' = M + \partial_G \Delta$ is called \emph{surface twisting} along $\Delta$. It is called 
\begin{enumerate}
\item 
\emph{surface twisting down}, if $\Delta$ is positive relative to $M$ (hence negative relative to $M'$);
\item 
\emph{surface twisting up}, if $\Delta$ is negative relative to $M$ (hence positive relative to $M'$).
\end{enumerate}
\end{defn}

Note that a twisting surface $\Delta$ may simply be a single (non-outer) face homeomorphic to a disc, in which case surface twisting generalises twisting at a positive/negative cycle encircling a face of a connected bipartite plane graph.
However, when a twisting surface contains several faces, or has several boundary components, surface twisting imposes numerous further requirements, quite distinct from twisting up/down on a negative/positive cycle in a bipartite plane graph.

\subsection{Surface twisting and orientations on the dual}
\label{Sec:surface_twisting_orientations}

Again let $(U, \Sigma, \mathscr{F}, G)$ be a framed multiverse, $G^\perp$ a dual of $G$, and $M \in \mathscr{M}_G$. Let $\Delta$ be a face subsurface of $G$.

Each edge $e^\perp$ of $G^\perp$ dual to a $G$-boundary edge $e$ of $\Delta$ goes across $e$, from one side to the other. Thus $e^\perp$ crosses the boundary of $\Delta$, and is dual to an edge in $\partial_G \Delta$. We refer to such edges repeatedly, so give them a name.
\begin{defn}
The \emph{$\partial_G \Delta^\perp$ edges} are the edges of $G^\perp$ dual to $G$-boundary edges of $\Delta$.
\end{defn}
Since $\partial_G \Delta^\perp$ edges cross the boundary of $\Delta$, they can be oriented into or out of $\Delta$.

When $\Delta$ is  positive or negative relative to  $M$, the prescribed orientation of $M$ orients $\partial_G \Delta^\perp$ edges in a definite manner.
\begin{lem}[Orientations dual to positive/negative surfaces]
\label{Lem:dual_orientation_in_out_surface}
Let $R_M \in \mathscr{PR}_G$ be the prescribed orientation of $G^\perp$ corresponding to $M$. 
\begin{enumerate}
\item 
$\Delta$ is positive relative to $M$ if and only if $R_M$ orients each $\partial_G \Delta^\perp$ edge into  $\Delta$.
\item 
$\Delta$ is negative relative to $M$ if and only if $R_M$ orients each $\partial_G\Delta^\perp$ edge out of $\Delta$.
\end{enumerate}
\end{lem}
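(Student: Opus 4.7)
The plan is to reduce both statements to a purely local computation at a single boundary edge. Since positivity/negativity of $\Delta$ (\refdef{pos_neg_subsurface}) is checked boundary component by boundary component, and each $G$-boundary component $C$ is a vertex-simple cycle of $G$ by \reflem{face_subsurface_boundary_cycles}, I work near one boundary edge $e \in \partial_G \Delta$ with black endpoint $b$ and white endpoint $w$.

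First I establish the local picture of the standard orientation $R_0$. By \refdef{standard_orientation} and \refdef{orientations_basic_cycles}, the basic cycle of $G^\perp$ about $b$ is oriented clockwise; unpacking this together with \refdef{basic_cycle}, the dual edge $e^\perp$ in $R_0$ crosses $e$ from the face lying on the left of $e$ to the face on the right, where ``left/right'' is taken as viewed standing at $b$ and looking along $e$ toward $w$, and makes sense because $\Sigma$ is oriented. The same conclusion can be reached by working at the white vertex $w$ (where the basic cycle is counterclockwise), after noting that ``left'' and ``right'' exchange when one reverses viewpoint, confirming consistency. Consequently, in $R_M$ the dual $e^\perp$ is oriented left-to-right (as seen from $b$) iff $e \notin M$, and right-to-left iff $e \in M$.

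Next I unpack positivity. Endow $C$ with the boundary orientation inherited from $\Delta$, so $\Delta$ lies on the left of $C$. Positivity means every $e \in C \cap M$, oriented $b \to w$, agrees with this orientation. Because $C$ is alternating with respect to $M$, this is equivalent to the following local statement at every vertex: at each black vertex of $C$, the outgoing (along $\partial \Delta$) edge of $C$ lies in $M$ and the incoming edge does not; at each white vertex of $C$, the incoming edge lies in $M$ and the outgoing does not. Therefore, for every boundary edge $e$ on $C$, whether $e \in M$ determines in which of the two directions $e$ is traversed by $\partial \Delta$, and hence which of the two faces adjacent to $e$ lies inside $\Delta$. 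A short case split gives: if $e \in M$ then $\partial\Delta$ traverses $e$ from $b$ to $w$ with $\Delta$ on the left, so the left face (from $b$) is inside $\Delta$; if $e \notin M$ then $\partial\Delta$ traverses $e$ from $w$ to $b$ with $\Delta$ on the left, so the right face (from $b$) is inside $\Delta$. Combining with the previous paragraph, in either case $R_M$ orients $e^\perp$ from the face outside $\Delta$ into the face inside $\Delta$; this proves the ``only if'' direction of (i).

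The converse for (i) is obtained by running the same case analysis backwards: if every $\partial_G \Delta^\perp$ edge points into $\Delta$ under $R_M$, the coupling between membership in $M$ and the direction of $e^\perp$ forces each boundary edge to be traversed by $\partial \Delta$ in exactly the direction demanded by positivity. Statement (ii) is proved by the same argument with ``left/inside'' interchanged with ``right/outside'' throughout; equivalently, negativity is the exact opposite of positivity at each boundary edge, so (ii) is formally symmetric to (i). The only delicate step is the bookkeeping of the left/right convention for $R_0$ at black versus white vertices; once the orientation of $\Sigma$ is fixed, this is entirely mechanical.
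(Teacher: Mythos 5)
Your proof is correct and takes essentially the same approach as the paper's: both determine the direction of each $\partial_G\Delta^\perp$ edge $e^\perp$ under $R_0$ from the basic cycle orientations at black and white vertices, then track what reversal by $M$ does, and translate the positivity condition into a statement about which of the two faces of $e$ lies inside $\Delta$. The only stylistic difference is that you work purely locally at a single boundary edge, while the paper labels the whole alternating cycle $w_1,b_1,\dots,w_n,b_n$; this is immaterial. One small point worth spelling out more explicitly in the converse is that the hypothesis (every $\partial_G\Delta^\perp$ edge pointing into $\Delta$) already forces $C$ to be alternating relative to $M$ — your ``coupling'' argument implies it, since at each vertex of $C$ exactly one of the two incident boundary edges comes out in $M$, but this should be stated since ``positive'' is only defined for alternating subsurfaces.
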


\begin{proof}
Let $C$ be a $G$-boundary component of $\Delta$, oriented as $\partial \Delta$. By \reflem{face_subsurface_boundary_cycles}, $C$ is a vertex-simple directed cycle. 
As $G$ is bipartite, we can denote the black and white vertices of $C$ by $w_j$ and $b_j$ for $j \in \Z/n\Z$ respectively, so that in order around $C$ they are $w_1, b_1, \ldots, w_n, b_n$. Denote each edge of $C$ by (white vertex, black vertex).

In the standard orientation $R_0$ of $G^\perp$ (\refdef{standard_orientation}), the edges of the basic cycle around each $w_j$ and $b_j$ (\refdef{basic_cycle}) are oriented counterclockwise and clockwise (\refdef{orientations_basic_cycles}) respectively. Thus in $R_0$ the edges of $G^\perp$ dual to edges of $C$ are directed alternately into and out of $\Delta$. Each edge $(w_j,b_j)^\perp$ of $G^\perp$ is directed into $\Delta$, and each edge $(w_j,b_{j-1})^\perp$ is directed out of $\Delta$. See \reffig{from_alpha_to_beta}.

\begin{figure}
	\centering
	\includegraphics[width=0.6\textwidth]{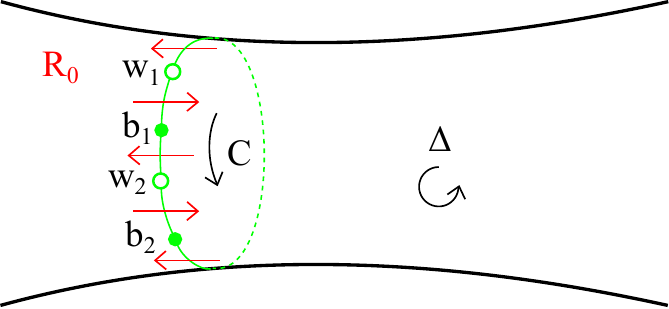}
	\caption{A $G$-boundary component $C$ of a face subsurface $\Delta$, with boundary orientation indicated, and standard orientation $R_0$ on dual edges of $G^\perp$. $G$ and $G^\perp$ are shown in green and red respectively.}\label{Fig:from_alpha_to_beta}
\end{figure}

The prescribed orientation $R_M$ differs from $R_0$ precisely on the edges of $G^\perp$ dual to edges of $M$. Thus $R_M$ points entirely into or out of $\Delta$ along $C$ precisely when $C$ is alternating relative to $M$. 

If $\Delta$ is positive (resp. negative) then, as noted after \refdef{pos_neg_subsurface}, the edges of $\partial_G \Delta \cap M$, oriented as $\partial \Delta$, are oriented from black to white (resp. white to black).
Thus $C$ is positive when the $(w_j, b_{j-1})$ belong to $M$, and $R_M$ orients each edge of $G^\perp$ dual to $C$ into $\Delta$. Similarly, $C$ is negative when the $(w_j, b_{j})$ belong to $M$, and $R_M$ orients each edge of $G^\perp$ dual to $C$ out of $\Delta$. See \reffig{K_is_maximal}.

\begin{figure}
	\centering
	\includegraphics[width=0.95\textwidth]{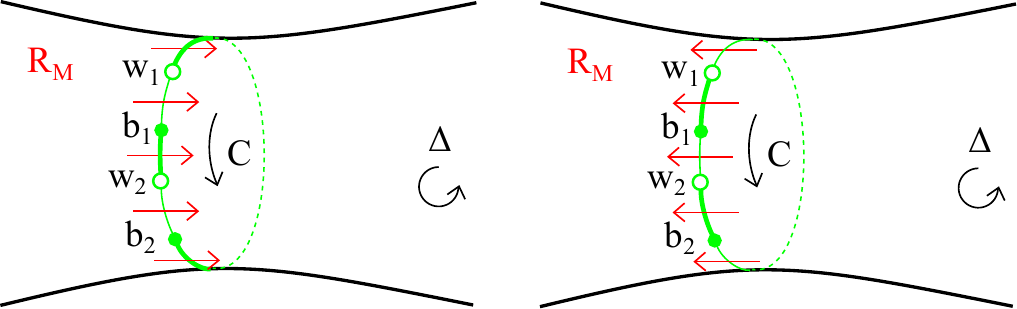}
	\caption{Left: $C$ positive, $R_M$ points into $\Delta$. Right: $C$ negative, $R_M$ points out of $\Delta$. Edges of $M$ are thickened.}\label{Fig:K_is_maximal}
\end{figure}

Thus, $\Delta$ is positive (resp. negative) if and only if each $G$-boundary cycle of $\Delta$ is positive (resp. negative), if and only if $R_M$ orients all 
$\partial_G \Delta^\perp$ edges into (resp. out of) $\Delta$.
\end{proof}

\begin{lemma}[Surface twisting preserves circulation]
\label{Lem:twisting_gives_same_circulation}
Let $\Delta$ be a positive or negative surface relative to $M$. Suppose $M' \in \mathscr{M}_G$ is obtained from $M$ by twisting $M$ at $\partial \Delta$, i.e. $M' = M + \partial_G \Delta$. Let $R,R'$  be the prescribed orientations of $G^\perp$ corresponding to $M, M'$ respectively. Then we have the following.
\begin{enumerate}
\item
$R$ and $R'$ differ precisely on the $\partial_G \Delta^\perp$ edges.
\item 
$R$ and $R'$ have the same circulation, i.e. $c_{R} = c_{R'}$. 
\end{enumerate}
\end{lemma}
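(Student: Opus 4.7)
The plan is to prove (i) directly from the definition of prescribed orientations, and then prove (ii) by analyzing how the $\partial_G\Delta^\perp$ edges contribute to circulation along any directed cycle.

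For (i), I would observe that since $M' = M + \partial_G\Delta$, the symmetric difference $M \triangle M'$ is precisely $\partial_G\Delta$. By \refdef{standard_orientation}, the prescribed orientation $R = R_M$ is obtained from the standard orientation $R_0$ by reversing the edges of $G^\perp$ dual to edges of $M$; similarly for $R' = R_{M'}$. Therefore $R$ and $R'$ differ from each other precisely on the edges of $G^\perp$ dual to $M \triangle M' = \partial_G\Delta$, which are by definition the $\partial_G\Delta^\perp$ edges.

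For (ii), without loss of generality suppose $\Delta$ is positive relative to $M$; the negative case is symmetric (and indeed follows, since then $\Delta$ is negative relative to $M'$, swapping the roles of $R,R'$). By \reflem{dual_orientation_in_out_surface}, $R$ orients every $\partial_G\Delta^\perp$ edge into $\Delta$, while $R'$ orients every such edge out of $\Delta$. Fix any directed cycle $C \in \mathscr{C}_{G^\perp}$; I want to show $c_R(C) = c_{R'}(C)$. Since $R$ and $R'$ agree on all edges outside the $\partial_G\Delta^\perp$ edges by (i), these agreed edges contribute identically to $c_R(C)$ and $c_{R'}(C)$, and it suffices to compare the contributions from the $\partial_G\Delta^\perp$ edges occurring in $C$.

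Let $k_{\mathrm{in}}$ (resp.\ $k_{\mathrm{out}}$) be the number of directed edges of $C$ that are $\partial_G\Delta^\perp$ edges directed into (resp.\ out of) $\Delta$. A directed edge of $C$ that is a $\partial_G\Delta^\perp$ edge directed into $\Delta$ agrees with $R$ (contributing $+1$ to $c_R(C)$) and disagrees with $R'$ (contributing $-1$ to $c_{R'}(C)$); the opposite holds for those directed out of $\Delta$. Hence the net contribution of these edges is $k_{\mathrm{in}} - k_{\mathrm{out}}$ to $c_R(C)$ and $k_{\mathrm{out}} - k_{\mathrm{in}}$ to $c_{R'}(C)$, so
\[
c_{R'}(C) - c_R(C) = 2(k_{\mathrm{out}} - k_{\mathrm{in}}).
\]
The heart of the argument, and the only potential obstacle, is the topological fact that $k_{\mathrm{in}} = k_{\mathrm{out}}$. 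This I would prove by tracking the position of $C$'s vertices relative to $\Delta$: each vertex of $G^\perp$ lies in a unique face of $G$, hence either in $\Delta$ or in its complement, and a directed edge of $C$ changes which side it lies on if and only if it is a $\partial_G\Delta^\perp$ edge, in which case the change is ``into $\Delta$'' or ``out of $\Delta$'' exactly as per its direction. Since $C$ is closed, the total net number of transitions from outside to inside must cancel those from inside to outside, giving $k_{\mathrm{in}} = k_{\mathrm{out}}$ and hence $c_R(C) = c_{R'}(C)$ for every $C \in \mathscr{C}_{G^\perp}$, as required.
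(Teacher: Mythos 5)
Your proof is correct and takes essentially the same approach as the paper: part (i) from the definition of prescribed orientations, and part (ii) by combining \reflem{dual_orientation_in_out_surface} with the observation that a closed directed cycle must enter and exit $\Delta$ equally often. The only cosmetic difference is that the paper shows the $\partial_G\Delta^\perp$ contributions to $c_R(C)$ and $c_{R'}(C)$ each vanish individually, while you compute the difference $c_{R'}(C) - c_R(C) = 2(k_{\mathrm{out}} - k_{\mathrm{in}})$ and show it vanishes.
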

(In the notation of \refdef{standard_orientation} here $R=R_M$ and $R'=R_{M'}$. Our notation here avoids repeated sub- and superscripts.) 

\begin{proof}
The edges of $M$ and $M'$ differ precisely 
on 
the $G$-boundary edges of $\Delta$. Thus the orientations $R,R'$ 
differ precisely on the $\partial_G \Delta^\perp$ edges. 
This gives (i).

For (ii), 
suppose $\Delta$ is positive relative to $M$, hence negative relative to $M'$; the argument is similar in the opposite case. Then, by \reflem{dual_orientation_in_out_surface}, for a $\partial_G \Delta^\perp$  edge $e^\perp$ of $G^\perp$,
$R$ orients $e^\perp$ into $\Delta$, and $R'$ orients $e^\perp$ out of $\Delta$.

Now consider a directed cycle $D$ in $G^\perp$, and its circulation $c_R (D), c_{R'}(D)$ with respect to $R,R'$ respectively. The orientations $R,R'$ differ precisely on the $\partial_G \Delta^\perp$ edges. 
So each non-$\partial_G \Delta^\perp$ edge of $D$ 
contributes equally to $c_R (D)$ and $c_{R'}(D)$.

The $\partial_G \Delta^\perp$ edges
are precisely those
which enter or exit $\Delta$. As $D$ is a cycle, the number of times $D$ enters $\Delta$ must be equal to the number of times $D$ exits $\Delta$.
In $R$, the edges into $\Delta$ are forward and the edges out of $\Delta$ are backward; in $R'$, the edges into $\Delta$ are backward and the edges out of $\Delta$ are forward.
The $\partial_G \Delta^\perp$ edges of $D$
thus contribute zero to $c_R (D)$, and zero to  $c_{R'}(D)$. 

Therefore $c_R (D) = c_{R'}(D)$, 
and $R$ and $R'$ have the same circulation. 
\end{proof}

\reflem{twisting_gives_same_circulation} immediately implies the following.
\begin{lem}
\label{Lem:orientations_related_by_surface_twisting}
Suppose $M' \in \mathscr{M}_G$ is obtained from $M$ by surface twisting along $\Delta$. Then the corresponding orientations $R,R' \in \mathscr{PR}_G$ have the same circulation, and differ precisely on $\partial_G \Delta^\perp$ edges. 
\end{lem}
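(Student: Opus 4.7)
The plan is to read the statement off \reflem{twisting_gives_same_circulation} with essentially no extra work. Unpacking the definitions: by \refdef{twisting_surface}, a twisting surface $\Delta$ relative to $M$ is, first and foremost, a positive or negative subsurface relative to $M$ in the sense of \refdef{pos_neg_subsurface} (together with two further clauses: that $\Delta$ is disjoint from $\partial \Sigma_0$, and that each interior edge of $\Delta$ is $c$-forced or $c$-forbidden). By \refdef{big_twisting}, surface twisting along $\Delta$ produces $M' = M + \partial_G \Delta$, which is exactly the twisting at $\partial \Delta$ of \refdef{twisting_on_alternating_subsurface}. Hence the hypotheses of \reflem{twisting_gives_same_circulation} are met.

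Applying that lemma directly gives both assertions: part (i) says $R$ and $R'$ differ precisely on the $\partial_G \Delta^\perp$ edges, and part (ii) says $c_R = c_{R'}$. Translating back via the bijection $\mathscr{M}_G \cong \mathscr{PR}_G$ of \reflem{matchings_and_orientations}, these are exactly the two conclusions claimed for $R, R' \in \mathscr{PR}_G$.

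There is no real obstacle here; the lemma is essentially a restatement. The two extra clauses in the definition of twisting surface (avoidance of $\partial \Sigma_0$, and the condition that every interior edge be $c$-forced or $c$-forbidden) play no role in the present argument. I expect them to become relevant only later, when surface twisting is matched against the pushing operation on accessibility classes of $(G^\perp, R)$: the disjointness from $\partial \Sigma_0$ will correspond to pushing at accessibility classes other than the distinguished unpushable one, and the $c$-forced/$c$-forbidden condition on interior edges will be what guarantees, via \refprop{all_nothing}, that $\Delta$ corresponds to a genuine accessibility class in the dual.
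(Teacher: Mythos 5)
Your proposal is correct and matches the paper's proof, which is exactly the one-line observation that a twisting surface is by definition positive or negative, so \reflem{twisting_gives_same_circulation} applies directly. Your additional remarks about which clauses of \refdef{twisting_surface} are (not) used here, and where they will matter later, are accurate but go beyond what the paper records at this point.
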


\begin{proof}
Surface twisting is performed along a twisting surface, which is positive or negative.
\end{proof}

\subsection{Accessibility class pushing in the dual of the spine}
\label{Sec:pushing_in_dual_of_spine}

As usual, let $(U, \Sigma, \mathscr{F}, G)$ be a framed multiverse, $G^\perp$ a dual of $G$, and $M \in \mathscr{M}_G$.

Recall from \refdef{outer_vertex} that $G^\perp$ has a distinguished \emph{outer} vertex, dual to the outer face of $G$, adjacent to the outer boundary $\partial \Sigma_0$ of $\Sigma$.
\begin{defn}[Outer accessibility class]
\label{Def:outer_accessibility_class}
Let $R$ be an orientation of $G^\perp$. The \emph{outer accessibility class} of $R$ is the accessibility class of $R$ containing the outer vertex of $G^\perp$.
\end{defn}

\begin{lemma}\label{Lem:big_twisting_and_pushing}
Let $M,M' \in \mathscr{M}_G$ correspond to $R,R' \in \mathscr{PR}_G$. The following are equivalent.
\begin{enumerate}
\item
There is a surface twisting up (resp. down) $T$  on a negative (resp. positive) twisting surface $\Delta$ relative to $M$, taking $M$ to $M'$.
\item 
There is a pushing up (resp. down) $P$ on a minimal (resp. maximal) non-outer accessibility class $K$ of $R$, taking $R$ to $R'$.
\end{enumerate}
Moreover, 
$K$ consists of the vertices of $G^\perp$ dual to the faces of $\Delta$.
\end{lemma}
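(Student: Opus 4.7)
The plan is to use the direct geometric correspondence between a twisting surface $\Delta$ and an accessibility class $K$, given by the dictionary ``$K$ equals the set of vertices of $G^\perp$ dual to the faces of $\Delta$''. Most of the verification amounts to translating between dual descriptions via \reflem{dual_orientation_in_out_surface}, \reflem{orientations_related_by_surface_twisting}, and \refprop{all_nothing}; the key technical point is that $K$ is exactly a single accessibility class.

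For (i)~$\Rightarrow$~(ii), observe first that the edges of $G^\perp$ between $K$ and $K^c$ are precisely the $\partial_G \Delta^\perp$ edges. Since $\Delta$ is negative relative to $M$, \reflem{dual_orientation_in_out_surface} says $R$ orients each such edge from $K$ to $K^c$, so minimality will follow once $K$ is shown to be an accessibility class. To see the latter, the interior edges of $\Delta$ are $c$-forced or $c$-forbidden by the twisting surface hypothesis, so \refprop{all_nothing} places the two endpoints of each interior dual edge in a common accessibility class; connectedness of $\Delta$ then chains all vertices of $K$ together via interior edges, placing them in a single class. Minimality upgrades ``$K$ is contained in one class'' to equality: any vertex of that class outside $K$ would require a forward path reaching back into $K$ across a $K^c$-to-$K$ edge, which is forbidden. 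Non-outerness of $K$ follows from $\Delta \cap \partial \Sigma_0 = \emptyset$. Finally, pushing up on $K$ reverses exactly the $\partial_G \Delta^\perp$ edges, matching the discrepancy between $R$ and $R'$ given by \reflem{orientations_related_by_surface_twisting}, so the pushed orientation is $R'$.

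For (ii)~$\Rightarrow$~(i), set $\Delta$ to be the closure of the union of the faces of $G$ dual to the vertices of $K$. Minimality of $K$ identifies the $K$--$K^c$ edges as the $\partial_G \Delta^\perp$ edges, all oriented out of $\Delta$ by $R$, so $\Delta$ is negative relative to $M$ via \reflem{dual_orientation_in_out_surface}. Non-outerness of $K$ gives $\Delta \cap \partial \Sigma_0 = \emptyset$, and for any interior edge $e$ of $\Delta$ the endpoints of $e^\perp$ both lie in $K$, hence in the same accessibility class, so $e$ is $c$-forced or $c$-forbidden by \refprop{all_nothing}. Thus $\Delta$ is a twisting surface. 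Surface twisting along $\Delta$ changes $M$ exactly along $\partial_G \Delta$, which dualises to reversal of the $\partial_G \Delta^\perp$ edges, i.e.\ to pushing up on $K$, yielding $R'$. The twisting-down / pushing-down statements follow by the symmetric argument with ``negative/minimal'' replaced by ``positive/maximal''.

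The main obstacle I expect is the precise matching of $K$ with a single accessibility class, requiring the combination of minimality (preventing forward-path return into $K$) with \refprop{all_nothing} (forcing interior dual edges to respect accessibility classes). A secondary technicality is verifying that the $\Delta$ constructed in (ii)~$\Rightarrow$~(i) is a genuine connected face subsurface rather than a pinched complex at vertices of $G$; this should follow by arguing that a pinch or disconnection would split $K$ into pieces whose mutual accessibility is again blocked by minimality, contradicting $K$ being a single class.
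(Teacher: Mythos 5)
Your (i)~$\Rightarrow$~(ii) direction is correct, and actually takes a slightly slicker route than the paper for showing $K$ is \emph{exactly} an accessibility class: the paper applies \refprop{all_nothing} a second time to the $\partial_G \Delta^\perp$ edges (using \reflem{orientations_related_by_surface_twisting} to see they are neither $c$-forced nor $c$-forbidden, hence cross between classes), whereas you observe directly that all $K^c$-to-$K$ edges are backward relative to $R$ and so no forward path can re-enter $K$. Both are valid; yours uses the orientation information that you already need anyway for minimality.

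The (ii)~$\Rightarrow$~(i) direction, however, has a genuine gap exactly at the ``secondary technicality'' you flag. You need to rule out $\Delta$ being pinched at a vertex $v$ of $G$ (i.e.\ four or more ends of boundary edges at $v$, so the faces of $\Delta$ around $v$ fall into two or more sectors). Your proposed argument --- that a pinch ``would split $K$ into pieces whose mutual accessibility is again blocked by minimality'' --- does not work: a pinch at $v$ does not disconnect $\Delta$ or $K$, since the sectors of $\Delta$ around $v$ can perfectly well be connected to one another elsewhere. (Your argument is fine for the \emph{disconnection} case, just not for pinching.) Nothing about accessibility or minimality prevents a pinch; what prevents it is the matching structure. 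The correct argument observes that $M$ and $M'$ differ precisely on edges of $G$ having $\Delta$ on one side but not the other (which you do establish), so each such ``boundary'' edge lies in exactly one of $M$, $M'$. Since $G$ is bipartite it has no loops, so four ends of boundary edges at $v$ give four distinct such edges incident to $v$; by pigeonhole at least two lie in $M$ or at least two in $M'$, contradicting the matching condition at $v$. Without this step you cannot legitimately speak of $\partial_G \Delta^\perp$ edges or invoke \reflem{dual_orientation_in_out_surface}, since those presuppose $\Delta$ is a face subsurface.
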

Similar to \refsec{twisting_transpositions}, we denote by $\STw_M$ the finite set of all surface twisting up operations that can be done on $M$, and by $\Pu_R$ the finite set of all pushing up operations on non-outer accessibility classes of $R$. Then \reflem{big_twisting_and_pushing} provides a bijection
\[
\STw_M \stackrel{\cong}{\To} \Pu_R
\]
which takes $T$ to $P$.

\begin{proof}
We prove that surface twisting up operations $T$ correspond to pushing up operations $P$; the arguments for twisting down and pushing down are similar.

Suppose a surface twisting up operation $T$ on the negative twisting surface $\Delta$ takes $M \mapsto M'$, and let $R,R'$ be the corresponding prescribed orientations of $G^\perp$. By \reflem{orientations_related_by_surface_twisting}, $R$ and $R'$ differ precisely on $\partial_G \Delta^\perp$ edges of $G^\perp$, and have the same circulation, which we denote by $c$. By \reflem{accessibility_same_circulation} then $R$ and $R'$ have the same accessibility classes. 

We construct the desired pushing up operation $P$ taking $R \mapsto R'$ in several steps as follows.
\begin{enumerate}
\item 
\emph{Construction of the set $K$.}
Let $\Delta_1,\dotsc,\Delta_m$ be the faces of $\Delta$.
Let $V_i$ be the vertex of $G^\perp$ dual to $\Delta_i$, and let $K = \{V_1, \ldots, V_m\}$. Let $K^c$ be the complement of $K$ in the vertex set of $G^\perp$. 
\item
\emph{$K$ forms an accessibility class of $c$.}
Let $V_a, V_b$ be two vertices in $K$ joined by an edge $e^\perp$ whose dual in $G$ is denoted $e$. Since $e$ is adjacent to $\Delta_a$ on one side and $\Delta_b$ on the other, it is an interior edge of $\Delta$ (\refdef{face_subsurface_bdy_edges}). Being an interior edge of a twisting surface (\refdef{twisting_surface}), $e$ is $c$-forced or $c$-forbidden.
By \refprop{all_nothing} then $e^\perp$ joins two vertices in the same accessibility class of $c$.
Hence, $V_a$ and $V_b$ belong to the same accessibility class of $c$.
As a face subsurface, $\Delta$ is connected (\refdef{face_subsurface}), hence the vertices of $K$ are connected by edges in $G^\perp$. Thus all vertices of $K$ lie in the same accessibility class.

Now consider a $\partial_G \Delta$ edge $e$ of $G$, and its dual $\partial_G \Delta^\perp$ edge $e^\perp$ of $G^\perp$. By \reflem{orientations_related_by_surface_twisting}, $R$ and $R'$ differ on $e^\perp$, so $e^\perp$ is not $c$-forced or $c$-forbidden. Hence by \refprop{all_nothing} again, the endpoints of $e^\perp$ belong to distinct accessibility classes of $c$.

Thus, the vertices of $G^\perp$ in $\Delta$ (namely $K$) lie in the same accessibility class, but if we proceed from $K$ out of $\Delta$, across a $\partial_G \Delta^\perp$ edge, then we arrive at a vertex in a different accessibility class. Thus the vertices of $K$ form an accessibility class.

\item
\emph{All edges between $K$ and $K^c$ are oriented from $K$ to $K^c$ in $R$.} Indeed, $K$ consists precisely of the vertices of $G^\perp$ in $\Delta$, and $K^c$ consists of the vertices of $G^\perp$ outside $\Delta$. So the edges between $K$ and $K^c$ are precisely the edges of $G^\perp$ crossing the boundary of $\Delta$, i.e. the $\partial_G \Delta^\perp$ edges. By \reflem{dual_orientation_in_out_surface}, $R$ orients all $\partial_G \Delta^\perp$ edges out of $\Delta$, i.e. from $K$ to $K^c$, as claimed. By a similar argument, all edges between $K$ and $K^c$ are oriented from $K^c$ to $K$ in $R'$.
\end{enumerate}
Thus, $K$ is a minimal accessibility class in $R$ and a maximal accessibility class in $R'$, and the two orientations differ precisely on the edges between $K$ and $K^c$. Hence, $R'$ is obtained from $R$ by pushing up on $K$. As a twisting surface (\refdef{twisting_surface}), $\Delta$ is disjoint from the outer boundary of $\Sigma$, and so $K$ does not contain the outer vertex of $G^\perp$, 
hence is a non-outer accessibility class. 
So the pushing up $P \in \Pu_R$ on the non-outer accessibility class $K$ takes $R$ to $R'$ and is as claimed.

Thus we have a map $r \colon \STw_M\to\Pu_R$ taking $T \mapsto P$, and it remains to construct an inverse.

So suppose a pushing up $P$ on a minimal non-outer accessibility class $K$ takes $R \mapsto R'$. 
Then $K$ is a maximal accessibility class for $R'$. Moreover $R,R'$ have the same viable circulation $c$, 
so by \reflem{viable_orientation_prescribed}, both $R,R'$ are prescribed orientations of matchings $M,M'$.
The orientations $R,R'$ differ precisely on the edges of $G^\perp$ between $K$ and $K^c$,  hence $M,M'$ differ precisely on the corresponding dual edges of $G$.

Let $K = \{ V_1,\dotsc,V_m \}$,  
and let $\Delta_i$ be the face of $G$ dual to $V_i$. 
The edges of $G^\perp$ between $K$ and $K^c$ are thus the edges between a vertex $V_j$ and a vertex not among the $V_j$. The corresponding edges of $G$ are those which have a face $\Delta_j$ on one side but not the other; these are precisely the edges on which $M,M'$ differ. 

Let $\Delta = \overline{\bigcup_{j=1}^m \Delta_j}$.
Then $M,M'$ differ precisely on those edges of $G$ which have $\Delta$ on one side but not the other. We temporarily call these the \emph{boundary edges} of $\Delta$.

We claim $\Delta$ is a negative twisting surface for $M$. We prove this in several steps.
\begin{enumerate}
\item
\emph{$\Delta$ is a subsurface of $\Sigma$.} As the closure of a union of faces, $\Delta$ fails to be a surface precisely if there is a vertex $v$ of $G$ whose neighbourhood in $\Delta$ is not a disc or half disc. See \reffig{vertex_surface_problem}. The number of ends of boundary edges at $v$ is even, and a neighbourhood of $v$ in $\Delta$ fails to be a disc or half disc when this number is $4$ or more. So suppose for contradiction that we have $4$ ends of boundary edges $e_1, e_2, e_3, e_4$ incident to a $v$ of $\Delta$. (These four ends of edges $e_j$ at $v$ are distinct, but a priori two of these ends could be the ends of a single edge, forming a loop.) As each $e_j$ is a boundary edge, it is in precisely one of $M$ or $M'$. Hence one of $M$ or $M'$ must contain at least $2$ of the $e_j$. But then we have a matching where more than one endpoint of an edge is chosen at $v$, contradicting \refdef{matching}. 
So $\Delta$ is indeed a subsurface. 

\begin{figure}
\begin{center}
\includegraphics[width=0.5\textwidth]{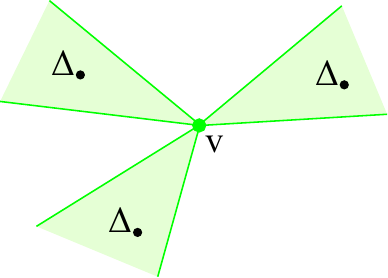}
\end{center}
    \caption{Vertex of $\Delta$ violating surface condition. Faces $\Delta_\bullet$ are shaded.} 
    \label{Fig:vertex_surface_problem}
\end{figure}

\item 
\emph{$\Delta$ is connected and hence a face subsurface of $G$} (\refdef{face_subsurface}). As $K$ is an accessibility class, the vertices $V_j$ of $G^\perp$ are all joined by edges of $G^\perp$, hence the faces $\Delta_j$ of $\Delta$ are all joined by common edges along their boundaries, and $\Delta$ is connected.

As $\Delta$ is a face subsurface of $G$, the boundary edges discussed above are in fact its $G$-boundary edges. The matchings $M,M'$ differ precisely on $\partial_G \Delta$.
\item 
\emph{$\Delta$ is an alternating subsurface} of $G$ relative to $M$ (\refdef{alternating_subsurface}) and in fact a \emph{negative subsurface} (\refdef{pos_neg_subsurface}).
As $K$ is a minimal accessibility class relative to $R$, each edge $e^\perp$ of $G^\perp$ between $K$ and $K^c$ points out of $K$ in $R$.
The edges of $G^\perp$ between $K$ and $K^c$ are precisely 
the $\partial_G \Delta^\perp$ edges. As $R$ orients $\partial_G \Delta^\perp$ edges out of $\Delta$, by \reflem{dual_orientation_in_out_surface} $\Delta$ is negative relative to $M$ as claimed.

\item
\emph{Each interior edge $e$ of $\Delta$ is $c$-forced or $c$-forbidden.} Such an $e$ is an edge of $G$ with a face $\Delta_j$ on either side, and hence its dual edge $e^\perp$ in $G^\perp$ joins two vertices among the $V_j$. Thus $e^\perp$ joins two vertices of $K$, which is an accessibility class. By \refprop{all_nothing} then $e$ is $c$-forced or $c$-forbidden.
\item 
\emph{$\Delta$ is disjoint from the outer boundary} $\partial \Sigma_0$ of $\Sigma$. As $K$ is a non-outer accessibility class, it does not include the outer vertex of $G^\perp$. Hence the outer face of $G$ does not appear as a face $\Delta_i$ of $\Delta$, so $\Delta$ is disjoint from $\partial \Sigma_0$.

This concludes the proof of the claim, and $\Delta$ is a negative twisting surface.
\end{enumerate}

Therefore, starting from $P$ we have found a surface twisting up operation $T$ on $M$ at $\Delta$. Since $M$ and $M'$ differ precisely on $\partial_G \Delta$, the operation results in $M'$ as desired. Moreover, $K$ consists of the vertices of $G^\perp$ dual to the faces of $\Delta$ as claimed. 
This map $P \mapsto T$ recovers the twisting down operation $T$ mapped to $P$ under $r$, and we have constructed the required inverse to $r$.
\end{proof}

\subsection{Surface transpositions}
\label{Sec:surface_transpositions}

We now define a version of transpositions for general multiverses, which we call \emph{surface transpositions}. Unlike the plane transpositions of \refdef{n_transposition}, these need not be along a single curve. Rather, a surface transpositions consists of a collection of contour transpositions (\refdef{partial_transposition}) performed along a collection of transposition contours (\refdef{transposition_contour}) which bound a surface with certain properties, which we define below.

Throughout this section, let $(U, \Sigma, \mathscr{F})$ be a multiverse, $S \in \mathscr{S}_U$ a state, and $c$ its circulation.

Recall a transposition contour $\gamma$ passes through various vertices and faces of $U$, denoted $v_j, F_j$ in \refdef{transposition_contour}. With multiple transposition contours, we require them to be disjoint as follows.
\begin{defn}
\label{Def:independent_contours}
A set of transposition contours $\gamma_1, \ldots, \gamma_n$ for $S$ is \emph{independent} if their vertices and faces are all disjoint.
\end{defn}
By the finiteness of a multiverse graph, an independent set of transposition contours is finite.

Let $\{\gamma_1, \ldots, \gamma_n\}$ be an independent set of contours for $S$, and denote by $Z_j$ the operation of contour transposition along $\gamma_j$. Each $Z_j$ changes distinct markers of the state $S$, so the operations $Z_j$ can be performed on $S$ in any order 
to obtain the same resulting state, which we denote $S'$.

We will perform a surface transposition along a surface $\Psi \subset \Sigma$, which has an independent set of transposition contours $\gamma_j$ for $S$
as its boundary. Like previous notions of transposition, surface transposition
must also avoid the outer boundary. Since each transposition contour $\gamma_j$ is a boundary component of  surface $\Psi$, the interior of $\Psi$ lies to one side of $\gamma_j$. At a vertex $v$ of a $\gamma_j$, the corners which lie entirely in the interior of $\Psi$ are called the \emph{interior corners} of $v$. At a vertex $v$ of $U$ in the interior of $\Psi$, all corners are interior corners. The \emph{interior corners} of $\Psi$ are the interior corners at all vertices of $U$ in $\Psi$.

At a vertex $v$ of a contour $\gamma_j$, let $\alpha, \alpha'$ be the corners with markers in the states $S,S'$ respectively. The contour transposition $Z_j$ replaces each $\alpha$ with $\alpha'$. There is a direction, clockwise or counterclockwise, such that the rotation  from $\alpha$ to $\alpha'$ about $v$ in this direction passes through the interior of $\Psi$. By construction, this direction is the same at all vertices of a single contour $\gamma_j$. See \reffig{clockwise_transposition_contour}. We refer to the contour $\gamma_j$ in $\partial \Psi$ as \emph{clockwise} or \emph{counterclockwise} accordingly. 

\begin{figure}
\begin{center}
\includegraphics[width=0.55\textwidth]{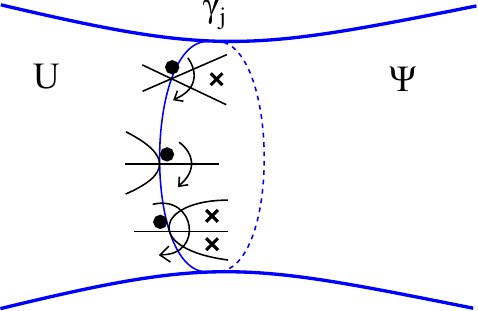}
\end{center}
    \caption{A clockwise contour transposition for a state $S$, at a contour $\gamma_j$ on the boundary of a subsurface $\Psi \subset \Sigma$. Dots are markers for $S$. Interior corners at vertices are noted with crosses.}
    \label{Fig:clockwise_transposition_contour}
\end{figure}

We require all $Z_j$ to be clockwise or counterclockwise; this requirement is analogous to a positive or negative surface (\refdef{pos_neg_subsurface}) having each $G$-boundary component positive or negative. 
\begin{defn}[Transposition surface]
\label{Def:transposition_surface}
A \emph{transposition surface} for $S$ is an compact subsurface $\Psi$ of $\Sigma$ such that the following hold: 
\begin{enumerate}
\item
The boundary of $\Psi$ consists of a (possibly empty) set of non-outer boundary components of $\Sigma$, and a nonempty independent set $\Gamma = \{\gamma_1, \ldots, \gamma_n\}$ of transposition contours of $S$.
\item 
Each interior corner of $\Psi$
is $c$-forced or $c$-forbidden.
\item 
All contours in $\Gamma$ are clockwise, or all contours in $\Gamma$ are counterclockwise.
\end{enumerate}
We say $\Psi$ is \emph{clockwise} or \emph{counterclockwise} accordingly with the contours of $\Gamma$.
\end{defn}
We call a boundary component of $\Psi$ a \emph{$\Sigma$-boundary component} if it is a boundary component of $\Sigma$, and a \emph{boundary contour} if it is a contour. There may be no $\Sigma$-boundary components.
As $\Sigma$ is compact, $\Psi$ contains finitely many boundary components. 

Condition (ii) here may be compared to the condition requiring forbidden corners on plane transpositions. However, the conditions are quite different. \refdef{n_transposition} only requires that that interior corners at vertices \emph{on} the contour are forbidden; condition (ii) here applies throughout the interior of $\Psi$,  refers only to the circulation $c$, and allows for $c$-forced as well as $c$-forbidden corners.

Just as transposition contours can be framed (\refdef{framed_contour}), so too can transposition surfaces.
\begin{defn}[Framed transposition surface]
\label{Def:framed_transposition_surface}
Let $(U, \Sigma, \mathscr{F}, G)$ be a framed multiverse. A transposition surface is \emph{framed} if all its boundary contours are framed transposition contours.    
\end{defn}

\begin{defn}[Surface transposition] \label{Def:transposition_on_positve_genus}
Let $\Psi$ be a framed transposition surface for a state $S$, with boundary contours $\Gamma = \{\gamma_1, \ldots, \gamma_n\}$.

\emph{Surface transposition} of $S$ along $\Psi$ is contour transposition of $S$ along all contours of $\Gamma$, yielding the state $S'$.
Accordingly as $\Psi$ is clockwise or counterclockwise, we say the surface transposition is \emph{clockwise} or \emph{counterclockwise}.
\end{defn}
Note that, just as a plane transposition must be done along a framed transposition contour, a surface transposition must also be done along framed transposition contours.

Plane transpositions are quite different from surface transpositions. As mentioned in the introduction, \reffig{Hasse_diagram_example1_thm1} and \reffig{Hasse_diagram_example1_thm2} show the distinct operations on the same multiverse.
Moreover, a Kauffman transposition (\refdef{transposition}) on a Kauffman universe may fail to be a surface transposition. For instance, \reffig{Hasse_diagram_example2_thm1} shows a multiverse and its Kauffman transpositions, which differ from the surface transpositions shown in \reffig{Hasse_diagram_example2_thm2}.

\subsection{Twisting surfaces and transposition surfaces}

We saw in \refsec{alternating_cycles_contours}, specifically \reflem{contours_alternating_cycles_equiv}, an equivalence between transposition contours and (vertex-simple) alternating cycles. We now show a similar equivalence between transposition surfaces and twisting surfaces.

As usual, let $(U, \Sigma, \mathscr{F}, G)$ be a framed multiverse, $S \in \mathscr{S}_U$ a state, $M \in \mathscr{M}_G$ the corresponding matching, and $c$ their circulation function.

The following lemma is the surface analogue of \reflem{alternating_cycles_contours}.
\begin{lem}
\label{Lem:twisting_surface_as_transposition_surface}
A positive (resp. negative) twisting surface $\Delta$ of $M$, regarded as an subsurface of $\Sigma$, is a clockwise (resp. counterclockwise) framed transposition surface for $S$.
\end{lem}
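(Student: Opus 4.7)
My plan is to verify the three conditions of \refdef{transposition_surface}, together with the framed condition of \refdef{framed_transposition_surface}, using the structure of $\Delta$ as a face subsurface with its positivity/negativity and the $c$-forced/$c$-forbidden condition on interior edges.

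First I would identify the boundary contours. By \reflem{face_subsurface_boundary_cycles} each $G$-boundary component $C_j$ of $\Delta$ is vertex-simple, and by \refdef{alternating_subsurface} each $C_j$ is alternating relative to $M$. Applying \reflem{alternating_cycles_contours} (or equivalently the straightening bijection of \reflem{contours_alternating_cycles_equiv}), each $C_j$ viewed as a simple closed curve in $\Sigma$ is a framed transposition contour $\gamma_j$ for $S$. The $\gamma_j$ are pairwise independent in the sense of \refdef{independent_contours}: the vertex sets of distinct $C_j$ are disjoint by \reflem{face_subsurface_boundary_cycles}, and under the contour--cycle correspondence the vertices $v_i$ of $\gamma_j$ are exactly the white vertices of $C_j$ while the faces $F_i$ correspond to its black vertices, so both vertex and face sets are pairwise disjoint across distinct contours.

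Second I would verify conditions (i) and (ii) of \refdef{transposition_surface}. For (i), $\partial\Delta = \partial_\Sigma \Delta \cup \partial_G \Delta$: the $\Sigma$-boundary components are non-outer since $\Delta$ is disjoint from $\partial\Sigma_0$ by \refdef{twisting_surface}(i), and the $G$-boundary components are precisely the $\gamma_j$ (forming a nonempty set $\Gamma$ in the non-degenerate case where $\Delta$ has at least one $G$-boundary). For (ii), a local analysis at a vertex $v$ of $U$ shows that a corner $\alpha$ at $v$ lies entirely in the interior of $\Delta$ if and only if its dual spine edge $e$ (in the unstarred face containing $\alpha$) is an interior edge of $\Delta$ in the sense of \refdef{face_subsurface_bdy_edges}. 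Indeed, near $v$ the wedge $\alpha$ is bisected by the initial segment of $e$ into two half-wedges lying in the two faces of $G$ on either side of $e$, so $\alpha$ is interior to $\Delta$ iff $e$ is not on $\partial\Delta$ and both adjacent faces of $G$ are in $\Delta$, i.e.~iff $e$ is an interior edge of $\Delta$. (Corners in starred faces of $U$ cannot be interior to $\Delta$, since $\Delta$ avoids $\partial\Sigma_0$ and hence the outer face of $G$, which contains all starred faces.) By \refdef{twisting_surface}(ii) each interior edge of $\Delta$ is $c$-forced or $c$-forbidden, and by \reflem{c-forced_forbidden_corners_edges} so is the corresponding corner.

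Finally I would verify condition (iii). Assume $\Delta$ is positive; I would show every $\gamma_j$ is clockwise (the case $\Delta$ negative is symmetric). Fix a vertex $v$ on $\gamma_j$ with state markers at corners $\alpha$ and $\alpha'$ (for $S$ and $S'$ respectively), and let $e_\alpha, e_{\alpha'}$ be the corresponding spine edges, lying on $\partial\Delta$. Orient $\partial\Delta$ so that $\Delta$ is on the left. Positivity forces the matching edge $e_\alpha \in M$, oriented from its black to its white endpoint, to agree with $\partial\Delta$, so $\partial\Delta$ enters $v$ through $e_\alpha$ and exits through $e_{\alpha'}$. A direct local orientation computation at $v$, analogous to the plane argument of \reflem{transpositions_and_twisting}, then shows that with $\Delta$ on the left the rotation from $\alpha$ through $\Delta$ to $\alpha'$ is clockwise. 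Since this holds uniformly at every vertex of every $\gamma_j$, each $\gamma_j$ is clockwise and $\Delta$ is a clockwise framed transposition surface.

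The main obstacle will be this local orientation check: one must carefully translate ``$\Delta$ on the left of $\partial\Delta$'', a condition involving the orientation of $\Sigma$ restricted to $\Delta$, into ``clockwise rotation from $\alpha$ to $\alpha'$ through $\Delta$'', a condition involving the local orientation of $\Sigma$ near $v$. The computation mirrors the plane case, but care is needed because $\Delta$ need not be a disc and the sign convention ``positive $\leftrightarrow$ clockwise'' (rather than the reverse) must be pinned down against the definitions. Secondarily, the degenerate case where $\Delta$ has no $G$-boundary component should be excluded from the statement, since then $\Gamma$ is empty and \refdef{transposition_surface}(i) fails.
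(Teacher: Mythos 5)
Your proposal is correct and takes essentially the same route as the paper's proof: verify conditions (i)--(iii) of the definition of a transposition surface one at a time, using Lemma~\ref{Lem:face_subsurface_boundary_cycles} and Lemma~\ref{Lem:alternating_cycles_contours} to identify the $G$-boundary cycles as independent framed transposition contours, Lemma~\ref{Lem:c-forced_forbidden_corners_edges} to transfer the $c$-forced/$c$-forbidden property from interior edges to interior corners, and the black-to-white orientation of matching edges along a positive boundary to get the clockwise rotation. You spell out the interior-corner/interior-edge correspondence in more detail than the paper, which states it more tersely, and your observation about corners in starred faces is a useful clarification. Your concern about a degenerate twisting surface with no $G$-boundary component is actually moot: a face subsurface of the connected surface $\Sigma$ that avoids $\partial\Sigma_0$ cannot have boundary lying entirely in $\partial\Sigma$, since then it would be open and closed in $\Sigma$ and hence equal to $\Sigma$, contradicting disjointness from $\partial\Sigma_0$; so $\partial_G\Delta$ is automatically nonempty and no modification of the statement is needed.
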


\begin{proof}
We prove the result for positive $\Delta$; the negative case is similar. We show the positive twisting surface $\Delta$ verifies 
conditions (i)--(iii) of \refdef{transposition_surface}. 

The boundary of $\Delta$ consists of non-outer $\Sigma$-boundary components, together with $G$-boundary components, 
which are vertex-simple alternating cycles of $M$, say $C_1, \ldots, C_n$, containing pairwise disjoint vertices (\reflem{face_subsurface_boundary_cycles}). 
By \reflem{alternating_cycles_contours} each $C_i$, regarded as a simple closed curve $\gamma_i$, is a framed transposition contour. As the $C_i$ have disjoint vertices, the $\gamma_i$ 
are independent,
satisfying (i).

Interior corners $\alpha$ of $\Psi$ 
correspond to interior edges $e$ of $\Delta$. 
As $\Delta$ is a twisting surface for $M$, all interior edges of $\Delta$ are $c$-forced or $c$-forbidden, hence
by \reflem{c-forced_forbidden_corners_edges}, all interior corners $\alpha$ of $\Psi$ are $c$-forced or $c$-forbidden,
satisfying (ii).

\begin{figure}
\begin{center}
\includegraphics[width=0.55\textwidth]{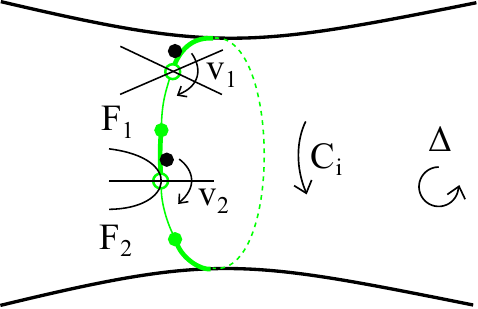}
\end{center}
    \caption{A boundary component $C_i$ of a positive twisting surface (green), regarded as a transposition contour. Edges of $M$ are thickened. The multiverse $U$ and state markers are shown in black.}
    \label{Fig:positive_twisting_surface_boundary}
\end{figure}

As $\Delta$ is positive, each $C_i$ is positive, so when oriented as $\partial \Delta$, the edges in $M$ are oriented from black to white (see \refdef{pos_neg_subsurface_bdy_component} and subsequent comment). Label the vertices of $C_i$, directed as $\partial \Delta$, in cyclic order as $v_1, F_1, \ldots, v_n, F_n$ where the $v_j, F_j$ are over $j \in \Z/n\Z$, the $v_j$ are white (hence vertices of $U$) and the $F_j$ are black (hence correspond to unstarred faces of $U$, which we also denote $F_j$), as in \reffig{positive_twisting_surface_boundary}; see also \reffig{K_is_maximal} (left). Then the edges in $M$ are precisely the edges oriented from black to white, i.e. $F_j$ to $v_{j+1}$. The edge in $M$ from $F_j$ to $v_{j+1}$ corresponds to a corner $\beta_{j+1}$ in the face $F_j$ at the vertex $v_{j+1}$ of $U$, where a marker appears in the state $S$. Contour transposition along $\gamma_j$ replaces the marker $\alpha_{j+1}$ at $v_{j+1}$ with the marker $\alpha'_{j+1}$ corresponding to the edge from $v_{j+1}$ to $F_{j+1}$. This is a clockwise transposition around $v_{j+1}$ through the interior of $\Psi$, as shown in figure \reffig{positive_twisting_surface_boundary}. Thus each contour $\gamma_i$ is clockwise, satisfying (iii).

Hence $\Delta$ is a clockwise transposition surface for $S$. As each $C_i$ is a framed contour, $\Delta$ is framed.
\end{proof}

For a converse and equivalence, we recall that the straightening (\refdef{straightening}) of a framed transposition contour $\gamma$ is a vertex-simple alternating cycle (\reflem{straightening_is_alternating_cycle}) isotopic to $\gamma$. Hence we have a surface analogue is as follows.
\begin{defn}
Let $\Psi$ a framed transposition surface with boundary contours $\Gamma$. The \emph{straightening} of $\Psi$ is the face subsurface of $G$ with the same $\Sigma$-boundary components as $\Psi$ and whose $G$-boundary components are straightenings of the contours $\Gamma$.
\end{defn}

\begin{lem}
\label{Lem:straightening_transposition_surface}
Let $\Psi$ a clockwise (resp. counterclockwise) framed transposition surface for $S$ with boundary contours $\Gamma$.
Then the straightening $\Delta$ of $G$ is a 
positive (resp. negative) twisting surface relative to $M$.
\end{lem}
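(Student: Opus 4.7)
My plan is to run the argument of \reflem{twisting_surface_as_transposition_surface} in reverse: straightening each boundary contour produces a vertex-simple alternating cycle of $G$, these cycles together with the $\Sigma$-boundary components of $\Psi$ bound a face subsurface $\Delta$ of $G$, and the clockwise/counterclockwise distinction on $\Psi$ translates into the positive/negative distinction on $\Delta$ once one reads off, at each contour vertex, which corner belongs to $M$ and which to $M'$.

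I would first verify that $\Delta$ is well-defined. By \reflem{straightening_is_alternating_cycle} each framed contour $\gamma_i \in \Gamma$ straightens to a vertex-simple alternating cycle $C_i$ in $G$ relative to $M$, isotopic to $\gamma_i$ in $\Sigma$. Since the $\gamma_i$ are independent (\refdef{independent_contours}) and the straightenings are isotopic to them, the $C_i$ have pairwise disjoint vertices and are disjoint from the $\Sigma$-boundary components of $\Psi$. These curves separate $\Sigma$, and $\Delta$ is obtained as the unique connected face subsurface of $G$ cobounded by them on the appropriate side (existence uses the isotopy to $\Psi$; uniqueness is in the spirit of \reflem{twisting_surface_G-boundary}). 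Alternation of $\Delta$ (\refdef{alternating_subsurface}) is immediate from each $C_i$ being alternating.

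Next I would determine the sign. Assuming $\Psi$ is clockwise, I would fix a contour $\gamma_i$, list its straightening $C_i$ in cyclic order (as in \reffig{positive_twisting_surface_boundary}) as $v_1, F_1, \ldots, v_n, F_n$ with the $v_j$ white and $F_j$ black, direct $C_i$ as $\partial\Delta$, and observe that the marker of $S$ at each $v_{j+1}$ sits in the corner adjacent to $F_j$ (so the edge from $F_j$ to $v_{j+1}$ lies in $M$), while the contour transposition rotates that marker clockwise through $\Psi$ to the corner adjacent to $F_{j+1}$. Thus the edges of $M\cap C_i$ are precisely the black-to-white edges of $C_i$ along $\partial\Delta$, making $C_i$ positive (\refdef{pos_neg_subsurface_bdy_component}); since this holds for each $C_i$, $\Delta$ is positive (\refdef{pos_neg_subsurface}). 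The counterclockwise case is symmetric.

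Finally I would check the two remaining clauses of \refdef{twisting_surface}. Disjointness from $\partial\Sigma_0$ is straightforward: contour vertices and faces lie in the interior of $\Sigma$, and the $\Sigma$-boundary components of $\Psi$ are non-outer by condition (i) of \refdef{transposition_surface}. The main technical point, and the step I expect to be the real obstacle, is the clause that every interior edge of $\Delta$ is $c$-forced or $c$-forbidden. An interior edge $e$ of $\Delta$ corresponds to an adjacency triple $(w,\alpha,b)$ with $w$ an interior vertex of $U$ in $\Delta$, $b$ the black vertex in some face of $U$ in $\Delta$, and $\alpha$ a corner not lying on any $G$-boundary cycle of $\Delta$; I need to argue that, via the isotopy identifying $\Delta$ with $\Psi$ through the faces of $U$, the corner $\alpha$ is actually an interior corner of $\Psi$ in the sense of \refsec{surface_transpositions}. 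Once this identification is made, condition (ii) of \refdef{transposition_surface} says $\alpha$ is $c$-forced or $c$-forbidden, and \reflem{c-forced_forbidden_corners_edges} transfers this to $e$, completing the verification.
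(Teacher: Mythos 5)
Your plan reproduces the paper's own proof step for step: straighten the boundary contours to disjoint vertex-simple alternating cycles bounding the face subsurface $\Delta$, verify disjointness from $\partial\Sigma_0$ and the $c$-forced/$c$-forbidden clause via the edge--corner correspondence and \reflem{c-forced_forbidden_corners_edges}, and read off the positive/negative sign from the clockwise/counterclockwise rotations exactly as in \reflem{twisting_surface_as_transposition_surface}. The step you flag as ``the real obstacle''---matching interior edges of $\Delta$ with interior corners of $\Psi$---is the one point the paper asserts without elaboration, and your instinct to ground it in the straightening isotopy (which stays within the faces and corners of $U$ and never crosses a vertex or edge of $U$, hence preserves which corners lie on the interior side) is exactly the right way to fill it in.
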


\begin{proof}
We prove the result for clockwise $\Psi$; the counterclockwise case is similar.

First, note that as $\Psi$ is a transposition surface, the contours $\Gamma = \{\gamma_1, \ldots, \gamma_n\}$ are independent, so their straightenings $C = \{ C_1, \ldots, C_n\}$ are disjoint vertex-simple alternating cycles. Hence the isotopies taking each $\gamma_j$ to $C_j$ are disjoint, so $\Delta$ is actually a subsurface of $\Sigma$. As its boundary consists of components of $\partial \Sigma$ and edges and vertices of $G$, it is a face subsurface of $G$ (\refdef{face_subsurface}). As each $C_i$ is an alternating cycle relative to $M$, $\Delta$ is an alternating subsurface relative to $M$ (\refdef{alternating_subsurface}).

Now we show $\Delta$ is a positive twisting surface relative to $M$ verifying (i) and (ii) of \refdef{twisting_surface}. As a transposition surface, $\Psi$ is disjoint from the outer boundary of $\Sigma$, hence so is $\Delta$, verifying (i).

Consider an interior edge $e$ of $\Delta$. This corresponds to an interior corner $\alpha$ of $U$ in $\Psi$ in an unstarred face. As $\Psi$ is a transposition surface, $\alpha$ is $c$-forced or $c$-forbidden. 
Thus by \reflem{c-forced_forbidden_corners_edges}, 
$e$ is $c$-forced or $c$-forbidden, verifying (ii).

Consider a cycle $C_j$, the straightening of $\gamma_j$. As $\Psi$ is a clockwise transposition surface, contour transposition along $\gamma_j$ rotates each marker at each vertex of $\gamma_j$ clockwise through the interior of $\Psi$, as in \reffig{clockwise_transposition_contour}. Thus, when oriented as $\partial \Delta$, the edges of $M$ in $C_j$ are oriented from black to white, as in \reffig{positive_twisting_surface_boundary}. Thus $C_j$ is positive, and $\Psi$ is a positive twisting surface relative to $M$.
\end{proof}

For an equivalence statement, we introduce the following straightforward notion of isotopy of transposition surfaces, following definitions of isotopy for embedded graphs (\refdef{isotopy_embedded_graphs}), transposition contours (\refdef{isotopy_contour}), and spines (\refdef{isotopy_spine}).
\begin{defn}
Two transposition surfaces $\Psi_0, \Psi_1$ for a state $S$ are \emph{isotopic} if there is a continuous family of transposition surfaces $\Psi_t$ for $t \in [0,1]$ from $\Psi_0$ to $\Psi_1$.

Two surface transpositions are \emph{isotopic} is their transposition surfaces are isotopic.
\end{defn}
In the continuous family $\Psi_t$ of transposition surfaces, the $\Sigma$-boundary components must remain constant, and the restriction to a boundary contour $\gamma_0$ of $\Psi_0$ yields an isotopy of transposition contours $\gamma_t$ from $\gamma_0$ to a corresponding boundary contour $\gamma_1$ of $\Psi_1$. Indeed, an isotopy of transposition surfaces $\Psi_t$ is determined by its restriction to its boundary contours, and we have the following, corresponding to \reflem{twisting_surface_G-boundary} for twisting surfaces.
\begin{lem}
\label{Lem:transpositions_equal_boundary}
Two transposition surfaces are isotopic if and only if their boundary contours are isotopic.
\qed
\end{lem}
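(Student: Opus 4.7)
The forward direction is essentially immediate from the definitions: given a continuous family $\Psi_t$ of transposition surfaces from $\Psi_0$ to $\Psi_1$, the restriction of $\Psi_t$ to any boundary contour of $\Psi_0$ is a continuous family of transposition contours ending at a boundary contour of $\Psi_1$, while the $\Sigma$-boundary components remain constant since they are components of $\partial \Sigma$. This yields a bijection between the boundary contours of $\Psi_0$ and those of $\Psi_1$ under which corresponding pairs are isotopic.

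For the backward direction, suppose the boundary contours of $\Psi_0$ and $\Psi_1$ are isotopic, so that the $\Sigma$-boundary components agree and there is a bijection between boundary contours with corresponding pairs $\gamma_j^0, \gamma_j^1$ connected through a family of transposition contours $\gamma_j^t$ for $t \in [0,1]$. By \refdef{isotopy_contour} and the observation immediately following it, each $\gamma_j^t$ passes through the same vertices, faces, and corners of $U$ as $\gamma_j^0$. Consequently the collections $\{\gamma_j^t\}_j$ remain pairwise disjoint (hence independent) and their clockwise/counterclockwise orientations are preserved throughout. I apply the isotopy extension theorem in $\Sigma$, relative to $\partial \Sigma$ and the interior vertices of $U$, to produce an ambient isotopy $\phi_t \colon \Sigma \to \Sigma$ with $\phi_0 = \Id$ realizing these contour isotopies simultaneously, and set $\Psi_t = \phi_t(\Psi_0)$.

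To conclude I would verify that each $\Psi_t$ is itself a transposition surface for $S$ and that $\phi_1(\Psi_0) = \Psi_1$. The three conditions of \refdef{transposition_surface} are all preserved along the family: the boundary has the prescribed form by construction; the clockwise/counterclockwise character persists by the previous paragraph; and the interior corners of $\Psi_t$ coincide with those of $\Psi_0$ (since the vertex/face/corner data of each contour is constant along the isotopy and no new interior vertex can be swept across), so the $c$-forced or $c$-forbidden requirement continues to hold. For the final equality, I claim, in analogy with \reflem{twisting_surface_G-boundary}, that a transposition surface is uniquely determined by its boundary: the contours $\gamma_j^1$ together with the prescribed $\Sigma$-boundary components separate $\Sigma$, and a transposition surface is forced to be the union of those complementary pieces that do not meet the outer boundary $\partial \Sigma_0$. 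Since $\phi_1(\Psi_0)$ and $\Psi_1$ share the same boundary, they coincide, completing the isotopy.

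The main obstacle I anticipate is the uniqueness-from-boundary statement in the (possibly disconnected) case: one must verify that every complementary component of $\Sigma$ cut along $\Gamma$ is forced either into $\Psi$ or into its complement by the conditions that $\Psi$ is disjoint from $\partial \Sigma_0$ and contains precisely the prescribed non-outer $\Sigma$-boundary components. Once this combinatorial analysis is in place, the remainder of the argument is a routine application of the isotopy extension theorem, since the constancy of vertex/face/corner data along contour isotopies makes preservation of the transposition-surface axioms automatic.
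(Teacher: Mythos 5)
Your proof is correct and closely matches the paper's implicit reasoning; the paper states this lemma with \qed{} and no separate proof, remarking only that an isotopy of transposition surfaces is determined by its restriction to boundary contours, in analogy with \reflem{twisting_surface_G-boundary}. The isotopy-extension step supplies detail the paper omits, and the ``obstacle'' you anticipate---uniqueness of a transposition surface from its boundary contours---is handled by exactly the separating-curve argument already given for twisting surfaces in \reflem{twisting_surface_G-boundary} (using that $\Sigma$ is connected and $\Psi$ avoids $\partial\Sigma_0$), so it requires no further work.
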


\begin{lem}
Straightening provides a bijection between isotopy classes of clockwise (resp. counterclockwise) framed transposition surfaces for $S$, and positive (resp. negative) twisting surfaces of $G$ relative to $M$.
\end{lem}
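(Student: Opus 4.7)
The plan is to assemble the bijection from the contour-level bijection of \reflem{contours_alternating_cycles_equiv}, using \reflem{transpositions_equal_boundary} to descend to the level of subsurfaces. The two preceding lemmas, \reflem{twisting_surface_as_transposition_surface} and \reflem{straightening_transposition_surface}, already establish that the two constructions (inclusion of a twisting surface as a transposition surface, and straightening of a transposition surface) land in the correct classes with the correct signs. What remains is to check that they are mutually inverse at the level of isotopy classes of transposition surfaces.

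First I would verify that straightening is well-defined on isotopy classes. If $\Psi_0, \Psi_1$ are isotopic framed transposition surfaces, then they have the same $\Sigma$-boundary components and their boundary contours are pairwise isotopic by \reflem{transpositions_equal_boundary}. Since isotopic framed transposition contours have identical straightenings (by \reflem{contours_alternating_cycles_equiv}), the face subsurfaces of $G$ produced by straightening $\Psi_0$ and $\Psi_1$ have equal $\Sigma$-boundary components and equal $G$-boundary components, hence are equal by \reflem{twisting_surface_G-boundary}.

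Next I would show the two directions invert each other. For a positive (resp. negative) twisting surface $\Delta$, \reflem{twisting_surface_as_transposition_surface} gives a clockwise (resp. counterclockwise) framed transposition surface $\Psi = \Delta$; its $G$-boundary components are already vertex-simple alternating cycles in $G$, which straighten to themselves, and its $\Sigma$-boundary components are preserved by straightening. Hence the straightening of $\Psi$ equals $\Delta$, and the composite ``twisting surface $\to$ transposition surface $\to$ straightening'' is the identity. Conversely, given a framed transposition surface $\Psi$ with boundary contours $\gamma_1,\ldots,\gamma_n$ and straightening $\Delta$, each $\gamma_j$ is isotopic to its straightening $C_j$ by \reflem{contours_alternating_cycles_equiv}, and these isotopies are disjoint since $\Gamma$ is independent. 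Extending them to an ambient isotopy supported in a neighbourhood of $\partial\Psi$ shows $\Psi$ is isotopic to $\Delta$ viewed as a transposition surface (using that $\Sigma$-boundary components of $\Psi$ and $\Delta$ coincide by definition of straightening). Thus the composite ``transposition surface $\to$ straightening $\to$ transposition surface'' is the identity on isotopy classes.

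The only subtle point, and the one I would treat most carefully, is the sign compatibility and ``realising disjoint contour isotopies by an ambient isotopy of the bounded surface''; since independence of $\Gamma$ guarantees disjoint neighbourhoods of the contours, and each framed contour isotopy stays in the union of the faces of $U$ it traverses, this is straightforward but deserves explicit mention. Signs (clockwise vs. counterclockwise, positive vs. negative) are tracked contour by contour exactly as in \reflem{twisting_surface_as_transposition_surface} and \reflem{straightening_transposition_surface}, and they match under the bijection.
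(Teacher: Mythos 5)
Your proof is correct and takes essentially the same approach as the paper's: both rely on \reflem{contours_alternating_cycles_equiv} for contour-level straightening, \reflem{twisting_surface_G-boundary} to pin down the twisting surface from its $G$-boundary, and the two preceding lemmas to land in the correct classes with the correct signs. The paper's proof is terser — it verifies well-definedness of the straightening map on isotopy classes and then simply asserts that ``regarding twisting surfaces as transposition surfaces provides the inverse,'' leaving the two composites implicit — whereas you spell them out explicitly. One small stylistic remark: for the reverse composite, you construct an ambient isotopy by hand, but you could have invoked \reflem{transpositions_equal_boundary} directly (which you already cite for well-definedness); the lemma states precisely that isotopic boundary contours imply isotopic transposition surfaces, so the ambient-isotopy construction, while correct, reproves a special case of it.
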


\begin{proof}
We prove the result for clockwise/positive surfaces; the other case is similar. 
Let $\Psi, \Psi'$ be  isotopic 
clockwise framed transposition surfaces for $S$, with framed boundary contours $\Gamma = \{\gamma_1, \ldots, \gamma_n\}$ and $\Gamma' = \{\gamma'_1, \ldots, \gamma'_n\}$, where each $\gamma_i$ is isotopic
to $\gamma'_i$,  
and each $\gamma_i$ and $\gamma'_i$ is clockwise. 

By \reflem{contours_alternating_cycles_equiv}, both $\gamma_i, \gamma'_i$ straighten to the same vertex-alternating cycle $C_i$ of $G$ relative to $M$. Thus the straightenings $\Delta, \Delta'$ of $\Psi, \Psi'$ on $G$ respectively are twisting surfaces of $G$ relative to $M$ with the same $G$-boundary cycles, and hence by \reflem{twisting_surface_G-boundary} $\Delta = \Delta'$. By \reflem{straightening_transposition_surface}, $\Delta = \Delta'$ is positive. 

This gives the desired map from isotopy classes of framed transposition surfaces to twisting surfaces. Regarding twisting surfaces as transposition surfaces then provides the desired inverse.
\end{proof}

As in \refsec{spines_framings}  and \refsec{alternating_cycles_contours}, when the faces involved are homeomorphic to discs, these considerations simplify. A multiverse $(U, \Sigma, \mathscr{F})$ has a unique framing $G$ up to isotopy. Then all transposition contours are framed, and all transposition surfaces are framed.

\subsection{Equivalence of surface twisting and surface transpositions}

We now use the equivalence of transposition surfaces and twisting surfaces of the previous section, to obtain an equivalence between surface transpositions and surface twisting.

\begin{lemma}\label{Lem:transposition_and_big_twisting}
Let $(U, \Sigma, \mathscr{F}, G)$ be a framed multiverse, 
an let $S,S' \in \mathscr{S}_U$ correspond to $M,M' \in \mathscr{M}_G$.
The following are equivalent.
\begin{enumerate}
\item
There is a counterclockwise (resp. clockwise) surface transposition $Z$ on a counterclockwise (resp. clockwise) framed transposition surface $\Psi$ for $S$, taking $S$ to $S'$.
\item 
There is a surface twisting up (resp. down) $T$ on a negative (resp. positive) twisting surface $\Delta$ relative to $M$, taking $M$ to $M'$.
\end{enumerate}
Moreover, $\Psi$ is isotopic to $\Delta$, and $\Delta$ is the straightening of $\Psi$.
\end{lemma}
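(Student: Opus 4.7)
The plan is to use the already-established correspondence between framed transposition surfaces and twisting surfaces, and then verify that the operations on states match the operations on matchings under the bijection $\mathscr{S}_U \cong \mathscr{M}_G$ of \reflem{states_and_matchings}. The two lemmas \reflem{twisting_surface_as_transposition_surface} and \reflem{straightening_transposition_surface} already give a bijection between twisting surfaces and isotopy classes of framed transposition surfaces (via straightening), so most of the structural work is done; what remains is to check that the \emph{operation} (transposition on $S$, twisting on $M$) performed along such a surface really does take $S \mapsto S'$ precisely when it takes $M \mapsto M'$, with the sign (up/down vs. clockwise/counterclockwise) correct.

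For $(\text{i}) \Rightarrow (\text{ii})$, I would start from a counterclockwise framed transposition surface $\Psi$ with independent boundary contours $\Gamma = \{\gamma_1, \ldots, \gamma_n\}$ and let $\Delta$ be the straightening of $\Psi$. By \reflem{straightening_transposition_surface}, $\Delta$ is a negative twisting surface relative to $M$, with $G$-boundary cycles $C_1, \ldots, C_n$ obtained by straightening the $\gamma_j$. Then I would show, one boundary component at a time, that the contour transposition along $\gamma_j$ (which replaces the markers $\alpha_j$ in $S$ with $\alpha'_j$ at the vertices of $\gamma_j$) is transported via \reflem{states_and_matchings} exactly to the twisting at $C_j$ (which removes the edges of $M$ on $C_j$ and adds the complementary edges). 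Since the $\gamma_j$ are independent (disjoint vertices and faces), the corresponding $C_j$ have pairwise disjoint vertex sets (\reflem{face_subsurface_boundary_cycles}), so the individual transpositions/twistings commute and their composition gives exactly $S'$ on the state side and $M + \partial_G \Delta = M'$ on the matching side.

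For $(\text{ii}) \Rightarrow (\text{i})$, I would reverse the above: given a surface twisting up $T$ along a negative twisting surface $\Delta$, \reflem{twisting_surface_as_transposition_surface} gives that $\Psi := \Delta$, regarded as a subsurface of $\Sigma$, is a counterclockwise framed transposition surface for $S$, with boundary contours given by the $G$-boundary cycles $C_j$. The same per-boundary-cycle argument (now read in reverse) shows that the combined contour transpositions along these $C_j$ change each relevant marker in the way recorded by the corresponding edge swap in $M \mapsto M'$, so the resulting state is $S'$. The clockwise/down case is entirely symmetric.

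For the \emph{moreover} statement, each framed boundary contour $\gamma_j$ of $\Psi$ is isotopic to its straightening $C_j$ by \reflem{contours_alternating_cycles_equiv}, and since the $\gamma_j$ are independent and the $C_j$ have disjoint vertex sets, these boundary isotopies can be performed in disjoint regular neighbourhoods and extend to an ambient isotopy of $\Sigma$ carrying $\Psi$ to $\Delta$ rel.\ the $\Sigma$-boundary components; then \reflem{transpositions_equal_boundary} confirms $\Psi$ and $\Delta$ represent the same isotopy class of framed transposition surface. The main technical obstacle I expect is the bookkeeping in the per-component argument: one must keep track of the orientation data (clockwise contour vs.\ positive cycle, negative subsurface vs.\ counterclockwise $\Psi$) and confirm that the markers on each vertex of $\gamma_j$ really do move between the corners labelled $\alpha_j$ and $\alpha'_j$ exactly when the edges labelled $\alpha_j$ and $\alpha'_j$ of $G$ are swapped in $M$ — this is ultimately a recapitulation of the proof of \reflem{contours_alternating_cycles_equiv} but needs to be explicitly noted to connect the two surface-level operations.
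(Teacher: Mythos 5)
Your proposal takes essentially the same route as the paper: straighten $\Psi$ to obtain $\Delta$ (resp. regard $\Delta$ as a transposition surface $\Psi$), invoke \reflem{straightening_transposition_surface} and \reflem{twisting_surface_as_transposition_surface} for the sign and framing correspondence, and then verify per boundary component that contour transposition on $\gamma_j$ matches twisting at its straightening $C_j$ under the bijection of \reflem{states_and_matchings}, with independence ensuring the operations compose to $S\mapsto S'$ and $M\mapsto M'$. The one thing the paper adds — needed not for the equivalence itself but for the bijection $\Tr_S \cong \STw_M$ stated immediately afterwards — is the uniqueness of $\Delta$ (via \reflem{twisting_surface_G-boundary}) and of $\Psi$ up to isotopy (via \reflem{framed_contour_equivalent} and \reflem{transpositions_equal_boundary}), which your proposal touches on only for the ``moreover'' clause; it would be worth spelling that out since the paper immediately uses the lemma to set up that bijection.
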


As in \refsec{pushing_in_dual_of_spine} and \reflem{big_twisting_and_pushing} we denote by $\STw_M$ the finite set of surface twisting up operations that can be done on $M$. Similarly, we denote by $\Tr_S$ the finite set of isotopy classes of counterclockwise (resp. clockwise) surface transpositions that can be done on $S$. Then \reflem{transposition_and_big_twisting} provides a bijection
\[
\Tr_S \stackrel{\cong}{\To} \STw_M
\]
which takes (the isotopy class of) $Z$ to $T$.

\begin{proof}
We consider the counterlockwise/negative case; the other case is similar. 
Let $\Psi$ be the counterclockwise transposition surface of $Z$, and let $\Delta$ be its straightening. By \reflem{straightening_transposition_surface}, $\Delta$ is a negative twisting surface of $G$ relative to $M$. Letting the boundary contours of $\Psi$ be $\gamma_1, \ldots, \gamma_n$, in $\Delta$ they are straightened into disjoint vertex-simple negative (resp. positive) alternating boundary cycles $C_1, \ldots, C_n$ of $G$ relative to $M$. Surface transposition on $\Psi$ rotates each state marker $\alpha_i$ of $S$ at each corner $v_i$ of $\gamma_j$ counterclockwise through the interior of $\Psi$ to a corner $\alpha'_i$ of $v_i$, to obtain $S'$. Surface twisting up on $\Delta$ twists the matching $M$ on $\partial_G \Delta$ to obtain $M' = M + \partial_G \Delta$, which then corresponds to $S'$. See  \reffig{black_to_white_on_Z}.

\begin{figure}
	\centering
	\includegraphics[width=0.35\textwidth]{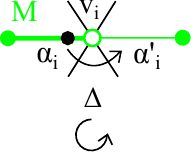}
	\caption{Rotating markers and matchings. The edges of $M$ are thickened.}
    \label{Fig:black_to_white_on_Z}
\end{figure}

Moreover, $\Delta$ is unique: any twisting surface $\Delta'$ for $M$
on which twisting takes $M \mapsto M'$ 
must have the same vertices and edges in its $G$-boundary as $\Delta$, and as the  $G$-boundary cycles of $\Delta'$ are disjoint vertex-simple cycles (\reflem{face_subsurface_boundary_cycles}), they must coincide with those of $\Delta$, so 
by \reflem{twisting_surface_G-boundary}, $\Delta=\Delta'$.

Conversely, if $\Delta$ is the negative twisting surface of $T$, then by \reflem{twisting_surface_as_transposition_surface} it may be regarded as a counterclockwise framed transposition surface  $\Psi$ with the properties claimed, and $\Delta$ is the straightening of $\Psi$ as required. Moreover, $\Psi$ is unique up to isotopy: any transposition surface $\Psi'$ for $S$ on which pushing up takes $S \mapsto S'$
must involve state markers in corners corresponding to the $G$-boundary edges of $\Delta$ in $M$. As the $G$-boundary edges form disjoint vertex-simple cycles, the framed boundary contours of $\Psi'$ must involve the same vertices, faces and corners of $\Psi$. Thus the framed boundary contours of $\Psi'$ are corner-equivalent to those of $\Psi$, so by \reflem{framed_contour_equivalent} they are isotopic. Hence by \reflem{transpositions_equal_boundary} $\Psi'$ is isotopic to $\Psi$, and to $\Delta$.
\end{proof}

\subsection{Clock theorem in arbitrary genus}

Combining \refeqn{triple_bijection}, \reflem{big_twisting_and_pushing} and \reflem{transposition_and_big_twisting} then immediately gives the following.
\begin{prop}\label{Prop:transposition_and_pushing}
Let $(U, \Sigma, \mathscr{F}, G)$ be a framed multiverse, 
and $G^\perp$ a dual of $G$. Then we have bijections
\[
\mathscr{S}_U \cong \mathscr{M}_G \cong \mathscr{PR}_G.
\]
For any $S \in \mathscr{S}$ corresponding to $M \in \mathscr{M}_G$ and $R \in \mathscr{PR}_G$, we have bijections
\[
\Tr_S \cong \STw_M \cong \Pu_R.
\]
\qed
\end{prop}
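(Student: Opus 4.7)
The plan is to assemble the proposition by directly composing the three previously established results; essentially all the work has already been done, and what remains is a formal composition. First I would note that the triple bijection $\mathscr{S}_U \cong \mathscr{M}_G \cong \mathscr{PR}_G$ is exactly \refeqn{triple_bijection}, which was obtained by composing \reflem{states_and_matchings} (sending a state $S$ to the matching whose edges record the corner chosen at each interior vertex) with \reflem{matchings_and_orientations} (sending a matching $M$ to the prescribed orientation $R_M$ of $G^\perp$ obtained from the standard orientation by reversing the edges dual to edges of $M$).

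Next I would fix corresponding $S \in \mathscr{S}_U$, $M \in \mathscr{M}_G$, and $R \in \mathscr{PR}_G$ under this chain, and then invoke the two equivalences relating the local dynamics at each of these three objects. From \reflem{transposition_and_big_twisting} I obtain a bijection $\Tr_S \cong \STw_M$: a counterclockwise surface transposition of $S$ along a framed counterclockwise transposition surface $\Psi$ is sent, via straightening $\Psi$ to a face subsurface $\Delta$ of $G$, to a surface twisting up of $M$ along the negative twisting surface $\Delta$; and an analogous correspondence holds in the clockwise/positive case. From \reflem{big_twisting_and_pushing} I obtain the bijection $\STw_M \cong \Pu_R$: a surface twisting up along a negative twisting surface $\Delta$ for $M$ corresponds to pushing up on the minimal non-outer accessibility class $K$ of $R$ whose vertices are dual to the faces of $\Delta$.

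Composing these two bijections yields $\Tr_S \cong \STw_M \cong \Pu_R$, as required. There is essentially no obstacle here, because each of the three identifications has been independently verified together with its compatibility with the bijections on the underlying objects $\mathscr{S}_U$, $\mathscr{M}_G$, $\mathscr{PR}_G$. The only thing I would perhaps emphasise in writing up is that the bijection $\Tr_S \cong \STw_M$ is naturally phrased up to isotopy of transposition surfaces, so that its composition with $\STw_M \cong \Pu_R$ lands unambiguously in $\Pu_R$. The proposition is therefore simply the collation of \refeqn{triple_bijection}, \reflem{transposition_and_big_twisting}, and \reflem{big_twisting_and_pushing}.
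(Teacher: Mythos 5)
Your proposal is correct and matches the paper's approach exactly: the paper states that the proposition follows immediately by combining \refeqn{triple_bijection}, \reflem{big_twisting_and_pushing}, and \reflem{transposition_and_big_twisting}, which are precisely the three results you compose. Your remark about working up to isotopy of transposition surfaces in $\Tr_S$ is a reasonable clarification consistent with how $\Tr_S$ is defined after \reflem{transposition_and_big_twisting}.
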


We can now define relations on these sets as follows.
\begin{defn}
\label{Def:general_clock_thm_relations}
Let $(U, \Sigma, \mathscr{F}, G)$ be a framed multiverse and $G^\perp$ a dual of $G$.
\begin{enumerate}
\item 
Define a relation $\leqslant$ on $\mathscr{S}_U$ by $S \leqslant S'$ if there exists a sequence of states $S=S_0, \ldots, S_m = S'$ of $\mathscr{S}_U$, for some $m \geq 0$, such that each $S_{j+1}$ is obtained from $S_j$ by a counterclockwise surface transposition.
\item 
Define a relation $\leqslant$ on $\mathscr{M}_G$ by $M \leqslant M'$ if there exists a sequence $M=M_0, \ldots, M_m = M'$ in $\mathscr{M}_G$, for some $m \geq 0$, such that each $M_{j+1}$ is obtained from $M_{j}$ by surface twisting up.
\item 
Define a relation $\leqslant$ on $\mathscr{PR}_G$ by $R \leq R'$ if there exists a sequence $R=R_0, \ldots, R_m = R'$ in $\mathscr{PR}_G$, for some $m \geq 0$, such that each $R_{j+1}$ is obtained from $R_j$ by pushing up on a non-outer accessibility class.
\end{enumerate}
\end{defn}
In other words, $S \leqslant S'$ if $S'$ is obtained from $S$ by a sequence of counterclockwise surface transpositions; $M \leqslant M'$ if $M'$ is obtained from $M$ by a sequence of surface twisting up operations; and $R \leqslant R'$ if $R'$ is obtained from $R$ by a sequnce of pushing up operations on non-outer accessibility classes.

As discussed in \refsec{surface_transpositions}, surface transpositions do not generalise Kauffman transpositions, and are quite different from surface transpositions. Hence, although it is in a similar vein,  the relation on $\mathscr{S}_U$ here is not strictly a generalisation of \refdef{leq_on_states} or \refdef{leq_on_plane_multiverse_state}.

Similarly, as discussed in \refsec{twisting_surfaces}, while surface twisting does generalise twisting on single faces of connected finite plane bipartite graphs, it is quite different in general. 
Thus the relation on $\mathscr{M}_G$ here is only a generalisation of \refdef{relation_on_matchings1} in a limited sense.

However, the relation on $\mathscr{PR}_G$ is essentially the same as that of \refdef{relation_on_orientations}, where the non-pushable accessibility class it the outer accessibility class. \refdef{relation_on_orientations} takes orientations one circulation at a time, but pushing up preserves circulation, and by \reflem{viable_orientation_prescribed}, the prescribed orientations are precisely the orientations with viable circulation.

\refprop{transposition_and_pushing} provides an isomorphism between the relations $\leqslant$ on $\mathscr{S}_U$, $\mathscr{M}_G$ and $\mathscr{PR}_G$. For a viable circulation function $c$ on $G^\perp$, by \reflem{orientations_related_by_surface_twisting} and \refdef{circulation_of_state_matching}, these isomorphisms restrict to isomorphisms between the relations on $\mathscr{S}_U^c$, $\mathscr{M}_G^c$ and $\mathscr{PR}_G^c = \mathscr{R}_G^c$.

Propp's theorem on orientations of graphs \refthm{Propp_pushing0} can now be applied to prove our general clock theorem.

\begin{theorem}[Clock theorem in arbitary genus] \label{Thm:clock_on_positive_genus}
Let $(U, \Sigma, \mathscr{F}, G)$ be a framed multiverse, 
$G^\perp$ a dual of $G$, and $c \colon \mathscr{C}_{G^\perp} \To \Z$ a viable circulation function.
Then $\mathscr{S}_U^c$, $\mathscr{M}_G^c$ and $\mathscr{PR}_G^c$, with the relations of \refdef{general_clock_thm_relations}, are isomorphic distributive lattices.

Moreover, for $S,S' \in \mathscr{S}_U^c$, $M,M' \in \mathscr{M}_G^c$, and $R,R' \in \mathscr{R}_G^c$,
\begin{enumerate}
\item
$S \lessdot S'$ if and only if $S'$ is obtained from $S$ by a counterclockwise surface transposition;
\item 
$M \lessdot M'$ if and only if $M'$ is obtained from $M$ by a surface twisting up;
\item 
$R \lessdot R'$ if and only if $R'$ is obtained from $R$ by pushing up on a non-outer accessibility class.
\end{enumerate}
\end{theorem}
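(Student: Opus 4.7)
The plan is to reduce \refthm{clock_on_positive_genus} to Propp's orientation theorem \refthm{Propp_pushing0} applied to the dual graph $G^\perp$, with the outer accessibility class designated unpushable. By \reflem{dual_of_spine_connected} the graph $G^\perp$ is connected, and since $c$ is viable it is in particular feasible, so $\mathscr{R}^c_{G^\perp}$ is nonempty and \refthm{Propp_pushing0} directly furnishes a distributive lattice structure on $\mathscr{R}^c_{G^\perp}$ in which $R \lessdot R'$ iff $R'$ arises from $R$ by pushing up on a pushable (i.e.\ non-outer) minimal accessibility class.

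Next, I would invoke \reflem{viable_orientation_prescribed} to identify $\mathscr{R}^c_{G^\perp} = \mathscr{PR}^c_G$ as sets, so that the distributive lattice structure above lives on $\mathscr{PR}^c_G$; this immediately gives statement (iii) of the theorem. Then I would transport this structure along the bijections
\[
\mathscr{S}^c_U \;\cong\; \mathscr{M}^c_G \;\cong\; \mathscr{PR}^c_G
\]
provided by \refprop{transposition_and_pushing} combined with the fact (via \reflem{orientations_related_by_surface_twisting} and \refdef{circulation_of_state_matching}) that these bijections preserve circulation, hence restrict to the $c$-fixed subsets. Since any isomorphism of sets carrying a distributive lattice structure from one side to the other yields a distributive lattice structure on the other, it only remains to check that the relations $\leqslant$ of \refdef{general_clock_thm_relations} on the three sets correspond under these bijections.

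For this, the key observation is that \reflem{transposition_and_big_twisting} gives a bijection $\Tr_S \cong \STw_M$ sending counterclockwise surface transpositions to surface twisting up operations, while \reflem{big_twisting_and_pushing} gives a bijection $\STw_M \cong \Pu_R$ sending surface twisting up at $\Delta$ to pushing up on the non-outer minimal accessibility class consisting of the vertices of $G^\perp$ dual to the faces of $\Delta$. Composing these, a single counterclockwise surface transposition on $S$ corresponds bijectively to a single pushing up of $R$ on a non-outer accessibility class. Iterating, the sequences in the three clauses of \refdef{general_clock_thm_relations} are in bijective correspondence, so the relations $\leqslant$ coincide under the bijections. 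This yields the lattice isomorphisms, and the covering characterisations (i)--(iii) then follow from the covering characterisation in \refthm{Propp_pushing0} together with the same bijections.

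The main obstacle, already addressed in the preceding sections, is ensuring that the various local correspondences (surface transpositions $\leftrightarrow$ surface twistings $\leftrightarrow$ accessibility-class pushings) genuinely match up, in particular that ``non-outer accessibility class'' on the orientation side corresponds to ``surface disjoint from $\partial \Sigma_0$'' on both the matching and state sides; this is exactly the content of \reflem{big_twisting_and_pushing} and the definition of twisting/transposition surfaces, so in the proof itself this step becomes a matter of quoting those lemmas and assembling the three bijections.
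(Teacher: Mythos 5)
Your proposal is correct and follows essentially the same route as the paper: apply Propp's orientation theorem (\refthm{Propp_pushing0}) to the connected graph $G^\perp$ with the outer accessibility class unpushable, identify $\mathscr{R}^c_{G^\perp} = \mathscr{PR}^c_G$ via \reflem{viable_orientation_prescribed}, and transport the lattice structure across the bijections of \refprop{transposition_and_pushing}, which are themselves built from \reflem{transposition_and_big_twisting} and \reflem{big_twisting_and_pushing}. You have simply unfolded the chain of lemmas that the paper's terse proof quotes implicitly in the preceding paragraph.
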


If we take unions over viable circulations, then we obtain isomorphisms of partially ordered sets 
\[
\mathscr{S}_U \cong \mathscr{M}_G \cong \mathscr{PR}_G
\]
which we can regard as ``disconnected" distributive lattices, whose ``connected components" are the $\mathscr{S}_U^c$, $\mathscr{M}_G^c$ and $\mathscr{R}_G^c$ above.

When $U$ is 2-cell embedded, then as discussed in \refsec{spines_framings} after \reflem{framed_contour_equivalent}, a spine $G$ for $U$ is unique up to isotopy, so we obtain a distributive lattice for the multiverse $(U, \Sigma, \mathscr{F})$, by taking the spine $G$ arbitrarily.

\begin{proof}
The dual $G^\perp$ is a finite graph, and by \reflem{dual_of_spine_connected}, it is connected. Applying \refthm{Propp_pushing0} to $G^\perp$ and a viable circulation function $c$, with the unpushable accessibility class being the outer accessibilty class of $G^\perp$ (\refdef{outer_accessibility_class}), provides a distributive lattice structure on $\mathscr{R}_G^c = \mathscr{PR}_G^c$ as claimed. The isomorphisms with $\mathscr{S}_U^c$ and $\mathscr{M}_G^c$ then provide the other distributive lattices as desired.
\end{proof}

\small

\bibliography{refs}
\bibliographystyle{amsplain}

\end{document}